\documentclass[11pt]{amsart}
\usepackage{graphicx} 
\input{style.sty}
\title{More Versions of Real Link Floer Homology}
\author{Yonghan Xiao}
\address{Department of Mathematics, School of Mathematical Sciences\\Peking University}
\email{judy\_xyh0530@stu.pku.edu.cn}

\makeatletter
\let\@wraptoccontribs\wraptoccontribs
\makeatother
\contrib[with an appendix joint with]{Zhenkun Li}
\address{Academy of Mathematics and Systems Science\\ Chinese Academy of Sciences}
\email{zhenkun@amss.ac.cn}

\usepackage{fullpage, graphicx, enumitem}
\usepackage{url}
\usepackage{verbatim}
\usepackage{bbm}
\usepackage{amssymb,amsmath}
\usepackage{amsfonts,savesym,graphicx,bm,amsthm}
\usepackage{color}
\usepackage[mathscr]{eucal}
\usepackage[all]{xy}

\date{\today}
\begin{document}
\maketitle
\begin{abstract} 
In this paper, we further develop the real link Floer homology defined by the first author in \cite{YXHFLR}. We introduce a new base-pointing convention that leads to a different version of real link Floer homology and show that this new theory is related to the old one by an exact triangle. We also define a real link Floer theory for multi-based strongly invertible links, which is a strong real Heegaard invariant in the sense of \cite{GM_real_naturality}, and take a first step toward a real link Floer TQFT. A computer implementation for the new theory via grid diagrams was written by Zhenkun Li in \cite{ZhenkunLioythonprgram}. We also include an appendix containing real grid homology of more than 50 small knots.
\end{abstract}
\tableofcontents

\section{Introduction}
In \cite{YXHFLR}, we defined a real link Floer homology for (generalized) strongly invertible links (see Definition~\ref{def:strongly invertible links}) based on the real Heegaard Floer package developed in \cite{guth2025real}, which provides bounds on equivariant slice genus, unknotting number and satisfies both oriented and unoriented skein relations for links in $S^3$. For background on real Heegaard Floer theory and equivariant knots, we refer readers to the first paragraph of \cite{YXHFLR}. To obtain invariance of the homology groups, we fixed auxiliary data on links, which essentially consist of a properly chosen orientation and a decoration of fixed points using $O$ and $X$. It can be seen from examples that this is necessary since the real knot Floer homology has no quasi-stabilization invariance when it is performed along the fixed set and fails to satisfy a K\"unneth principle for equivariant connected sums (see Section~\ref{sub:example of full real complex} and \cite[Section~6.5]{YXHFLR}). In that paper, we required artificially that on each strongly invertible knot component, the two fixed points are decorated by $O$ and $X$, respectively. In this paper, we will further develop the theory by loosening this restriction. More precisely, we will define an alternative version of real link Floer homology which has all fixed points decorated by $O$, investigate its properties and then make a comparison with the existing one from \cite{YXHFLR}, as promised in \cite[Remark~1.2]{YXHFLR}. This is of particular interest, since these two theories are exactly related by quasi-stabilizations along the fixed set which can also be regarded as an equivariant connected sum (see Section~\ref{subsub:Type-changing quasi-stabilizations}).

Most of the discussion in \cite{YXHFLR} and Section~\ref{sec:A different theory for strongly invertible links} depends heavily on real grid diagrams using techniques developed in \cite{OSS2015grid}. This method is convenient for performing computations, but has poor naturality properties. In the classical setting, link Floer homology were first defined by Ozsv\'{a}th-Szab\'{o}~\cite{Holomorphicdisksandknotinvariants} and independently by Rasmussen~\cite{Rasmussen}. Then Zemke \cite{Zemke2019absolutegradinginHFL,Zemkequasistabandbasepointmoving,zemke2019link} upgraded it to a natural link Floer theory. Motivated by this, the second half of this paper is devoted to defining a natural real link Floer invariant and taking a first step toward defining a real link Floer TQFT.

In the first half, we define an alternative version of real link Floer homology and investigate its properties, as mentioned above. 

\begin{theorem}\label{intro-thm:OO-real link Floer homology} 
Let $(Y,\tau)$ be a closed real $3$-manifold and $(L,\fro)$ be an oriented (null-homologous) generalized strongly invertible link in it. 
Then there are three versions of $OO$-real link Floer homology groups associated to the equivariant isotopy class of $(L,\fro)$: 
\begin{itemize}
	\item $\HFLR^-_{OO}(Y,\tau,L,\fro)$, a (bigraded) module over a graded polynomial ring over $\F$ which has one degree $(-2,-1)$ variable for each pair of knot components and two degree $(-1,-1/2)$ variables for each strongly invertible knot component;
	\item $\HFLR^-_{O}(Y,\tau,L,\fro)$, a (bigraded) module over a graded polynomial ring over $\F$ which has one degree $(-2,-1)$ variable for each pair of knot components and one degree $(-1,-1/2)$ variable for each strongly invertible knot component;
	\item $\widehat{\HFLR}_{O}(Y,\tau,L,\fro)$, a (bigraded) $\F$-vector space.
\end{itemize} 
	Throughout this paper, $\F$ denotes the $2$-element field. Here we bracket the word bigraded since we need some mild extra assumption for the gradings to be well-defined.
\end{theorem}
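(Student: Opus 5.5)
The plan is to follow the real grid diagram approach of \cite{YXHFLR}, adapted to the new base-pointing convention. \textbf{Setup.} First I represent $(L,\fro)$, after an equivariant isotopy, by a real grid diagram $\mathbb{G}$ (for $Y=S^3$), or in general by a real multi-pointed Heegaard diagram compatible with $\tau$. In this diagram every fixed point of $L$ on a strongly invertible component is decorated by an $O$ lying on the fixed locus of the grid involution. Each strongly invertible component then has two fixed $O$-markings, and the remaining markings come in $\tau$-swapped pairs. The real complex $\mathrm{CFLR}^-_{OO}(\mathbb{G})$ is generated by the $\tau$-invariant grid states. Its differential counts real (invariant) empty rectangles, or more generally real domains of Maslov index one, with one $U$-variable per $\tau$-orbit of $O$-markings. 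A paired orbit gets a variable of degree $(-2,-1)$; a fixed $O$ gets a variable of degree $(-1,-1/2)$, because the real Maslov index is half the ordinary one. The grading is defined by halving the classical Maslov and Alexander gradings. The mild extra assumption in the statement is what makes these halved gradings well defined, i.e.\ integral or at least consistent on $\tau$-invariant states. The three versions are then obtained as follows. $\HFLR^-_{OO}$ is the homology of the full complex. $\HFLR^-_{O}$ sets, for each component, one of the two fixed variables equal to zero; I would show that the choice does not matter up to isomorphism, as in the classical argument that all $U_i$ on one component act homotopic to each other. $\widehat{\HFLR}_O$ sets all variables equal to zero except one per component, which is dropped as well.

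\textbf{Key steps.} First, $\partial^2=0$. I decompose real domains of index two into $\tau$-invariant juxtapositions of rectangles, following the real grid argument of \cite{YXHFLR}. The new cases come from domains that contain a fixed $O$ on the fixed locus, including thin annuli through a fixed $O$. Second, I show that $U$-variables attached to markings on the same component act homotopically, via the standard $H_O$-type homotopies counting real rectangles with an $X$ in the interior. Third, I prove invariance under the real grid moves: real commutations and real (de)stabilizations away from the fixed set, which are handled by the pentagon and hexagon counts of \cite{YXHFLR} applied to invariant domains. Finally, I invoke the real version of the grid Reidemeister theorem for strongly invertible links used in \cite{YXHFLR}, so that the result depends only on the equivariant isotopy class.

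\textbf{Main obstacle.} The hard step is invariance under moves that involve the fixed points, that is, commutations or stabilizations near a fixed $O$. In \cite{YXHFLR} the fixed points were decorated by $O$ and $X$, so the invariance argument there used that a fixed $X$ blocks certain domains. With two fixed $O$'s, the degenerate invariant domains through a fixed point instead contribute powers of the half-degree variable. I would first try to write down an explicit real destabilization map, projecting onto states that occupy the fixed corner. I would then check by hand the finitely many local configurations of invariant pentagons and hexagons at the fixed point. If this check fails, the fallback is to prove invariance through the general real Heegaard Floer machinery of \cite{guth2025real}: real Heegaard moves performed away from the fixed set, with index-one/two real stabilizations done equivariantly.
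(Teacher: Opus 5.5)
Your construction of $\HFLR^-_O$ has a genuine gap. The paper obtains $\CFLR^-_O$ from $\CFLR^-_{OO}$ by \emph{identifying} the two fixed variables on each strongly invertible component, $\CFLR^-_O=\CFLR^-_{OO}/(u_{i,1}-u_{i,2})$. It does not set one of them to zero.

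Your reason why the choice of which variable to kill is immaterial does not apply here. The classical homotopy relates two $O$-markings separated by a single blocked $X$. Here the fixed marking $O^f_{i,1}$ is separated from the next $O$-pair by the $X$-pair $(X_1,X_1')$, and the corresponding homotopy satisfies $\partial\Psi^p_{(X_1,X_1')}+\Psi^p_{(X_1,X_1')}\partial=u_{i,1}^2+U_2$. (Differentiate the curvature $\omega^O_K=u_{i,1}^2V_1+U_2V_1+\cdots$ with respect to $V_1$.) Chaining along either arc gives only $u_{i,1}^2\simeq U_j\simeq u_{i,2}^2$. Nothing relates $u_{i,1}$ to $u_{i,2}$ themselves, and the paper's computations show that for most knots they are not homotopic, and not even equal on homology.

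Killing $u_{i,2}$ also makes things worse: the relation becomes $u_{i,1}^2\simeq 0$, so your module is entirely torsion. For the unknot, $H(\CFLR^-_{OO})\cong\F[u]$ with $u_1,u_2$ both acting by $u$, and the exact triangle for the mapping cone of $u_2$ gives $\F$. The paper's $\HFLR^-_O$ of the unknot is instead $\F[u]_{(0,0)}\oplus\F[u]_{(0,1/2)}$, and this rank-two free part is what later produces $\tau^R_{\min}$ and $\tau^R_{\max}$. Moreover, the chosen fixed point is extra data, so without a valid independence argument your object is not shown to depend only on $(L,\fro)$.

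The fix is to quotient by $u_{i,1}-u_{i,2}$; the chain-level invariance of $\CFLR^-_{OO}$ then passes to the quotient. Your hat version, which ends up killing both fixed variables, does agree with the paper's $\widehat{\HFLR}_O$, provided you kill exactly one $U$ per pair of components. Killing every $U$ gives the diagram-dependent $\widetilde{\HFLR}_O$.

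Separately, real grid diagrams exist only for $(S^3,\tau_{\std})$, so your primary route cannot reach the generality of the statement. What you call the fallback is in fact the paper's proof, which runs as follows.
\begin{enumerate}
\item It uses strongly $\s^R$-admissible $OO$-real Heegaard diagrams. Existence comes from viewing a minimal one as a real sutured diagram of the link complement \cite{BGX}.
\item Any two such diagrams are connected by real Heegaard moves performed away from the \emph{base points}, not away from the fixed set. The moves are $\{1\}$-stabilizations along the fixed set, $\Z/2$-stabilizations of index $(1,2)$ or $(0,3)$, real handleslides and equivariant isotopies.
\item Invariance under these moves is quoted from \cite{guth2025real} and \cite{GM_real_naturality}, with the $(0,3)$-stabilizations handled as in \cite{YXHFLR}.
\end{enumerate}
Since the base points never move, the obstacle you single out---commutations or stabilizations at a fixed $O$---does not arise.

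Likewise, $\partial^2=0$ needs no new case analysis. On the complex weighted also by $V$-variables one has $\partial^2=\omega^O_{\cH}\cdot\id$, and every monomial of $\omega^O_{\cH}$ contains some $V_j$.

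Finally, the ``mild extra assumption'' is not about integrality of halved grid gradings. It is that $c_1(\s^R)$ is torsion, needed for $M^R_{\bfO}(\xv,\yv)=\mu_R(\phi)-n_{\bfO}(\phi)$, and that $L$ is null-homologous, needed for $A^R$.
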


Specializing to $(Y,\tau)=(S^3,\tau_{\mathrm{std}})$, the $3$-sphere equipped with its unique real involution, the $OO$-real link Floer homology shares many nice structural properties with the one defined in \cite{YXHFLR}, which we will refer to as the $OX$-theory in the current paper. 

\begin{proposition}\label{intro-prop:summary of OO properties}
The $OO$-real link Floer homologies in $(S^3,\tau_{\mathrm{std}})$ satisfy the following: 
\begin{enumerate}
	\item They admit a combinatorial description via real grid diagrams, making them algorithmically computable. A computer implementation is provided in \cite{ZhenkunLioythonprgram} and a list of more than 50 examples is provided in Appendix~\ref{app:Calculation results for knots with small crossing numbers}.
	\item There are crossing change maps $C_{A}^{\pm}$ ($C_{B}^{\pm}$) on $\HFLR^-_{OO}(L,\fro)$ and $\HFLR^-_{O}(L,\fro)$ associated to equivariant crossing changes of type $A$ (type $B$) (see \cite{boyle2025equivariantunknottingnumbersstrongly} or \cite[Definition~5.1]{YXHFLR}). 
	\item There are saddle maps $\sigma$ and $\mu$ on $\HFLR^-_{O}(L,\fro)$ associated to attachment of pairs of $1$-handles to an equivariant link cobordism. We actually have two pairs of $(\sigma,\mu)$ according to how the $1$-handles change the number of components in $L$.
	\item $\HFLR^-_{O}$ and $\widehat{\HFLR}_{O}$ groups of an oriented skein triple (see Figure~\ref{fig:oriented skein triple}) fit into an exact triangle. 
\end{enumerate}
\end{proposition}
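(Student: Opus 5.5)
The plan is to follow the grid-diagram route of \cite{YXHFLR} step by step, replacing the $OX$ decoration of the two fixed points on each strongly invertible component by an $OO$ decoration. For item (1), we will work with a real grid diagram $\mathbb{G}$ for $(L,\fro)$ that is symmetric under the grid involution and in which both fixed markings on each strongly invertible component are $O$'s. The real grid complex $\mathrm{GC}^-_{OO}(\mathbb{G})$ is generated by the $\tau$-invariant grid states. Its differential counts empty rectangles that are either invariant or come in $\tau$-symmetric pairs, in the same way as in the $OX$ case. Each fixed $O$ contributes its own variable of degree $(-1,-1/2)$, which accounts for the two such variables per strongly invertible component. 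Each pair of off-axis $O$'s swapped by $\tau$ contributes one variable of degree $(-2,-1)$.

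The steps are as follows.
\begin{enumerate}
\item Check that $\partial^2=0$ by the usual decomposition of juxtaposed domains, together with the symmetric-pair analysis already carried out in \cite{YXHFLR}.
\item Prove invariance under equivariant commutation and stabilization moves that avoid the fixed set. This transcribes the pentagon and hexagon arguments to the $OO$ setting.
\item Obtain $\HFLR^-_{O}$ by setting the two fixed-$O$ variables on each component equal to each other, that is, by tensoring with the quotient in which $U_{O_1}=U_{O_2}$. Obtain $\widehat{\HFLR}_O$ by further setting all variables to zero. This matches the three versions listed in Theorem~\ref{intro-thm:OO-real link Floer homology}.
\end{enumerate}
Computability is then immediate, since everything is finite-dimensional and combinatorial.

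For items (2) and (3), we will define the maps by counting pentagons, or by comparing two grids that differ by an equivariant cross-commutation, exactly as in the crossing-change and saddle maps of \cite{YXHFLR}. The required relations are of the form $C^-\circ C^+ = $ multiplication by a variable. We will verify them through the standard domain-counting identities, restricted to $\tau$-symmetric domains.

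The saddle maps are only claimed on $\HFLR^-_O$. This suggests that their identities hold only after identifying the two fixed-$O$ variables, and the plan is to prove them there. There are two pairs of saddle maps, one for each way in which the pair of $1$-handles changes the number of components. We will treat these separately:
\begin{itemize}
\item if the handles merge a pair of swapped components into a strongly invertible one, we compare the variable structures before and after the move;
\item if the handles split a strongly invertible component, we make the reverse comparison.
\end{itemize}
For item (4), we will adapt the skein exact triangle proof of \cite{YXHFLR}. The three grids of the skein triple differ only near a fixed crossing. The triangle comes from a mapping-cone decomposition of the complex, built from the real analogue of the maps $\mathrm{GC}(\mathbb{G}_+)\to\mathrm{GC}(\mathbb{G}_-)$ together with the $\mathrm{GC}(\mathbb{G}_0)$ term.

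The main obstacle is likely the invariance step (2), together with checking that the bigradings are well-defined. With both fixed markings equal to $O$, the Alexander grading normalization must be redone. Stabilizations that occur near the fixed set must also stay away from the axis, because quasi-stabilization along the fixed set changes the $OO$ type into the $OX$ type. We will handle this by allowing only equivariant stabilizations at off-axis pairs, and at fixed points only in a way that preserves the $OO$ decoration. The proof then reduces to showing that such moves, together with equivariant commutations, connect any two $OO$ real grid diagrams of the same oriented link. The ``mild extra assumption'' on gradings will be imposed as needed to fix the Maslov grading via the symmetric formula $M=\tfrac12(M_{\mathbb{O}}+\dots)$ adapted to the two fixed $O$'s.
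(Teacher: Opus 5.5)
There is a genuine gap in item (1), at the step you yourself flag as the main obstacle.

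\textbf{The gap in item (1).} You reduce the combinatorial invariance of $OO$ real grid homology to the claim that equivariant commutations, together with decoration-preserving equivariant stabilizations, connect any two $OO$ real grid diagrams of $(L,\fro)$. That is an equivariant Cromwell-type theorem. Your plan does not establish it, and it is far from routine.

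Worse, the move you intend to allow at the fixed set does not exist as an elementary move. An equivariant stabilization at a diagonal marking must insert the new row and column symmetrically, so the new $2\times 2$ block is invariant. The two markings of the original type must then occupy the swapped off-diagonal pair, and the opposite-type marking must sit on a diagonal cell. So stabilizing at a fixed $O$ always produces a fixed $X$, and vice versa. This is exactly the type-changing quasi-stabilization of Figure~\ref{fig:OX-OO stabilization}. It relates the $OO$ and $OX$ theories through the exact triangle of Theorem~\ref{thm:exact triangle between OX and OO for knots}, not through an isomorphism. Any decoration-preserving move at a fixed point is therefore compound, passes through non-$OO$ diagrams, and would need its own invariance proof.

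The paper never needs combinatorial invariance. The $OO$ groups are defined holomorphically, and their invariance is Theorem~\ref{intro-thm:OO-real link Floer homology}, via the real Heegaard moves of Proposition~\ref{prop:existence of real HD for strongly invertible knots and real H moves}. A real grid diagram is just a particular $OO$-real Heegaard diagram for $(S^3,\tau_{\mathrm{std}})$, automatically admissible, on which the real index-one disk counts reduce to symmetric rectangle counts as in \cite[Section~3]{YXHFLR}. Item (1) only needs this identification.

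\textbf{Items (2)--(4).} The proofs of \cite[Sections~5--7]{YXHFLR} carry over verbatim because they use only local pictures away from the fixed base points, where $OX$ and $OO$ diagrams look the same. Your plan is compatible with this, but there are three inaccuracies.
\begin{enumerate}
\item You misread item (3). Merge and split along the same pair of bands are a single pair $(\sigma_p,\mu_p)$ running in opposite directions. The second pair comes from the paired merge--split move, which leaves the number of components unchanged and carries the factor $W$ on both sides.
\item The saddle maps and the minus skein sequences are stated for the collapsed theory $c\HFLR^-_O$. There all fixed-$O$ variables are identified with a single $u$ and the pair variables with $u^2$. This collapse is forced because $L$ and $L'$ have different base rings; it is not merely a matter of setting $u_{i,1}=u_{i,2}$.
\item On a grid diagram, killing all variables gives $\widetilde{\HFLR}_O$, which is a stabilization of $\widehat{\HFLR}_O$, not the hat group itself.
\end{enumerate}
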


As in \cite[Section~5]{YXHFLR}, we can define $\ord_{u}^O(K,\fro)$ as the torsion order of $\HFLR^-_{O}(K,\fro)$ as an $\F[u]$-module, which gives rise to bounds on the type $A$ and $B$ unknotting numbers using (2). Combining with a direct calculation for the unknot, we obtain another corollary of (2) of Proposition~\ref{intro-prop:summary of OO properties}: for a strongly invertible knot $K$ in $S^3$, $\HFLR^-_{O}(K,\fro)/\Tors$ has rank $2$ as an $\F[u]$-module, while $\HFLR^-_{OO}(K,\fro)/\Tors$ has rank $1$ as an $\F[u_i]$-module, for $i=1,2$. Motivated by the $\tau$-set invariant for links, we extract two real $\tau$-invariants $\tau_{i}^R$ ($i=1,2$) from $\HFLR^-_{OO}(K,\fro)$ and a $\tau^R$-set invariant $\tau_{\min}^R\le \tau_{\max}^R$ from $\HFLR^-_{O}(K,\fro)$. 

\begin{prop}\label{intro-prop:tau invariants}
	For any oriented strongly invertible knot $(K,\fro)$ in $(S^3,\tau_{\std})$, 
	\[\tau^R_1(K,\fro)= \tau^R_2(K,\fro).\]
	Moreover, \[\tau^R_{\max}(K,\fro) = \tau_i^R(K,\fro)+1+n \quad \text{and}\quad \tau^R_{\min}(K,\fro)\le \tau^R_i(K,\fro),\]
where $n=n(K,\fro)$ measures the difference between $u_1$ and $u_2$ actions on $\HFLR^-_{OO}(K,\fro)$. See Section~\ref{sub:Torsion order, real tau-invariant and module structure} for its precise meaning and more detailed discussion on $\tau^R$-invariants.
\end{prop}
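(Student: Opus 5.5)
We work throughout with a real grid diagram $G$ for $(K,\fro)$ in which both fixed points of $K$ are decorated by $O$; call them $O_1,O_2$, with variables $u_1,u_2$. We also write $u$ for the variable acting on $\HFLR^-_O$. The plan is to relate the three objects through explicit chain-level identifications, so that the claimed equalities reduce to algebraic statements about finitely generated bigraded modules over $\F[u_1,u_2]$.

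\textbf{Step 1: identify the $O$-complex with a quotient of the $OO$-complex.} By construction, the complex computing $\HFLR^-_O$ should be the $OO$-complex with $u_1=u_2$ imposed, i.e.\ $\mathit{CFLR}^-_{OO}\otimes_{\F[u_1,u_2]}\F[u_1,u_2]/(u_1-u_2)$. This comes from a chain homotopy between the actions of $u_1$ and $u_2$. We first recall that homotopy explicitly from the definition of $\HFLR^-_O$; it counts rectangles crossing the fixed-set row or column.

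Because the complex is free, there is a short exact sequence
\[0\to \mathit{CFLR}^-_{OO}\xrightarrow{u_1-u_2}\mathit{CFLR}^-_{OO}\to \mathit{CFLR}^-_{O}\to 0.\]
It induces a long exact sequence relating $\HFLR^-_{OO}$ and $\HFLR^-_O$, and we use this sequence in Step 3.

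\textbf{Step 2: prove $\tau^R_1=\tau^R_2$.} By the unknot computation, $\HFLR^-_{OO}/\Tors\cong\F[u_i]$ for each $i=1,2$. We define $\tau^R_i$ as minus the maximal Alexander grading of a homogeneous non-$u_i$-torsion element.

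The key point is that $u_1$-torsion and $u_2$-torsion agree. To see this, suppose $x$ is $u_1$-torsion but not $u_2$-torsion. The module is finitely generated over $\F[u_1,u_2]$ and every homogeneous element has bounded-above grading. Since $u_1$ and $u_2$ commute and have the same bidegree, the element $u_2^kx$ is also $u_1$-torsion. Passing to the localization, the rank-$1$ statements for each variable force $\HFLR^-_{OO}\otimes\F[u_1^{\pm1},u_2^{\pm1}]$ to be $1$-dimensional. In that localization we then have $u_1=c\,u_2$ up to an automorphism, which gives a contradiction.

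A cleaner route is symmetry. The involution $\tau$ together with reversing the orientation data should exchange $O_1$ and $O_2$, so a real-grid symmetry swapping the two fixed points gives $\tau_1^R=\tau_2^R$ directly. I will attempt both arguments. The torsion approach is the robust one, and I expect it to yield a generator $x_0$ that is simultaneously a top non-torsion element for $u_1$ and for $u_2$.

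\textbf{Step 3: compute $\tau^R_{\max}$ and bound $\tau^R_{\min}$.} On the free part, $u_1$ and $u_2$ act on the generator $x_0$ of the free part (computed against suitable generators) with a relative shift. The integer $n$ is defined as this discrepancy: roughly, $u_1^{a}x_0$ and $u_2^{a+n}x_0$ become comparable modulo torsion.

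We then run the long exact sequence from Step 1. Multiplication by $u_1-u_2$ has degree $(-1,-\tfrac12)$, so its cokernel and kernel produce the two free summands of $\HFLR^-_O$. The first is the image of $x_0$, in Alexander grading $-\tau_i^R$. The second arises through the connecting map from the kernel, shifted by $1$ from the degree of $u_1-u_2$ and by $n$ from the discrepancy. Reading off the top gradings gives $\tau^R_{\max}=\tau_i^R+1+n$. For the other summand, the image of $x_0$ is non-torsion, which yields the inequality $\tau^R_{\min}\le\tau^R_i$; equality can fail because of extension problems.

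\textbf{Main obstacle.} The hardest step is the bookkeeping in Step 3. We must show that the connecting homomorphism lands exactly in the claimed grading, with shift $1+n$, and that no extension in the $\F[u]$-module structure raises the grading of the second generator. I would handle this by choosing a splitting of $\HFLR^-_{OO}$ into a free part plus torsion, adapted to both $u_1$ and $u_2$, and computing kernel and cokernel of $u_1-u_2$ summand by summand.
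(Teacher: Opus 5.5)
Your architecture is the paper's: the sequence $0\to C\xrightarrow{u_1-u_2}C\to C'\to0$ (equivalently $\Cone(u_1-u_2)$), a cokernel tower generated by the image of $x_0$ giving $\tau^R_{\min}\le\tau^R_i$, and a second tower coming from $\ker(u_1-u_2)$. However, there is a genuine gap. The input that drives the paper's proof is missing, and a false one takes its place.

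\textbf{The missing input.} The paper uses that $u_1^2\simeq u_2^2$ on $\GCR^-_{OO}$ (Proposition~\ref{prop:U,v action identification}: both are homotopic to the $U_j$-actions). You never use this. Instead, Step~1 says the $O$-complex ``comes from a chain homotopy between the actions of $u_1$ and $u_2$''. That is false in general. $\CFLR^-_O$ is simply defined as the quotient, and the paper stresses that $u_1\not\simeq u_2$ for most knots; if they were homotopic, $n$ would always be $0$.

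\textbf{Step 2.} It does not go through as written. Rank one over each $\F[u_i]$ plus finite generation over $\F[u_1,u_2]$ does not force equal torsion or a one-dimensional localization. For example, $\F[u_1,u_2]/(u_2)\oplus\F[u_1,u_2]/(u_1)$ has rank one over each $\F[u_i]$, different $u_1$- and $u_2$-torsion, and zero localization. Also, ``$u_1=c\,u_2$ up to an automorphism'' yields no contradiction. With $u_1^2=u_2^2$ on homology the step is one line: $u_1^{2m}=u_2^{2m}$ on $H(C)$, so the torsion submodules coincide and $\tau^R_1=\tau^R_2$. The symmetry route is unavailable. There is no general symmetry of $(K,\fro)$ exchanging the two fixed points; that asymmetry is exactly the direction dependence the paper points out.

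\textbf{Step 3: the definition of $n$.} Your $n$ is not meaningful. Once the torsions agree, $u_2$ induces an injective $\F[u_1]$-linear map of degree $(-1,-\tfrac12)$ on $H(C)/\Tors\cong\F[u_1]$, so it acts there as $u_1$. There is no shift on the free part, and $u_1^ax_0$, $u_2^{a+n}x_0$ lie in different gradings anyway. The actual discrepancy is the torsion class $(u_1+u_2)[x_0]$. The correct $n$ is the least $n\ge0$ with $(u_1^{2n+1}-u_2^{2n+1})x_0=\partial y_0$.

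\textbf{Step 3: the second tower.} The paper builds it by hand. Choose $y_1$ with $\partial y_1=(u_1^{2n}-u_2^{2n})x_0$ (again from $u_1^2\simeq u_2^2$). Then $Y=(u_1^{2n}x_0,\,y_0+u_2y_1)$ is a cycle of $\Cone(u_1-u_2)$, not in the tower of $X=(0,x_0)$, at $A^R=A^R(x_0)-n-\tfrac12$. For the reverse bound, the paper shows that a tower generator with non-torsion first coordinate $x$ needs $(u_1+u_2)[x]=0$, and uses this to force $A^R(x)\le A^R(x_0)-n$.

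This is exactly what you postpone as the ``main obstacle'', and your proposed remedy cannot work. Suppose $H(C)$ split as a free part plus torsion, both stable under $u_1$ and $u_2$. Then $u_2=u_1$ on the free summand, so $u_1+u_2$ kills a top generator. Such a splitting therefore exists only when $n=0$, which is precisely the case where there is nothing to prove.

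Also keep track of normalization: with this $n$, the shift of $\tau^R_{\max}$ over $\tau^R_i$, measured in $-2A^R$, is $2n+1$.
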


Despite these similarities, the $OX$ and $OO$-theories have many differences. We will see this through analysis of the curvature in Section~\ref{sub:Killing the curvature} and examples in Section~\ref{sub:Examples}. More interestingly, suitable minus and hat $OO$ and $OX$-real link Floer groups fit into an exact triangle. 
\begin{thm}\label{intro-thm:exact triangle between OX and OO for knots}
	Let $(K,\fra)$ be a strongly invertible knot with auxiliary data and $(K,\fro)$ be the underlying oriented strongly invertible knot.
	Then there is an exact triangle \[\begin{tikzcd}
	\HFLR^-(K,\fra) \arrow{rr}{(-1,-1/2)} & & \HFLR^-(K,\fra) \arrow{dl}{(0,1/2)}\\
		& \HFLR^-_{O}(K,\fro) \arrow{ul}{(0,0)}\\
	\end{tikzcd}\]
An arrow labeled $(d,s)$ shifts the $(M^R,A^R)$-bigrading by $(d,s)$. These triangles split naturally along real $\spinc$ structures. There is a similar triangle relating $\widehat{\HFLR}_{O}$ and $\widehat{\HFLR}$.
\end{thm}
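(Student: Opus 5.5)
The plan is to realize all three groups on a single real grid diagram for $K$ and obtain the triangle from a short exact sequence of chain complexes.

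\textbf{Setup.} Fix a real grid diagram $\mathbb{G}$ for $(K,\fra)$. In the $OX$-convention the two fixed points are decorated by one $O$ and one $X$. As explained in Section~\ref{subsub:Type-changing quasi-stabilizations}, passing from the $OX$- to the $OO$-convention is a quasi-stabilization along the fixed set. Concretely, we insert an extra fixed $O$ next to the fixed $X$ and obtain a real grid diagram $\mathbb{G}'$ for $(K,\fro)$ with all fixed points marked by $O$.

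\textbf{Step 1: the model complex.} Consider $\widetilde{C}=\mathrm{GCR}^-_O(\mathbb{G}')$, where the two fixed $O$-variables are identified to a single variable $u$. Its homology is $\HFLR^-_O(K,\fro)$. As in the classical proof of stabilization invariance in \cite{OSS2015grid}, split the generators according to whether the new fixed intersection point adjacent to the stabilization is occupied. This gives a mapping cone description
\[
\widetilde{C}\cong \mathrm{Cone}\bigl(\partial_{\mathbf{N}}^{\mathbf{I}}:\mathbf{I}\to\mathbf{N}\bigr),
\]
in which both $\mathbf{I}$ and $\mathbf{N}$ are quasi-isomorphic to (bigrading shifts of) the $OX$ complex $\mathrm{GCR}^-(\mathbb{G})$.

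\textbf{Step 2: identify the connecting map.} The component $\partial_{\mathbf{N}}^{\mathbf{I}}$ counts the thin domains around the new markings. In the $OX$ setting these domains do not cancel in pairs, unlike the classical case where they give $V_1-V_2=0$. Instead they contribute multiplication by the fixed-point variable of degree $(-1,-1/2)$, coming from the unpaired fixed $O$ against the fixed $X$. Equivalently, we get the map $U_{\mathrm{fix}}$ on $\HFLR^-(K,\fra)$. The short exact sequence $0\to \mathbf{N}\to\widetilde{C}\to\mathbf{I}\to 0$ then yields the long exact sequence. Since every group is finitely generated over $\F[u]$ and the sequence is $2$-periodic in the relevant sense, we write it as a triangle. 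The degree shifts $(0,0)$ and $(0,1/2)$ on the other two arrows are read off from the relative Maslov and Alexander gradings of $\mathbf{I}$ and $\mathbf{N}$ inside $\widetilde{C}$. These gradings are determined by the local picture near the new $O$ and the fixed $X$.

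\textbf{Step 3: splitting and the hat version.} The splitting along real $\spinc$ structures follows because all maps count real domains, which preserve the real $\spinc$ decomposition. For the hat version, set the remaining $U$-variable to zero. The same cone argument then gives the triangle relating $\widehat{\HFLR}_O$ and $\widehat{\HFLR}$.

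\textbf{Main obstacle.} The hard step is Step 2. The real differential counts $\tau$-invariant domains with half-weights, so the quasi-isomorphisms $\mathbf{I},\mathbf{N}\simeq \mathrm{GCR}^-(\mathbb{G})$ must be built equivariantly. We must also show that the cross term really is the $(-1,-1/2)$ action and not a sum with extra terms.

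I would first check this by the local filtration argument of \cite{OSS2015grid}, filtering by the number of points in the new fixed row/column. If needed, I would then cross-check against the unknot computation, where both sides are explicit.
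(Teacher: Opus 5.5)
Your architecture matches the paper's: split the generators of the quasi-stabilized complex into two halves, write it as a mapping cone, and take the long exact sequence. That short exact sequence alone already gives the triangle with the stated bidegrees.

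\textbf{Step~2 is false and should be dropped.} You single it out as the heart of the argument, but the connecting map $\delta$ is \emph{not} multiplication by $u$. If it were, $\ker\delta$ and $\mathrm{coker}\,\delta$ would both be killed by $u$, and the triangle would make $\HFKR^-_O(K,\fro)$ a torsion module. That contradicts Proposition~\ref{prop:ranks of minus theories}, which gives rank $2$. For the unknot you would get $\F$ instead of the $\F[u]\oplus\F[u]$ of Example~\ref{ex:unknot}. In the hat version your $\delta$ would vanish, forcing $\dim\widehat{\HFKR}_O=2\dim\widehat{\HFKR}$. For the left-handed trefoil the computation in Section~\ref{sub:example of full real complex} gives $4$, not $6$.

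What actually happens is given by Proposition~\ref{prop: differential oftype changing quasi-stab} with $V=0$. The cross term is $u_0+u_1+\partial^1_v$ on a minimal diagram and $u_1+\partial^1_v$ otherwise. Here $\partial^1_v$ is the coefficient of the blocked fixed-$X$ variable $v$ in the full $OX$ differential. Once the two fixed $O$-variables are identified, the $u$'s cancel in the minimal case and $\delta$ is induced by $\partial^1_v$; for $\bar 3_1$ this is $l_2\mapsto l_3$. The theorem only asserts that a triangle exists. The bidegree $(-1,-1/2)$ of the connecting map is forced by the grading shift between the two halves, so nothing is lost by dropping Step~2.

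\textbf{Scope and a hidden lemma.} Real grid diagrams exist only for $(S^3,\tau_{\std})$. The theorem is meant for knots in an arbitrary closed real $3$-manifold, which is why it speaks of a splitting along real $\spinc$ structures; your Step~3 handles that only vacuously. Even within $S^3$, Step~1 hides real work. $\mathbf{N}$ has many more generators than $\mathrm{GCR}^-(\mathbb{G})$, so the claim $\mathbf{N}\simeq\mathrm{GCR}^-(\mathbb{G})$ needs an equivariant version of the filtration argument of \cite{OSS2015grid}. You invoke that argument only by analogy.

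The paper avoids both problems by quasi-stabilizing a real Heegaard diagram as in Figure~\ref{fig:OX-OO stabilization}.
\begin{itemize}
\item The new curves $\alpha_O,\beta_O$ meet in exactly two points $a,b$. So the generators split as $G_a\sqcup G_b$, each in bijection with the generators of $\cH$.
\item Let $m,n$ be a domain's multiplicities at the region containing the new $O$ and at the adjacent outer region; the two other adjacent regions are blocked by $X,X'$.
\item A positive class from $a\xv$ to $b\yv$ would need $m+n=-1$, which is impossible, so $C_a$ is a subcomplex.
\item Classes from $a\xv$ to $a\yv$, or from $b\xv$ to $b\yv$, have $m+n=0$ and correspond canonically to classes in $\cH$.
\end{itemize}
Hence $C_a$ and $C_b$ are each isomorphic on the nose to $\CFKR^-(\cH)$. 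The sequence $0\to\CFKR^-(\cH)_{d,s-1/2}\to\CFKR^-_O(\cH_O)_{d,s}\to\CFKR^-(\cH)_{d,s}\to 0$ then gives the triangle in any $(Y,\tau)$ and in each real $\spinc$ structure, with no quasi-isomorphism argument needed.
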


See Section~\ref{sub:Basic setup} for the definition of bigradings and see Theorem~\ref{thm:spectral sequence relating OO and OX for links} for a generalization to links. This exact triangle also enables us to compare $\tau_{\max}^R$, $\tau_{\min}^R$ with $\tau^R$ defined in \cite[Definition~6.1]{YXHFLR}. 

\begin{cor}\label{intro-cor:bounding tauR_max, tauR_min by tauR}
Let $(K,\fra)$ be a strongly invertible knot with auxiliary data and $(K,\fro)$ be the underlying oriented strongly invertible knot. Then we have inequalities \[\tau_{\min}^R(K,\fro)+1\le \tau^R(K,\fra)\le \tau_{\max}^R(K,\fro).\]
\end{cor}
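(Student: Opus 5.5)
We use the exact triangle of Theorem~\ref{intro-thm:exact triangle between OX and OO for knots} and compare the free parts of the three groups, restricted to the real $\spinc$ structure that carries the nontorsion part. First we recall the definitions. By \cite[Definition~6.1]{YXHFLR}, $\tau^R(K,\fra)$ is minus the maximal Alexander grading of a homogeneous nontorsion element of $\HFLR^-(K,\fra)$; here $\HFLR^-(K,\fra)/\Tors\cong\F[u]$. Similarly, $\HFLR^-_{O}(K,\fro)/\Tors$ has rank $2$ over $\F[u]$. The two numbers $-\tau^R_{\min}\ge -\tau^R_{\max}$ are the Alexander gradings of the generators of a homogeneous basis of this free quotient, chosen as in the $\tau$-set construction so that they give the maximal gradings of nontorsion elements in successive steps of the filtration. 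Everything is then translated into statements about Alexander gradings. We must be careful to match the sign and normalization conventions of Section~\ref{sub:Torsion order, real tau-invariant and module structure}.

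Next we read the triangle as a long exact sequence of $\F[u]$-modules:
\[\cdots\to \HFLR^-(K,\fra)\xrightarrow{\ f\ }\HFLR^-(K,\fra)\xrightarrow{\ g\ }\HFLR^-_{O}(K,\fro)\xrightarrow{\ h\ }\HFLR^-(K,\fra)\to\cdots.\]
The map $f$ has bidegree $(-1,-1/2)$, and we expect it to be multiplication by $u$, up to maps that are nilpotent or torsion. This is the standard behaviour of a triangle coming from a quasi-stabilization or an extra basepoint, and we would check it from the chain-level construction in Theorem~\ref{thm:spectral sequence relating OO and OX for links}.

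We then localize, i.e.\ tensor with $\F[u,u^{-1}]$. Since $f$ becomes an isomorphism after localization, we get $\HFLR^-_{O}\otimes\F[u^{\pm1}]=0$. This contradicts rank $2$, so the guess that $f$ is multiplication by $u$ must be wrong. The rank count, $2=1+1$, instead forces $f$ to vanish on the free part, so that $0\to \mathrm{coker} f\to \HFLR^-_O\to\ker f\to0$ splits the free rank as $1+1$. Concretely, the free generator $x$ of $\HFLR^-(K,\fra)$, in Alexander grading $-\tau^R$, has the following properties:
\begin{enumerate}
\item Its image $g(x)$ is a nontorsion element of $\HFLR^-_O$ in Alexander grading $-\tau^R+1/2$.
\item Some nontorsion $y\in\HFLR^-_O$ satisfies $h(y)=x$ modulo torsion, with $y$ in Alexander grading $-\tau^R$.
\end{enumerate}
The precise bookkeeping of half-integers versus the grading normalization of $\tau^R_{\min},\tau^R_{\max}$ (which differ by $1+n$ according to Proposition~\ref{intro-prop:tau invariants}) is exactly what produces the shift $+1$ in the statement.

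The inequalities then follow from maximality. For the upper bound $\tau^R\le\tau^R_{\max}$: $g(x)$ is nontorsion, so it lies in grading at most $-\tau^R_{\max}$ after normalization. For the lower bound $\tau^R_{\min}+1\le\tau^R$: the element $y$ together with $g(x)$ spans the free quotient up to finite index, so the second generator sits in grading at least that of $y$. Alternatively, we could use that $h$ is $\F[u]$-linear of degree $0$, and that an element of $\HFLR^-_O$ in grading above $-\tau^R_{\min}$ must map to torsion.

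The main obstacle is identifying the connecting maps on the free parts: showing that $f$ kills the free part, that $g(x)$ is not torsion, and pinning down the grading shifts. We would first try to compute the maps directly on the unknot and then propagate. The crossing change maps of Proposition~\ref{intro-prop:summary of OO properties}(2) commute with the triangle maps up to homotopy and are isomorphisms after localization, so the unknot computation determines the localized maps for every knot.
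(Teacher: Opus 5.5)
\textbf{Verdict: there is a genuine gap.} Your overall route is the paper's: its proof just says the bounds follow from the minus exact triangle. Your rank count is also correct. Since $\HFKR^-(K,\fra)/\Tors$ has rank $1$ and $\HFKR^-_O(K,\fro)/\Tors$ has rank $2$, the map $f$ vanishes after inverting $u$, so $g$ is injective and $h$ is surjective modulo torsion. The problems are in what you extract from this.

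\emph{Normalization.} Let $x$ be the tower generator of $\HFKR^-(K,\fra)$, and let $e_1,e_2$ be homogeneous generators of $\HFKR^-_O(K,\fro)/\Tors$ with $A^R(e_1)\ge A^R(e_2)$. Then $\tau^R=-2A^R(x)$, $\tau^R_{\min}=-2A^R(e_1)$ and $\tau^R_{\max}=-2A^R(e_2)$. The $+1$ in the statement is just $-2$ times the $1/2$ shift of $g$. Proposition~\ref{intro-prop:tau invariants} relates $\tau^R_{\max}$ to $\tau^R_i$, not to $\tau^R_{\min}$, and plays no role here.

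\emph{Claim (2) is unjustified and false in general.} From $f(x)\in\Tors$ you only get $u^Nx\in\operatorname{im} h$ for some $N\ge 0$, not $N=0$. If (2) held, then $y$ and $u\,g(x)$ would be independent modulo torsion in grading $A^R(x)$ (apply $h$ to see this). That forces $A^R(e_2)\ge A^R(x)$, i.e.\ $\tau^R_{\max}\le\tau^R$, hence equality. A concrete counterexample is the trefoil whose complex is the $r_i$-complex in \eqref{eq:full real complex for trefoils}. By Proposition~\ref{prop: differential oftype changing quasi-stab}, the connecting map is $p=\partial_v^1$ once $u_0=u_1$ and $V=0$. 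So $f[r_3]=[r_2]$ is a nonzero torsion class, no such $y$ exists, and indeed $\tau^R=-1<0=\tau^R_{\max}$.

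\emph{Each inequality is attached to the wrong element.} A nontorsion class in grading $s$ only bounds the top tower: $A^R(e_1)\ge s$.
\begin{itemize}
\item Applied to $g(x)$, which is nontorsion in grading $A^R(x)+1/2$, this gives $\tau^R_{\min}\le\tau^R-1$. That is the \emph{lower} bound, not the upper one.
\item Your assertion that $g(x)$ lies in grading at most that of the second tower is false. For the trefoil above, $g[r_3]=[a r_3]$ is the top tower generator, at $A^R=1$.
\item Your argument with $y$ and $g(x)$ would prove the reverse of the upper bound.
\end{itemize}

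\emph{How to get the upper bound.} Use the other half of exactness. Fix an $A^R$-grading $s>A^R(x)$. Then $\HFKR^-(K,\fra)$ is torsion in grading $s$, so for every $z$ of grading $s$ we have $h(u^Nz)=0$ for large $N$, i.e.\ $u^Nz\in\operatorname{im} g$. Now $\operatorname{im} g$ has rank at most $1$. Classes in a single grading that are $\F$-independent modulo torsion stay independent after inverting $u$. Hence the free quotient of $\HFKR^-_O(K,\fro)$ is at most one-dimensional in each grading $s>A^R(x)$. This gives $A^R(e_2)\le A^R(x)$, i.e.\ $\tau^R\le\tau^R_{\max}$.

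With this, your last paragraph is unnecessary. That includes the unknot computation and the claim that crossing-change maps commute with the triangle maps, which you have not justified.
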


One most obvious difference between $OO$ and $OX$ is that for each strongly invertible knot, we have two $OX$-theories, essentially depending on a choice of direction, but only one $OO$-theory. One may guess that the $OO$-theory no longer has a direction dependence. However, we shall see in Section~\ref{sub:Torsion order, real tau-invariant and module structure} that the two variables $u_1$, $u_2$ sometimes have non-homotopic actions on $\HFLR^-{OO}$, which means though both fixed points are labelled by $O$, their roles are sometimes unequal in the real link Floer theory.

Having $OO$ and $OX$-theories in hand, it is then natural to ask about an $XX$-version. In Section~\ref{sub:XX-theory}, we briefly discuss the properties of $XX$-real link Floer homology and its relationship with the $OX$-theory.

After the comparison, we move to define a real link Floer theory $\fullCFLR^-$ for multi-based strongly invertible links (see Definition~\ref{def:based strongly invertible links}) which is in general a multi-graded curved complex over a polynomial ring. This has been shown to be a natural invariant. 

\begin{thm}\label{intro-thm:invariance and naturality of the full real link Floer complex}(\cite[Theorem~2]{GM_real_naturality})
Let $(Y,\tau)$ be a closed real $3$-manifold. Fix a multi-based strongly invertible link $\LL\subset (Y,\tau)$ (see Definition~\ref{def:based strongly invertible links}) and a real $\spinc$ structure $\s^R\in \rspinc(Y,\tau)$. If $\cH$ and $\cH'$ are two strongly $\s^R$-admissible real Heegaard diagrams representing $\LL$, then there is $\Z^{\vert \bfO^f\vert+\frac{1}{2}\vert\bfO^p\vert} \oplus \Z^{\vert \bfX^f\vert+\frac{1}{2}\vert\bfX^p\vert}$-filtered map 

\[\Phi_{\cH\to \cH'}\colon \fullCFLR^-(\cH,\s^R) \to \fullCFLR^-(\cH',\s^R)\] which is a chain homotopy equivalence in the category of curved complexes of $\cR^-(\LL)$-modules. This will be referred to as a \emph{change of diagram map}. It preserves the relative gradings $M^R_{\bfO}$, $M^R_{\bfX}$ as well as $A^R$ when they are well-defined.  
	
If $\cH''$ is yet another strongly $\s^R$-admissible real Heegaard diagram representing $\LL$, then the change of diagram maps satisfy \[\Phi_{\cH\to \cH''}\simeq \Phi_{\cH'\to \cH''} \circ \Phi_{\cH\to \cH'}.\]    
\end{thm}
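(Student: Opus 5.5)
This statement is quoted from \cite[Theorem~2]{GM_real_naturality}. The plan is to check that the multi-based real link Floer complex $\fullCFLR^-$ fits the axiomatic framework of strong real Heegaard invariants there, and then adapt Juh\'asz--Thurston--Zemke naturality to the real setting.

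\textbf{Step 1: real Heegaard moves.} First I would show that any two strongly $\s^R$-admissible real Heegaard diagrams representing the same multi-based strongly invertible link $\LL$ are related by a finite sequence of real moves:
\begin{itemize}
\item equivariant isotopies of the $\alpha$-curves, each $\beta$ being $\tau$-paired with an $\alpha$;
\item real handleslides;
\item real (index one/two) stabilizations away from the basepoints;
\item equivariant diffeomorphisms isotopic to the identity.
\end{itemize}
Admissibility is maintained throughout by equivariant winding. Since real Heegaard diagrams are determined by the half-diagram $(\Sigma,\boldsymbol{\alpha},\bfw,\bfz)$ together with $\tau$, this reduces to ordinary Reidemeister--Singer theory for the $\alpha$-data relative to the basepoints. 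Free basepoints and fixed basepoints must be kept in the correct strata: basepoints in $\bfO^f,\bfX^f$ come in $\tau$-orbits of size two, while those in $\bfO^p,\bfX^p$ are fixed by $\tau$.

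\textbf{Step 2: a map for each move.} Next I would assign a map to each move.
\begin{itemize}
\item For isotopies and handleslides, the map is the real triangle (or continuation) map, which counts $\tau$-invariant holomorphic triangles in the real moduli spaces of \cite{guth2025real}.
\item For stabilizations, the map is the obvious tensoring identification.
\item For diffeomorphisms, the map is the tautological one.
\end{itemize}
Each map is filtered with respect to the $\bfO$- and $\bfX$-multiplicities. Fixed basepoints contribute half-weight, which is why the filtration group is $\Z^{|\bfO^f|+\frac12|\bfO^p|}\oplus\Z^{|\bfX^f|+\frac12|\bfX^p|}$. The structure equations come from degenerations of one-parameter real moduli spaces. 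There is one genuinely new feature here: the curvature term. Boundary degenerations in the curved complex contribute the same curvature on both sides, because it depends only on $\LL$ and not on the diagram. Hence the triangle maps are morphisms of curved complexes over $\cR^-(\LL)$, and the standard associativity arguments show that they are homotopy equivalences.

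\textbf{Step 3: naturality axioms.} Finally, I would verify the strong Heegaard invariant axioms: functoriality, commutation of distinguished rectangles, handleswap invariance, and simple handleswap invariance. Defining $\Phi_{\cH\to\cH'}$ as a composite along a path of moves, these axioms give independence of the path and the composition law $\Phi_{\cH\to\cH''}\simeq\Phi_{\cH'\to\cH''}\circ\Phi_{\cH\to\cH'}$. The gradings $M^R_{\bfO}$, $M^R_{\bfX}$ and $A^R$ are preserved because each elementary map counts index-zero real triangles or polygons.

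The main obstacle is transversality and orientation-free compactness for $\tau$-invariant moduli spaces of triangles and quadrilaterals. This is especially delicate near the fixed basepoints, and in the simple handleswap, where the real structure can produce codimension-one fixed-locus degenerations. My first approach would be to identify real curves with curves on the quotient or half-diagram, as in \cite{guth2025real}. I would then import the genericity arguments of Juh\'asz--Thurston--Zemke verbatim, checking separately that boundary degenerations at fixed basepoints cancel in pairs modulo $2$.
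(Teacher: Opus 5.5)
The paper gives no argument here beyond quoting \cite[Theorem~2]{GM_real_naturality}. Your overall plan, showing that $\fullCFLR^-$ satisfies the axioms of a strong real Heegaard invariant in the style of Juh\'asz--Thurston--Zemke, is the framework that citation refers to. As written, however, several steps are wrong.

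\textbf{Basepoint strata and the filtration group.} You have the strata reversed. In this paper $\bfO^f,\bfX^f$ are fixed pointwise by $\tau$, and $\bfO^p,\bfX^p$ consist of pairs $(O,O')$ swapped by $\tau$. So your explanation of the filtration group (``fixed basepoints contribute half-weight'') is false. The ring $\cR^-(\LL)$ has one variable per $\tau$-orbit of basepoints: a $u$ or $v$ for each fixed point, and a single $U$ or $V$ for each pair. This is legitimate because every real class satisfies $n_O(\phi)=n_{O'}(\phi)$. The filtration records the exponent of each variable, and the $\frac12$ appears only because $|\bfO^p|$ counts points, two per orbit.

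\textbf{Step~1 and Reidemeister--Singer.} Step~1 does not reduce to ordinary Reidemeister--Singer theory. The $\bm\alpha$-curves determine a real diagram only once the $\tau$-invariant Heegaard surface is fixed. Relating different real splittings needs a separate equivariant Reidemeister--Singer theorem. Its moves include, besides $\Z/2$-stabilizations in $\tau$-swapped pairs, the $\{1\}$-stabilization along the fixed set (Proposition~\ref{prop:existence of real HD for multi-based strongly invertible knots and real H moves}). That move has no counterpart in Juh\'asz--Thurston--Zemke, so it needs its own map and its own compatibility checks.

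\textbf{Shape of the change-of-diagram maps.} For handleslides and isotopies the maps are not counts of $\tau$-invariant triangles. Changing $\bm\alpha$ to $\bm\alpha'$ forces $\bm\beta'=\tau(\bm\alpha')$. The map $\Phi_{\cH\to\cH'}$ then counts symmetric rectangles in the real quadruple $(\bm\alpha',\bm\alpha,\bm\beta,\bm\beta')$, with corners $\Theta_{\alpha',\alpha}$, $\xv$, $\Theta_{\beta,\beta'}$, $\yv$; this is the form used in the proof of Proposition~\ref{prop:quasi-stab quadruples}. Equivalently, one counts half-triangles with one edge on the fixed locus. The associativity and homotopy-equivalence arguments therefore have to be redone for symmetric polygons, where degenerations along the fixed locus appear.

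\textbf{Boundary degenerations.} This is also where your last paragraph goes astray. Curvature is not new, since Zemke's link Floer complexes are already curved. The genuinely real phenomenon is $\{1\}$-boundary degenerations, and these do not simply cancel in pairs mod $2$. As in the proof of Proposition~\ref{prop: differential oftype changing quasi-stab}, the two $\tau$-symmetric partners are distinguished by an orientation of the fixed arc of the source. The doubling map then becomes a bijection, and these degenerations contribute honest terms. What you must show is that such ends contribute the same diagram-independent curvature $\omega_{\LL}$ on both sides, so that the maps are morphisms of curved complexes; vanishing is neither needed nor true. Finally, the real-specific work lies in local holomorphic computations (for the $\{1\}$-stabilization and an equivariant handleswap). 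It is not a matter of genericity arguments that can be imported verbatim.
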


See Section~\ref{sub:Upgrade to a based link invariant} for terminologies in the statement. The real link Floer complexes can be colored via homomorphism from the base ring (see Definition~\ref{def:coloring on link and colored real link Floer}, and compare~\cite{Zemkequasistabandbasepointmoving,zemke2019link}), and the naturality is inherited by the colored invariants.

Having the naturality in hand, we will make our first attempt in making $\fullCFLR^-$ a TQFT. Motivated by \cite{Zemkequasistabandbasepointmoving,zemke2019link}, we introduce base point actions and quasi-stabilization maps on $\fullCFLR^-$. 

\begin{proposition}\label{intro-prop:base point action}
We have base point actions $\Phi_O^f$ (of degree $(0,1/2)$) and $\Phi_{(O,O')}^p$ (of degree $(1,1)$) on $\fullCFLR^-$ associated to fixed $O$-base points and pairs of $O$-base points, respectively. When the base points are suitably colored, they are chain maps and are well-defined on the level of transitive system. We also have similar actions $\Psi_X^f$ and $\Psi_{(X,X')}^p$.
\end{proposition}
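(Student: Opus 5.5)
The plan is to mimic Zemke's construction of basepoint actions in ordinary link Floer homology, adapted to the real setting. There the action $\Phi_w$ is the formal derivative $\partial/\partial U_w$ of the differential. In $\fullCFLR^-$ the differential counts real holomorphic disks, equivalently classes $\phi$ in the fixed locus of the symplectic involution, weighted by monomials $\prod u_O^{n_O(\phi)}\prod v_X^{n_X(\phi)}$.

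\textbf{Multiplicities and definition of the actions.} First record how these multiplicities behave under the involution. A fixed basepoint $O\in\bfO^f$ carries a single variable whose exponent is $n_O(\phi)$. For the lift of a real disk this exponent may be half-integral, or is halved by convention, which is exactly why the filtration in Theorem~\ref{intro-thm:invariance and naturality of the full real link Floer complex} is indexed by $\Z^{\vert \bfO^f\vert+\frac12\vert\bfO^p\vert}$. A pair $(O,O')$ swapped by $\tau$ carries one variable whose exponent is $n_O(\phi)=n_{O'}(\phi)$. I would then define
\[\Phi^f_O(\x)=\sum_{\y}\sum_{\phi} n_O(\phi)\, u_O^{n_O(\phi)-1}\cdots \y,\]
with the analogous formula for $\Phi^p_{(O,O')}$. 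Equivalently, each action is $\partial_{u}$ applied to the matrix of the differential, and similarly for $\Psi_X^f$ and $\Psi^p_{(X,X')}$ with the $v$-variables. The stated degrees then follow by direct computation. Removing one power of a variable of degree $(-1,-1/2)$ raises the bigrading by $(1,1/2)$, and the Maslov index drop of $\partial$ compensates the first coordinate, which gives $(0,1/2)$ for $\Phi^f_O$. The same computation with a variable of degree $(-2,-1)$ gives $(1,1)$ for $\Phi^p_{(O,O')}$.

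\textbf{Chain map property.} The key identity comes from differentiating $\partial^2=\omega\cdot\mathrm{id}$, where $\omega$ is the curvature, a sum of monomials of the form $u_O v_X$ over adjacent basepoints. This gives
\[\partial\Phi+\Phi\partial=\frac{\partial\omega}{\partial u_O}.\]
So $\Phi$ is a chain map precisely when the derivative of the curvature vanishes. This is where the hypothesis that the basepoints are ``suitably colored'' enters. I would choose the coloring so that the image of $\partial\omega/\partial u_O$ becomes $0$, either because the adjacent $X$-variable is colored to zero or because the contributions cancel in pairs mod $2$. For fixed basepoints the real structure may make $\omega$ involve $u_O^2$-type terms, whose derivative is $0$ over $\F$. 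I would extract the precise condition from the curvature analysis in Section~\ref{sub:Killing the curvature}.

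\textbf{Naturality, the main obstacle.} To show the actions are well defined on the transitive system, I would verify that they commute up to homotopy with each elementary change-of-diagram map underlying $\Phi_{\cH\to\cH'}$: real isotopies, real handleslides, and real (de)stabilizations. For triangle maps, differentiating the $A_\infty$-type relation for real triangle counts yields a homotopy $\partial/\partial u_O(H)$, exactly as in Zemke's argument. Stabilizations away from the basepoints commute on the nose. The difficult step is making sure the real triangle and continuation counts respect the $\tau$-symmetric multiplicities so that differentiation is legitimate, in particular the half-integral bookkeeping at fixed basepoints. I would handle this by working directly with the real Heegaard diagram's invariant domains rather than with the double cover. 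Finally, the results for $\Psi$ follow by the symmetric argument with $\bfO$ and $\bfX$ interchanged.
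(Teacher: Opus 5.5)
Your proposal follows the paper's argument. The actions are defined as formal derivatives of $\partial$, the commutation relations with $\partial$ come from differentiating $\partial^2=\omega_{\LL}\cdot\id$, and naturality is Zemke's derivative trick. The paper applies that trick in one stroke by differentiating the on-the-nose identity $\Phi_{\cH\to\cH'}\circ\partial_{\cH}+\partial_{\cH'}\circ\Phi_{\cH\to\cH'}=0$, so the homotopy is just the formal derivative of $\Phi_{\cH\to\cH'}$ and there is no need to revisit triangle counts move by move.

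The obstacle you single out does not exist. The exponents $n_O(\phi)$ are ordinary integer multiplicities of domains, even at fixed base points. The $\frac{1}{2}\vert\bfO^p\vert$ in the filtration index only reflects that each pair $(O,O')$ carries a single variable, so formal differentiation over the polynomial ring is legitimate with no extra bookkeeping.
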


\begin{proposition}\label{intro-prop:quasi-stab}
We have maps $S^{\pm}_{O,X}$, $T^{\pm}_{O,X}$ (and their counterparts with the order of $O$ and $X$ interchanged) associated to pairs of quasi-stabilizations performed equivariantly away from the fixed set. When the link is suitably colored, they are chain maps and are well-defined on the level of transitive system. See Section~\ref{subsub:Quasi-stabilizations without type change} for distinctions between different versions.  

Moreover, when the complex is suitably colored, we have type-changing quasi-stabilization maps $Q_{X\mapsto O}^+$ and $Q_{O\mapsto X}^-$ associated to quasi-stabilizations occurring on the fixed set (see Figure~\ref{fig:OX-OO stabilization} for an intuition). They are also well-defined on the level of transitive system. When the roles of $O$ and $X$ are interchanged, we have similar maps $P_{O\mapsto X}^+$ and $P_{X\mapsto O}^-$.
\end{proposition}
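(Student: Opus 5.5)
The plan follows Zemke's construction of quasi-stabilization maps in \cite{Zemkequasistabandbasepointmoving,zemke2019link}, adapted to real Heegaard diagrams, and uses the naturality package of Theorem~\ref{intro-thm:invariance and naturality of the full real link Floer complex}.

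\textbf{Quasi-stabilizations away from the fixed set.} Fix a real Heegaard diagram $\cH$ for $\LL$ and a point on the link away from $\mathrm{Fix}(\tau)$. A quasi-stabilization there adds a new pair $(O,X)$ together with a new $\alpha$-curve $\alpha_s$ and $\beta$-curve $\beta_s$ bounding a small disk around the new basepoints. To stay in the real setting we simultaneously perform the mirror quasi-stabilization at the $\tau$-image point, with the involution exchanging the two local pictures; here $\tau$ sends $\alpha$ curves to $\beta$ curves. The real generators of the stabilized diagram are then the real generators of $\cH$ extended by the intersection points of $\alpha_s\cap\beta_s$ and of its image. Each of these pairs meets in two points, $\theta^{+}$ and $\theta^{-}$.
\begin{enumerate}
\item Define $S^{+}(\mathbf{x})=\mathbf{x}\times\theta^{+}\times\tau(\theta^{+})$ and $S^{-}(\mathbf{x}\times\theta^{-}\times\tau(\theta^{-}))=\mathbf{x}$, with $S^-$ vanishing on the other generators. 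Define $T^{\pm}$ the same way with the roles of $\theta^{+}$ and $\theta^{-}$ reversed.
\item Prove a real analogue of Zemke's neck-stretching lemma: for a sufficiently stretched almost complex structure that is compatible with $\tau$, the invariant Maslov index one disks in the stabilized diagram are either disks from $\cH$ times a constant, or the two small bigons in the stabilization region.
\item Deduce from this count that $\partial S^{\pm}+S^{\pm}\partial$ is a sum of terms of the form $(U_{O}-V_{X})$ times a map. These vanish exactly when $O$ and $X$ receive the same color, and this is the meaning of ``suitably colored''. For curved complexes the stabilized curvature differs from the original by precisely this term.
\item Show well-definedness on the transitive system. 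The maps must commute up to homotopy with the change of diagram maps $\Phi$ of Theorem~\ref{intro-thm:invariance and naturality of the full real link Floer complex}. We check this for each elementary move (real isotopies, real handleslides, real (de)stabilizations) performed away from the stabilization region, using the same stretching argument as in Zemke. The moves that pass through the region are reduced to these using Zemke's ``moving the stabilization'' trick.
\end{enumerate}

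\textbf{Type-changing quasi-stabilizations.} Now the quasi-stabilization is performed at a fixed point. The new curve is $\tau$-invariant in the sense that $\tau(\alpha_s)=\beta_s$, and the stabilization replaces a fixed $X$ by a fixed $O$ together with a pair of new non-fixed basepoints (see Figure~\ref{fig:OX-OO stabilization}). The local intersection $\alpha_s\cap\beta_s$ now consists of $\tau$-fixed points, so only the invariant generators survive.
\begin{enumerate}
\item Define $Q^{+}_{X\mapsto O}$ by inserting the top-graded fixed intersection point, and $Q^-_{O\mapsto X}$ by projecting onto the bottom-graded one.
\item Check gradings using the $(M^R,A^R)$ conventions. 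A fixed basepoint contributes half-weight, which is consistent with the degree $(-1,-1/2)$ variables of Theorem~\ref{intro-thm:OO-real link Floer homology}.
\item Redo the neck-stretching count for real disks through a fixed region. The local model is a half-disk count, i.e.\ the real bigons are the $\tau$-quotient of the ordinary ones. The maps are then chain maps once the fixed variable of the old decoration and the new one are identified by the coloring.
\item Obtain $P^{\pm}$ by the symmetry exchanging $\alpha$ and $\beta$, or equivalently by reversing orientation and swapping $O$ and $X$.
\end{enumerate}

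\textbf{Main obstacle.} I expect the hardest step to be the real neck-stretching and gluing analysis near the fixed set in the type-changing case. There, invariant disks correspond to disks with Lagrangian boundary on a single totally real submanifold in the doubled picture, and transversality for $\tau$-equivariant almost complex structures must be arranged. I would first try to reduce to the corresponding analysis in \cite{guth2025real} and \cite{GM_real_naturality}, where real stabilization invariance is proved, by viewing the type-changing quasi-stabilization as a real stabilization followed by a basepoint swap. Then Zemke's local computation could be applied to the quotient model.
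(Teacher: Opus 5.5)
\textbf{Type-$S$ and type-$T$ maps.} Your setup matches the paper: equivariant special connected sums at $p$ and $\tau(p)$, a stretched $2\times 2$ block differential, then Zemke's naturality argument, and your formulas for $S^{\pm}$, $T^{\pm}$ agree with the paper's. Steps 2--3, however, are wrong.

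The new $\alpha$-curve must separate $O_a$ from $X_a$ inside the component $A$ of $\Sigma\setminus\bm\alpha$ that contains the arc between them. Two small circles enclosing both new basepoints would instead add a split unknotted component. Because of this separation, there are index-one classes besides disks of $\cH$ and the two small bigons: they leave the local piece and cover the region containing the adjacent old basepoint. The stretched differential is therefore
\[\partial_{\cH^+}=\begin{bmatrix}\partial_{\cH} & U_O+U_{O_a}\\ V_X+V_{X_a} & \partial_{\cH}\end{bmatrix}.\]
It follows that $S^{\pm}$ are chain maps iff $\sigma'(V_X)=\sigma'(V_{X_a})$, and $T^{\pm}$ iff $\sigma'(U_O)=\sigma'(U_{O_a})$, assuming the adjacent basepoint is off the fixed set. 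The curvature changes by $(U_O+U_{O_a})(V_X+V_{X_a})$, not by a linear term. Your condition that ``$O$ and $X$ get the same color'' cannot be right: $U_O$ and $V_X$ have bigradings $(-2,-1)$ and $(0,1)$, so no graded coloring identifies them.

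\textbf{Type-changing maps.} Here the gap is more serious: the local count is not a quotient/half-disk count, and your maps are not chain maps in general.

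Gluing at the fixed $X_0$ matches a real disk $\phi$ on $\cH$ with $n_{X_0}(\phi)=k$ to a class $\phi_0$ on the standard piece $\cH_U$ with $n_{O_2}(\phi_0)=k$. For odd $k$ the corner switches between the fixed points $a$ and $b$, and $n_X(\phi_0)$ is $\lceil k/2\rceil$ or $\lfloor k/2\rfloor$ depending on direction. Add the bigons through the new fixed $O_1$, and, for a minimally pointed knot, $\{1\}$-boundary degenerations contributing $u_0$. Writing $\partial_{\cH}=\sum_k v^k\partial_v^k$ and indexing columns by $a\xv,b\xv$, one gets
\[\partial_{\cH^+}=\begin{bmatrix}\sum_k V^k\partial_v^{2k} & \sum_k V^k\partial_v^{2k+1}+u_1\\ \sum_k V^{k+1}\partial_v^{2k+1}+u_1V & \sum_k V^k\partial_v^{2k}\end{bmatrix},\]
with $u_1$ replaced by $u_0+u_1$ for a minimally pointed knot.

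Your maps (insert one fixed intersection point, project onto the other) are exactly the $v=0$ specializations of the correct maps. They are chain maps only after coloring $v$ and $V$ to zero, which is the setting of the exact triangle. Identifying the old fixed $X$-variable (bigrading $(0,1/2)$) with the new fixed $O$-variable (bigrading $(-1,-1/2)$) is not graded, and it does not kill the lower-left entry either.

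The missing idea is to color $V\mapsto\sigma(v)^2$ and pass to the basis $\{a\xv+v\,b\xv,\ b\xv\}$. In this basis $\partial_{\cH^+}$ is upper triangular with diagonal $\partial_{\cH}+\sigma'(u_1)\sigma(v)$ (respectively $\partial_{\cH}+\sigma'(u_0+u_1)\sigma(v)$). This dictates the twisted maps $Q^+_{X\mapsto O}(\xv)=a\xv+v\,b\xv$, $Q^-_{O\mapsto X}(a\xv)=v\,\xv$, $Q^-_{O\mapsto X}(b\xv)=\xv$. It also gives the actual coloring conditions: $\sigma'(u_1)=\sigma(u_0)$ for a minimally pointed knot, $\sigma'(u_1)=0$ otherwise, or all $X$-variables killed.

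\textbf{Naturality of the type-changing maps.} Your outline does not supply this, and it needs its own computation. One shows that the rectangle maps $\cH\#\cH_U\to\cH'\#\cH_U$ have the same parity form as the differential, in terms of the expansion $\Phi_{\cH\to\cH'}=\sum_k v^k\Phi_v^k$. This uses the local index formula $\mu_R(\psi_0)=n_{O_1}(\psi_0)+n_{O_2}(\psi_0)$ and a divisor-degeneration count for the odd-multiplicity rectangles; commutation with the twisted $Q^{\pm}$ is then immediate.

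Your suggested reduction to a real stabilization followed by a basepoint swap is not available. A swap is not a Heegaard move and it changes the base ring. Moreover, the two full complexes have different curvatures, which is precisely why a coloring is needed.
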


Motivated by a discussion with Gary Guth and Ciprian Manolescu, we introduce a special coloring $\sigma_s$ and a bounding chain $\bL$ for $\fullCFLR^-(\LL^{\sigma_s})$ (see the end of Section~\ref{sub:Upgrade to a based link invariant} and compare~\cite[Section~4.5]{guth2025real}). The new differential $D=\partial+ \bL\cdot \id$ satisfies $D^2=0$, thus makes $(\fullCFLR^-(\LL^{\sigma_s}),D)$ a chain complex. We will see that  $\sigma_s$ is injective and $\bL$ is a purely combinatorial expression determined by $\LL$, so some information is lost when passing from $(\fullCFLR^-(\LL),\partial)$ to $(\fullCFLR^-(\LL^{\sigma_s}),D)$. This new chain complex admits well-behaved base point actions (see Section~\ref{subsub:Base point actions on Dcomplex}) and quasi-stabilization maps. The latter are even better than the original ones, as we do not need to kill any variable to obtain chain maps (see Section~\ref{subsub:Quasi-stabilization maps on Dcomplex}).

Finally, in Section~\ref{sub:example of full real complex}, we calculate $\fullCFKR^-$ for the minimally pointed trefoil knots, their connected sum and a multi-based knot after type-changing quasi-stabilization. On these simplest examples, we already see distinctions between usual and real link Floer theory as well as the properties discussed in the previous sections of this paper. 

\begin{remark}
Readers familiar with Zemke's work should have noticed that our notation follows his closely. On one hand, we will see in Section~\ref{sub:base point action}-\ref{sub:Relationship between base point actions and quasi-stabilization maps} that $\Phi^p$, $\Psi^p$ as well as the type-$S$ and type-$T$ quasi-stabilization maps share similar properties with their counterparts in usual link Floer theory sharing the same names. On the other hand, we will also see several notable distinctions between the real operators and their usual counterparts. For example, one can compare Lemma~\ref{lem:squares of the base point action} with \cite[Lemma~4.9]{zemke2019link}. 

More importantly, we have two brand-new objects. One is the type-$Q$ and type-$P$ quasi-stabilizations, which have no usual counterpart. Their idea originates from the exact triangle in Theorem~\ref{intro-thm:exact triangle between OX and OO for knots}. Actually, they measure how far the rank fails to double under a fixed point quasi-stabilizations or alternatively, how far a K\"unneth formula is from being true when taking an equivariant connected sum with an unknot described by diagrams in Figure~\ref{fig:std_pieces_for_type-changing_quasi_stab}. The chain complex $(\fullCFLR^-(\LL^{\sigma_s}),D)$ is the other, which removes the curvature obstruction to relating real link Floer complexes of different multi-based links. The idea of a bounding chain comes from Lagrangian Floer homology and the strategy does not apply to the usual link Floer complex.
\end{remark}

\subsection{Organization}
In Section~\ref{sec:A different theory for strongly invertible links}, we construct the $OO$-version of real link Floer homology and analyze its basic properties, thereby proving Theorem~\ref{intro-thm:OO-real link Floer homology}, Proposition~\ref{intro-prop:summary of OO properties} and \ref{intro-prop:tau invariants}.  Section~\ref{sec:Property comparison} is devoted to the comparison between $OO$ and $OX$-theories. The distinction between curvatures is observed in Section~\ref{sub:Killing the curvature}; Theorem~\ref{intro-thm:exact triangle between OX and OO for knots} and  Corollary~\ref{intro-cor:bounding tauR_max, tauR_min by tauR} will be proved in Section~\ref{sub:Exact triangle and a common generalization} and concrete examples will be provided in Section~\ref{sub:Examples}. A brief discussion on $XX$-theory appears in Section~\ref{sub:XX-theory}. Section~\ref{sec:Based link invariants, base point action and quasi-stabilization} focuses on the multi-based theory: Theorem~\ref{intro-thm:invariance and naturality of the full real link Floer complex} is illustrated in Section~\ref{sub:Upgrade to a based link invariant}; Proposition~\ref{intro-prop:base point action} and \ref{intro-prop:quasi-stab} will be detailed and proved in Section~\ref{sub:base point action} and \ref{sub:quasi-stabilization}, respectively; Along the way, we consider properties of $(\fullCFLR^-(\LL^{\sigma_s}),D)$;  The commutative relations between these operators are analyzed in Section~\ref{sub:Relationship between base point actions and quasi-stabilization maps}; Finally, we compute some real knot complexes explicitly in Section~\ref{sub:example of full real complex}. 

\,

\noindent{\bf Note on AI usage.} We used GPT-5.6 Sol to find grammar and typographical problems during the revision process. The corrections were made by hand. Trea was used for coding and preparing Appendix~\ref{app:Calculation results for knots with small crossing numbers}. A more detailed explanation on AI usage is provided at the beginning of Appendix~\ref{app:Calculation results for knots with small crossing numbers}. The author takes full responsibility
for the correctness of all mathematical statements and proofs. 
 
\begin{ack}
The author would like to thank Zhenkun Li for useful discussions during the preparation of this paper and his contribution to computing examples and preparing Appendix~\ref{app:Calculation results for knots with small crossing numbers} using Python programs. The idea of this paper originated from inspiring discussion with Gary Guth and Ciprian Manolescu during the author's visit to Stanford University, for which the author is very grateful. The author is also grateful to Irving Dai for discussions concerning full knot Floer complex and to Ian Zemke for explanations of invariance and naturality in the usual link Floer TQFT.
\end{ack}

\section{A different theory for strongly invertible links}\label{sec:A different theory for strongly invertible links}

In this section, we consider a variation of the real link Floer theory defined in \cite{YXHFLR} for generalized strongly invertible links by requiring a different base-pointing convention on real Heegaard diagrams. Definitions and basic properties will be provided, and we also define some numerical invariants. The relationship between this theory and the one from \cite{YXHFLR} will be analyzed in Section~\ref{sec:Property comparison}.

\subsection{Basic setup}\label{sub:Basic setup}

We first recall some basic notions in the theory of strongly invertible links.

\begin{defn}\label{def:strongly invertible links}
Let $(Y,\tau)$ be a closed oriented real $3$-manifold, i.e., an oriented closed $3$-manifold equipped with an orientation-preserving involution $\tau$ whose fixed set is $1$-dimensional and let $L$ be an equivariant link in $Y$.
\begin{itemize}
        \item $L$ is called \emph{strongly invertible} if for any choice of orientation on $L$, $\tau$ restricts to an orientation-reversing involution on each component of $L$. 
        \item $L$ is called \emph{generalized strongly invertible} if there exists an orientation on $L$ such that $\tau$ restricts to an orientation-reversing involution on $L$. Here, we allow pairs of components to be interchanged in an orientation-reversing way.
    \end{itemize} 
\end{defn}

\begin{defn}\label{def:O-heegaard diagram for s.i.links}
Let $L$ be a generalized strongly invertible link in $(Y,\tau)$. Fix an orientation $\fro$ on $L$ that fits $L$ into the definition. A multi-based real Heegaard diagram $\cH=(\Sigma,\bm\alpha,\bm\beta,\bfO,\bfX, R)$ is called a \emph{$OO$-based real Heegaard diagram} for $(L,\fro)$ if:
    \begin{itemize}
        \item $(\Sigma,\bm\alpha,\bm\beta,\bfO, R)$ and $(\Sigma,\bm\alpha,\bm\beta,\bfX, R)$ are both multi-based real Heegaard diagrams for $(Y,\tau)$. Here, we only require $\bfO$ and $\bfX$ to be fixed as sets by $R$, which is more general than the notion of real Heegaard diagram defined in \cite{guth2025real}.
        \item Each connected component of $\Sigma\setminus \bm\alpha$ contains exactly one $O$-mark and one $X$-mark. The same holds for $\Sigma\setminus \bm\beta$.
        \item Connect arcs from $\bfO$ to $\bfX$ in the complement of $\bm\beta$ and push their interiors into the $\beta$-handlebody. Then, connect arcs from $\bfX$ to $\bfO$ in the complement of $\bm\alpha$ and push their interior into the $\alpha$-handlebody. Performing these simultaneously and equivariantly, we get an oriented equivariant link in $(Y,\tau)$. We require it to be equivariantly isotopic to $(L,\fro)$.
        \item If $l_f$ is the number of components in $L$ that are fixed setwise by $\tau$, then $\vert \fix (R)\cap \bfO\vert =2l_f$ and $\vert \fix (R)\cap \bfX\vert =0$. (This means that on each strongly invertible knot component, we have its two fixed points labeled by $O$.)
    \end{itemize}
    Furthermore, $\cH$ is said to be \emph{minimal} if there are exactly four base points on each strongly invertible component and exactly four base points on each pair of components interchanged by $\tau$.
\end{defn}

\begin{convention}\label{conv:setup of holomorphic structure}
Since our theory closely follows from those in \cite{guth2025real} and \cite{BGX}, we decide not to include basic definitions in real Heegaard Floer theory. We will set up some notation here and refer readers to these papers for details. 
\begin{itemize}
    \item $\mathrm{Sym}^m(\Sigma)$ will denote the $m$-fold symmetric product of $\Sigma$, on which we have an induced involution which we still denote by $R$. For a point $p\in \Sigma$ and a map $\phi$ from $D^2$ to $\mathrm{Sym}^m(\Sigma)$, we use $n_{p}(\phi)$ to denote the intersection number $\# (\im(\phi)\cap \{p\} \times \mathrm{Sym}^{m-1}(\Sigma))$, when the intersection is transverse. 
    
    \item $\T_\alpha$ and $\T_\beta$ are two Lagrangian tori (This can be achieved by using a special class of symplectic forms on $\mathrm{Sym}^m(\Sigma)$, see~\cite{guth2025real}) in $\mathrm{Sym}^m(\Sigma)$ coming from the products $\alpha_1\times \alpha_2\times \ldots\times \alpha_m$, $\beta_1\times \beta_2\times\ldots\times \beta_m$, in which $m=\vert\bm\alpha\vert=\vert\bm\beta\vert$. By definition, $R$ interchanges $\T_\alpha$ and $\T_\beta$ diffeomorphically.

    \item We always fix a generic choice of a symmetric almost complex structure on $\mathrm{Sym}^m(\Sigma)$, so that all the index one moduli spaces (see below) are cut out transversely.
   
    \item We will assume $\T_\alpha$ and $\T_\beta$ intersect transversely, so that $(\T_\alpha\cap \T_\beta)^R$ is a finite set of points. These points will be refer to as generators.
    
    \item For $\xv, \yv \in (\T_\alpha\cap \T_\beta)^R$, we use $\pi_2^R(\xv,\yv)$ to denotes the set of homotopy classes of disks connecting $\xv$ to $\yv$. For $\phi\in \pi_2^R(\xv,\yv) $, $\ind_R(\phi)$ denotes the Fredholm index of the $\bar{\partial}$-operator acting on the space of real sections after suitable Sobolev completion and $\mu_R(\phi)$ denotes its real Maslov index. These two indices coincide on most classes that we will consider. Moreover, $\widehat{\cM}_R(\phi)$ denotes the moduli space of real holomorphic representatives of $\phi$ modulo the natural $\R$-translation on $\C \supset D^2-\{\pm i\}\cong [0,1]\times \R$ and $\#\widehat{\cM}_R(\phi)$ denotes the modulo $2$ count of points in it when it is of dimension zero.

    \item We will use $\rspinc(Y,\tau)$ to denote the set of real $\spinc$ structures on the real manifold $(Y,\tau)$. There is a complete characterization of this in \cite[Section~3.5-3.8]{guth2025real} and a classification theorem was proved in \cite{li2022monopolefloerhomologyreal}. As in \cite[Section~3.7]{guth2025real}, we can assign to each generator $\xv$, a real $\spinc$ structure $\s^R(\xv)$. When we do this assignment, we always use $\bfO$ as the base points of the underlying real $3$-manifold.

    \item We need some real admissibility assumptions on the diagram for the homology theories to be well-defined. In \cite[Section~3.9]{guth2025real}, the authors introduced two notions of admissibility on real Heegaard diagrams. The weak admissibility works once and for all, while the strong admissibility is defined for each real $\spinc$ structure.
    One can generalize the argument from \cite[Lemma~3.33-3.34]{guth2025real} to show that \begin{itemize}
        \item Fix a real $\spinc$ structure $\s^R$. Any real Heegaard diagram is equivariantly isotopic to a $\s^R$-strongly admissible one. 
        \item Any two strongly $\s^R$-admissible (weakly admissible) real Heegaard diagrams can be connected by a sequence of real Heegaard moves such that each intermediate diagram is strongly $\s^R$-admissible (weakly admissible).
    \end{itemize}
    
\end{itemize}
\end{convention}

\begin{prop}\label{prop:existence of real HD for strongly invertible knots and real H moves}
Let $L$ be a generalized strongly invertible link in $(Y,\tau)$ and  $\fro$ be an orientation on $L$ that fits $L$ into the definition. Then there exists a (minimal) $OO$-real Heegaard diagram representing $(L,\fro)$. Moreover, if $\cH$ and $\cH'$ both represent $(L,\fro)$, then they can be connected by a sequence of real Heegaard moves:
\begin{itemize}
    \item $\{1\}$-(de)stabilization (see \cite[Definition~2.13]{BGX}) along the fixed set away from the base points;
    \item $\Z/2$-(de)stabilizations (of index (1,2) or (0,3)) away from the fixed point set and the base points;
    \item real handleslide away from the base points;
    \item equivariant isotopy of $\alpha$ and $\beta$ curves away from the base points.
\end{itemize}
Moreover, for any real $\spinc$ structure $\s^R$ on $(Y,\tau)$, we can choose a strongly $\s^R$-admissible $OO$-real Heegaard diagram $\cH$ representing $(L,\fro)$. If two $OO$-real Heegaard diagrams representing 
$(L,\fro)$ are both strongly $\s^R$-admissible, then they can be connected by a sequence of real Heegaard moves such that each intermediate real Heegaard diagram is also strongly $\s^R$-admissible.
\end{prop}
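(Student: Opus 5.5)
Existence comes from adapting the existing construction for the $OX$-convention in \cite{YXHFLR}, which in turn rests on the real Heegaard diagrams of \cite{guth2025real} and the equivariant link presentations of \cite{BGX}. Take the quotient $Y/\tau$, a $3$-manifold with branch set $B=\fix(\tau)/\tau$, and the image $\bar L$ of $L$. Each strongly invertible component maps to an arc whose two endpoints lie on $B$; each interchanged pair maps to a single closed component. Choose a Heegaard splitting of the quotient (a splitting of the orbifold pair $(Y/\tau, B)$ as in \cite{guth2025real}) that puts $\bar L$ in bridge position, with each arc meeting $\Sigma/R$ minimally. Lift it to a real Heegaard diagram on $\Sigma$. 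The points of $L\cap\Sigma$ give the base points, labeled alternately $O$ and $X$ along $L$ according to $\fro$.

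The orientation $\fro$ is reversed by $\tau$, so on each strongly invertible component the two fixed points are sources or sinks of the orientation relative to the involution. We can isotope $L$ near each fixed point so that the fixed point is a local extremum crossing $\Sigma$ there. The fixed point then becomes a base point, and we are free to declare it an $O$: choosing the crossing direction means that both arcs emanating from the fixed point leave it on the $\beta$-side and return on the $\alpha$-side. This parity choice is exactly what distinguishes $OO$ from $OX$. Off the fixed set we then insert base points in $R$-symmetric pairs, alternating $O$ and $X$, until each component of $\Sigma\setminus\bm\alpha$ and of $\Sigma\setminus\bm\beta$ contains one of each. We realize this by adding pairs of $\alpha$/$\beta$ curves equivariantly, as in the multi-pointed construction of \cite{OSS2015grid}.

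The step I expect to be the main obstacle is \emph{parity}. On a strongly invertible component the marks alternate, so the number of marks on each of the two $\tau$-swapped half-arcs between the two fixed $O$'s must be odd. With four marks total (two fixed $O$'s and one swapped pair of $X$'s) this works and gives minimality. For an interchanged pair, four marks (one $O$ and one $X$ on each component, swapped) also works. I would check via the orientation condition that $R$ maps $O$'s to $O$'s and $X$'s to $X$'s, rather than swapping them as in the $OX$ case. This is the point where orientation reversal must be used carefully: $R$ sends a segment from $O$ to $X$ to a segment traversed from $X$ to $O$ with reversed orientation, which is again an $O\to X$ arc. Hence $\bfO$ and $\bfX$ are each $R$-invariant as sets, as Definition~\ref{def:O-heegaard diagram for s.i.links} requires.

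For the moves, I would follow the standard argument. Represent the two diagrams as equivariant Morse data on the pair $(Y,L)$, that is, $\tau$-invariant gradient-like Morse functions for which $L$ is a union of flowlines through the base points. Connect them by a generic equivariant path of such data. By the equivariant Cerf theory already used in \cite{guth2025real,BGX}, the path produces $\{1\}$-stabilizations on the fixed set, $\Z/2$-stabilizations in pairs off it, handleslides, and isotopies. Changes in the base points themselves are excluded because the link type $(L,\fro)$ is fixed and the number of marks is fixed, or adjusted first by a base point-preserving reduction to the minimal diagram. Finally, the admissibility statements follow verbatim from the generalization of \cite[Lemma~3.33--3.34]{guth2025real} recorded in Convention~\ref{conv:setup of holomorphic structure}. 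That argument winds equivariantly along curves dual to periodic domains, away from base points, and the base-pointing convention does not affect it.
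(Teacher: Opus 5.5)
Your proposal has two genuine gaps.

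\textbf{Existence of a minimal diagram.} The existence of a \emph{minimal} $OO$-diagram is never actually established.
\begin{itemize}
\item The step ``choose a Heegaard splitting of the quotient that puts $\bar L$ in bridge position, with each arc meeting $\Sigma/R$ minimally'' carries the whole content of the existence claim, and it is only asserted.
\item As stated it is not even well posed. Since $\tau$ interchanges $U_\alpha$ and $U_\beta$, a real splitting does not descend to a splitting of $Y/\tau$; the quotient is $U_\alpha$ with its boundary folded by $R$.
\item Your next step, inserting symmetric pairs of base points and extra $\alpha/\beta$ curves, moves away from minimality. Your parity count only shows that four marks per component is consistent, not that it is attainable.
\item You cannot ``declare'' a fixed base point to be $O$. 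Its label is forced by the direction in which $L$ crosses $\Sigma$ relative to $\fro$. With one interior crossing on each half-arc, a component comes out $OO$ or $XX$ according to its position. You give no mechanism for making all strongly invertible components $OO$ at once with four marks each. Flipping a single fixed label is exactly the type-changing quasi-stabilization of Figure~\ref{fig:OX-OO stabilization}, which adds a swapped pair.
\end{itemize}
The paper sidesteps all of this. It observes that a minimal $OO$-diagram is the same thing as a real sutured Heegaard diagram for $(Y-\nu(L),\gamma,\tau)$ with meridional sutures. On a strongly invertible component these are the two $\tau$-invariant meridians through the fixed points and one swapped pair, decorated so that the invariant sutures carry $O$. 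Existence is then \cite[Proposition~2.12]{BGX}.

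\textbf{Number of base points in the moves argument.} You claim that the number of marks is fixed, so no change of base points is needed. This is false. Two $OO$-diagrams for $(L,\fro)$ agree only in their fixed base points (two $O$'s on each strongly invertible component). The numbers of paired base points may differ, which is exactly why the statement lists $\Z/2$-stabilizations of index $(0,3)$. Your fallback, a ``base point-preserving reduction to the minimal diagram'', is self-contradictory.

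The fix is to work in two stages:
\begin{enumerate}
\item Equalize the paired marks on each half-arc and on each interchanged pair, using equivariant pairs of index-$(0,3)$ link stabilizations away from the fixed set. This respects the parity constraint: each half-arc between the two fixed $O$'s carries an odd number of marks, and such a move changes that number by two.
\item Only then run the equivariant Cerf argument with base points held fixed, as in \cite[Proposition~2.5]{YXHFLR}.
\end{enumerate}

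A smaller slip: in the $OX$-convention $R$ also preserves $\bfO$ and $\bfX$ setwise, so the contrast you draw there is wrong. Your conclusion for the $OO$ case is nonetheless correct.
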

\begin{proof}
This can be proved in the same way as \cite[Proposition~2.5]{YXHFLR}. We only make a remark about existence. Note that a minimal diagram of $(L,\fra)$ is exactly a real sutured Heegaard diagram for the real sutured manifold $(Y-\nu(L),\gamma,\tau)$ characterized in \cite[Example~2.5]{BGX} with sutures suitably decorated by $O$ and $X$. Thus, the existence follows from \cite[Proposition~2.12]{BGX}.
\end{proof}

Now we fix a closed real $3$-manifold $(Y,\tau)$ and an oriented generalized strongly invertible link $(L,\fro)$ in it as well as a real $\spinc$ structure $\s^R\in \rspinc(Y,\tau)$. Let $\cH=(\Sigma,\bm\alpha,\bm\beta,\bfO,\bfX,R)$ be a strongly $\s^R$-admissible $OO$-real Heegaard diagram representing $(L,\fro)$. We will define an $OO$-version of real link Floer invariant as follows. 

Fix an order of strongly invertible components of $L$ by $1,2,\ldots,l_f$ and an order of pairs of components in $L$ by $1,2,\ldots,l_p$. Label the base points so that the fixed $O$-base points on the $i$-th strongly invertible component are $O^f_{i,1}$ and $O^f_{i,2}$. All other $O$-base points are $(O_1,O_1')$, $\ldots$, $(O_k,O_k')$ so that for $1\le i\le l_p$, $(O_i,O_i')$ lie on the $i$-th pair of components. The $X$-base points are $(X_1,X_1')$, $\ldots$, $(X_{k+l_f},X_{k+l_f}')$, among which $(X_i,X_i')$ lie on the $i$-th pair of components. 

The base ring $\cR(\cH)$ is \[\F[u_{1,1}, u_{1,2},\ldots u_{l_f,1},u_{l_f,2}, U_1,\ldots, U_k, V_1,\ldots V_{k+l_f}].\] We grade the variables as follows: \[M_{\bfO}^R(u_{i,j})=-1, \quad M_{\bfX}^R(u_{i,j})=0;\quad \forall i,j; \]
\[M_{\bfO}^R(U_{i})=-2, \quad M_{\bfX}^R(U_{i})=0,\quad \forall i; \quad M_{\bfO}^R(V_{j})=0, \quad M_{\bfX}^R(V_{j})=-2;\quad \forall j; \] while for any variable, \[A^R(-)=\frac{1}{2}(M_{\bfO}^R(-)-M_{\bfX}^R(-)).\] 
Throughout this paper, when we talk about bigrading, we mean $(M^R_{\bfO},A^R)$ and we will often omit $\bfO$ from the notation.

We define the \emph{$OO$-(minus) full real link Floer complex} $\fullCFLR_{OO}^-(\cH,\s^R)$ to be the module over $\cR(\cH)$ with generators \[\{\xv\in (\T_\alpha\cap \T_\beta)^R| \s^R(\xv)=\s^R \}.\] 
We define an endomorphism on it by \[\partial \xv =\sum_{\yv} \sum_{\phi\in \pi_2^R(\xv,\yv),\mu_R(\phi)=1}\# \widehat{\cM}_R(\phi) \prod_{1\le i\le l_f, 1\le j\le 2}  u_{i,j}^{n_{O_{i,j}^f}(\phi)}\prod_{1\le i\le k} U_i^{n_{O_i}(\phi)} \prod_{1\le j\le k+l_f} V_j^{n_{X_j}(\phi)}\yv\] on generators and extend it linearly over the base ring. 

The strongly $\s^R$-admissible assumption ensures that this is a finite sum (see \cite[Lemma~3.32]{guth2025real}). However, $\partial$ is not a genuine differential, since $\partial^2$ is not zero. We calculate the curvature as follows. Let  $K\subset L$ be a strongly invertible knot. We temporarily label the base points so that the fixed $O$ base points are $O^f_1$ and $O^f_{n+1}$ and along one arc from $O^f_1$ to $O^f_{n+1}$, it reads \[O^f_1, X_1, \ldots, O_{n-1}, X_{n}, O^f_{n+1}\] while along the other, it reads \[O^f_1, X_1', \ldots, O_{n-1}', X_{n}', O^f_{n+1}.\]
In this way, we have degree $(-1,1/2)$ variables $u_1$ and $u_{n}$ account for $O_1$ and $O_{n+1}$, and degree $(-2,-1)$ variables $U_{i}$ $(2\le i\le n)$ and degree $(0,+1)$ variables $V_{j}$ $(1\le j\le n)$ account for pairs $(O_i,O_i')$ and $(X_j,X_j')$.  
Now, we define \[\omega^O_{K}=u_1^2\cdot V_1+ U_2\cdot V_1+ U_2\cdot V_2+\ldots U_{n}\cdot V_{n-1}+ U_{n}\cdot V_n+ u_{n+1}^2\cdot V_n.\] 
If $(K,K')\subset L$, then we label the base points on $K$ so that it reads \[O_1, X_1,\ldots,O_n,X_n\] in it's orientation. Define  \[\omega_{K,K'}=U_1\cdot V_1+ U_2\cdot V_1+ U_2\cdot V_2+\ldots U_{n}\cdot V_{n-1}+ U_{n}\cdot V_n+ V_n\cdot U_1.\]  

Using these and \cite[Lemma~4.14-4.15]{guth2025real}, we can write down the curvature as \[\partial^2= (\sum_{K\subset L, K \text{ strongly invertible}} \omega^O_K+\sum_{(K,K')\subset L, \text{ }\tau(K)=K'}\omega_{K,K'})\cdot \id:=\omega^O_{\cH}\cdot \id.\]

Now, we consider suitable quotients of $\fullCFLR^-_{OO}(\cH,\s^R)$ in which the curvature becomes zero. In this way, we get genuine chain complexes, so the homology can be taken. 

First, consider \[\CFLR_{OO}^-(\cH,\s^R)=\fullCFLR_{OO}^-(\cH,\s^R)/ (V_j)_{1\le i\le l_f+k}.\] One can check that after blocking all $X$-base points, $\CFLR^-_{OO}(\cH,\s^R)$ is indeed a chain complex. Let $\partial^-$ denote the induced differential on it. Then, the homology 
\[\HFLR_{OO}^-(\cH,\s^R)=H_*(\CFLR_{OO}^-(\cH,\s^R),\partial^-)\] 
is an invariant of $(L,\fro)$ and $\s^R$ when it is regarded as an $\F[u_{1,1},u_{1,2},\ldots,u_{l_f,1},u_{l_f,2}, U_1,\ldots U_{l_p}]$-module. We will denote it by $\HFLR_{OO}^-(L,\fro,\s^R)$ and call it the \emph{$OO$-minus version of real link Floer homology} associated to $(L,\fro)$ in the real $\spinc$ structure $\s^R$. 

Then consider \[\CFLR_{O}^-(\cH,\s^R)= \frac{\CFLR_{OO}^-(\cH,\s^R)}{u_{i,1}=u_{i,2}, 1\le i\le l_f},\] which is also a chain complex. We abuse $\partial^-$ for the induced differential. The homology 
\[\HFLR_{O}^-(\cH,\s^R)=H_*(\CFLR_{O}^-(\cH,\s^R),\partial^-)\] 
is an invariant of $(L,\fro)$ and $\s^R$ when it is regarded as an $\F[u_{1},\ldots,u_{l_f}, U_1,\ldots U_{l_p}]$-module. We will denote it by $\HFLR_{O}^-(L,\fra,\s^R)$ and call it the \emph{$O$-minus version of real link Floer homology} associated to $(L,\fro)$ in the real $\spinc$ structure $\s^R$. 

We can further introduce the chain complex \[\widehat{\CFLR}_{O}(\cH,\s^R)=\CFLR_{O}^-(\cH,\s^R)/(u_i, U_j)_{1\le i\le l_f, 1\le j\le l_p} \] with induced differential $\widehat{\partial}$. Then 
\[\widehat{\HFLR}_{O}(\cH,\s^R)=H_{*}(\widehat{\CFLR}_{O}(\cH,\s^R),\widehat{\partial})\] 
is an invariant of $(L,\fro)$ and $\s^R$ as a vector space over $\F$. We will denote it by $\widehat{\HFLR}_O(L,\fro,\s^R)$ and call it the \emph{$O$-hat version of real link Floer homology} associated to $(L,\fro)$ in the real $\spinc$ structure $\s^R$.

A further variation is \[\widetilde{\CFLR}_O(\cH,\s^R)=\CFLR_{O}^-(\cH,\s^R)/(u_i,U_j)_{1\le i\le l_f,1\le j\le k}.\] Its homology $\widetilde{\HFLR}_O(\cH,\s^R)$ is not an invariant of $(L,\fro)$ and $\s^R$, but it appears as a suitable stabilization of $\widehat{\HFLR}_O(L,\fro,\s^R)$ according to the size of the real Heegaard diagram. More precisely, $\widetilde{\HFLR}_O(\cH,\s^R)=\widehat{\HFLR}_O(L,\fro,\s^R)\otimes \F^{2(k-l_p)}$.

Next, we consider relative bigradings on these link Floer theories, which share the same description as the one we characterized for $\HFLR^{\circ}$ in \cite[Section~2.1]{YXHFLR}. 
When $\s^R$ has torsion first Chern class, we can define a relative real Maslov grading $M^R$ on the generators by \[M^R_{\bfO}(\xv,\yv)=\mu_R(\phi)-n_{\bfO}(\phi)\]
for any $\phi\in \pi^R_2(\xv,\yv)$. The torsion assumption ensures that this is well-defined and independent of the choice of $\phi$. if we further assume that $L$ is null-homologous, then we can define \[M^R_{\bfX}(\xv,\yv)=\mu_R(\phi)-n_{\bfX}(\phi)\] and introduce the relative real Alexander grading as their difference \[A^R(\xv,\yv)= \frac{1}{2} (M^R_{\bfO}(\xv,\yv)-M^R_{\bfX}(\xv,\yv)).\] The null-homologous and  torsion assumptions make sure that this is also well-defined. These can be extended over the base ring naturally.
In general, if $c_1(\s^R)$ is non-torsion with divisibility $2n$, we can define a $\Z/n\Z$-valued real Maslov grading on $\CFLR^\circ_{O}$ using the same formula.

To get an invariant for the link $(L,\fro)$ inside $(Y,\tau)$, we take direct sum over all real $\spinc$ structures 
\[\HFLR_{OO}^-(Y,\tau, L,\fro)=\bigoplus_{\s^R\in \rspinc(Y,\tau)} \HFLR_{OO}^-(Y,\tau, L,\fro,\s^R); \]
\[\HFLR_{O}^-(Y,\tau, L,\fro)=\bigoplus_{\s^R\in \rspinc(Y,\tau)} \HFLR_{O}^-(Y,\tau, L,\fro,\s^R);\]
\[\widehat{\HFLR}_O(Y,\tau, L,\fro)=\bigoplus_{\s^R\in \rspinc(Y,\tau)} \widehat{\HFLR}_{O}(Y,\tau, L,\fro,\s^R).\]

For simplicity, we will often omit the real $3$-manifold from notation and write $\HFLR_{*}^{\circ}(L,\fro)$. 

Up to now, we have constructed everything we promised in Theorem~\ref{intro-thm:OO-real link Floer homology}. The promised bigrading on homology is provided by $(M^R,A^R)$. For the proof of invariance,  one need to check each real Heegaard move from Proposition~\ref{prop:existence of real HD for strongly invertible knots and real H moves} induces an isomorphism on $OO$-real link Floer groups. The invariance under $\Z/2$-stabilization of indices $(0,3)$ follows from \cite[Section~2.1]{YXHFLR} and invariance under all other moves follows from \cite{guth2025real} and \cite{GM_real_naturality}. This concludes the proof of Theorem~\ref{intro-thm:OO-real link Floer homology}.


As in Section 3 of \cite{YXHFLR}, the $OO$-real link Floer homologies admit a combinatorial description via real grid diagrams when we restrict to $(Y,\tau)=(S^3,\tau_{\std})\subset (\C^2,\text{conjugation})$. The real grid diagrams are always admissible in any sense. The computer program by Zhenkun Li (\cite{ZhenkunLioythonprgram}) allows us to compute examples with small real grid diagrams. Since the modification to arguments is straightforward, we omit the details. We will use notation such as $\GHR^-_{OO}$, $\GHR^-_{O}$ or $\widehat{\GHR}_{O}$ when we want to stress the combinatorial nature of the result and will use $\HFKR^{\circ}_{O}$ when we focus on strongly invertible knots.

The only part we want to emphasize is the combinatorial description of relative gradings, since in the $OO$-theory, the roles of $\bfO$ and $\bfX$ are no longer symmetric. We define absolute real Maslov gradings by 
\[M_{\bfO}^R(\xv)=\frac{1}{2} M_{\bfO}(\xv)-\frac{1}{4} \vert \xv\cap C \vert+\frac{1}{2}l_{f},\] 
\[M_{\bfX}^R(\xv)=\frac{1}{2} M_{\bfX}(\xv)-\frac{1}{4} \vert \xv\cap C\vert,\] 
Here, $M_{\bfO}$ and $M_{\bfX}$ are the usual Maslov functions given in \cite[Section~4.3]{OSS2015grid}. One can check directly that these take values in $\Z$.
For the real Alexander grading, an absolute lift is given by \[A^R(\xv)=\frac{1}{2}(M_{\bfO}^R(\xv) -M_{\bfX}^R(\xv))-\frac{(n+l_f-2l_p)}{4},\] so that it is still half of the usual Alexander grading. Then, $A^R$ values in $\frac{1}{2}\Z$ as in \cite{YXHFLR}.

Taking the graded Euler characteristic of\textbf{} $\widehat{\HFLR}_{O,*}(L,\fro,*)$, we define an \emph{$O$-real Alexander polynomial} by \[\Delta^R_{O}(L,\fro)(t)=(t-t^{-1})^{-l_p}\sum_{s\in \frac{1}{2}\Z} \sum_{d\in \Z} (-1)^d t^{2s} \dim \widehat{\HFLR}_{O,d}(L,\fro,s).\]
We will see in the next section that this is closely related to $\Delta^R(L,\fra)(t)$ defined in \cite[Section~7]{YXHFLR}, whose definition depends on the choice of auxiliary data, but the resulting polynomial invariant does not.

\subsection{Structural properties in $S^3$}\label{sub:Structural properties in $S^3$}
In this subsection, we consider structural properties of $\HFLR^{\circ}_{O}$ in $(S^3,\tau_{\std})$ via the combinatorial version $\GHR^{\circ}_{O}$, as listed in Proposition~\ref{intro-prop:summary of OO properties}. This will be our background manifold throughout this subsection unless otherwise stated. The discussion follows \cite[Section~4-7]{YXHFLR} closely, so most of the proofs will be omitted.

In the $OO$-theory, the roles of $\bfO$ and $\bfX$ are asymmetric, so most ``symmetric'' properties of $\GHR^{\circ}$ analyzed in \cite[Section~4.1]{YXHFLR} no longer hold in $OO$-theory. The invariants associated to $(K,\fro)$ and $(m(K),m(\fro))$ are expected to be closely related, but there is no explicit formula analogous to \cite[Proposition~4.4]{YXHFLR}. On the other hand, away from the fixed set, we see no difference between a real Heegaard diagram $\cH$ representing a generalized strongly invertible link with auxiliary data $(L_1,\fra)$ and an $OO$-real Heegaard diagram $\cH_{O}$ representing an oriented generalized strongly invertible link $(L_2,\fro)$. Thus, the unknotting maps from \cite[Section~5.1]{YXHFLR}, the saddle maps from \cite[Section~6.2]{YXHFLR} and the oriented skein relation from \cite[Section~7.1]{YXHFLR} still work in $OO$-theory, since their proofs are all based on local pictures in real grid diagrams away from the fixed set. 

As a preliminary result, we analyze the relationship between actions of different variables. 

\begin{prop}\label{prop:U,v action identification}
Let $\cH$ be an $OO$-real grid diagram representing $(K,\fro)$, an oriented strongly invertible knot. Then there are two degree $(-2,-1)$ variables $u_1, u_2$ in $\cR(\cH)$, and for any $1\le j\le k$ ($k$ is the number of paired $O$ base points in $\cH$), the $U_j$-action on $\GCR_{OO}^-(\cH)$ is homotopic to the $u_i^2$-action for $i=1,2$. In particular,  $u_1^2\simeq u_2^2$ as maps $\GCR_{OO}^-(\cH) \to \GCR_{OO}^-(\cH)$. 

More generally, let $\cH$ be any $OO$-real diagram representing an oriented generalized strongly invertible link $(L,\fro)$. If $(O_i,O_i')$ and $O^f_{t,j}$ lie on the same component of $L$, then the $U_i$-action on $\GCR^-_{OO}(\cH)$ is homotopic to the $u_{t,j}^2$-action. If $(O_i,O_i')$ and $(O_j,O_j')$ lie on the same component or the same pair of components of $L$, then the $U_i$ and $U_j$-actions on $\GCR^-_{OO}(\cH)$ are homotopic.    

There are similar relations on $\GCR_{O}^-(\cH)$, since $\GCR^-_{O}(\cH)$ is the quotient of $\GCR^-_{OO}(\cH)$ by $u_{i,1}=u_{i,2}$ for $1\le i\le l_f$. 
\end{prop}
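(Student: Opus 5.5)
The plan is to imitate the standard grid-homology argument showing that adjacent $O$-variables act homotopically: \cite[Lemma~4.6.9]{OSS2015grid}, and its real analogue in \cite[Section~3]{YXHFLR}. We work on $\GCR^-_{OO}(\cH)$, where all $V_j$ have been set to zero. Fix an $X$-base point pair $(X_j,X_j')$ on a component. For a strongly invertible knot, let the two $O$-type markings adjacent to $X_j$ along the component be $O_a$ and $O_b$; these may be paired $O$'s or a fixed $O^f$. We define the homotopy
\[
H_{X_j}(\xv)=\sum_{\yv}\sum_{r\in \mathrm{Rect}^{R}(\xv,\yv),\ X_j\in r}\ \prod u_{i,j}^{n_{O^f_{i,j}}(r)}\prod U_i^{n_{O_i}(r)}\,\yv .
\]
Here the sum runs over the real domains counted by $\partial$ (equivariant rectangle pairs and the symmetric domains crossing the fixed set), restricted to those containing $X_j$ (equivalently $X_j'$) exactly once. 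The goal is the identity
\[
\partial^- H_{X_j}+H_{X_j}\partial^- = \text{(weight of }O_a\text{)}+\text{(weight of }O_b\text{)}.
\]
When both neighbors are paired, this gives $U_a+U_b$. When one neighbor is fixed, say $O_a=O^f$, it gives $u^2+U_b$, because a domain passing through a fixed $O^f$ must be symmetric and therefore covers it with multiplicity one in the symmetric-product sense, contributing $u$ to each half, hence $u^2$ overall.

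We verify the identity by the usual decomposition of composite domains. First, compose two real index-one domains, one of which contains $X_j$. The composites that decompose in exactly two ways cancel in pairs. The ones that remain are the thin annuli, i.e. the vertical or horizontal real periodic domains through $X_j$. Second, for each such annulus we read off the variables of the $O$ marking it contains. In the real setting these annuli are the equivariant pairs of columns/rows. Their contribution can be read off from the curvature formula $\omega^O_K$ displayed above: the terms of $\omega^O_K$ involving $V_j$ are precisely $u_1^2V_1$ (or $U_iV_j$) and $U_{i+1}V_j$. So the $V_j$-derivative of the curvature is exactly the right-hand side, which is the conceptual reason the identity holds. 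Concretely, I would prove it as ``$\partial H_{X_j}+H_{X_j}\partial=\partial_{V_j}(\omega^O_{\cH})$'' on $\fullCFLR^-_{OO}$, by differentiating the relation $\partial^2=\omega^O_\cH$ with respect to $V_j$ and then setting all $V$'s to zero. Since $\partial$ depends polynomially on $V_j$, with $H_{X_j}$ being the coefficient of $V_j$ modulo higher powers, this gives the formula over $\F$ immediately.

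The main obstacle is making this derivative trick rigorous for domains with $n_{X_j}\ge 2$ and checking that the symmetric (fixed-set) domains really contribute $u^2$ rather than $u$. The first point is handled by taking the formal partial derivative $\partial_{V_j}$ of the full differential; this is a legitimate operation over $\F$ and commutes appropriately, since $\partial_{V_j}(\partial\circ\partial)=(\partial_{V_j}\partial)\partial+\partial(\partial_{V_j}\partial)$. The second point is handled by the explicit form of $\omega^O_K$ from \cite[Lemma~4.14--4.15]{guth2025real}, which we take as given.

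Chaining the resulting homotopies along the sequence $O^f_1,X_1,O_2,\dots,X_n,O^f_{n+1}$ then shows that $u_1^2\simeq U_2\simeq\cdots\simeq U_n\simeq u_{n+1}^2$. For pairs of components, we chain around $\omega_{K,K'}$ in the same way, which gives the general statement. The $\GCR^-_O$ case follows by passing to the quotient, since all homotopies are $\cR$-linear.
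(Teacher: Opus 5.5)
Your proposal is correct, and the homotopy you construct is the one underlying the paper's proof. That proof simply cites \cite[Lemma~4.6.9]{OSS2015grid} (cf.\ \cite[Proposition~3.7]{YXHFLR}): count the real index-one domains that contain $X_j$ (and $X_j'$) once and no other $X$-marking. You differ in how you verify $\partial^-H_{X_j}+H_{X_j}\partial^-=(\text{weight of }O_a)+(\text{weight of }O_b)$.

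The cited grid argument does this by hand. Composite domains with two decompositions cancel in pairs. What survives are the thin real annuli through $X_j$ (a row together with its reflected column), and their $O$-weights are read off directly; the square of a fixed $O^f$ is covered twice, so it contributes $u^2$.

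You instead note that $H_{X_j}$ is $\frac{d}{dV_j}\partial$ reduced modulo $(\bfV)$, and you differentiate $\partial^2=\omega^O_{\cH}\cdot\id$. Since $\omega^O_{\cH}$ is linear in $V_j$ and its $V_j$-coefficient contains no $V$'s, the identity drops out. This is exactly the argument the paper itself uses later for Lemma~\ref{lem:commutation relation of base point actions and the differential}, where your $H_{X_j}$ reappears as $\Psi^p_{(X_j,X_j')}$.

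Your route buys economy and generality. It applies verbatim to any $OO$-real Heegaard diagram, and it handles higher-multiplicity domains and the $u^2$ coefficients at fixed points automatically. In particular, your symmetric-product heuristic for the $u^2$, which is slightly off, is not needed.

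The cost is that all the geometric content is outsourced to the curvature formula of \cite[Lemmas~4.14--4.15]{guth2025real}. That is a holomorphic statement involving boundary degenerations. To apply it to the combinatorial complex $\GCR^-_{OO}(\cH)$, you implicitly need one of two things: that the real grid differential agrees with the holomorphic one on the full complex, or a direct combinatorial computation of $\partial^2$ there. The latter is the same thin-annulus count the paper's route performs. The paper's route, by contrast, stays self-contained within real grid combinatorics.
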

\begin{proof}
This follows from \cite[Lemma~4.6.9]{OSS2015grid}. (cf.~\cite[Proposition~3.7]{YXHFLR})
\end{proof}

In \cite{boyle2025equivariantunknottingnumbersstrongly}, Boyle and Chen introduced three types of equivariant unknotting operations and defined versions of equivariant unknotting numbers for strongly invertible knots. For type $A$ and $B$ crossing changes, we have induced maps on real grid homologies. 

\begin{prop} \label{prop:unknotting maps}
Let $(K_+,\fro_+),(K_-,\fro_-)$ be a pair of oriented strongly invertible knots. When $(K_-,\fro_-)$ is obtained from $(K_+,\fro_+)$ by a $+$ to $-$ type $A$ crossing change, we have $\F[u_1,u_2]$-module maps 
\[C_-^A\colon \GHR_{OO}^-(K_+,\fro_+)\to \GHR_{OO}^-(K_-,\fro_-),\quad C_+^A\colon \GHR_{OO}^-(K_-,\fro_-)\to \GHR_{OO}^-(K_+,\fro_+)\] of bigrading $(M^R,A^R)=(0,0)$, $(-2,-1)$, respectively, so that \[C_+^A\circ C_-^A=u_1^2,\quad C_-^A\circ C_+^A=u_1^2,\] where $u_1^2$ means that module action given by multiplication by $u_1^2$.

When $K_+$ and $K_-$ are related by a type $B$ crossing change, then we have $\F[u]$-module maps \[C_-^B\colon \GHR_{OO}^-(K_+,\fro_+)\to \GHR_{OO}^-(K_-,\fro_-),\quad C_+^B\colon \GHR_{OO}^-(K_-,\fro_-)\to \GHR_{OO}^-(K_+,\fro_+)\] of bigrading $(M^R,A^R)=(-1,-1/2)$, $(-1,-1/2)$, respectively, so that \[C_+^B\circ C_-^B=u_1^2,\quad C_-^B\circ C_+^B=u_1^2.\] 

The same is true with $\GHR_{O}^-$ in place of $\GHR_{OO}^-$.
\end{prop}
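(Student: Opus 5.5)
We follow the proof of the unknotting maps in \cite[Section~5.1]{YXHFLR}, which is modeled on the crossing change maps of \cite[Section~6.1]{OSS2015grid}. Represent $(K_+,\fro_+)$ and $(K_-,\fro_-)$ by $OO$-real grid diagrams $\mathbb G_+$ and $\mathbb G_-$ that differ only by a local move. For a type $A$ crossing change, the crossing and its image under $\tau$ are a pair of crossings away from the fixed axis. The change is realized by simultaneously commuting two symmetric pairs of columns. This replaces a pair of vertical circles $(\beta,\beta')$ by $(\gamma,\gamma')$, interchanged by $R$.

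For a type $B$ crossing change, the crossing lies on the fixed axis. It is realized by a single $R$-invariant commutation of columns centered at the fixed set. This is the same local picture as in the $OX$-theory of \cite{YXHFLR}, because near a type $B$ crossing no fixed base point is involved. The marks adjacent to the fixed set are paired $O$/$X$ markings in the interior of the arcs. Therefore the only change from \cite{YXHFLR} is bookkeeping: here both fixed base points are $O$'s with variables $u_1,u_2$.

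In both cases we define $C_-$ and $C_+$ by counting real pentagons, as in \cite{YXHFLR}. For type $A$ these are symmetric pairs of pentagons; for type $B$ they are single $R$-invariant pentagons. Each pentagon meets the intersection points of $\beta,\gamma$ (and their mirrors), chosen so that they avoid or cover the relevant $X$-marks in the prescribed way. Powers of the variables record the $O$-markings covered.

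The first step is to check that these are chain maps on $\GCR^-_{OO}$. The $V$-variables are set to zero, so the curvature vanishes, and the usual juxtaposition-and-cancellation argument of \cite[Lemma~6.1.x]{OSS2015grid} applies verbatim in real form. The degeneracies are real domains, and all of them are away from the fixed points. The bigradings $(0,0)$, $(-2,-1)$ for type $A$ and $(-1,-1/2)$ for type $B$ follow from the combinatorial grading formulas for $M^R_{\bfO}$ and $M^R_{\bfX}$ above. For type $A$ the local computation is exactly half of the classical one: the pair of crossings changes the classical grading by $(0,0)$ and $(-4,-2)$, and we halve it. For type $B$ a single crossing on the axis contributes half of the classical $(-2,-1)$, which gives $(-1,-1/2)$ each way.

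The main step is the composition formula. We analyze $C_+\circ C_-$ as a count of real hexagon-like juxtapositions, as in \cite[Proposition~6.1.x]{OSS2015grid}. We construct a homotopy $H$ counting real hexagons, and show that $\partial H+H\partial+C_+C_-$ is multiplication by a monomial in the variables of the $O$-markings adjacent to the crossing. For type $A$ this is $U_i$ for the adjacent pair $(O_i,O_i')$, since the annular region covers one symmetric pair of $O$'s. For type $B$ it is again the variable of the paired $O$ adjacent to the axis crossing, namely $U_i$. Proposition~\ref{prop:U,v action identification} then gives $U_i\simeq u_1^2$ on $\GCR^-_{OO}$, which yields the stated relations. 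We expect this real hexagon count to be the main obstacle, specifically in the type $B$ case. There, $R$-invariant thin annuli crossing the fixed set must be shown to contribute exactly once: each $R$-invariant domain has a unique real representative modulo $2$, rather than canceling in pairs. We would first try to import the corresponding analysis from \cite[Section~5.1]{YXHFLR} and verify that no fixed $O$ enters the local region. Finally, all maps commute with the quotient $u_{1}=u_{2}$, so the same statements hold for $\GHR^-_O$.
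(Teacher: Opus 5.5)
Your overall route is the paper's. Its proof simply says that the argument of \cite[Proposition~5.2]{YXHFLR} goes through verbatim, because the real grid moves realizing type $A$ and $B$ crossing changes are local pictures away from the fixed set, so the fixed $O$-markings never enter. Any paired variable produced by the local computation is identified with $u_1^2$ via Proposition~\ref{prop:U,v action identification}, and all maps and homotopies are $\F[u_1,u_2]$-linear, so they descend to the quotient $u_1=u_2$. Insofar as you import that argument, the proposal is fine.

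The details you supply yourself, however, contain an error you should not rely on: your type $B$ grading heuristic does not work. The classical crossing change maps have degrees $(0,0)$ and $(-2,-1)$, with composition $U$. So ``halving a single classical crossing change'' gives $(0,0)$ and $(-1,-1/2)$, not $(-1,-1/2)$ each way. It would also make $C_+^B\circ C_-^B$ have degree $(-1,-1/2)$, which is the degree of $u_1$, not of $u_1^2$ (nor of $U_i$, which you yourself name as the composition). Since the composition has degree $(-2,-1)$, the two degrees must sum to $(-2,-1)$. That they split evenly has to be read off from the actual real local model in \cite{YXHFLR}, using the formulas for $M^R_{\bfO}$ and $M^R_{\bfX}$, not from halving. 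Those formulas are not simply half the classical gradings, because of the $-\tfrac14\vert \xv\cap C\vert$ term.

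Relatedly, your description of the moves is inaccurate. $R$ exchanges the $\alpha$- and $\beta$-circles, i.e.\ rows and columns. So there is no ``$R$-invariant commutation of columns'', and no pair of vertical circles $(\beta,\beta')$ exchanged by $R$. Any symmetric move that alters a pair of columns must alter the mirror pair of rows at the same time, and the real maps count $R$-invariant domains in that symmetric configuration.
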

\begin{proof}
The proof of \cite[Proposition~5.2]{YXHFLR} works here verbatim. 
\end{proof}

In \cite[Section~6.1]{YXHFLR}, we defined several versions of equivariant saddle moves and associated maps on collapsed grid homologies to the paired split/merge and split merge moves. The same approach works in $OO$-theory. More precisely, for an oriented generalized strongly invertible link $(L,\fro)$, fix an $OO$-real grid diagram $\cH$ for it. After labeling its base points as in Section~\ref{sub:Basic setup}, we consider the following quotient of $\GCR^-_{O}$ :\[c\GCR^-_{O}(\cH)=\frac{\GCR^-_{O}(\cH)}{u_{1,1}=u_{2,1}=\ldots u_{l_f,1}, u_{1,1}^2=U_1=\ldots=U_{l_p}},\] which has a natural induced differential from $\partial^-$.  The homology \[c\GHR^-_{O}(\cH)=H_*(c\GCR^-_{O}(\cH),\partial^-)\] is called the \emph{$O$-collapsed real grid homology} of $(L,\fro)$, which is an invariant of $(L,\fro)$ in the category of bigraded $\F[u]$-modules. This is because the homotopy type of $\GCR^-_{O}(\cH)$ is actually an invariant of $(L,\fro)$. Note that for a strongly invertible knot, we actually collapse nothing. 

\begin{prop}\label{prop:band maps}
Let $W=\F_{(0,0)}\oplus\F_{(-1,-1)}$. 
\begin{enumerate}
    \item If $L'$ is obtained from $L$ by a paired split move, for any choice of compatible orientations $\fro$ and $\fro'$, there are $\F[u]$-module maps 
    \[\sigma_p\colon c\GHR_{O}^-(L,\fro)\otimes W\to c\GHR_{O}^-(L',\fro')\]
    \[\mu_p\colon c\GHR_{O}^-(L',\fro')\to c\GHR_{O}^-(L,\fro)\otimes W \] with the following properties: \begin{itemize}
        \item $\sigma_p$ is homogeneous of degree $(-1,0)$;
        \item $\mu_p$ is homogeneous of degree $(-1,-1)$;
        \item $\mu_p\circ\sigma_p$ is the multiplication by $u^2$;
        \item $\sigma_p\circ\mu_p$ is the multiplication by $u^2$.
    \end{itemize}
    \item If $L'$ is obtained from $L$ by a paired merge-split move, then there are $\F[u]$-module maps 
    \[\sigma\colon c\GHR_{O}^-(L,\fro)\otimes W\to c\GHR_{O}^-(L',\fro')\otimes W\]
    \[\mu\colon c\GHR_{O}^-(L',\fro')\otimes W\to c\GHR_{O}^-(L,\fro)\otimes W\] with the following properties: \begin{itemize}
        \item $\sigma$ is homogeneous of degree $(-1,-1/2)$;
        \item $\mu$ is homogeneous of degree $(-1,-1/2)$;
        \item $\mu\circ\sigma$ is the multiplication by $u^2$;
        \item $\sigma\circ\mu$ is the multiplication by $u^2$.
    \end{itemize}
\end{enumerate}
\end{prop}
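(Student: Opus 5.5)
The plan is to follow the construction of \cite[Section~6.2]{YXHFLR} essentially verbatim. As remarked above, away from the fixed set an $OO$-real grid diagram looks exactly like an $OX$-real grid diagram. All moves in question are paired, so both saddles can be realized away from the fixed set.

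First I would choose, for $(L,\fro)$, an $OO$-real grid diagram $\cH$ in which each band of the equivariant saddle pair is realized by a local $2\times 2$ configuration. Each configuration has two $X$-markings in adjacent columns, placed symmetrically under $R$ and away from the fixed diagonal. Swapping the two $X$'s in each configuration, equivariantly, produces an $OO$-real grid diagram $\cH'$ for $(L',\fro')$. This is exactly the picture used for the saddle maps in the $OX$-theory.

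Next, to produce the tensor factor $W$, I would pick an $O$-marking $O_i$ adjacent to the band on a component, or pair of components, that merges. I would then use the standard identification of $c\GCR^-_O(\cH)$ with $\GCR^-$ in which the corresponding $U_i$ is set equal to the collapsed variable. By Proposition~\ref{prop:U,v action identification}, $U_i\simeq u^2$, and the resulting mapping cone gives the factor $W=\F_{(0,0)}\oplus\F_{(-1,-1)}$. This is the analogue of \cite[Lemma~8.4.x]{OSS2015grid} adapted as in \cite{YXHFLR}.

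The maps $\sigma$ and $\mu$ are then defined on generators. In the paired split case, with $\cH$ and $\cH'$ sharing all generators, $\sigma$ is the identity on one summand and multiplication by the relevant $V$-type variable, which is set to $1$ in the minus theory, on the other. The map $\mu$ is defined symmetrically. The merge-split case uses both copies of $W$.

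To verify the required properties, I would proceed in three steps.

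1. Check that $\sigma$ and $\mu$ are chain maps. This is a count of empty rectangles in $\cH$ versus $\cH'$. Since the local pictures avoid the fixed set, real rectangles pair up as in the $OX$ case, and the only change is in which $O$-markings are counted.
2. Verify that $\mu\circ\sigma$ and $\sigma\circ\mu$ equal $U_i$-multiplication, possibly up to homotopy. Proposition~\ref{prop:U,v action identification} then converts $U_i$ into $u^2$ after collapsing. The collapse $u_{t,1}=u_{s,1}$, $u^2=U_j$ in the definition of $c\GCR^-_O$ is exactly what makes this a single $\F[u]$-action.
3. Compute the degrees from the absolute grading formulas of Section~\ref{sub:Basic setup}. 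Swapping a pair of $X$'s changes $M^R_{\bfX}$ but not $M^R_{\bfO}$ on a fixed generator, and $l_p$ changes by one in the split case. The normalization constant $(n+l_f-2l_p)/4$ then produces Alexander shift $0$ for $\sigma_p$, $-1$ for $\mu_p$, and $-1/2$ in the merge-split case.

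I expect step 3 to be the main obstacle. The grading normalization in the $OO$-theory is asymmetric in $\bfO$ and $\bfX$, and the term $\tfrac12 l_f$ in $M^R_{\bfO}$ does not change under paired moves. The shifts therefore have to be rechecked rather than copied from \cite{YXHFLR}. I would do this on the explicit local model, comparing a generator $\xv$ in $\cH$ with the same $\xv$ in $\cH'$, where $M_{\bfO}$ is unchanged and $M_{\bfX}$ differs by the local count.
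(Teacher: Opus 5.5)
Your overall strategy is the paper's. Its proof is only the remark that \cite[Proposition~6.1]{YXHFLR} carries over verbatim, because the pair of saddles is local and can be placed away from the fixed set.

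However, the explicit maps you sketch cannot have the stated properties. In $\CFLR^-_{OO}$, and hence in $c\GCR^-_O$, the $X$-markings are blocked, so every $V_j$ is set to $0$, not to $1$. Suppose each saddle is realized by switching two $X$-markings in a $2\times 2$ square with central lattice point $c$. The two diagrams then differ exactly in their domains having a corner at $c$:
\begin{itemize}
\item one diagram counts those leaving generators that contain $c$;
\item the other counts those entering such generators;
\item domains without a corner at $c$ are counted in either diagram only if they avoid the whole square.
\end{itemize}
Consequently, setting $V=1$ makes your $\sigma$ the identity on generators, which is not a chain map because the two differentials differ. Setting $V=0$ turns your maps into projections onto complementary sets of generators; these are chain maps, but both composites vanish. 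Keeping $V$ as a variable gives composites equal to $V$. More basically, $u$ and $U$ record $O$-markings, so nothing built from $V$-type variables can give $\mu\circ\sigma=u^2$. Your step 1, which says that only the counted $O$-markings change, is already inconsistent with switching $X$'s.

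The missing idea is to realize each saddle of the symmetric pair by switching two $O$-markings, in a $2\times 2$ square $Q$ and equivariantly in $R(Q)$, away from the diagonal.
\begin{itemize}
\item The two diagrams then have the same real generators and the same admissible domains; only the $O$-weights differ.
\item Set the $U$-variables of the two $O$-pairs meeting $Q$ equal to each other, and to $u^2$ after collapsing.
\item For any domain from $\mathbf{x}$ to $\mathbf{y}$, the boundary condition at $c$ says the multiplicities at the two new $O$-squares minus those at the two old ones equal $\pm1$ if $c$ lies in exactly one of $\mathbf{x},\mathbf{y}$, and $0$ otherwise.
\item Hence multiplying the generators not containing $c$ by $U$ (identity on the rest) is a chain map in one direction, and multiplying those containing $c$ by $U$ is a chain map in the other; which is which depends on the corner convention.
\item Both composites are exactly multiplication by $U=u^2$.
\end{itemize}
Real domains are $R$-symmetric, and a real generator contains $c$ if and only if it contains $R(c)$, so nothing new happens in the real setting.

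Your explanation of $W$ then fits. For the link on which the two $O$-pairs lie on different components (or pairs), identifying their variables is part of the collapse. For the other link they already act homotopically by Proposition~\ref{prop:U,v action identification}, and the cone lemma produces the factor $\otimes W$. Finally, redo your grading check: switching $O$'s changes $M^R_{\bfO}$ of a fixed generator, not $M^R_{\bfX}$.
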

\begin{proof}
The proof of \cite[Proposition~6.1]{YXHFLR} works here verbatim. 
\end{proof}

\begin{thm}\label{thm:O-minus oriented skein triple}
Let $(L_+, L_- ,L_0,\fro)$ be a real oriented skein triple as shown in Figure~\ref{fig:oriented skein triple}. Here, we fix $\fro$ to be a consistent choice of orientations on all three links and omit it from the notation. 

\begin{figure}
    \centering
    \begin{overpic}[width=0.7\textwidth]{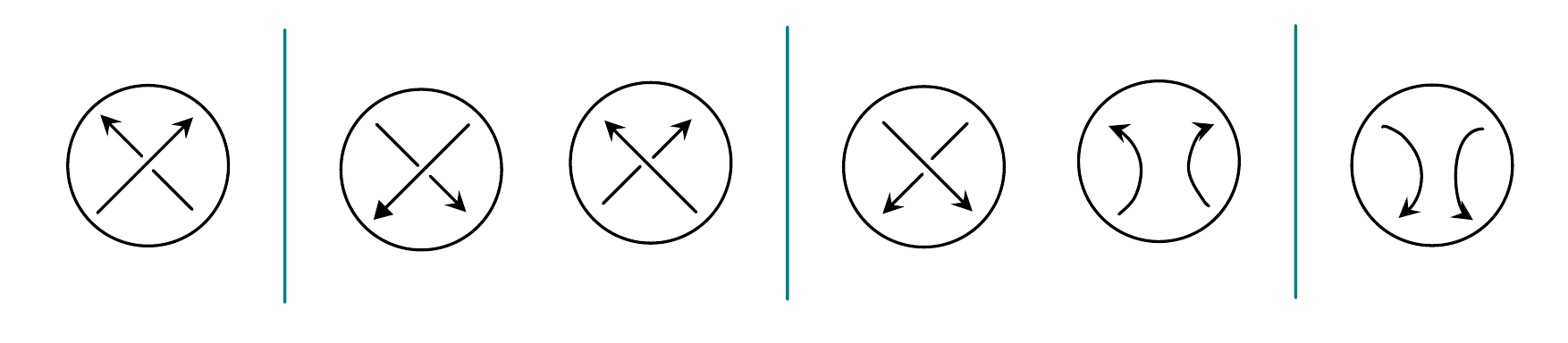}
			\put(16,-2) {$L_+$}
			\put(49,-2) {$L_-$}
            \put(81,-2) {$L_0$}
		\end{overpic}
    \caption{Real oriented skein triple}
    \label{fig:oriented skein triple}
\end{figure}

\begin{itemize}
 \item If $l_p'=l_p+1$, we have a long exact sequence \begin{equation*}
    \to c\GHR^-_{O,d}(L_+,s) \xrightarrow{f^-} c\GHR^-_{O,d}(L_-,s) \xrightarrow{g^-} c\GHR^-_{O,d-1}(L_0,s) \xrightarrow{h^-} c\GHR^-_{O,d-1}(L_+,s) \to 
    \end{equation*}
 \item If $l_p'=l_p$, we have a long exact sequence \begin{equation*}
    \to c\GHR^-_{O,d}(L_+,s) \xrightarrow{f^-} c\GHR^-_{O,d}(L_-,s) \xrightarrow{g^-} (c\GHR_{O}^-(L_0)\otimes W)_{d-1,s} \xrightarrow{h^-} c\GHR^-_{O,d-1}(L_+,s) \to 
    \end{equation*}
    for $W=\F_{(0,1/2)}\oplus \F_{(-1,-1/2)}$.
    \item If $l_p'=l_p-1$, we have a long exact sequence \begin{equation*}
    \to c\GHR^-_{O,d}(L_+,s) \xrightarrow{f^-} c\GHR^-_{O,d}(L_-,s) \xrightarrow{g^-} (c\GHR_{O}^-(L_0)\otimes J)_{d-1,s} \xrightarrow{h^-} c\GHR^-_{O,d-1}(L_+,s) \to 
    \end{equation*}
    for $J=\F_{(0,1)}\oplus \F_{(-1,0)}\oplus \F_{(-1,0)}\oplus \F_{(-2,-1)}$.
\end{itemize}
\end{thm}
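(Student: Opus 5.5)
We follow the grid-diagram proof of the oriented skein exact sequence in \cite[Section~7.1]{YXHFLR}, which in turn adapts \cite[Chapter~9]{OSS2015grid}. The key point, already noted above, is that the skein crossing in Figure~\ref{fig:oriented skein triple} sits away from the fixed set. Choose real grid diagrams $\cH_+$, $\cH_-$ for $L_\pm$ so that they differ only by an equivariant pair of commutation-type modifications near the two crossings exchanged by $\tau$. The $OO$-decoration of the fixed points then plays no role in the local analysis. Also choose a diagram $\cH_0$ for $L_0$ in which the two $O$'s near the crossing (and their images under $R$) form a pair $(O_i,O_i')$ away from $\fix(R)$.

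\emph{Step 1: the mapping cone.} Form the $O$-collapsed complexes $c\GCR^-_O(\cH_\pm)$. Define $f^-$ by counting equivariant pentagons, exactly as in \cite[Section~7.1]{YXHFLR}. Realize $c\GCR^-_O(\cH_-)$ as the mapping cone of a map between two copies of a complex built from $\cH_0$. This is the real analogue of the decomposition in \cite[Section~9.2]{OSS2015grid}, where generators are split according to whether they occupy the special intersection point(s) near the crossing. Because the local picture is $R$-symmetric and disjoint from $\fix(R)$, only the paired variables $U_i,V_j$ enter. The curvature contributions $\omega^O_K$ from the fixed $O$'s do not interact with these local domains. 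Since we quotient by all $V_j$, $\partial^-$ squares to zero, and the filtration and grading arguments carry over unchanged.

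\emph{Step 2: identification of the $L_0$ term.} The previous step yields an exact triangle whose third term is the homology of $\GCR^-_O(\cH_0)$ with some paired variables not yet identified. The three cases are distinguished by $l_p'-l_p$, i.e.\ by how the resolution changes which components are paired. In the collapsed complex we impose $u_{1,1}^2=U_1=\cdots$ only across components, one representative per component. The local pair of $O$-markings therefore becomes either
\begin{itemize}
\item identified with an existing variable (case $l_p'=l_p+1$, no tensor factor), or
\item an extra free variable, producing an extra tensor factor.
\end{itemize}
To pass from the extra variable to the tensor factor, use Proposition~\ref{prop:U,v action identification}. It shows that on the relevant components the extra variable acts homotopically to $u^2$ or to another $U_j$. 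Then the standard argument (\cite[Lemma~8.4.?]{OSS2015grid}-style: the quotient by $U_i-U_j$ versus tensoring with a two-dimensional space) gives $c\GHR^-_O(L_0)\otimes W$ or $c\GHR^-_O(L_0)\otimes J$. In the $J$ case, two new pairs of $O$'s appear, giving $W\otimes W$ with the stated gradings.

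\emph{Step 3: gradings, and the main obstacle.} Check the bidegree shifts using the absolute formulas for $M^R_{\bfO}$ and $A^R$ stated above. The term $\frac{(n+l_f-2l_p)}{4}$ in $A^R$ changes with $l_p$, and this is what produces the half-integer shifts in $W=\F_{(0,1/2)}\oplus\F_{(-1,-1/2)}$. I expect this grading bookkeeping to be the main obstacle. It requires comparing the normalization constants $\tfrac12 l_f$ and $\tfrac{n+l_f-2l_p}{4}$ across the three diagrams, whose grid sizes and numbers of fixed components differ. This is where the $OO$-normalization differs from the $OX$ case. I would verify it on the local generators near the crossing, then propagate using the relative grading formula $M^R_{\bfO}=\mu_R-n_{\bfO}$.
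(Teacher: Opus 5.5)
Your proposal follows the same route as the paper. The paper's proof simply observes that the argument of \cite[Theorem~7.1]{YXHFLR} (modeled on \cite[Chapter~9]{OSS2015grid}) carries over verbatim, because the skein modification is an $R$-symmetric local change away from the fixed set, so the $OO$-decoration never enters the mapping-cone analysis. One correction to your Step~3: the grading bookkeeping is not a real obstacle and is not where $OO$ differs from $OX$. The three grids can be taken of the same size, $l_f$ is the same for $L_+$, $L_-$, $L_0$ (the crossings avoid the fixed set, and every invariant component meets it in exactly two points), and $W$ occurs exactly when $l_p'=l_p$, so all normalization constants agree in that case and the $(0,1/2)$ shift in $W$ comes from the local grading comparison in the cone, as in the $OX$ case.
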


\begin{thm}\label{thm:O-hat oriented skein triple}
Let $(L_+, L_- ,L_0,\fro)$ be a real oriented skein triple defined as above.
\begin{itemize}
 \item If $l_p'=l_p+1$, we have a long exact sequence \begin{equation*}
    \to \widehat{\GHR}_{O,d}(L_+,s) \xrightarrow{\hat{f}} \widehat{\GHR}_{O,d}(L_-,s) \xrightarrow{\hat{g}} \widehat{\GHR}_{O,d-1}(L_0,s) \xrightarrow{\hat{h}} \widehat{\GHR}_{O,d-1}(L_+,s) \to 
    \end{equation*}
 \item If $l_p'=l_p$, we have a long exact sequence 
 \begin{equation*}
    \to \widehat{\GHR}_{O,d}(L_+,s) \xrightarrow{\hat{f}} \widehat{\GHR}_{O,d}(L_-,s) \xrightarrow{\hat{g}} (\widehat{\GHR}_{O}(L_0)\otimes W)_{d-1,s} \xrightarrow{\hat{h}} \widehat{\GHR}_{O,d-1}(L_+,s) \to 
    \end{equation*}
    for $W=\F_{(0,1/2)}\oplus \F_{(-1,-1/2)}$.
    \item If $l_p'=l_p-1$, we have a long exact sequence \begin{equation*}
    \to \widehat{\GHR}_{O,d}(L_+,s) \xrightarrow{\hat{f}} \widehat{\GHR}_{O,d}(L_-,s) \xrightarrow{\hat{g}} (\widehat{\GHR}_{O}(L_0)\otimes J)_{d-1,s} \xrightarrow{\hat{h}} \widehat{\GHR}_{O,d-1}(L_+,s) \to 
    \end{equation*}
    for $J=\F_{(0,1)}\oplus \F_{(-1,0)}\oplus \F_{(-1,0)}\oplus \F_{(-2,-1)}$.
\end{itemize}
\end{thm}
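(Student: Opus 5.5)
The plan is to derive the hat triangles from the minus-version skein sequences of Theorem~\ref{thm:O-minus oriented skein triple}, working at the chain level with real grid diagrams as in \cite[Section~7.1]{YXHFLR}. Choose real grid diagrams $\mathbb{G}_+$, $\mathbb{G}_-$, $\mathbb{G}_0$ for $L_+$, $L_-$, $L_0$ that agree everywhere except in an equivariant local region away from the fixed set. In that region, either two $X$-markings are swapped by a pair of symmetric commutation moves, or a symmetric pair of $O$-markings is collapsed to produce $\mathbb{G}_0$. The minus proof gives chain maps $f^-,g^-,h^-$, built from the pentagon and rectangle counts near the distinguished intersection points. Composing two consecutive ones yields a map that is null-homotopic via a hexagon-type homotopy, and there is a homotopy-equivalence of mapping cones. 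All of these maps count real domains, weighted by $u_i$ and $U_j$ powers. Since these maps are $\cR$-linear and preserve the bigrading (up to the stated shifts), they descend to the quotient complexes $\widetilde{\GCR}_O$, where every $u_i$ and every $U_j$ is set to zero. The identities $h\circ g\simeq0$, and so on, also descend, so we obtain exactness for $\widetilde{\GHR}_O$ via the standard cone lemma (\cite[Lemma~5.2.12]{OSS2015grid}-style argument).

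Next we pass from $\widetilde{\GHR}_O$ to $\widehat{\GHR}_O$. We use $\widetilde{\HFLR}_O(\cH)=\widehat{\HFLR}_O\otimes\F^{2(k-l_p)}$ and choose the three grids so that the numbers of extra paired $O$-markings beyond one per pair agree. The triangle for tilde groups then has the form $(\text{hat triangle})\otimes V$ for a fixed $V$. To strip off $V$, we follow the classical argument of \cite[Section~9]{OSS2015grid}. We set the surplus $U_j$ equal to one another, as allowed by Proposition~\ref{prop:U,v action identification}, which says $U_j\simeq U_{j'}$ on a component. We then use the fact that $\widehat{\GCR}_O$ is quasi-isomorphic to $\GCR^-_O$ with one $U$ per component or pair, and one $u_i$ per strongly invertible component, set to zero. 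Concretely, we take the minus exact sequence and apply the mapping cone of multiplication by the chosen variables, one per component of each link. The delicate point is that $L_+$, $L_-$ and $L_0$ have different component structures, and these structures determine the tensor factors $W$ and $J$.

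For the case analysis, the number of paired components changes by $+1$, $0$ or $-1$ in passing to $L_0$. In the case $l_p'=l_p+1$, $L_0$ has one more pair, so killing one variable on $L_\pm$ corresponds to killing one variable on each new pair of $L_0$; in the hat theory the extra normalization $(t-t^{-1})^{-l_p}$ absorbs this, and no $W$ factor appears. In the case $l_p'=l_p$, a component is split or merged with the fixed set involved, and killing one fewer variable on $L_0$ produces the two-dimensional factor $W=\F_{(0,1/2)}\oplus\F_{(-1,-1/2)}$; this comes from the cone of a degree $(-1,-1/2)$ variable $u$ or $V$ acting trivially in homology. In the case $l_p'=l_p-1$, two $U$ variables, each of degree $(-2,-1)$ with a shift, are killed with one fewer pair, giving $J=W'\otimes W'$ with $W'=\F_{(0,1/2)}\oplus\F_{(-1,-1/2)}$. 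Expanding, this is $\F_{(0,1)}\oplus\F_{(-1,0)}^2\oplus\F_{(-2,-1)}$, which matches $J$.

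The main obstacle is getting the grading shifts right. In the $OO$-theory the absolute gradings carry the asymmetric correction $\tfrac12 l_f$ and $-\tfrac{n+l_f-2l_p}{4}$, and these must be tracked through the local moves so that the shifts in $W$ and $J$ come out as stated. I would check this by computing the grading difference of the distinguished generators in the local picture, exactly as in \cite[Theorem~7.2]{YXHFLR}. The fixed set is untouched by the moves, so the fixed-point terms cancel.
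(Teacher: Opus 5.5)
Your proposal has a genuine gap, and it sits exactly where the content of the theorem lives: the passage from the chain-level triangle to $\widehat{\GHR}_O$ with the factors $W$ and $J$. There are four problems.

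\emph{The surplus counts cannot be made equal.} In your setup the three grids agree outside a small region off the axis. So they have the same size and the same markings on the axis, hence the same number $k$ of $O$-pairs. The surplus is therefore $k-l_p$ for $L_\pm$ but $k-l_p'$ for $L_0$, and these differ precisely when $l_p'\neq l_p$.

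\emph{Stripping a tensor factor is not formal.} Even if all three tilde groups had the shape $(\cdot)\otimes V$, an exact triangle of such groups gives no exact triangle after deleting $V$. For that you need to know at chain level that the maps are of the form $\hat f\otimes\mathrm{id}_V$. Citing the classical argument does not supply this.

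\emph{The case analysis producing $W$ and $J$ is not an argument, and is partly wrong.}
\begin{itemize}
\item The normalization $(t-t^{-1})^{-l_p}$ of $\Delta^R_O$ is a consequence of the triangle, so it cannot absorb anything.
\item Leaving a variable unkilled does not create a tensor factor; killing a redundant variable that acts null-homotopically does.
\item The $V$ variables are already zero in the $O$-theory.
\item A variable acting trivially on homology need not split its cone.
\item The cone of a null-homotopic map of bidegree $(-1,-1/2)$ has generators differing by $(0,1/2)$. That is the factor $\F_{(0,0)}\oplus\F_{(0,1/2)}$ of the $u_1\simeq u_2$ lemma and of Theorem~\ref{thm:exact triangle between OX and OO for knots}. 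The two generators of $W=\F_{(0,1/2)}\oplus\F_{(-1,-1/2)}$ differ by $(1,1)$, so $W$ can only come from a null-homotopic map of bidegree $(-2,-1)$.
\end{itemize}

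\emph{Possible double counting.} The minus triangle you start from, Theorem~\ref{thm:O-minus oriented skein triple}, already carries $W$ and $J$. Manufacturing them again in the hat step would count them twice.

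\emph{A cleaner route.} The tilde detour is unnecessary. With the paper's definitions, reducing the collapsed complex modulo $u$ kills every $u_i$ and $U_1,\dots,U_{l_p}$ and leaves the surplus $U_j$. So $c\GCR^-_O(\mathbb{G})/(u)=\widehat{\GCR}_O(\mathbb{G})$ on the nose. Keep the minus skein triangle at chain level: an $\F[u]$-linear quasi-isomorphism between free complexes, with the $L_0$-term identified up to $\F[u]$-homotopy equivalence with $c\GCR^-_O(L_0)\otimes W$ or $c\GCR^-_O(L_0)\otimes J$. Applying $-\otimes_{\F[u]}\F[u]/(u)$ preserves all of this, since the complexes and cones have no $u$-torsion. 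It yields the hat triangle with exactly the same factors, which also explains why the two theorems have identical $W$ and $J$.

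\emph{What the paper does.} Its proof is shorter still. It observes that the argument of \cite[Theorem~7.1]{YXHFLR}, for both versions, takes place in a region of the real grid away from the fixed set. There $OO$- and $OX$-diagrams are indistinguishable, so the argument applies verbatim.
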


\begin{proof}
The proof of \cite[Theorem~7.1]{YXHFLR} works for Theorem~\ref{thm:O-minus oriented skein triple} and \ref{thm:O-hat oriented skein triple} verbatim. 
\end{proof}

\begin{thm}\label{thm:O-skein relation for real Alexander polynomial}
Let $(L_+, L_- ,L_0,\fro)$ be a real oriented skein triple. Then we have \[\Delta^R_O(L_+,\fro)(t)-\Delta^R_O(L_-,\fro)(t)=(t-t^{-1})\cdot\Delta^R_O(L_0,\fro)(t).\]
\end{thm}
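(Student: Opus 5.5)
We take graded Euler characteristics in the hat exact triangles of Theorem~\ref{thm:O-hat oriented skein triple}. For a bigraded finite-dimensional $\F$-vector space $H=\bigoplus_{d,s}H_{d,s}$, write $\chi(H)(t)=\sum_{d,s}(-1)^d t^{2s}\dim H_{d,s}$. With this notation
\[\Delta^R_O(L,\fro)=(t-t^{-1})^{-l_p(L)}\,\chi\bigl(\widehat{\GHR}_O(L,\fro)\bigr).\]
A long exact sequence whose maps preserve the Alexander grading $s$ has vanishing alternating sum of dimensions in each fixed $s$. Each of the sequences in Theorem~\ref{thm:O-hat oriented skein triple} is $3$-periodic, with the third term shifted by one in Maslov grading. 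Hence in every case
\[\chi(\widehat{\GHR}_O(L_+))-\chi(\widehat{\GHR}_O(L_-))=\chi(T_0),\]
where $T_0$ is the third term appearing there. The sign is exactly right because of the $d\mapsto d-1$ shift on $T_0$.

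Next we compute $\chi(T_0)$ using $\chi(A\otimes B)=\chi(A)\chi(B)$. Here $L_\pm$ both have $l_p$ pairs and $L_0$ has $l_p'$ pairs.

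In the case $l_p'=l_p+1$, we have $T_0=\widehat{\GHR}_O(L_0)$. Dividing by $(t-t^{-1})^{l_p}$ gives
\[\Delta^R_O(L_+)-\Delta^R_O(L_-)=(t-t^{-1})^{-l_p}\chi(\widehat{\GHR}_O(L_0))=(t-t^{-1})\,\Delta^R_O(L_0).\]

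In the case $l_p'=l_p$, we have $\chi(W)=t^{1}-t^{-1}$, since $W$ has summands in bigradings $(0,1/2)$ and $(-1,-1/2)$. This produces the same factor $(t-t^{-1})$.

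In the case $l_p'=l_p-1$, we compute
\[\chi(J)=t^2-2+t^{-2}=(t-t^{-1})^2,\]
and $(t-t^{-1})^{-l_p}(t-t^{-1})^2=(t-t^{-1})\,(t-t^{-1})^{-l_p'}$. Again we obtain the factor $(t-t^{-1})$.

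Finally, $\widehat{\GHR}_O$ agrees with $\widehat{\HFLR}_O$, so the identity holds for the invariant $\Delta^R_O$.

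The main point to check is that the maps $\hat f,\hat g,\hat h$ really have the stated bigradings. Specifically, each must preserve $s$, and $\hat g$ and $\hat h$ must carry the one-step Maslov shift, with no hidden Alexander shift coming from the normalization $-\frac{n+l_f-2l_p}{4}$ in the absolute grading. This normalization depends on $l_p$, and $l_p$ changes across the triple, so this is the step to verify. It is handled by the fact that the theorem already states the sequences in absolute gradings, with the shifts absorbed into $W$ and $J$. If one is in doubt, one can check a simple example, such as an unknot and unlink triple, to confirm that the overall normalization has no extra power of $t$.
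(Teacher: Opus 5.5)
Your proposal is correct and follows the paper's proof: the paper simply declares the identity a direct corollary of the hat skein exact sequences (Theorem~\ref{thm:O-hat oriented skein triple}) together with the definition of $\Delta^R_O$. You supply the details the paper leaves out: the Maslov shift on the third term of the sequence gives $\chi(L_+)-\chi(L_-)=\chi(T_0)$, and $\chi(W)=t-t^{-1}$ and $\chi(J)=(t-t^{-1})^2$ exactly absorb the change in the normalizing factor $(t-t^{-1})^{-l_p}$ across the three cases.
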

\begin{proof}
This is a direct corollary of Theorem~\ref{thm:O-hat oriented skein triple}, using the definition of $\Delta_{O}^R(L,\fro)$. 
\end{proof}

In the real grid homology defined in \cite{YXHFLR}, we also deduced an unoriented skein exact triangle between stabilized hat homology groups, which respects a $\delta^R$-grading (see \cite[Section~7.3]{YXHFLR}). During the proof, we need to interchange the roles of $O$ and $X$ on certain components in order to relate the Floer homology groups we want to. This operation is illegal in $OO$-theory, so such a triangle no longer exists. 

\subsection{Torsion order, real $\tau$-invariant and module structure}\label{sub:Torsion order, real tau-invariant and module structure}

Before defining numerical invariants, we first calculate $\GHR_{O}^
{\circ}$ for the most basic example: the standard strongly invertible unknot.

\begin{example}\label{ex:unknot}
A real grid diagram of size $2$ for the strongly invertible unknot in $S^3$ is shown in Figure~\ref{fig:unknotOOgrid}.
\begin{figure}
\centering
    \begin{overpic}[width=0.2\textwidth]{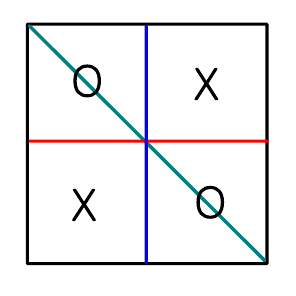}
		\end{overpic}
    \caption{An $OO$-real grid diagram for the strongly invertible unknot.}
    \label{fig:unknotOOgrid}

\end{figure}

It can be calculated easily that \[\GHR_{OO}^-(U,\fro)\cong \F[u_1,u_2]_{(0,0)}/(u_1=u_2),\] \[\GHR_{O}^-(U,\fro)\cong \F[u]_{(0,0)}\oplus \F[u]_{(0,1/2)},\]
\[\widehat{\GHR}_{O}(U,\fro)\cong \F_{(0,0)}\oplus \F_{(0,1/2)}.\]
Here, we write $\F[u]_{(d,s)}$ for a copy of $\F[u]$ with a generator at grading $(M^R,A^R)=(d,s)$ and similarly for $\F_{(d,s)}$.
\end{example}

Recall that for a finitely generated $\F[u]$-module $M$, we can define its \emph{torsion submodule} \[\Tors(M)=\{m\in M|\text{there exists } 0\ne p \in \F[u]\text{ such that } p\cdot m=0\}.\] Then there exists an $r\in \Z_{\ge 0}$ such that $M/\Tors (M)\cong \F[u]^{\oplus r}$, such an $r$ is called the \emph{rank} of $M$. We can also define the \emph{torsion order} of $M$ \[\ord_{u}(M)=\min \{k\in \N| u^k\cdot \Tors(M)=0\}\in \N.\] 

\begin{defn}
Let $(K,\fro)$ be a strongly invertible knot with auxiliary data in $(S^3,\tau)$. We define the \emph{torsion order} $\ord^O_{u}(K,\fro)$ to be $\ord_{u}(\GHR_{O}^-(K,\fro))$.  
\end{defn}

\begin{prop}
If $(K_+,\fro_+)$ and $(K_-,\fro_-)$ are a pair of knots related by a type $A$ or $B$ crossing change, then \[\vert \ord^O_{u}(K_+,\fro_+) -\ord^O_{u}(K_-,\fro_-)\vert \le 2. \]   
\end{prop}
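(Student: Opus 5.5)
The plan is to apply the crossing change maps of Proposition~\ref{prop:unknotting maps} in the $O$-version, i.e.\ with $\GHR_{O}^-$ in place of $\GHR_{OO}^-$. After setting $u_1=u_2=u$, these become $\F[u]$-module maps
\[C_-\colon \GHR_{O}^-(K_+,\fro_+)\to \GHR_{O}^-(K_-,\fro_-),\qquad C_+\colon \GHR_{O}^-(K_-,\fro_-)\to \GHR_{O}^-(K_+,\fro_+),\]
with $C_+\circ C_- = u^2$ and $C_-\circ C_+=u^2$. This holds for type $A$ and type $B$ alike. The gradings play no role in the argument, so the two types can be treated uniformly.

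Next I prove the general algebraic statement. Suppose $M,N$ are finitely generated $\F[u]$-modules with module maps $f\colon M\to N$ and $g\colon N\to M$ such that $g\circ f=u^k$ and $f\circ g=u^k$. Then $\vert\ord_u(M)-\ord_u(N)\vert\le k$. By symmetry it suffices to show $\ord_u(M)\le \ord_u(N)+k$. Let $m\in\Tors(M)$ and write $n=\ord_u(N)$.

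\begin{enumerate}
\item Since $f$ is $\F[u]$-linear, it maps torsion to torsion: if $p\cdot m=0$ with $p\neq 0$, then $p\cdot f(m)=0$. Hence $f(m)\in\Tors(N)$.
\item Therefore $u^n f(m)=0$.
\item Applying $g$ gives $0=g(u^n f(m))=u^n\, g(f(m))=u^{n+k}m$.
\end{enumerate}

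Thus $u^{n+k}\Tors(M)=0$, which gives $\ord_u(M)\le n+k$. Applying this with $k=2$ yields the proposition.

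The only point needing care is that the maps of Proposition~\ref{prop:unknotting maps} really descend to $\GHR_O^-$ as $\F[u]$-module maps satisfying $C_\pm\circ C_\mp=u^2$. Proposition~\ref{prop:unknotting maps} explicitly asserts this for $\GHR_O^-$, with the relations $u_1^2$ becoming $u^2$ under $u_1=u_2=u$. Finite generation of the modules holds because the grid complexes are finitely generated over the polynomial ring, so torsion orders are finite and the argument applies.
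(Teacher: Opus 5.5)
Your proposal is correct and follows essentially the same route as the paper, which deduces the bound directly from the crossing change maps of Proposition~\ref{prop:unknotting maps} on $\GHR_{O}^-$ (with $C_\pm\circ C_\mp=u^2$) via the standard torsion-order argument of \cite[Proposition~5.8]{YXHFLR}, i.e.\ exactly your algebraic lemma with $k=2$. One small correction: finiteness of $\ord_u$ comes from the grading, which forces the torsion to be $u$-power torsion, not from finite generation alone.
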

\begin{proof}
This follows from Proposition~\ref{prop:unknotting maps} (see \cite[Proposition~5.8]{YXHFLR}).
\end{proof}

As a corollary, we have the following bounds on type $A$, $B$ equivariant unknotting numbers: 
\begin{cor}\label{cor:torsion order bounds u'}
Let $K$ be a strongly invertible knot in $(S^3,\tau)$, for any choice of $\fro$, we have \[\ord^O_{u}(K,\fro)\le 2\widetilde{u}_A(K), \] \[\ord^O_{u}(K,\fro)\le 2\widetilde{u}_B(K). \] 
\end{cor}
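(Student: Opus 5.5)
The plan is to iterate the crossing-change inequality along an optimal unknotting sequence and then compute the endpoint, namely the unknot, using Example~\ref{ex:unknot}. Fix an orientation $\fro$ on $K$. Let $n=\widetilde{u}_A(K)$. By definition there is a sequence of strongly invertible knots
\[K=K_0, K_1, \ldots, K_n=U,\]
each obtained from the previous one by a single type $A$ crossing change (possibly after equivariant isotopy), ending at the strongly invertible unknot.

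Along this sequence we carry the orientations. A type $A$ crossing change is local and away from the fixed set, so an orientation of $K_{i}$ that is reversed by the involution induces one on $K_{i+1}$ with the same property. This gives oriented knots $(K_i,\fro_i)$ with $\fro_0=\fro$, and the preceding proposition then gives $\vert\ord^O_u(K_i,\fro_i)-\ord^O_u(K_{i+1},\fro_{i+1})\vert\le 2$ for each $i$. By the triangle inequality,
\[\ord^O_u(K,\fro)\le \ord^O_u(U,\fro_n)+2n.\]

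It remains to show $\ord^O_u(U,\fro_n)=0$. By Example~\ref{ex:unknot}, $\GHR^-_O(U,\fro)\cong\F[u]\oplus\F[u]$ is free, so its torsion submodule vanishes and the torsion order is $0$. This holds for either orientation on the unknot: the two orientations are interchanged by an equivariant isotopy (rotating the standard unknot by $\pi$ about an axis perpendicular to the fixed axis), and in any case the $OO$ grid diagram computation is symmetric. Hence $\ord^O_u(K,\fro)\le 2\widetilde{u}_A(K)$.

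The type $B$ bound is proved identically, using the type $B$ case of the proposition. There is one subtlety, which I expect to be the only real point to check. A type $B$ crossing change occurs at the fixed axis and exchanges the arcs through a fixed point. We need the orientation to still be reversed by $\tau$ afterwards, and we need the resulting oriented knot to be one to which Proposition~\ref{prop:unknotting maps} applies. Proposition~\ref{prop:unknotting maps} is stated for pairs $(K_\pm,\fro_\pm)$ related by the move, with compatible orientations, so the natural induced orientation suffices. If Boyle--Chen's definition permits intermediate equivariant isotopies that reverse orientation, the independence of the unknot's torsion order from the orientation (established above) handles the endpoint, and intermediate steps only use the proposition pairwise.
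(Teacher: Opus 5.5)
Your argument is correct and is the one the paper intends. The paper states the corollary as an immediate consequence of the bound $\vert\ord^O_u(K_+,\fro_+)-\ord^O_u(K_-,\fro_-)\vert\le 2$, iterated along a minimal type $A$ (resp.\ type $B$) unknotting sequence, together with Example~\ref{ex:unknot}, which shows that $\GHR^-_O$ of the unknot is free and so has torsion order $0$.

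Two side remarks need correcting:
\begin{itemize}
\item Rotating about an axis perpendicular to the fixed axis is not an equivariant isotopy: at intermediate angles it does not commute with $\tau$, and at angle $\pi$ it reverses the fixed circle. The two orientations of the unknot are instead related by rotating in the plane of the fixed circle, i.e.\ sliding $U$ along the axis through $\infty$. This commutes with $\tau$, swaps the two fixed points, and reverses the orientation of $U$.
\item Your concern that $\tau$ might fail to reverse the orientation after a type $B$ move is vacuous. An involution of a circle with fixed points is always orientation-reversing, so this holds automatically for every strongly invertible knot.
\end{itemize}
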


\begin{prop}\label{prop:ranks of minus theories}
Let $(K,\fro)$ be an oriented strongly invertible knot in $S^3$. Then regarding $\GHR^-_{OO}(K,\fro)$ as an $\F[u_1]$-module, it has rank $1$. The same is true with $u_2$ in place of $u_1$. Similarly, regarding $\GHR^-_{O}(K,\fro)$ as an $\F[u]$-module, its free part has rank $2$.

More generally,if $(L,\fro)$ is an oriented generalized strongly invertible link in $S^3$, then as an $\F[u]$-module, $c\GHR^-_{O}(L,\fro)$ has rank $2^{2l_f+l_p-1}$.
\end{prop}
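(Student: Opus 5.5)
We will argue as in the corresponding rank computation of \cite{YXHFLR}: the rank of a finitely generated module over a PID is unchanged under module maps that become isomorphisms after inverting $u$. The plan is to reduce every knot to the unknot by crossing changes, and then read the rank off Example~\ref{ex:unknot}.

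\emph{Step 1: invariance of rank under crossing changes.} Every strongly invertible knot can be turned into the standard strongly invertible unknot by a finite sequence of type $A$ and type $B$ crossing changes. Indeed, $\widetilde{u}_A$ and $\widetilde{u}_B$ are finite by \cite{boyle2025equivariantunknottingnumbersstrongly}; this finiteness is already used in Corollary~\ref{cor:torsion order bounds u'}. For one crossing change, Proposition~\ref{prop:unknotting maps} gives maps $C_\pm$ with both composites equal to multiplication by $u_1^2$.
\begin{itemize}
\item For $\GHR^-_{OO}$, view both modules as $\F[u_1]$-modules. The maps are $\F[u_1,u_2]$-linear, hence $\F[u_1]$-linear. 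After tensoring with $\F[u_1,u_1^{-1}]$, the element $u_1^2$ is invertible, so $C_\pm$ become mutually inverse up to a unit. Therefore the free ranks agree.
\item For $\GHR^-_O$ we argue the same way with $u=u_1=u_2$.
\item For $u_2$ in place of $u_1$, use Proposition~\ref{prop:U,v action identification}: it gives $u_1^2\simeq u_2^2$, so the composites are also homotopic to multiplication by $u_2^2$, and the argument repeats with $\F[u_2,u_2^{-1}]$.
\end{itemize}
One point needs checking: the relation ``$C_+\circ C_-=u_1^2$'' must hold on homology as a module map. This is true because it is a chain homotopy statement.

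\emph{Step 2: the unknot.} By Example~\ref{ex:unknot}, $\GHR^-_{OO}(U)\cong\F[u_1,u_2]/(u_1=u_2)\cong\F[u_1]$ has rank $1$ over $\F[u_1]$ and over $\F[u_2]$. Also $\GHR^-_O(U)\cong\F[u]^2$ has rank $2$. Combined with Step 1, this proves the knot case.

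\emph{Step 3: links.} For $c\GHR^-_O(L,\fro)$, use equivariant crossing changes together with the collapsed crossing-change maps; these are also available from the local grid arguments, with composite $u^2$. They reduce $L$ to an equivariant unlink with $l_f$ strongly invertible unknots and $l_p$ pairs of unknots swapped by $\tau$. It then remains to compute the collapsed homology of this split unlink. I would do this either directly with a small real grid diagram, in the spirit of Example~\ref{ex:unknot}, or by merging components with the paired saddle maps of Proposition~\ref{prop:band maps}. Those maps have composite $u^2$, so they relate ranks with the factor $\dim W=2$. Each strongly invertible unknot should contribute a factor $4$, and each pair a factor $2$, with one overall factor $2$ removed by the collapsing. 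This gives $2^{2l_f+l_p-1}$.

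The main obstacle is Step 3. The exponent requires careful bookkeeping. The collapsing identifies $u_{i,1}$ across different components and sets $u^2=U_j$, and one must check that this collapsing commutes with the crossing-change and band maps, including crossings between distinct fixed components. One must also track exactly how the split unlink's complex factors, so that the factor $2^{2l_f}$ (rather than $2^{l_f}$) and the global $2^{-1}$ come out correctly. I would first verify the formula on the two-component strongly invertible unlink by direct grid computation, and only then set up the induction.
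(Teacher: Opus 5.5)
Your treatment of knots is correct and is the paper's argument. The paper deduces the statement from the crossing-change maps of Proposition~\ref{prop:unknotting maps} (composites $u_1^2$, with $u_1^2\simeq u_2^2$) and the unknot computation of Example~\ref{ex:unknot}, citing \cite[Proposition~5.5]{YXHFLR}. Step~3 does not establish the link statement, however, and the first gap is the reduction to a split unlink.

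Let $J_1,J_2$ be two strongly invertible components, and let $A'$ be either arc of $\mathrm{Fix}(\tau)$ between the two fixed points of $J_1$. Write $[J_1]=c-\tau_*c+d$, where $c$ is one half of $J_1$ closed up by a push-off of $A'$ and $d$ is a thin loop around $A'$. Using $\tau_*[J_2]=-[J_2]$, one gets $\mathrm{lk}(J_1,J_2)\equiv \#(J_2\cap A')\pmod 2$. Now compare this with the moves you have:
\begin{itemize}
\item A symmetric pair of off-axis crossing changes between $J_1$ and $J_2$ changes $\mathrm{lk}(J_1,J_2)$ by $\pm 2$, since the two crossings have the same sign.
\item An axis crossing whose strands are exchanged by $\tau$ involves only one component or one swapped pair.
\end{itemize}
So two strongly invertible unknots whose fixed points alternate along the axis (an equivariant Hopf link) can never be split by these moves. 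The only equivariant crossing change that alters this parity is at an axis crossing where each strand passes through $\mathrm{Fix}(\tau)$ and the two strands lie on different components. Proposition~\ref{prop:unknotting maps} is stated only for knots and does not provide maps for this move. You would have to construct collapsed maps with composite $u^2$ for it, or treat the non-split configurations as additional base cases.

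The second gap is the base case, which is exactly where the exponent $2l_f$ comes from. The sentence ``each strongly invertible unknot should contribute a factor $4$'' is a guess, not a computation. The band maps of Proposition~\ref{prop:band maps} cannot supply it:
\begin{itemize}
\item They come from pairs of $1$-handles exchanged by $\tau$, hence disjoint from the axis. So they preserve $L\cap\mathrm{Fix}(\tau)$, and therefore preserve $l_f$, because each strongly invertible component meets the axis in exactly two points.
\item The paired split maps change only $l_p$; this is the factor $\dim W=2$.
\item The merge--split maps relate $c\GHR^-_O(L)\otimes W$ to $c\GHR^-_O(L')\otimes W$, so they give no change in rank at all.
\end{itemize}
After removing pairs you are still left with computing $c\GHR^-_O$ of the split union of $l_f$ strongly invertible unknots, and of a single split pair when $l_f=0$. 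You must show these have rank $2^{2l_f-1}$ and $1$ respectively, for instance on an explicit real grid diagram with a disjoint-union argument. Until both gaps are closed, the link case of the proposition remains unproved by your outline.
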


\begin{proof}
These also follow from Proposition~\ref{prop:unknotting maps} (see \cite[Proposition~5.5]{YXHFLR}). 
\end{proof}

\begin{defn}
Let $(K,\fro)$ be an oriented strongly invertible knot. For $j=1,2$, the \emph{real $\tau$-invariant} $\tau^R_j(K,\fro)$ is $-1$ times the maximal integer $i$ for which there is a homogeneous, non-torsion element in $\GHR_{OO}^-(K,\fro)$ whose real Alexander grading is equal to $i/2$, when we regard $\GHR_{OO}^-(K,\fro)$ as an $\F[u_j]$-module.
\end{defn}

\begin{prop}\label{prop:tau_1=tau_2}
For any oriented strongly invertible knot $(K,\fro)$, 
\[\tau^R_1(K,\fro)= \tau^R_2(K,\fro).\]
\end{prop}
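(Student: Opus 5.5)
The plan is to use Proposition~\ref{prop:U,v action identification}, which says $u_1^2\simeq u_2^2$ as endomorphisms of $\GCR^-_{OO}(\cH)$. In particular $u_1^2$ and $u_2^2$ induce the same map on $\GHR^-_{OO}(K,\fro)$. Throughout, write $M=\GHR^-_{OO}(K,\fro)$ for the homology module.

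The first step is to compare the torsion submodules for the two module structures. Because $u_1$ and $u_2$ commute, $M$ is an $\F[u_1,u_2]$-module with $u_1^2=u_2^2$ on it. Let $\Tors_j(M)$ denote the torsion submodule when $M$ is regarded as an $\F[u_j]$-module. Suppose $m\in M$ is homogeneous. Since $u_j$ is homogeneous of bidegree $(-1,-1/2)$ and $M$ is finitely generated and bigraded, $m$ is $u_j$-torsion exactly when $u_j^N m=0$ for some $N$. Then:
\begin{itemize}
\item if $u_1^N m=0$, we get $u_2^{2N}m=u_1^{2N}m=0$;
\item the reverse implication holds symmetrically.
\end{itemize}
Hence the homogeneous elements of $\Tors_1(M)$ and $\Tors_2(M)$ coincide, and since both submodules are graded, $\Tors_1(M)=\Tors_2(M)$. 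In particular, a homogeneous element is non-torsion for $u_1$ if and only if it is non-torsion for $u_2$.

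The second step reads off the $\tau$-invariants. By the definition, $\tau^R_j(K,\fro)$ is determined by the maximal real Alexander grading of a homogeneous element of $M$ that is non-torsion for the $\F[u_j]$-module structure. By the first step, the set of homogeneous non-torsion elements does not depend on $j$. The underlying bigraded vector space is the same in both cases, so the maxima agree and $\tau^R_1=\tau^R_2$. The maximum exists because Proposition~\ref{prop:ranks of minus theories} says the rank is $1$, so non-torsion elements exist, and because Alexander gradings of $M$ are bounded above, as $M$ is finitely generated over the polynomial ring.

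I expect the main obstacle to be a well-definedness detail rather than the argument itself. The homotopy $u_1^2\simeq u_2^2$ has to be $\F[u_1,u_2]$-linear, or at least commute with $u_1$ and $u_2$ up to homotopy, so that the induced maps on homology genuinely make $M$ a module over $\F[u_1,u_2]/(u_1^2-u_2^2)$. To check this, I would verify that the homotopy coming from \cite[Lemma~4.6.9]{OSS2015grid} counts rectangles with the same $u_i$-weights as the differential, which makes it $\cR(\cH)$-linear. Once that holds, the rest of the argument is purely algebraic.
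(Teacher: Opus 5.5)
Your argument is correct and rests on the same input as the paper's proof: $u_1^2\simeq u_2^2$ (Proposition~\ref{prop:U,v action identification}) forces a homogeneous class to be $u_1$-torsion exactly when it is $u_2$-torsion. Your version is more direct.

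The paper takes a longer route. It fixes a cycle $x$ realizing $\tau^R_1$ and argues by contradiction, using the rank-one statement of Proposition~\ref{prop:ranks of minus theories}, that $(u_1^{2n+1}-u_2^{2n+1})x$ is a boundary for some $n\ge 0$. It reads off $u_2$-non-torsion of $[x]$ along the way. That auxiliary claim is not needed for $\tau^R_1=\tau^R_2$; it is what produces the integer $n$ used afterwards to compare $\tau^R_{\max}$ with $\tau^R_i$. Your argument uses rank one only to guarantee that the maximum in the definition exists, and it shows directly that the two sets of homogeneous non-torsion elements coincide.

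The obstacle you anticipate at the end is not real. Multiplication by $u_1^2$ and by $u_2^2$ are chain maps because $\partial^-$ is $\cR(\cH)$-linear. For any cycle $c$ and any homotopy $H$ with $\partial^- H+H\partial^-=u_1^2-u_2^2$, you get $u_1^2c-u_2^2c=\partial^-(Hc)$. Hence $u_1^2$ and $u_2^2$ are the same endomorphism of $M$, whether or not $H$ is linear. Iterating gives $u_1^{2N}m=u_2^{2N}m$ for all $m\in M$, which is all your first step uses.
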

\begin{proof}
Fix an $OO$-real grid diagram $\cH$ for $(K,\fro)$. Let $(C,\partial)$ be the chain complex $(\GCR_{OO}^-(\cH),\partial^-)$, so that $C'=C/(u_1-u_2)$ is the chain complex $\GCR_{O}^-(\cH)$. 



Let $x$ be a homogeneous element in $C$ such that $[x]\in H(C)$ realizes $\tau_1^R(K,\fro)$. We claim that there exists some $n\ge 0$ so that $u_1^{2n+1}x-u_2^{2n+1}x$ is a boundary. Otherwise, consider the element $[u_2x]$. If it is not $u_1$-torsion, then $[u_2x]$ and $[x]$ will be two distinct towers in $H(C)$ when it is regarded as an $\F[u_1]$-module, which contradicts Proposition~\ref{prop:ranks of minus theories}. If $[u_2x]$ is a $u_1$-torsion, then there is an element $y\in C$ with $\partial y= u_1^m u_2 x$ for some $m\ge 0$. From Proposition~\ref{prop:U,v action identification}, we know that there is an element $z\in C$ with $\partial z= (u_1^{2}-u_2^{2})x$, then $u_1^{m+2} x=\partial u_2 y+ \partial u_1^m z$, contradicts the assumption on $x$. Thus, the claim holds This argument implies that $[x]$ is also a non-torsion element when $H(C)$ is regarded as an $\F[u_2]$-module. Thus, $\tau^R_{2}(K,\fro)\le \tau_1^R(K,\fro)$. The roles of $u_1$ and $u_2$ are symmetric in the discussion above, so we have the desired equality.
\end{proof}

Actually, we can extract more from the proof above. There is a homogeneous element $x_0\in C$, such that $[x_0]$ realizes both $\tau^R_1(K,\fro)$ and $\tau^R_2(K,\fro)$. Fix such an $x_0$.

At chain level, $u_1-u_2\colon C\to C$ is an injective map, so we have a quasi-isomorphism of chain complexes \[\Cone(u_1-u_2\colon C\to C)\simeq C'\] (see \cite[Lemma~5.2.13]{OSS2015grid}). $\Cone(u_1-u_2\colon C\to C)$ can be written as $C\oplus C$ equipped with differential \[\partial'=\begin{bmatrix}
\partial & u_1-u_2\\
0& \partial
\end{bmatrix}.\]
Note that $(x,y)\in C\oplus C$ is a cycle in the mapping cone if and only if $\partial x=0$ and $\partial y+(u_1+u_2)x=0$. 

Let $n$ be the smallest non-negative integer such that $(u_1^{2n+1}-u_2^{2n+1})x_0$ is a boundary, say $\partial y_0=(u_1^{2n+1}-u_2^{2n+1})x_0$ for some homogeneous element $y_0$. We also know from Proposition~\ref{prop:U,v action identification} that there is a homogeneous element $y_1$, so that $\partial y_1=(u_1^{2n}-u_2^{2n})x_0$. Then $X=(0,x_0)$ and $Y=(u_1^{2n}x_0, y_0+u_2y_1)$ are two cycles in the mapping cone. It is clear that they both represent homogeneous non-torsion elements when $H(C')$ is regarded as an $\F[u]$-module, with $u=u_1\simeq u_2$ action. It is also easy to see that they do not belong to the same tower, since $u^m X-Y$ is never a boundary. Thus, we have found two towers in $H(C')$ by hand, which are the only towers in $H(C')$ by Proposition~\ref{prop:ranks of minus theories}.

The grading in $C'$ is given by $C'_{d,s}=C_{d,s+1/2}\oplus C_{d,s}$, since as a map, $u_1-u_2$ is of bigrading $(-1,-1/2)$. Thus, $X$ lies in bigrading $(M^R(x_0), A^R(x_0))=(M^R(x_0), -\tau_i^R/2)$ and $Y$ lies in bigrading $(M^R(x_0)-2n, A^R(x_0)-n-1/2)=(M^R(x_0)-2n, -\tau_i^R/2-n-1/2)$. 

Mimicking the definition of $\tau$-set invariant for links from collapsed grid homology, we extract the following invariant from $\GHR^-_{O}$. 

\begin{defn}\label{def:real tau set for u_1=u_2}
Let $(K,\fro)$ be an oriented strongly invertible knot. Then we define its \emph{real $\tau_{O}$-set ($\tau^R_{O}$-set)} to be the set of integers $\tau^R_{\min}(K,\fro)\le \tau^R_{\max}(K,\fro)$, defined as follows. Choose a generating set of two elements for $\GHR_{O}^-(K,\fro)/\Tors$ such that each element is homogeneous with respect to $A^R$-grading. Then $\tau^R_O$-set is defined to be $-2$ times the real Alexander grading of the generators. 
\end{defn}

It is easy to check that this is well-defined and is an invariant of $(K,\fro)$ (see~\cite[Corollary~8.3.4]{OSS2015grid}). We claim that the $\tau^R_{O}$-set satisfies $\tau_i^R(K,\fro)+n+1=\tau^R_{\max}(K,\fro)$ and $\tau^R_{\min}(K,\fro)\le \tau^R_i(K,\fro)$. 


From the construction of $X$ and $Y$ above, we know that $\tau^R_{\min}(K,\fro)\le \tau_i^R(K,\fro)$, $\tau^R_{\max}(K,\fro) \le \tau_i^R(K,\fro)+1+n$. Since $Y$ has the first coordinate being non-torsion, we know that there must be a homogeneous tower generator with the same property. 


Let $Z=(x,y)$ be a cycle representative of such a generator. Then $\partial x=0$ and $\partial y+(u_1+u_2)x=0$. By assumption, $[x]$ is a non-torsion element (for both $u_1$ and $u_2$ action) in $H(C)$. (From the proof of Proposition~\ref{prop:tau_1=tau_2}, we see that an element in $H(C)$ is $u_1$-torsion iff it is $u_2$-torsion.) Without loss of generality, we can assume $[x]=u_1^m [x_0]$ for some $m\ge 0$. Thus, $A^R(Z)\le A^R ([x_0])-1/2$, so $Z$ must be the generator of the tower starting at grading $-2\tau^R_{\max}(K,\fro)$, since $\tau^R_{\min}(K,\fro)\le \tau^R_i(K,\fro)=-2A^R(x_0)$. On the other hand, we need $(u_1+u_2)x$ to be a boundary, so $m$ must be bigger than $2n$, which leads to a further refinement $A^R(Z)\le A^R ([x_0])-1/2-n$. Thus, we must have $A^R(Z)= A^R ([x_0])-1/2-n$ and $\tau^R_{\max}(K,\fro) = \tau_i^R(K,\fro)+1+n$.

Thus, we have shown that 
\begin{prop}\label{prop:relationship tau_max, tau_min and tau_i^R}
For an oriented strongly invertible knot $(K,\fro)$ in $(S^3,\tau_{\std})$, we have \[\tau^R_{\max}(K,\fro) = \tau_i^R(K,\fro)+1+n \quad \text{and}\quad \tau^R_{\min}(K,\fro)\le \tau^R_i(K,\fro).\]
Thus, $\tau^R_{\max}(K,\fro)-\tau^R_{\min}(K,\fro)-1$ provides an upper bound on the difference between $u_1$ and $u_2$ actions on the tower in $\HFLR^-_{OO}(K,\fro)$.
\end{prop}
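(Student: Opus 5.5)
The argument runs through the mapping cone description of $\GCR^-_{O}$. Fix an $OO$-real grid diagram $\cH$ and write $C=\GCR^-_{OO}(\cH)$ and $C'=C/(u_1-u_2)=\GCR^-_O(\cH)$. Since $u_1-u_2$ is injective on $C$, \cite[Lemma~5.2.13]{OSS2015grid} gives $C'\simeq \Cone(u_1-u_2\colon C\to C)$. By Proposition~\ref{prop:tau_1=tau_2} and its proof, we may fix a homogeneous cycle $x_0$ whose class realizes $\tau^R_1=\tau^R_2$, with $A^R(x_0)=-\tau^R_i/2$. Let $n$ be the least integer with $(u_1^{2n+1}-u_2^{2n+1})x_0=\partial y_0$; that such an $n$ exists is the claim inside the proof of Proposition~\ref{prop:tau_1=tau_2}. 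By Proposition~\ref{prop:U,v action identification} there is also $y_1$ with $\partial y_1=(u_1^{2n}-u_2^{2n})x_0$.

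\emph{Step 1: exhibit two towers.} In the cone, the cycles $X=(0,x_0)$ and $Y=(u_1^{2n}x_0,\,y_0+u_2y_1)$ are non-torsion. They lie in different towers because the first coordinate of $u^mX$ is always $0$, while $[u_1^{2n}x_0]\ne 0$ is non-torsion in $H(C)$. Proposition~\ref{prop:ranks of minus theories} says $\GHR^-_O$ has rank exactly $2$, so these are the only towers. Reading off gradings with $C'_{d,s}=C_{d,s+1/2}\oplus C_{d,s}$ places $X$ at $A^R=-\tau^R_i/2$ and $Y$ at $-\tau^R_i/2-n-1/2$. Since tower generators can only sit in Alexander gradings at least as high as these classes, this yields $\tau^R_{\min}\le\tau^R_i$ and $\tau^R_{\max}\le \tau^R_i+1+n$.

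\emph{Step 2: the reverse inequality for $\tau^R_{\max}$; this is the main obstacle.} The class $[Y]$ has non-torsion first coordinate under the projection of the cone to its first copy of $C$, which is a chain map to $H(C)$. The projection of $[Y]$ lies in the span of the two homogeneous tower generators modulo torsion, so some homogeneous tower generator $Z=(x,y)$ also has $[x]$ non-torsion. Since $H(C)$ has rank one and $u_1$-torsion coincides with $u_2$-torsion, we may write $[x]=u_1^m[x_0]$, modulo torsion and after adjusting $Z$ by a multiple of $X$ if necessary. The grading shift then gives $A^R(Z)\le A^R(x_0)-1/2$, so $Z$ must be the generator realizing $\tau^R_{\max}$, given the bound on $\tau^R_{\min}$ from Step 1.

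The cycle condition $\partial y=(u_1+u_2)x$ forces $(u_1+u_2)u_1^m x_0$ to be a boundary. Using $u_1^2\simeq u_2^2$, rewrite this as $(u_1^{m+1}-u_2^{m+1})x_0$ up to a boundary. Minimality of $n$ then forces $m\ge 2n$, hence $A^R(Z)\le A^R(x_0)-n-1/2$. Combined with Step 1, this gives the equality $\tau^R_{\max}=\tau^R_i+1+n$.

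The delicate point is this last reduction. One has to check that modifications by torsion, and by the homotopies from Proposition~\ref{prop:U,v action identification}, do not spoil minimality. The plan is to argue that any boundary relation of this kind for smaller $m$ would produce, after multiplying by the appropriate power of $u_1$ and correcting with $y_1$-type homotopies, a primitive of $(u_1^{2n'+1}-u_2^{2n'+1})x_0$ for some $n'<n$, contradicting the choice of $n$. The final sentence of the proposition is then immediate: $n=\tau^R_{\max}-\tau^R_i-1\le\tau^R_{\max}-\tau^R_{\min}-1$.
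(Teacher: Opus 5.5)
Your outline follows the paper's argument step for step: the same cycles $X=(0,x_0)$ and $Y$, the same tower generator $Z=(x,y)$ with non-torsion first coordinate, and the same appeal to minimality of $n$. The paper's text makes the same moves at the same points (it also asserts $m\ge 2n$ and writes $\tau^R_i+1+n$), so the following gaps are inherited rather than introduced, but they are real.

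\emph{Grading bookkeeping.} Since $\tau=-2A^R$, a class at $A^R=-\tau^R_i/2-n-\tfrac12$ gives $\tau^R_{\max}\le\tau^R_i+2n+1$, not $\tau^R_i+1+n$. Likewise your final bound $A^R(Z)\le A^R(x_0)-n-\tfrac12$ would yield $\tau^R_{\max}=\tau^R_i+2n+1$, which contradicts the displayed formula whenever $n\ge1$.

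\emph{The step ``minimality of $n$ forces $m\ge 2n$'' fails for odd $m$.} Put $e=u_1+u_2$. On $H(C)$ we have $e^2=u_1^2+u_2^2=0$ and $u_1^{2j+1}+u_2^{2j+1}=u_1^{2j}e$. So minimality says $u_1^{2n-2}e[x_0]\neq 0=u_1^{2n}e[x_0]$ (for $n\ge1$), and nothing about $u_1^{2n-1}e[x_0]$. Your rewrite of $(u_1+u_2)u_1^m$ as $u_1^{m+1}+u_2^{m+1}$ is valid only for even $m$; for odd $m$ the latter is a multiple of $u_1^2+u_2^2$, hence null-homotopic. Multiplying by powers of $u_1$ only raises exponents, so your proposed repair cannot produce a primitive for some $n'<n$. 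A concrete model: the cyclic module $\F[u_1,u_2]/(eu_1,eu_2)$ generated by $x_0$. It has every property your argument uses (rank one, $u_1^2=u_2^2$, $u_1$- and $u_2$-torsion agree) and has $n=1$. Yet $u_1x_0\in\ker e$, so the lower tower generator sits at $A^R(x_0)-1$, not $A^R(x_0)-\tfrac32$.

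\emph{The torsion term cannot be discarded.} From $[x]=u_1^m[x_0]+t$, the cycle condition only gives $e\,u_1^m[x_0]=e\,t$. In fact $n$ depends on which $x_0$ you fix. If $H(C)$ has a torsion class $s$ in the bigrading of $x_0$ with $es=ex_0\neq0$, then replacing $x_0$ by $x_0+s$ drops $n$ to $0$, while $\tau^R_{\max}$ is unchanged. So no argument can give an equality in terms of $n$ for an arbitrary fixed $x_0$.

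\emph{What the argument does prove.} Your Step 1, combined with the exact triangle $H(C)\xrightarrow{e}H(C)\to H(C')\to H(C)$, gives $\tau^R_{\min}\le\tau^R_i$ and $\tau^R_{\max}=\tau^R_i+1+m_0$. Here $m_0$ is the least $m$ with $u_1^m[x_0]\in\ker(e)+\Tors(H(C))$; a lift of a generator of $\ker(e)$ modulo torsion is the lower tower generator. The cycle $Y$ shows $m_0\le 2n$, and the final inequality of the proposition holds with $m_0$ in place of $n$. Identifying $m_0$ with $n$ (or with $2n$) needs an input that neither your proposal nor the paper's argument supplies.
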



The minus real knot Floer homology defined in \cite{YXHFLR} is a module over $\F[u]$, while the $OO$-minus real knot Floer homology is a module over $\F[u_1,u_2]$. From Proposition~\ref{prop:U,v action identification}, we know that $u_1^2$ and $u_2^2$ actions on $\GHR^-_{OO}$ are the same. So it is natural to ask whether the $u_1$ and $u_2$ actions are the same. From Example~\ref{ex:unknot}, we know that on $\GCR^-_{OO}(U,\fro)$, $u_1$ and $u_2$ actions are homotopic, thus they are equal on $\GHR^-_{OO}(U,\fro)$. But this is not true in general.

\begin{lem}(A modification of \cite[Lemma~7.4.1]{OSS2015grid}) Let $C$ be a chain complex of modules over the polynomial ring over $\F$ with degree $(-1,-1/2)$ variables $\{u_i\}_{1\le i\le t}$ and degree $(-2,-1)$ variables $\{U_i\}_{1\le i\le k}$. If $u_1$ and $u_2$ actions are homotopic on $C$, then we have a quasi-isomorphism from $C\otimes W$ to $\frac{C}{u_1-u_2}$ as chain complexes over $\F[u_1,\ldots,u_t,U_1,\ldots, U_k]$, in which $W=\F_{(0,0)}\otimes \F_{(0,1/2)}$. In particular, $H(C)\otimes W\cong H(C/u_1-u_2)$ as modules over $\F[u_1,\ldots,u_t,U_1,\ldots, U_k]$.
\end{lem}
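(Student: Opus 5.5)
Since $C$ is a chain complex of free modules over the polynomial ring, multiplication by $u_1-u_2$ is injective on $C$. As in the discussion following Proposition~\ref{prop:tau_1=tau_2}, \cite[Lemma~5.2.13]{OSS2015grid} then gives a quasi-isomorphism of chain complexes over the polynomial ring
\[\Cone(u_1-u_2\colon C\to C)\simeq \frac{C}{u_1-u_2}.\]
The cone is $C\oplus C$ with differential $\partial'=\begin{bmatrix}\partial & u_1-u_2\\ 0&\partial\end{bmatrix}$, and the shift making the projection grading-preserving is $C'_{d,s}=C_{d,s+1/2}\oplus C_{d,s}$, as computed earlier. It therefore suffices to show that this cone is isomorphic, as a chain complex over $\F[u_1,\ldots,u_t,U_1,\ldots,U_k]$, to $C\otimes W = C\oplus C[\text{shift}]$ with the diagonal differential $\partial\oplus\partial$.

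By hypothesis there is a module map $H\colon C\to C$ with $u_1-u_2=\partial H+H\partial$. Since $u_1-u_2$ has bigrading $(-1,-1/2)$, we may take $H$ homogeneous of bigrading $(0,-1/2)$. We also take $H$ to commute with all the variables, which is automatic if the homotopy is one of module maps. We then form the change of basis
\[\Theta=\begin{bmatrix}\id & H\\ 0&\id\end{bmatrix}\colon C\oplus C\to C\oplus C.\]
Over $\F$ this map is its own inverse, since $\Theta^2=\id+2H$. A direct computation gives
\[\Theta\circ\begin{bmatrix}\partial&0\\0&\partial\end{bmatrix}\circ\Theta^{-1}=\begin{bmatrix}\partial & \partial H+H\partial\\ 0&\partial\end{bmatrix}=\partial'.\]
So $\Theta$ is an isomorphism of chain complexes from $(C\oplus C,\partial\oplus\partial)$ to the cone. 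The grading shift of $H$ matches the shift between the two summands of the cone, so the diagonal complex is exactly $C\otimes W$ with $W$ supported in bigradings $(0,0)$ and $(0,1/2)$. Composing $\Theta$ with the quasi-isomorphism from the cone to $C/(u_1-u_2)$ gives the required quasi-isomorphism $C\otimes W\to C/(u_1-u_2)$. Taking homology, and using that $W$ is a vector space over $\F$, yields $H(C)\otimes W\cong H(C/(u_1-u_2))$ as modules.

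The main point to check is that $H$ can be chosen homogeneous and linear over the whole ring. If the hypothesis only provides an $\F[u_1,\ldots]$-linear but non-homogeneous homotopy, we replace $H$ by its homogeneous component of bigrading $(0,-1/2)$. This component still satisfies the homotopy relation, because $\partial$ is homogeneous of degree $(-1,0)$ and $u_1-u_2$ is homogeneous of degree $(-1,-1/2)$. The remaining ingredient is the injectivity of $u_1-u_2$ needed for \cite[Lemma~5.2.13]{OSS2015grid}, which holds because the complexes are free over the polynomial ring.
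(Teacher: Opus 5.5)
Your proof is correct and is essentially the argument the paper intends. The paper gives no proof beyond the pointer to \cite[Lemma~7.4.1]{OSS2015grid}, and the standard argument there is yours: identify $C/(u_1-u_2)$ with $\Cone(u_1-u_2)$ by injectivity and \cite[Lemma~5.2.13]{OSS2015grid}, as the paper also does after Proposition~\ref{prop:tau_1=tau_2}, then use the null-homotopy to untwist the cone into $\Cone(0)=C\otimes W$ via $(x,y)\mapsto (x+Hy,y)$.

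Two bookkeeping remarks. First, you were right to make freeness explicit, because without injectivity of $u_1-u_2$ the statement fails (take $C=\F[u]$ with $u_1=u_2=u$ and $\partial=0$). Second, with the shift $C'_{d,s}=C_{d,s+1/2}\oplus C_{d,s}$ that you use, the second summand is really $C\otimes\F_{(0,-1/2)}$. So the $(0,1/2)$ in the statement, like the $\otimes$ in $W$, is a convention slip that your argument does not actually produce.
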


From this lemma, we can see that if $u_1$ and $u_2$ actions are homotopic on $\GCR^-_{OO}(K,\fro)$, then their actions on $\GHR^-_{OO}(K,\fro)$ are equal (thus $\GHR^-_{OO}(K,\fro)$ can be regarded as a $\F[u]$-module) and $\GHR^-_{O}(K,\fro)$ must take the form $\GHR^-_{OO}(K,\fro)\otimes W$. Thus, if $\GHR^-_{O}(K,\fro)$ does not take the form $P\otimes W$ for some $\F[u]$-module $P$, then $u_1$ and $u_2$ actions on $\GCR^-_{OO}(K,\fro)$ cannot be homotopic. Here, we abuse the notation $\GCR^-_{OO}(K,\fro)$ for $\GCR^-_{OO}(\cH)$ for any choice of $OO$-real Heegaard diagram representing $(K,\fro)$.

In Appendix~\ref{app:Calculation results for knots with small crossing numbers}, we will see from examples that $\GHR^-_{O}(K,\fro)$ rarely takes the form $P\otimes W$. So for most strongly invertible knots, the $u_1$ and $u_2$ actions are not equal on the homology group level thus not homotopic on the chain complex level. This means two $O$ base points play unequal roles in the $OO$-real link Floer homology, which is an evidence that there is still a direction dependence in the $OO$-theory.

Motivated by the propositions and examples, we make the following conjecture.

\begin{conjecture}
The inequality in Proposition~\ref{prop:relationship tau_max, tau_min and tau_i^R} is an equality, so $\tau^R_{\max}(K,\fro)-\tau^R_{\min}(K,\fro)-1$ records the difference between $u_1$ and $u_2$ actions on the tower in $\HFLR^-_{OO}(K,\fro)$ faithfully. 
\end{conjecture}

\subsection{Two spectral sequences}
Before ending this section, we step back to the holomorphic theory and consider two spectral sequences associated to real knot Floer homology as in \cite[Section~4]{YXHFLR}.

The first spectral sequence is a corollary of \cite[Theorem 1.1]{hendricks2025noterealheegaardfloer} by applying it to $(\Sym^m(\Sigma),\T_{\alpha},\T_{\beta},R)$ with $\Sigma$, $\bm\alpha$ and $\bm\beta$ coming from a minimal $OO$-real Heegaard diagram.

\begin{thm}\label{thm:spectral sequence from HFK,strongly invertible case}
For a strongly invertible knot $(K,\fro)$, there is a spectral sequence starting from $\widehat{\HFK}(K)\otimes \F[\theta,\theta^{-1}]\otimes (\F_{(0,0)}\oplus \F_{(1,1)})$ and converging to $\widehat{\HFKR}_{O}(K,\fro)\otimes \F[\theta,\theta^{-1}]$. Moreover, this spectral sequence respects the splitting of $\widehat{\HFK}$ and $\widehat{\HFKR}_{O}$ along the (real) Alexander grading. 
\end{thm}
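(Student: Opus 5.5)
We will apply \cite[Theorem~1.1]{hendricks2025noterealheegaardfloer} to the Lagrangian Floer data coming from a minimal $OO$-real Heegaard diagram for $(K,\fro)$. That theorem has the following shape: for a symplectic manifold with an anti-symplectic involution $R$ exchanging two Lagrangians $L_0,L_1$, there is a spectral sequence from $HF(L_0,L_1)\otimes\F[\theta,\theta^{-1}]$ to $HF(L_0^R\cap L_1^R\text{-type real Floer theory})\otimes\F[\theta,\theta^{-1}]$.

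\textbf{Choice of diagram and complexes.} Fix a minimal $OO$-real Heegaard diagram $\cH=(\Sigma,\bm\alpha,\bm\beta,\bfO,\bfX,R)$ for $(K,\fro)$, which exists by Proposition~\ref{prop:existence of real HD for strongly invertible knots and real H moves}. It has two fixed $O$'s and one pair $(X_1,X_1')$, so four base points in total. The ambient manifold is $\Sym^m(\Sigma)$ minus the divisors $\{p\}\times\Sym^{m-1}(\Sigma)$ for all base points $p$, with Lagrangians $\T_\alpha,\T_\beta$ exchanged by $R$.

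On the non-real side, the Floer complex counts all disks avoiding all four base points. This is the hat complex of a link-type diagram for $K$ with two $O$'s and two $X$'s. Its homology is therefore $\widehat{\HFK}(K)\otimes V$, with one extra rank-two factor $V$ coming from the surplus base point pair.

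On the real side, the complex counts real disks avoiding all base points. This is the tilde complex with $u$ and $V_1$ set to zero. Because the diagram is minimal ($k=0=l_p$), it coincides with $\widehat{\CFLR}_O$. The $V_1=0$ setting is forced by the blocking in the definition.

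\textbf{Key steps.}
\begin{enumerate}
\item Check the hypotheses of \cite{hendricks2025noterealheegaardfloer}. We need exactness or monotonicity, which holds in the cylindrical reformulation. We also need admissibility, which is automatic here since all base points are blocked. Finally, the stable normal trivialization or polarization condition used in that paper must hold; I expect it to follow as in \cite[Section~4]{YXHFLR}.
\item Identify the $E_1$ page with $\widehat{\HFK}(K)\otimes\F[\theta,\theta^{-1}]\otimes(\F_{(0,0)}\oplus\F_{(1,1)})$. Here we use that a four-pointed diagram computes $\widehat{\HFK}(K)\otimes(\F_{(0,0)}\oplus\F_{(-1,-1)})$ (see \cite[Proposition~2.3]{ozsvath2008holomorphic}). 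After the regrading convention used in \cite{YXHFLR} for the analogous $OX$ statement, this gives the stated factor. We then check degrees by comparing Maslov index conventions.
\item Identify the $E_\infty$ page with $\widehat{\HFKR}_O(K,\fro)\otimes\F[\theta,\theta^{-1}]$.
\item Prove compatibility with the Alexander splitting. Differentials in the spectral sequence come from counts of disks and doubled real disks. Such a disk $\phi$ satisfies $n_{\bfO}(\phi)=n_{\bfX}(\phi)=0$, so it preserves the relative Alexander grading. The real Alexander grading of an $R$-invariant generator is half of its usual one, which shows the filtration levels match.
\end{enumerate}

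\textbf{Main obstacle.} The hardest step is the grading bookkeeping in step 2. This means matching the shift of the extra tensor factor and the normalization relating $A^R$ to $A/2$, since the asymmetric absolute normalization in the $OO$-theory differs from the $OX$ one. I would handle it by checking the unknot computation in Example~\ref{ex:unknot}: there $\widehat{\HFK}(U)=\F$ and $\widehat{\HFKR}_O$ has rank two, and this should pin down the shifts. The analytic input in step 1 is taken from the cited theorem, exactly as in \cite[Section~4]{YXHFLR}.
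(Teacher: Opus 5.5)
Your proposal is correct and follows the paper's argument, which deduces the statement directly from \cite[Theorem~1.1]{hendricks2025noterealheegaardfloer} applied to $(\mathrm{Sym}^m(\Sigma),\T_\alpha,\T_\beta,R)$ for a minimal $OO$-real Heegaard diagram. Your identification of the non-real side with the two-pair hat knot Floer complex (which gives the extra rank-two factor), your identification of the real side with $\widehat{\CFLR}_O$, and your observation that every counted curve avoids the base points are exactly what that one-line deduction relies on. One small correction: admissibility does not follow from blocking the base points---in $S^3$ it holds because no nontrivial periodic domain has zero multiplicity at all the $O$'s, and in general you should simply start from an admissible diagram.
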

It was conjectured in \cite{hendricks2025noterealheegaardfloer} and proved in \cite{LO_Real_bordered} and \cite{BGX} that the first differential in a similar spectral sequence in $OX$-theory can be written down explicitly as \[d_1=(1+(\iota_K\tau_K)_*),\] when using a minimally pointed real nice diagram. It would be nice to know whether the differential for this spectral sequence can be written down explicitly in some special cases. As remarked in \cite[Example~1.6]{hendricks2025noterealheegaardfloer}, the spectral sequence collapses immediately when the underlying knot is $\HFK$-thin.

The second one comes from the filtration induced by the real Alexander grading, as in \cite[Section~4.2]{YXHFLR}.

\begin{thm}\label{thm:spectral sequence relating HFKR and HFR}
Let $(K,\fro)$ be an oriented strongly invertible knot. Then there is a spectral sequence \[\widehat{\HFKR}
_{O}(K,\fro) \Longrightarrow \widehat{\HFR}(S^3,\tau) \otimes (\F\oplus\F).\]
\end{thm}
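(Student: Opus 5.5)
I would follow the strategy of \cite[Section~4.2]{YXHFLR}: filter the hat complex by the real Alexander grading and identify the associated graded and the total homology. Fix a minimal $OO$-real Heegaard diagram $\cH$ for $(K,\fro)$ that is strongly admissible for every relevant real $\spinc$ structure. Minimality means there are exactly two $X$-base points $X_1,X_1'$, interchanged by $R$, and two fixed $O$-base points. Consider the complex
\[
\widehat{\CFLR}_{O}(\cH)=\CFLR^-_O(\cH)/(u),
\]
whose differential counts real disks with $n_{\bfO}(\phi)=0$ but arbitrary multiplicity at $X_1$ (recall that $V$ has already been set to $0$). Since $n_{X_1}(\phi)=n_{X_1'}(\phi)$ for real classes, $A^R$ is non-increasing along the differential. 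So $A^R$ defines a filtration, and its associated graded complex is exactly $\widehat{\CFLR}_O(\cH)$ together with the differential counting disks avoiding all base points. This gives the $E_1$-page $\widehat{\HFKR}_O(K,\fro)$, which is a finite-dimensional bounded filtered complex, so the spectral sequence converges.

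Next I would identify the total homology of this filtered complex. Forgetting the $X$-base points, which amounts to counting disks through them with no weight, turns the total complex into the hat real Heegaard Floer complex of $(Y,\tau)=(S^3,\tau_{\std})$. This complex is computed from a real Heegaard diagram with two fixed $O$-base points and no other basepoints. Removing the $X$'s is legitimate because $(\Sigma,\bm\alpha,\bm\beta,\bfO,R)$ is by definition a multi-based real Heegaard diagram for $(Y,\tau)$. So the total homology is $\widehat{\HFR}$ of $(Y,\tau)$ computed with two fixed basepoints.

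The remaining step, and the main obstacle, is the basepoint count. One must show that adding a second fixed basepoint to a real Heegaard diagram tensors the hat real Floer homology with $\F\oplus\F$, i.e. a real analogue of the usual free-basepoint stabilization formula. I would first try the real basepoint-stabilization lemma of \cite{guth2025real} and \cite{GM_real_naturality}, as was done for the $OX$ case in \cite{YXHFLR}. Failing that, I would argue directly: perform an equivariant $\{1\}$-stabilization along the fixed set that places the new fixed base point in a small disk cut by one new real $\alpha/\beta$ pair meeting in two points. A neck-stretching argument, or in $S^3$ a real grid computation, then shows that the complex becomes a tensor product with the two-dimensional complex spanned by these intersection points, with vanishing differential in the hat theory. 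Combining this with the filtration gives
\[
\widehat{\HFKR}_O(K,\fro)\Longrightarrow \widehat{\HFR}(S^3,\tau)\otimes(\F\oplus\F).
\]

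Finally, I would check that the result is independent of diagram choices. This follows because the change-of-diagram maps are filtered, so the spectral sequence is an invariant from the $E_1$-page on.
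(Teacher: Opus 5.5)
Your strategy is the paper's: the paper says only that the spectral sequence comes from the filtration by the real Alexander grading, exactly as in \cite[Section~4.2]{YXHFLR}. Your outline (filter, identify $E_1$, identify the total homology, count fixed basepoints) is the natural expansion of that remark. As written, though, two steps need repair.

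First, you filter the wrong complex. In $\widehat{\CFLR}_O(\cH)=\CFLR^-_O(\cH)/(u)$ the variable $V$ is already $0$, so every disk through $X_1$ or $X_1'$ is discarded. You cannot both set $V=0$ and allow arbitrary multiplicity at $X_1$. With $V=0$ the differential preserves $A^R$, so the spectral sequence degenerates at $E_1$ and converges to $\widehat{\HFKR}_O(K,\fro)$ itself; your later step of forgetting the $X$-base points does not apply to that complex. The filtered complex must instead be $\fullCFLR^-_{OO}(\cH)$ with $u_1,u_2\mapsto 0$ and $V\mapsto 1$; the curvature $(u_1^2+u_2^2)V$ vanishes there. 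This is the hat real complex of $(S^3,\tau)$ with the two fixed basepoints $\bfO$. On it, $A^R$ is non-increasing because holomorphic domains are positive, not because of the symmetry $n_{X_1}=n_{X_1'}$. Its associated graded is $\widehat{\CFLR}_O(\cH)$.

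Second, your tensor-product claim in the basepoint count holds only if the local two-intersection-point piece is glued in inside the region containing the existing fixed $O$. You also need invariance of the two-basepoint hat real homology under equivariantly isotoping the basepoints along $\fix(\tau)$, to compare with the configuration coming from $\cH$. If you glue at a generic unmarked fixed point, disks can cross the neck. The neck-stretching of Proposition~\ref{prop: differential oftype changing quasi-stab}, specialized to $u\mapsto 0$ and $v,V\mapsto 1$, then gives
\[
\begin{bmatrix}\partial_{\mathrm{even}} & \partial_{\mathrm{odd}}\\ \partial_{\mathrm{odd}} & \partial_{\mathrm{even}}\end{bmatrix},
\]
where $\partial_{\mathrm{even}}$ and $\partial_{\mathrm{odd}}$ count disks of even and odd multiplicity at the gluing point. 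As in Corollary~\ref{cor:type-changing quasi-stab differential in new basis}, this is a mapping cone on $\partial_{\mathrm{odd}}$, not a tensor product. When the gluing point lies in the $O$-region, all such multiplicities vanish in the hat theory, so $\partial_{\mathrm{odd}}=0$. You then get two copies of the one-basepoint complex with diagonal differential, which is the factor $\F\oplus\F$.
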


\section{Property comparison}\label{sec:Property comparison}
In this section, we make a comparison between the real link Floer homology defined in \cite{YXHFLR} and the modified real link Floer homology defined above. Unlike in Section~\ref{sub:Structural properties in $S^3$} and \ref{sub:Torsion order, real tau-invariant and module structure}, we will work with generalized strongly invertible links in an arbitrary closed real $3$-manifolds from this point on. But the $3$-manifold will usually be omitted from the notation for simplicity.


In \cite[Definition~2.2]{YXHFLR}, we only characterized generalized strongly invertible links with auxiliary data in real closed $3$-manifold a with connected fixed set. To make the $OX$-theory work in general, we would like to weaken the assumption: we only require a choice of orientation on $L$ that fits it into the definition of ``generalized strongly invertible'' and a labeling of $O$ and $X$ on the fixed points of each strongly invertible component. 

Recall that a choice of orientation $\fro$ on $L$ is a part of $\fra$. Let $\cH$ be a real Heegaard diagram for $(L,\fra)$. Then performing one of the quasi-stabilizations shown in Figure~\ref{fig:OX-OO stabilization} at each fixed $X$-base point, we get $\cH_O$, an $OO$-real Heegaard diagram for $(L,\fro)$. Using such a picture, we will analyze the difference and connection between these two theories.

\begin{figure}
    \centering
     \begin{overpic}[width=0.6\textwidth]{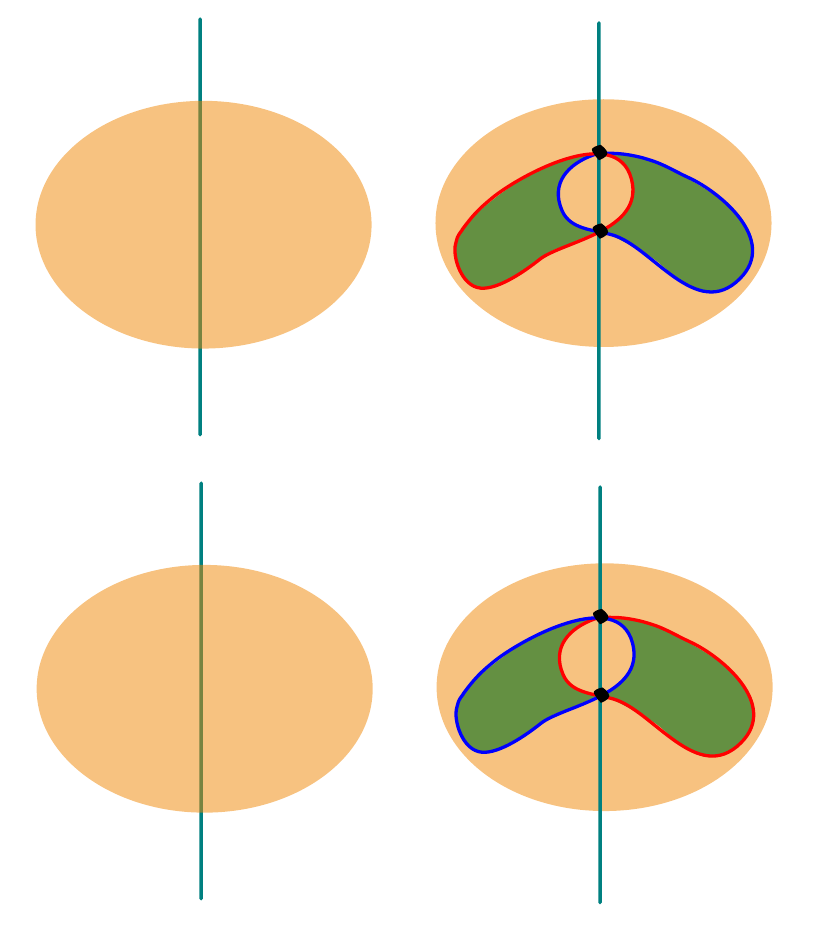}
     \put(20,0) {$\cH$}
     \put(20,50) {$\cH$}
     \put(62,0) {$\cH'_O$}
     \put(62,50) {$\cH_O$}
     \put(64,36) {$c$}
     \put(64,23) {$d$}
     \put(64,85) {$a$}
     \put(64,72) {$b$}
     
     \put(20,29) {$X$}
     \put(20,78) {$X$}
     \put(62,29) {$O$}
     \put(62,78) {$O$}
     
     \put(54,26) {$X$}
     \put(70,26) {$X'$}
     \put(54,75) {$X$}
     \put(70,75) {$X'$}

	\end{overpic}
    \caption{Two kinds of quasi-stabilization from an $OX$-real Heegaard diagram to an $OO$-real Heegaard diagram.}
    \label{fig:OX-OO stabilization}
\end{figure}

\begin{remark}\label{rmk:two local pictures for type changing quasi-stabilization}
	Abstractly, we cannot tell the difference between two quasi-stabilized diagrams shown in Figure~\ref{fig:OX-OO stabilization}. However, when we later consider real link Floer complex or homology as an invariant of a multi-based link embeddings in a real $3$-manifold (see Section~\ref{sec:Based link invariants, base point action and quasi-stabilization}), the two diagrams characterize different embedding of the same based link.
\end{remark}

\subsection{Killing the curvature}\label{sub:Killing the curvature}
In this subsection, we consider the difference in curvature between the two versions of full real link Floer complexes. Since the setups of the two theories are identical on pairs of components, we will focus on the case $L=K$ is a strongly invertible knot.

Label the base points in $\cH$ so that the two fixed base points are $O_1$ and $X_n$. Along the arc from $O_1$ to $X_n$, the base points are \[O_1, X_1, \ldots, X_{n-1}, O_{n}, X_{n}\] while along the arc from $X_n$ to $O_1$, the base points are \[X_n,O_n',X_{n-1}', \ldots,X_1',O_1.\] In this way, we have variables $u_1$ (of degree $(-1,-1/2)$) and $v_n$ (of degree $(0,1/2)$) account for $O_1$ and $X_n$, and variables $U_{i}$ ($2\le i\le n$, each of degree $(-2,-1)$), $V_{j}$ ($1\le j\le n-1$, each of degree $(0,1)$) accounts for pairs $(O_i,O_i')$ and $(X_j,X_j')$. The curvature of $\fullCFLR^-(K,\fra)$ is \[\omega_{K,\fra}=u_1^2\cdot V_1+ U_2\cdot V_1+ U_2\cdot V_2+\ldots U_{n}\cdot V_{n-1}+ U_{n}\cdot v_n^2,\] if $n\ge 2$. The case $n=1$ is a little subtle: if the link consists of the single knot $K$, then $\omega_{K,\fra}=0$; if it is a genuine link, then $K$ contributes a term $\omega_{K,\fra}=u_1^2v_1^2$ to the curvature.

After the quasi-stabilization, $X_n$ moves away from the fixed set and is paired with a new base point $X_n'$ while a new base point $O_{n+1}$ appears on the fixed set. In the base ring, we replace the degree $(0,1/2)$ variable $v_n$ with a degree $(0,1)$ variable $V_n$ and a new degree $(-1,-1/2)$ variable $u_{n+1}$ is introduced. In Section~\ref{sub:Basic setup}, we calculated the curvature of $\fullCFLR^-_{OO}(K,\fro)$: \[\omega^O_{K,\fro}=u_1^2\cdot V_1+ U_2\cdot V_1+ U_2\cdot V_2+\ldots U_{n}\cdot V_{n-1}+ U_{n}\cdot V_n+ u_{n+1}^2\cdot V_n.\]

In order to define homology theories, we need to kill the curvature. In the previous discussion, we always set $v$ and $V$ variables to zero, so the curvature is naturally zero. In the $OO$-theory, we have an extra variation defined by further setting $u_1=u_{n+1}$. Motivated by the coloring used in \cite{zemke2019link}, we make comparison between their curvatures and propose some further variation.

An obvious distinction between the two curvatures is that $\omega_{K,\fra}$ consists of $2n-1$ monomials while $\omega^O_{K,\fro}$ consists of $2n$ monomials. Philosophically, this implies that the two full real link Floer complexes are quite different, since we know from \cite[Section~2]{zemke2019link} that two curved complexes admit a non-trivial morphism between them only if they share the same curvature. The odd and even distinction between $\omega_{K,\fra}$ and $\omega^O_{K,\fro}$ makes it hard for them to be equal, even after coloring. This is partially the reason why the author decided to investigate $OO$ and $OX$-theories separately and expects them to provide us with different information.

In the usual link Floer theory, if along the orientation of the knot, the base points read $O_1, X_1, \ldots O_n, X_n$, then the curvature is \[\omega=U_1V_1+V_1U_2+\ldots U_nV_n+V_n U_1.\] An effective way of killing $\omega$ is to set all $U_i$ variables or all $V_i$ variables to be equal. However, for real theories, this strategy works for $OO$-theory, but not for $OX$-theory. More precisely, by setting $u_1^2=U_2=\ldots=U_{n}=u_{n+1}^2$,  $\omega^O_{K,\fro}$ becomes zero in $\cR(\cH_O)$. But whether we set $u_1^2=U_2=\ldots=U_{n}$ or $V_1=\ldots=V_{n-1}=v_{n}^2$, $\omega_{K,\fra}$ remains non-zero. $2n-2$ terms cancel in pairs and a monomial is left. 

Thus, in the $OO$-theory, we have yet another minus version of real link Floer homology which has no counterpart in $OX$-theory. Let $\cH_O$ be a minimal $OO$-real Heegaard diagram for $(L,\fro)$. Assume it is strongly $\s^R$-admissible for some $\s^R \in \rspinc(Y,\tau)$. Then we can define \[\CFLR_{OO,V}^-(\cH,\s^R)=\fullCFLR(\cH,\s^R)/ (u_{i,1}^2-u_{i,2}^2)_{1\le i\le l_f}.\] The homology \[\HFLR_{OO,V}^-(\cH,\s^R)= H(\CFLR_{OO,V}^-(\cH,\s^R),\partial^-)\] is an invariant of  $(L,\fro)$ and $\s^R$ when it is regarded as an $\F[u_1,u_2,\ldots,u_{l_f}, U_{1},\ldots U_{l_p}, V_{1},\ldots V_{l_p+l_f}]$-module. 

\begin{remark}
We will see in the next subsection that $\HFLR^-$ and $\HFLR^-_O$ can be related by an exact triangle. Motivated by this, one can ask what's the relationship between $\HFLR_{OO,V}^-(K,\fro)$ and $\HFLR^-$ or $H(\fullCFLR^-(K,\fra)/(\bfU))$. Here, $H(\fullCFLR^-(K,\fra)/(\bfU))$ is yet another minus version of real link Floer homology obtained from the full real link Floer complex defined in \cite{YXHFLR}. $\HFLR_{OO,V}^-(K,\fro)$ may contain more information than other homology theories we have discussed, since it admits both $\bfU$ and $\bfV$ actions.
\end{remark}

\subsection{Exact triangle and a common generalization}\label{sub:Exact triangle and a common generalization}

\begin{thm}\label{thm:exact triangle between OX and OO for knots}
Let $(K,\fra)$ be a strongly invertible knot with auxiliary data and $(K,\fro)$ be the underlying oriented strongly invertible knot.
Then there are exact triangles \[\begin{tikzcd}
    \widehat{\HFKR}(K,\fra) \arrow{rr}{(-1,-1/2)} & & \widehat{\HFKR}(K,\fra) \arrow{dl}{(0,1/2)}\\
    & \widehat{\HFKR}_{O}(K,\fro) \arrow{ul}{(0,0)}\\
\end{tikzcd};\]
\[\begin{tikzcd}
    \HFKR^-(K,\fra) \arrow{rr}{(-1,-1/2)} & & \HFKR^-(K,\fra) \arrow{dl}{(0,1/2)}\\
    & \HFKR^-_{O}(K,\fro) \arrow{ul}{(0,0)}\\
\end{tikzcd}.\]
An arrow labeled $(d,s)$ shifts the $(M^R,A^R)$-bigrading by $(d,s)$. These triangles split naturally along real $\spinc$ structures.
\end{thm}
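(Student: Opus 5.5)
We would prove this by a local computation at the type-changing quasi-stabilization of Figure~\ref{fig:OX-OO stabilization}. Start with a real Heegaard diagram $\cH$ for $(K,\fra)$, strongly $\s^R$-admissible, and perform the quasi-stabilization at the fixed $X$-base point to get the $OO$-diagram $\cH_O$. Relative to $\cH$, the diagram $\cH_O$ has one new pair of curves meeting near the fixed point, and the generators of $\cH_O$ take the form $\xv\times\{a\}$ and $\xv\times\{b\}$, where $\xv$ runs over generators of $\cH$ and $a,b$ are the two new real intersection points. Since the stabilization region contains no other real generators, this identification respects real $\spinc$ structures, and that is why the triangle splits along $\s^R$. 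After setting the $V$-variables to zero and $u_1=u_{n+1}$ (respectively all $u$, $U$ to zero for the hat version), we expect $\CFKR^-_O(\cH_O)$ to be a two-step filtered complex. Its associated graded should be two copies of $\CFKR^-(\cH)$, with the off-diagonal component
\[
\partial_O=\begin{bmatrix}\partial & 0\\ \Phi & \partial\end{bmatrix},
\]
where $\Phi$ counts real disks crossing the new fixed $O$-point once. The grading shift of $\Phi$ then comes out as $(-1,-1/2)$, which is the degree of a single fixed $u$-variable.

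The key steps, in order, are as follows.

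\begin{enumerate}
\item Use a neck-stretching/degeneration argument in the style of Zemke's quasi-stabilization analysis, adapted to the real setting, to show two things. Real disks staying in one of the $a$- or $b$-strata correspond bijectively to disks of $\cH$, with the base point weights $n_{X_n}$ replaced by $n_{O_{n+1}}$. Disks from $a$ to $b$ are counted by a map $\Phi$.
\item Identify $\Phi$ as a chain map $\CFKR^-(\cH)\to \CFKR^-(\cH)$ of bidegree $(-1,-1/2)$. This follows because $\partial_O^2=0$.
\item Conclude that $\CFKR^-_O(\cH_O)\cong \Cone(\Phi)$ up to grading shifts. The long exact sequence of the mapping cone then gives the triangle.
\item Read off the grading shifts from the combinatorial gradings. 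The absolute formula for $M^R_{\bfO}$ on $OO$-diagrams carries the extra $\tfrac12 l_f$, and the $A^R$ normalization constant changes with $n+l_f$. These give the labels $(0,0)$ on the inclusion and $(0,1/2)$ on the projection, since $a$ and $b$ differ in $A^R$ by $1/2$.
\end{enumerate}

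The same argument works modulo $u$, which gives the hat triangle.

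Two consistency checks are available. On the unknot, $\widehat{\HFKR}(U,\fra)=\F$, and Example~\ref{ex:unknot} gives $\widehat{\HFKR}_O=\F_{(0,0)}\oplus\F_{(0,1/2)}$. This forces $\Phi=0$ there, which matches the cone having rank twice. In the minus version we would alternatively expect $\Phi$ to be the action of the fixed variable $u_1$. If that holds, the triangle can also be obtained algebraically: the identification $u_1=u_{n+1}$ is $\Cone(u_1-u_{n+1})$, as in the argument after Proposition~\ref{prop:tau_1=tau_2}. Matching with the degree-$(-1,-1/2)$ top arrow supports this identification.

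\textbf{Main obstacle.} The hardest step is the degeneration analysis in step 1 for real disks. The stabilization sits on the fixed set, so the relevant moduli spaces are real moduli spaces near a fixed point. Ordinary quasi-stabilization gluing, which uses a product of a disk count with a model count on the sphere, does not directly apply. Here the model should be a count of real disks in a real genus-zero piece, giving counts $\#=1$ for the two "small" domains. What we would try first is to avoid holomorphic analysis: work with real nice diagrams, or real grid diagrams in $S^3$, where the stabilization is a local combinatorial move. There we would check directly that the rectangles/empty domains split as claimed. Afterwards we would invoke naturality, namely the invariance of Proposition~\ref{prop:existence of real HD for strongly invertible knots and real H moves} for both theories, to pass to arbitrary admissible diagrams.
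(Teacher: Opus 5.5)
Your skeleton is the paper's: split the generators of $\cH_O$ by which new intersection point they use, show one class spans a subcomplex, identify subcomplex and quotient with $\CFKR^\circ(\cH)$, and take the long exact sequence of the cone. Steps 2 and 3 go through as you say. The problems are in how you propose to get step 1 and in what you expect it to produce.

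\textbf{Step 1 is a corner count, not a neck-stretching problem.} Around each new intersection point the four quadrants are:
the region containing the new fixed $O$, with multiplicity $m$;
the two regions containing $X$ and $X'$, with multiplicity $0$ because they are blocked;
the outer region, with multiplicity $n$, which is what remains of the region of $\cH$ that contained the old fixed $X$.
The corner conditions give $m+n=0$ for a domain staying in one stratum, and $m+n=\pm1$ for a domain switching strata, with the sign fixed by the direction. Positivity therefore rules out one direction entirely. It also forces $m=n=0$ on the diagonal, so diagonal domains are exactly the domains of $\cH$ with zero multiplicity at the old $X$, and the counts agree for a suitable almost complex structure. In particular there is no ``replacement of $n_{X_n}$ by $n_{O_{n+1}}$'': both multiplicities vanish. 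The real neck-stretching you describe is what the paper uses later (Proposition~\ref{prop: differential oftype changing quasi-stab}) to compute the full quasi-stabilized differential; the triangle does not need it. Your grid/nice-diagram fallback would in any case only reach $S^3$.

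\textbf{Your description of $\Phi$ is wrong.} An off-diagonal domain has $m+n=1$. If $m=1$, it is forced to be the small bigon through the new $O$, contributing $u\cdot\id$. If $m=0$ and $n=1$, it is a disk of $\cH$ passing once through the old fixed $X$. So $\Phi=u+p_1$ with $p_1$ nonzero in general. For a minimally pointed knot, the later proposition even shows the two $u$-terms cancel once the fixed variables are identified. Your model count with only the two small domains, and your expectation $\Phi=u_1$, both miss $p_1$.

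That expectation fails for every knot in $S^3$. If $\Phi_*$ were multiplication by $u$, the triangle would make $\HFKR^-_O(K,\fro)$ torsion, whereas it has rank two (Proposition~\ref{prop:ranks of minus theories}). For the unknot you would get $\F$ instead of $\F[u]_{(0,0)}\oplus\F[u]_{(0,1/2)}$. In the hat version it would force $\dim\widehat{\HFKR}_O=2\dim\widehat{\HFKR}$, which the computed examples rarely satisfy. The cone of $u_1-u_{n+1}$ you invoke relates $\CFKR^-_O$ to the $OO$-complex, not to the $OX$-complex, so it is no shortcut. None of this breaks existence of the triangle, which only needs $\Phi$ to be a chain map.

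\textbf{Your step 4 labels are reversed.} The small bigon shows the subcomplex copy sits $1/2$ higher in $A^R$ than the quotient copy. With the normalization of the statement (compare Example~\ref{ex:unknot}), the inclusion has degree $(0,1/2)$ and the projection $(0,0)$. Your own unknot check would have caught this: your labels would put the two generators at $A^R=0$ and $-1/2$.
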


\begin{proof}
Consider the pairs of real Heegaard diagrams set up at the beginning of this section, shown in Figure~\ref{fig:OX-OO stabilization}. For concreteness, we will argue for the pair shown in the upper row. The argument works for the lower one with $c$ in place of $b$ and $d$ in place of $a$. Fix a real $\spinc$ structure $\s^R$ and assume that $\cH$ and $\cH_{O}$ are both strongly $\s^R$-admissible. Without loss of generality, we assume they are minimal. In $\cH_{O}$, there are two new curves $\alpha_O$ and $\beta_O$ intersecting in a pair of points $a$ and $b$. The generators of $\CFKR_O^{\circ}(\cH_O)$ split into two subsets $G_a$ and $G_b$ according to whether they contain $a$ or $b$. We claim that
\begin{enumerate}
    \item the submodule $C_a^-$ generated by $G_a$ is a subcomplex of $\CFKR_O^{-}(\cH_O)$ with quotient complex $C^-_b$ generated by the image of $G_b$;
    \item if we use $G$ to denote the set of generators of $\CFKR^{\circ}(\cH)$, then both $G_a$ and $G_b$ are in one-to-one correspondence with $G$ and both $C_a^-$ and $C_b^-$are naturally identified with $\CFKR^{-}(\cH)$ as chain complexes over $\F[\bfU]$ ($\F$).
\end{enumerate} 
The analogous claims hold for the hat theory. We first prove the theorem assuming the claims. These give rise to mapping cone formulas 
\[\CFKR^{\circ}_O(\cH_O)=\Cone (\partial_b^a: \CFKR^{\circ}(\cH)\to \CFKR^{\circ}(\cH)),\] with the second copy of $\CFKR^{\circ}(\cH)$ having $A^R$ shifted up by $1/2$ according to $M^R(a,b)=0$, $A^R(a,b)=1/2$, which is calculated from the small bigon containing the new $O$-base point.

The mapping cones can be rewritten as short exact sequences
\[0\to \CFKR^{-}(\cH)_{d,s-1/2}\to \CFKR_O^{-}(\cH_O)_{d,s} \to \CFKR^{-}(\cH)_{d,s} \to 0.\]

\[0\to \widehat{\CFKR}(\cH)_{d,s-1/2}\to \widehat{\CFKR}_O(\cH_O)_{d,s} \to \widehat{\CFKR}(\cH)_{d,s} \to 0.\] Then a standard homological algebra lemma leads to the desired exact triangles.

Now we prove the claims for the minus version, as the hat version is similar and easier. To see (1), it suffices to show that there can be no differential from a generator of the form $a\xv$ to a generator of the form $b\yv$. Assume $\cD_O\in \pi^R_2(a\xv,b\yv)$ is such a positive real domain. Let $m$ and $n$ be the multiplicities of $\cD$ at the region containing the new $O$ and the outer orange region in Figure~\ref{fig:OX-OO stabilization}, respectively. Then since the green regions are blocked by $X$-base points, we must have \[0+0-m-n=1,\quad m+n-0-0=-1,\] which is never true for a positive domain. The first statement in (2) is obvious. For the second one, if $\cD_O\in \pi^R_2(b\xv,b\yv)$ ($ \pi^R_2(a\xv,a\yv)$), then we have a similar equation $0+0-m-n=0$ ($m+n-0-0=0$), which tells us that $\cD_O$ can be identified canonically with a domain $\cD \in \pi^R_2(\xv,\yv)$ inside $\cH$. In this way, we can identify the coefficient of $a\yv$ in $\partial_{\cH_O} a\xv$ ($b\yv$ in $\partial_{\cH_O} b\xv$) with the coefficient of $\yv$ in $\partial_{\cH} \xv$ when choosing the almost complex structure on $\cH$ and $\cH_O$ properly. Then the claim is proved.  
\end{proof}

\begin{remark}\label{rmk:differential of OX->OO stabilization}
Based on the proof above, we can say more about the differential $\partial_{\cH_O}$. We have seen that 
\[\partial_{\cH_O}=\begin{bmatrix}
    \partial_{\cH} & p\\
    0 & \partial_{\cH}\\
\end{bmatrix}.\] Now we claim that $p$ takes the form $u_O+p_1$, in which $u_O$ is the degree $-1$ variable associated to the new $O$ base points and $p_1$ is a morphism $\CFKR^-(\cH)\to \CFKR^-(\cH)$, in particular, $u_O$ does not appear in $p_1$. Consider any $\cD_O \in \pi^R_2(b\xv,a\yv)$ that contributes the differential. Using the same notation for multiplicities as above,  we have equations  \[0+0-m-n=-1,\quad m+n-0-0=1.\] Thus either $n=1$, $m=0$ or $n=0$, $m=1$. In the first case, $u_O$ does not appear, so the domain contributes to $p_1$. In the second case, the small bigon of real Maslov index $1$ containing the new $O$ has multiplicity $1$. Since Maslov index is additive under disjoint union and $n=0$ forces this bigon to be isolated, it must be the case $\xv=\yv$. Then we have exactly one copy of $u_0\cdot a\xv$ in $\partial b\xv$, since the small bigon has a unique holomorphic representative. Thus, the claim holds. In Proposition~\ref{prop: differential oftype changing quasi-stab}, we will provide a more comprehensive description on the ``quasi-stabilized'' differential.
\end{remark}

\begin{cor}\label{cor:OO v.s. OX polynomial for knots}
Let $(K,\fra)$ and $(K,\fro)$ be set up as in Theorem~\ref{thm:exact triangle between OX and OO for knots}. Further assume the ambient manifold $(S^3,\tau_{\std})$. Then we have \[\Delta^R_{O}(K,\fro)=(1+t)\Delta^R(K,\fra)\] when we normalize the real Alexander grading in $\widehat{\HFKR}_{O}(K,\fro)$ by requiring \[A^R_{\max}(L,\fro)+A^R_{\min}(L,\fro)= 1,\] in which \[A^R_{\max}(L,\fro)=\max\{s| \widehat{\HFLR}_{O,*}(L,\fro,s)\ne 0\}, \quad \text{and} \quad A^R_{\min}(L,\fro)=\min\{s| \widehat{\HFLR}_{O,*}(L,\fro,s)\ne 0\}.\]
\end{cor}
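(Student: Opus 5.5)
We will take graded Euler characteristics in the hat exact triangle of Theorem~\ref{thm:exact triangle between OX and OO for knots}. A cleaner route is to work at chain level. The proof of that theorem gives, for each Maslov grading $d$ and real Alexander grading $s$, a short exact sequence
\[0\to \widehat{\CFKR}(\cH)_{d,s-1/2}\to \widehat{\CFKR}_O(\cH_O)_{d,s} \to \widehat{\CFKR}(\cH)_{d,s} \to 0.\]
Euler characteristics are additive in short exact sequences of finite-dimensional complexes, and they agree with the Euler characteristics of homology. Hence, Alexander grading by Alexander grading,
\[\chi\big(\widehat{\HFKR}_O(K,\fro)\big)_s=\chi\big(\widehat{\HFKR}(K,\fra)\big)_{s-1/2}+\chi\big(\widehat{\HFKR}(K,\fra)\big)_s .\]
Working in $(S^3,\tau_{\std})$ ensures that both gradings are absolutely defined, or at least relatively defined, on a single real $\spinc$ structure, so these Euler characteristics make sense.

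Next we multiply by $t^{2s}$ and sum over $s$. Since $l_p=0$ for a knot, the definition of $\Delta^R_O$ involves no $(t-t^{-1})$ factor. A shift of $A^R$ by $1/2$ multiplies by $t^{2\cdot 1/2}=t$, so the identity above becomes
\[\Delta^R_O(K,\fro)(t)=(1+t)\,\Delta^R(K,\fra)(t)\]
up to an overall power of $t$, and up to a global sign coming from the Maslov normalization. The sign should vanish because the connecting maps in the triangle preserve $M^R$ on the relevant pieces: the labels on the arrows are $(0,0)$ and $(0,1/2)$.

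It remains to fix the normalization, and this is where the only real work lies. The normalization of $\Delta^R(K,\fra)$ from \cite[Section~7]{YXHFLR} is symmetric, i.e.\ centred at $0$. We must check that the support of $\widehat{\HFKR}_O$ runs from $A^R_{\min}(\widehat{\HFKR})$ to $A^R_{\max}(\widehat{\HFKR})+1/2$. Then requiring $A^R_{\max}+A^R_{\min}=1$ should recover exactly the factor $(1+t)$ with no extra monomial.

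The main obstacle is that homology could cancel at the extreme gradings: in the long exact sequence, the top group of $\widehat{\HFKR}_O$ might vanish. Only then would the support of $\widehat{\HFKR}_O$ be strictly smaller than expected. To handle this, we first try the chain-level short exact sequence in the extreme gradings. In the top grading $s_{\max}+1/2$, the subcomplex term is the only nonzero piece. So $\widehat{\HFKR}_O$ in top grading is isomorphic to $\widehat{\HFKR}$ in its top grading, which is nonzero, and similarly at the bottom. This pins down $A^R_{\max}$ and $A^R_{\min}$ of the $O$-theory, so the normalization $A^R_{\max}+A^R_{\min}=1$ is precisely the one making the identity hold on the nose.
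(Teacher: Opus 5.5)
Your route is the paper's: its proof just says the corollary follows from the theorem and its proof, meaning the mapping-cone short exact sequence in the proof of Theorem~\ref{thm:exact triangle between OX and OO for knots}. Additivity of Euler characteristics there gives $\chi_O(s)=\chi(s-\tfrac12)+\chi(s)$, and hence $\Delta^R_O=(1+t)\Delta^R$ up to the overall normalization.

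\textbf{The normalization step is wrong.} You assert, without checking, that $A^R_{\max}+A^R_{\min}=1$ is precisely the normalization that makes the identity hold on the nose. Your own support computation contradicts this. $\widehat{\HFKR}_O(K,\fro)$ is supported in $[s_{\min},s_{\max}+\tfrac12]$, and $s_{\min}=-s_{\max}$ for the symmetric $OX$-theory, so $A^R_{\max}+A^R_{\min}=\tfrac12$, not $1$. Imposing $A^R_{\max}+A^R_{\min}=1$ literally would shift every $A^R$ by $\tfrac14$, out of $\tfrac12\mathbb{Z}$, and would produce $t^{1/2}(1+t)\Delta^R$. Two checks confirm this:
\begin{itemize}
\item The unknot has $\widehat{\HFKR}_O\cong\F_{(0,0)}\oplus\F_{(0,1/2)}$ and $\Delta^R_O=1+t$.
\item The $5_2^{OO}$ entry in the appendix lists $t$-exponents $0$ and $1$.
\end{itemize}
So the condition in the statement has to be read on the exponents $2A^R$ of $t$, i.e.\ $2A^R_{\max}+2A^R_{\min}=1$. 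Your proof should derive this from the support computation rather than inherit the ``$=1$''.

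\textbf{The extreme-grading argument must be run in homology.} As written it mixes chain level and homology level. If $s_{\max}$ is the top grading of $\widehat{\HFKR}(K,\fra)$, the chain group $\widehat{\CFKR}(\cH)_{*,s_{\max}+1/2}$ need not vanish. If instead $s_{\max}$ is the top chain-level grading, the homology there need not be nonzero. Use the long exact sequence in homology:
\begin{itemize}
\item At $s=s_{\max}+\tfrac12$ the quotient term $\widehat{\HFKR}_{*,s_{\max}+1/2}$ vanishes, so $\widehat{\HFKR}_{O,*,s_{\max}+1/2}\cong\widehat{\HFKR}_{*,s_{\max}}\neq 0$.
\item At $s=s_{\min}$ the subcomplex term vanishes, so $\widehat{\HFKR}_{O,*,s_{\min}}\cong\widehat{\HFKR}_{*,s_{\min}}\neq 0$.
\item Outside $[s_{\min},s_{\max}+\tfrac12]$ both terms vanish.
\end{itemize}
This pins down exactly the support you need.
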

\begin{proof}
This follows directly from Theorem~\ref{thm:exact triangle between OX and OO for knots} and its proof.
\end{proof}

In \cite[Proposition~7.2]{YXHFLR}, we have seen that $\Delta^R(K,\fra)(t)=\Delta^R(m(K),m(\fra))(-t)$, 
so \[\Delta^R_{O}(K,\fro)(t)=(1+t)\Delta^R(K,\fra)(t)=(1+t)\Delta^R(m(K),m(\fra))(-t)=(1+t)\Delta^R_{O}(m(K),m(\fro))(-t)/(1-t).\] It is interesting to ask whether there is a more direct relation between $\Delta^R_{O}(K,\fro)$ and $\Delta^R_{O}(m(K),m(\fro))$ or more generally, what's the relationship between $\widehat{\HFLR}_{O}(K,\fro)$ and $\widehat{\HFLR}_{O}(m(K),m(\fro))$. This question is motivated by the fact that we cannot find a general formula as in \cite[Proposition~4.4]{YXHFLR} that is true on small knots we calculated in Appendix~\ref{app:Calculation results for knots with small crossing numbers}.

\begin{cor}\label{cor:bounding tauR_max, tauR_min by tauR}
Let $(K,\fra)$ and $(K,\fro)$ be set up as in Theorem~\ref{thm:exact triangle between OX and OO for knots}. Then we have inequalities \[\tau_{\min}^R(K,\fro)+1\le \tau^R(K,\fra)\le \tau_{\max}^R(K,\fro).\]
\end{cor}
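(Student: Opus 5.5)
The plan is to work at chain level with the mapping cone description from the proof of Theorem~\ref{thm:exact triangle between OX and OO for knots}. We identify $\CFKR^-_O(\cH_O)$ with $\Cone(p\colon C\to C)$, where $C=\CFKR^-(\cH)$ and $p=u+p_1$ by Remark~\ref{rmk:differential of OX->OO stabilization}. In this identification $u_O$ is identified with $u$, because after setting $u_1=u_2$ both fixed $O$ variables act by the same $u$. The first step is to check that $p$ is homotopic to multiplication by $u$ on $C$ (the $OX$ complex). A convenient route is to pass through the $OO$ complex before imposing $u_1=u_2$. There the component $p$ is exactly $u_{O}+p_1$. In the $O$-quotient, a cycle $(x,y)$ in the cone satisfies $\partial x=0$ and $\partial y=p x$. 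If the homotopy identification is not available, I will instead use the weaker fact that $[p]$ and $[u]$ agree on $H(C)/\Tors$. This suffices for rank arguments and follows because $p$ has bidegree $(-1,-1/2)$ and $H(C)/\Tors\cong \F[u]$ has rank one by \cite[Proposition~5.5]{YXHFLR}.

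Next I will read off the free parts of the long exact sequence
\[\cdots\to H(C)\xrightarrow{p_*} H(C)\to \HFKR^-_O(K,\fro)\to H(C)\xrightarrow{p_*}\cdots\]
where the maps have degrees $(-1,-1/2)$, $(0,1/2)$ and $(0,0)$. Let $g$ be a homogeneous tower generator of $H(C)$, placed at $A^R=-\tau^R(K,\fra)/2$. The image of $g$ under $H(C)\to \HFKR^-_O$ (the arrow of degree $(0,1/2)$, coming from the subcomplex $C_a$) gives a non-torsion class at $A^R=-\tau^R/2+1/2$. It is non-torsion because the tower of $g$ meets $\operatorname{im}p_*$ only in torsion-shifted copies: modulo torsion, $p_*$ acts as $u$, so $g$ itself is not in the image mod torsion. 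Hence the class $[(0,g)]$ generates a direct summand of the free part. This yields $-2A^R=\tau^R-1\ge \tau^R_{\min}$, which gives the first inequality $\tau^R_{\min}+1\le\tau^R$.

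For the second inequality, take a tower generator $Z$ of $\HFKR^-_O$ whose projection to $H(C)$ (the arrow of degree $(0,0)$) is non-torsion. Such a $Z$ exists because the free rank of $\HFKR^-_O$ is $2$ by Proposition~\ref{prop:ranks of minus theories}. At the same time, the image of $H(C)\to\HFKR^-_O$ has free rank $1$, since the cokernel of $p_*\approx u$ on $\F[u]$ is torsion. The projection $[x]$ of $Z$ lies in $\ker p_*$ modulo torsion considerations, so $[x]=u^m g+(\text{torsion})$ with $m\ge 1$. Hence $A^R(Z)\le -\tau^R/2-1/2$. Because the other tower has $A^R=-\tau^R/2+1/2$, the two tower generators sit at $-2A^R$ values $\tau^R-1$ and something $\ge\tau^R+1$. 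The main obstacle is making the second inequality sharp enough, $\tau^R\le\tau^R_{\max}$. I expect this to follow from $-2A^R(Z)\ge \tau^R+1>\tau^R$, provided the identification of the set $\{\tau_{\min},\tau_{\max}\}$ with these two generators is justified. That justification uses the fact that the $\tau^R_O$-set is independent of the choice of homogeneous basis (\cite[Corollary~8.3.4]{OSS2015grid}), and the fact that $\max\ge$ the value at $Z$. The delicate point is controlling $p_*$ modulo torsion, which I will handle via the rank count above.
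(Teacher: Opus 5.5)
Your route (compare free parts in the long exact sequence of the minus triangle) is the one the paper intends; its proof is only ``this follows from the minus exact triangle''. However, the step you call the main control is false, and in fact the opposite holds. You claim that $p_*$ agrees with multiplication by $u$ on $H(C)/\Tors$, but $p_*$ is zero modulo torsion. The degree argument only shows that the induced map on $H(C)/\Tors\cong\F[u]$ is $0$ or $u$. Tensor the triangle with the fraction field $\F(u)$, which is flat, to get an exact sequence $\F(u)\xrightarrow{p}\F(u)\xrightarrow{i}\F(u)^2\xrightarrow{\pi}\F(u)$. If $p$ has rank $r$ there, then $\dim\operatorname{im}\pi=2-(1-r)\le 1$, which forces $r=0$. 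Hence $\operatorname{im}p_*\subseteq\Tors(H(C))$. At chain level this is visible from Proposition~\ref{prop: differential oftype changing quasi-stab}. For a minimal diagram, the $u$-contributions to $p$ are $u_0+u_1$: one comes from the small bigon at the new fixed $O$, the other from the boundary-degeneration term at the old one. These cancel once $u_0=u_1=u$, so the $p_1$ of Remark~\ref{rmk:differential of OX->OO stabilization} is not $u$-free in the $O$-quotient. Your own steps are also inconsistent with $\bar p=u$. If $p_*(g)=ug+t$ with $t$ torsion, then $u\,i_*(g)=i_*(t)$ is torsion, so $i_*(g)$ would be torsion. Likewise, $\operatorname{im}i_*\cong\operatorname{coker}p_*$ being torsion means free rank $0$, not $1$. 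Together with $\ker p_*\subseteq\Tors$, this would make $\HFKR^-_O(K,\fro)$ torsion, contradicting Proposition~\ref{prop:ranks of minus theories}.

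The same error makes your second half prove something false. The claim $[x]=u^mg+(\text{torsion})$ with $m\ge 1$ gives $-2A^R(Z)\ge\tau^R+1$, hence $\tau^R<\tau^R_{\max}$. But for the unknot $\tau^R=\tau^R_{\max}=0$ (Example~\ref{ex:unknot}), and the paper records $4_1$, $6_1$, $5_2$, $6_2$ with $\tau^R(K,\fra)=\tau^R_{\max}(K,\fro)$. The repair uses $\operatorname{im}p_*\subseteq\Tors$ directly.

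For the first inequality: if $u^k i_*(g)=0$, then $u^k g\in\operatorname{im}p_*\subseteq\Tors$, which is impossible. So $i_*(g)$ is a homogeneous non-torsion class at $A^R=-\tau^R/2+1/2$. Every such class has $A^R\le-\tau^R_{\min}/2$, which gives $\tau^R_{\min}+1\le\tau^R$; no direct-summand claim is needed.

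For the second inequality: $\ker(\pi\otimes\F(u))=\operatorname{im}(i\otimes\F(u))$ is at most one-dimensional inside $\F(u)^2$. So some element $Z$ of a homogeneous basis of $\HFKR^-_O(K,\fro)/\Tors$, lifted, has $\pi_*(Z)$ non-torsion. Since $\pi_*$ has degree $(0,0)$, $A^R(Z)\le-\tau^R/2$, and therefore $\tau^R_{\max}\ge-2A^R(Z)\ge\tau^R$. All you get is $m\ge 0$, and you never need to know which of the two generators $Z$ is.
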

\begin{proof}
This follows from the minus version of exact triangle in Theorem~\ref{thm:exact triangle between OX and OO for knots}.
\end{proof}

Comparing this corollary with Proposition~\ref{prop:relationship tau_max, tau_min and tau_i^R}, one may conjecture that when $(K,\fra)$ and $(K,\fro)$ are set up as in Corollary~\ref{cor:OO v.s. OX polynomial for knots}, we have \[\tau_1^R(K,\fro)=\tau_2^R(K,\fro)=\tau^R(K,\fra).\]

However, this is not true. Actually, from Proposition~\ref{prop:relationship tau_max, tau_min and tau_i^R}, we know that $\tau^R_i\le \tau_{\max}^R-1$, but we can find many pairs of examples from Appendix~\ref{app:Calculation results for knots with small crossing numbers} and \cite[Appendix A]{YXHFLR} with $\tau^R(K,\fra)=\tau_{\max}^R(K,\fro)$. More concretely, $4_1$, $6_1$ with one of their involutions and $5_2$, $6_2$ with all their involutions provide such examples.

Theorem~\ref{thm:exact triangle between OX and OO for knots} and Corollary~\ref{cor:OO v.s. OX polynomial for knots} can be generalized to generalized strongly invertible links. To do this, we need to introduce a common generalization of $OO$ and $OX$-theories.

A \emph{mixed real Heegaard diagram} $\cH$ representing an oriented generalized strongly invertible link $(L,\fro)$ is a real Heegaard diagram satisfying all but the last item in Definition~\ref{def:O-heegaard diagram for s.i.links}. That is, we allow a strongly invertible knot component to have two fixed $O$-base points or a pair of fixed $(O,X)$-base points. When we label the strongly invertible knot components of $L$ by $1,\ldots,l_f$, we can assign $\cH$ a vector $\bm\epsilon\in\{O,X\}^{l_f}$ which records the fixed base point information.

For a mixed real Heegaard diagram $\cH$ with base point information $\bm\epsilon$, we can associate various real link Floer homologies as in $OO$ or $OX$-theory. We set up $\cR(\cH)$ as $\F[\bfu,\bfv,\bfU,\bfV]$, so that we have one $u$ ($v$)-variable for each fixed $O$ ($X$) base point and one $U$ ($V$) for each pair of base points $(O,O')$ ($(X,X')$). As usual, we require $\cH$ to be strongly $\s^R$-admissible. Then, the \emph{$\bm\epsilon$-full real link Floer chain complex} $\fullCFLR^-_{\bm\epsilon}(\cH,\s^R)$ is defined to be the chain complex over $\cR(\cH)$ with generators  \[\{\xv\in (\T_\alpha\cap \T_\beta)^R| \s^R(\xv)=\s^R \}.\] We define an endomorphism on it by 
\begin{align*}
\partial \xv = \sum_{\yv} \sum_{\phi\in \pi_2^R(\xv,\yv),\mu_R(\phi)=1}\# \widehat{\cM}_R(\phi) \prod_{1\le i\le l_f, 1\le j\le 2, \epsilon_i=O}  u_{i,j}^{n_{O_{i,j}^f}(\phi)} 
&\prod_{1\le i\le l_f, \epsilon_i=X}  u_{i}^{n_{O_{i}^f}(\phi)} v_{i}^{n_{X_{i}^f}(\phi)} \\ &\prod_{1\le j\le k} U_i^{n_{O_j}(\phi)} \prod_{1\le j\le k'} V_j^{n_{X_j}(\phi)}\yv    
\end{align*}
on generators and extend linearly over the base ring. Here, $k'-k$ records the number of $O$ in $\bm\epsilon$.

For simplicity, we assume the diagram is minimal in the sense that on each component of $L$, there are at most four base points and there are four base points on component $K$ if and only if $K$ is a strongly invertible knot with two fixed $O$-base points. In this case, $k=l_p$ and $k'=l_p+ \#\{1\le i \le l_f| \epsilon_i=O\}$. The definition can be generalized to stabilized diagrams easily.

Firstly, consider \[\CFLR_{\bm\epsilon,OO}^-(\cH,\s^R)=\fullCFLR^-_{\bm\epsilon}(\cH,\s^R)/ (V_i, v_j)_{1\le i\le k', 1\le j\le k'-k}.\] Since we have blocked all $X$-base points, this is indeed a chain complex. Let $\partial^-$ denote the induced differential on it. Then, the homology 
\[\HFLR_{\bm\epsilon,OO}^-(\cH,\s^R)=H_*(\CFLR_{\bm\epsilon,OO}^-(\cH,\s^R),\partial^-)\] 
is an invariant of $(L,\fra)$ and $\s^R$ when it is regarded as an $\F[\bfu,\bfU]$-module. Here, we have one degree $(-1,-1/2)$ variable $u_i$ for each strongly invertible component with $\epsilon_i=X$, a pair of degree $(-1.-1/2)$ variables $(u_{j,1},u_{j,2})$ for each strongly invertible component with $\epsilon_i=O$ and a degree $(-2,-1)$ variable $U_t$ for each pair of components $(K_t,K_t')$. The auxiliary data $\fra$ includes the orientation $\fro$ as well as the labeling of fixed points by $O$ and $X$ on components $K_i$ with $\epsilon_i=X$ (and the choice of $\bm \epsilon$ is determined by this). This is the \emph{$(\bm\epsilon,OO)$-minus real link Floer homology} associated to $(L,\fra)$ in the real $\spinc$ structure $\s^R$. 

Then consider \[\CFLR_{\epsilon,O}^-(\cH,\s^R)= \frac{\CFLR_{\epsilon,OO}^-(\cH,\s^R)}{(u_{i,1}=u_{i,2})_{1\le i\le l_f,\epsilon_i=O}},\] which is of course also a chain complex. We still abuse $\partial^-$ for the induced differential. Taking homology 
\[\HFLR_{\bm\epsilon,O}^-(\cH,\s^R)=H_*(\CFLR_{\bm\epsilon,O}^-(\cH,\s^R),\partial^-)\] 
is an invariant of $(L,\fra)$ and $\s^R$ when it is regarded as an $\F[u_{1},\ldots,u_{l_f}, U_1,\ldots U_{l_p}]$-module. This is the \emph{$(\bm\epsilon,O)$-minus real link Floer homology} associated to $(L,\fra)$ in the real $\spinc$ structure $\s^R$. 

In Section~\ref{sub:Structural properties in $S^3$} and \cite[Section~6.2]{YXHFLR}, we considered a collapsed version of minus real grid homology for generalized strongly invertible links in $(S^3,\tau_{\std})$. That definition does not depend on grid diagrams or the background real $3$-manifold $S^3$, so it actually works for any generalized strongly invertible link. That is, we can define 
\[c\CFLR_{\bm\epsilon,O}^-(\cH,\s^R)=\frac{\CFLR_{\bm\epsilon,O}^-(\cH,\s^R)}{(u_i-u_1)_{2\le i\le l_f}+ (U_j-u_1^2)_{1\le j\le l_p}},\] in which we abuse $u_i$ for $u_{i,1}=u_{i,2}$ for those strongly invertible components with $\epsilon_i=O$. Taking the homology, we get $c\HFLR_{\bm\epsilon,O}^-(\cH,\s^R)$, which is also an invariant of $(L,\fra)$ and $\s^R$. This is the \emph{$\bm\epsilon$-collapsed minus real link Floer homology} associated to $(L,\fra)$ in the real $\spinc$ structure $\s^R$. 

Lastly, we consider \[\widehat{\CFLR}_{\bm\epsilon,O}(\cH,\s^R)=\CFLR_{\bm\epsilon,O}^-(\cH,\s^R)/(u_i, U_j)_{1\le i\le l_f, 1\le j\le l_p} \] with induced differential $\widehat{\partial}$. Then 
\[\widehat{\HFLR}_{\bm\epsilon,O}(\cH,\s^R)=H_{*}(\widehat{\CFLR}_{\bm\epsilon,O}(\cH,\s^R),\widehat{\partial})\] 
is an invariant of $(L,\fra)$ and $\s^R$ as a vector space over $\F$. This is the \emph{$(\bm\epsilon,O)$-hat real link Floer homology} associated to $(L,\fra)$ in the real $\spinc$ structure $\s^R$.

Note that the collapsed real grid homology can be recovered from this (its generalization to stabilized diagrams, more precisely) by taking $\bm\epsilon$ to consist of all $X$ or all $O$.  More generally, both $OO$ and $OX$-theories can be recovered from the $\bm\epsilon$-decorated theory.

With all these preparations in hand, we can now make explicit the relationship between $OO$ and $OX$-theories for links.


\begin{thm}\label{thm:spectral sequence relating OO and OX for links}
Let $(L,\fra)$ be a generalized strongly invertible link with auxiliary data and $(L,\fro)$ be its underlying generalized strongly invertible link. Write $(L,\fra_{\bm\epsilon})$ for the link with the labeling on $K_i$ forgotten whenever $\epsilon_i=O$.
If $L$ has $l_f$ strongly invertible components, then there is a resolution cube of dimension $l_f$ with each vertex a mixed real link Floer homology group $c\HFLR_{\bm\epsilon,O}^-(L,\fra_{\bm \epsilon})$ ($\epsilon\in \{O,X\}^{l_f}$) calculating the homology $c\HFLR^-_{O}(L,\fro)$. Here, the diagram $\cH_{\bm\epsilon}$ for $(L,\fra_{\bm \epsilon})$ can be constructed from a minimal diagram for $(L,\fra)$ in $OX$-theory (which corresponds to $\bm\epsilon=(X,X\ldots,X)$) by performing quasi-stabilizations from Figure~\ref{fig:OX-OO stabilization} on strongly invertible components according to $\bm\epsilon$.

A similar result holds with $\widehat{\HFLR}_{\bm\epsilon,O}$ in place of $c\HFLR_{\bm\epsilon,O}^-$ and with $c\HFLR^-_{O}$ replaced by $\widehat{\HFLR}_{O}$.

As a corollary, when the background manifold is $(S^3,\tau_{\std})$, we have \[\Delta^R_{O}(L,\fro)=(1+t)^{l_f}\Delta^R(L,\fra)\] after suitable normalization of the real Alexander grading in the $OO$-theory. 
\end{thm}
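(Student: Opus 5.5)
Start from a minimal strongly $\s^R$-admissible diagram $\cH=\cH_{(X,\ldots,X)}$ for $(L,\fra)$ in the $OX$-theory. Perform the type-changing quasi-stabilization of Figure~\ref{fig:OX-OO stabilization} at the fixed $X$-base point of every strongly invertible component $K_1,\ldots,K_{l_f}$ simultaneously. Choose the stabilization regions small, disjoint, and away from all other base points, and keep strong admissibility as in the proof of Theorem~\ref{thm:exact triangle between OX and OO for knots}. The result is an $OO$-diagram $\cH_O$ for $(L,\fro)$ with new curve pairs $(\alpha_i,\beta_i)$ meeting in two points $a_i,b_i$. Its generators are $\{\xv\times\prod_i c_i : c_i\in\{a_i,b_i\}\}$ with $\xv$ a generator of $\cH$, so they are indexed by the vertices of the cube $\{0,1\}^{l_f}$, where $0\leftrightarrow a$ and $1\leftrightarrow b$. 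The idea is that partially undoing this (keeping the choice $c_i$ only for $\epsilon_i=O$) gives $\cH_{\bm\epsilon}$.

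First, generalize the multiplicity argument in the proof of Theorem~\ref{thm:exact triangle between OX and OO for knots} to each stabilization region separately. For a positive real domain from $\xv\prod c_i$ to $\yv\prod c_i'$, look at stabilization region $i$. The two green regions carry $X$-marks, which are set to zero in $c\CFLR^-_{O}$. This forces $c_i=b_i\Rightarrow c_i'=b_i$. Equivalently, the differential never increases the coordinate in the cube order, and it changes at most a subset of coordinates from $a$ to $b$ in the reversed orientation. With the convention of the theorem, the $a$-part is the subcomplex.

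This gives a filtration of $c\CFLR^-_O(\cH_O)$ by the cube $\{0,1\}^{l_f}$, with $D=\sum_{\bm\epsilon\le\bm\epsilon'}D_{\bm\epsilon,\bm\epsilon'}$. The associated graded at each vertex is identified with $\CFLR^-(\cH)$, where all fixed points are labeled $O/X$ and the new fixed $O$ is invisible. By the same region-by-region equation $m+n=0$ used in claim (2), domains from $b_i\xv$ to $b_i\yv$ correspond canonically to domains in the destabilized diagram.

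The main point is to identify the partial complexes. For a fixed $\bm\epsilon$, consider the subquotient in which only the coordinates with $\epsilon_i=O$ are allowed to vary. This should be exactly $c\CFLR^-_{\bm\epsilon,O}(\cH_{\bm\epsilon})$, where $\cH_{\bm\epsilon}$ is obtained from $\cH$ by stabilizing only at the components with $\epsilon_i=O$. The unused stabilization regions are destabilized back by the same identification. One also has to check that the collapsing relations $u_i=u_1$ and $U_j=u_1^2$ are compatible. This is where the main obstacle lies. When several stabilization regions are involved, a single holomorphic disk could cover two regions at once, so the one-region argument has to be applied to each region independently and then combined.

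I would handle this by noting that multiplicity constraints are local to each region. By the degeneration argument (neck-stretching / choosing almost complex structures as in Remark~\ref{rmk:differential of OX->OO stabilization}), counts of domains that stay within the $a$- or $b$-sheet in region $i$ then agree with the counts for the diagram destabilized at $i$. Given this identification, the cube filtration yields the resolution cube, meaning a spectral sequence/iterated mapping cone from $\bigoplus_{\bm\epsilon}c\HFLR^-_{\bm\epsilon,O}$ to $c\HFLR^-_O(L,\fro)$. The edge maps are the maps $p=u_O+p_1$ of Remark~\ref{rmk:differential of OX->OO stabilization}. The hat version is identical after setting all $u,U$ to zero.

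For the polynomial corollary, take Euler characteristics in the hat cube. Each edge direction contributes two copies of $\widehat{\HFLR}$ whose Alexander gradings differ by $A^R(a_i,b_i)=1/2$, which becomes a factor of $t^{2\cdot 1/2}=t$ in $t^{2s}$. The Maslov gradings agree, since $M^R(a_i,b_i)=0$. So each strongly invertible component multiplies the graded Euler characteristic by $(1+t)$, giving $\Delta^R_O(L,\fro)=(1+t)^{l_f}\Delta^R(L,\fra)$. The normalizing factor $(t-t^{-1})^{-l_p}$ is unchanged, and the overall power of $t$ is absorbed into the normalization of the absolute grading, exactly as in Corollary~\ref{cor:OO v.s. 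OX polynomial for knots}.
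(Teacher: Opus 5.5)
Your proposal is correct and follows the paper's own argument: the paper's proof simply repeats the mapping-cone construction of Theorem~\ref{thm:exact triangle between OX and OO for knots} on each strongly invertible component, and your simultaneous cube filtration, region-by-region multiplicity argument, neck-stretching identification and chain-level Euler characteristic count are that iteration made explicit. Two slips to fix. First, the local multiplicity equations rule out differentials from $a_i$ to $b_i$, so the implication should read $c_i=a_i\Rightarrow c_i'=a_i$; this matches your later statement that the $a$-part is the subcomplex, but not the implication you actually wrote. Second, the $E_1$ page of your cube filtration is $2^{l_f}$ grading-shifted copies of the $OX$ group, and the mixed groups $c\HFLR^-_{\bm\epsilon,O}$ arise as homologies of the faces (the intermediate iterated cones / exact triangles), not as a direct sum over all $\bm\epsilon$ on the $E_1$ page.
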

\begin{proof}
Repeating the construction in Theorem~\ref{thm:exact triangle between OX and OO for knots} on each component is enough.
\end{proof}

\subsection{Examples}\label{sub:Examples}
In Appendix~\ref{app:Calculation results for knots with small crossing numbers}, the polynomial $\Delta_{O}^R$ and homology theories $\GHR^-_{O}$ as well as $\widehat{\GHR}_{O}$ for strongly invertible knots with diagrams of small crossing numbers are calculated. In this subsection, we compare them with properties analyzed above and extract some interesting phenomena from them.

\begin{example}
From Theorem~\ref{thm:exact triangle between OX and OO for knots} and its proof, we saw that for a strongly invertible knot $(K,\fro)$ and any enhancement of $\fro$ to a set of auxiliary data $\fra$, by choosing suitable real Heegaard diagrams, $\widehat{\CFKR}_{O}(K,\fro)$ looks like two copies of $\widehat{\CFKR}(K,\fra)$. Specializing to $(S^3,\tau_{\std})$, the real Alexander polynomial satisfies a similar relation. We also know from \cite{MOS2009knot} that such a quasi-stabilization leads to a doubling of the rank in homology in the  $\widehat{\HFK}$ theory. However, this is no longer true in real link Floer homology. By comparing examples in Appendix~\ref{app:Calculation results for knots with small crossing numbers} and those from \cite[Appendix A]{YXHFLR}, one can see that \[\dim \widehat{\HFKR}_{O}(K,\fro)\le 2\dim \widehat{\HFKR}(K,\fra)\] holds on all these examples as predicted by the exact triangle, but the equality rarely holds. 
\end{example}

\begin{example}\label{ex:torsion order comparison}
In \cite[Section~5.2]{YXHFLR}, we also introduced the notion of torsion order $\ord_{u}(K,\fra)$ as $\ord_{u}(\HFKR^-(K,\fra))$. Though $\HFKR^-(K,\fra)$ and $\HFKR_{O}^-(K,\fro)$ are related by an exact triangle (Theorem~\ref{thm:exact triangle between OX and OO for knots}), the torsion orders may be distinct. More concretely,
\begin{itemize}
    \item for $5_1$ with its unique involution, $\ord_{u}^O$ is $2$ which is greater than $\ord_{u}=1$;
    \item for $5_2$ with one of its involutions, $\ord_{u}^O$ is $1$ which is less than $\ord_{u}=2$.
\end{itemize}
\end{example}

\begin{example}\label{ex: comparison of OO and OX between different knots}
Though there is an exact triangle between $\widehat{\HFKR}$ and $\widehat{\HFKR}_{O}$ and $\Delta_{O}^R$ looks like two copies of $\Delta^R$, for most examples we have calculated in the appendix, $\widehat{\HFKR}_{O}(K,\fro)$ is not $\widehat{\HFKR}(K,\fra)\otimes (\F_{(0,0)}\oplus \F_{(0,1/2)})$ unless $\widehat{\HFKR}(K,\fra)$ is trivial or $(K,\fra)$ is obtained from a symmetric connected sum. This reflects the distinction between $OO$ and $OX$-theories. 

On the other hand, among all the examples we have calculated,  $\widehat{\HFKR}$ ($\HFKR^-$) are the same for $(K_1,\fra_1)$ and $(K_2,\fra_2)$ if and only if $\widehat{\HFKR}_{O}$ ($\HFKR^-_{O}$) coincide on the underlying oriented strongly invertible knots. Four families of examples are \begin{itemize}
    \item unknot, $5_2$, $7_4$, $8_3$ and $8_8$ with one of their involutions;
    \item $3_1$, $4_1$ and $7_3$ with one of their involutions;
    \item $5_2$ and $6_1$ with one of their involutions;
    \item $6_3$ and  $7_7$ with one of their involutions.
\end{itemize}
It is an interesting question to ask whether we can obtain $\widehat{\HFKR}_{O}$ ($\HFKR^-_{O}$) from $\widehat{\HFKR}$ ($\HFKR^-$) in a purely algebraic manner. The converse direction cannot be true since $\widehat{\HFKR}$ ($\HFKR^-$) depends on the choice of a direction while the $\widehat{\HFKR}_{O}$ ($\HFKR^-_{O}$) does not- the direction dependence seen in Section~\ref{sub:Torsion order, real tau-invariant and module structure} disappear after we set $u_1=u_2$.
\end{example}

\begin{example}\label{ex:tau_min, tau_max}
In Proposition~\ref{prop:relationship tau_max, tau_min and tau_i^R} and Corollary~\ref{cor:bounding tauR_max, tauR_min by tauR}, we provided estimates of $\tau^R_{\min}$ and $\tau^R_{\max}$ in terms of $\tau^R_i$ and $\tau^R$, respectively. They both imply the difference $\tau^R_{\max}-\tau^R_{\min}$ is always larger than or equal to one. From the proof of Proposition~\ref{prop:relationship tau_max, tau_min and tau_i^R}, we also know that $\tau^R_{\max}-\tau^R_{\min}=1$ implies that the $u_1$ and $u_2$ actions are homotopic on $\CFKR_{OO}^-$. Through examples, we see that \begin{itemize}
    \item There are knots with $\tau^R_{\max}-\tau^R_{\min}=1$ with $\HFKR_{O}^-$ non-trivial (in the sense that it does not coincide with the unknot). $9_{42}$ with one of its involutions and several other knots obtained equivariant connected sum provide us with such examples.
    \item For most knots in Appendix~\ref{app:Calculation results for knots with small crossing numbers}, $\tau^R_{\max}-\tau^R_{\min}=2$. Moreover, for all knots calculated there, the difference between the bigrading of generators of the two towers takes the form $(M^R,A^R)=(k,\frac{k+1}{2})$ for some $k\in \Z_{\ge 0}$. The author conjectures that this is true in general.
    \item For one involution on $8_{19}$, $\tau^R_{\max}-\tau^R_{\min}=3$. This provides some space for interesting phenomena to occur in the module structure on $\HFKR_{OO}^-$.
\end{itemize}
\end{example}

\begin{remark}
It is worthwhile to compare Example~\ref{ex:tau_min, tau_max} with \cite[Example~8.6]{YXHFLR}. Both examples focus on the only two non-thin knots $8_{19}$ and $9_{42}$ with crossing numbers $\le 9$. In that example, an exotic phenomenon occurred to $9_{42}$, from which the dependence of auxiliary data (essentially a choice of direction on the strongly invertible knot) was seen, while $\widehat{\HFKR}(8_{19},\fra)\cong \widehat{\HFK}(8_{19})$ for either choice of auxiliary data. In the $OO$-theory, there is obviously no direction dependence and the exotica moves to $8_{19}$, on which $\tau^R_{\max}-\tau^R_{\min}$ achieves the maximal value among all examples we have computed. The author expects a more theoretical explanation for these.
\end{remark}

\subsection{XX-theory}\label{sub:XX-theory}
After defining and comparing $OO$ and $OX$-theories, it is natural to ask for an $XX$ version. Intuitively, the $XX$-theory should resemble the $OO$-decorated one. However, careful readers may remember that the roles of $O$ and $X$ are indeed equal on the level of $\fullCFLR^-$, but they become distinguishable after we pass to homology, since $X$'s are now ``blocked''. Thus, requiring each strongly invertible knot component to have two fixed points decorated by $X$ indeed provides us with a new theory. The strategy from \cite{YXHFLR} and previous sections works well for this version, so instead of pursuing details, we will only list its properties in this short section.

To a $XX$-based real Heegaard diagram $\cH$ defined analogously to Definition~\ref{def:O-heegaard diagram for s.i.links}, we can define $\fullCFLR_{XX}^-(\cH,\s^R)$ as in Section~\ref{sub:Basic setup}. Then by blocking $X$-base points, we get two homology theories: 

\begin{itemize}
\item $\HFLR^-_{X}(L,\fro)$, a (bigraded) module over a graded polynomial ring over $\F$ which has one degree $(-2,-1)$ variable for each pair of knot components and for each strongly invertible knot component;
\item $\widehat{\HFLR}_{X}(L,\fro)$, a (bigraded) $\F$-vector space.
\end{itemize}

One should note that, as now we have no $O$-base point appearing on the fixed set, we only have degree $(-2,-1)$ variables in the minus version of the homology. Nevertheless, these homology theories share similar structural properties with the $OO$-ones. 

\begin{prop}
Specializing to $(S^3,\tau_{\mathrm{std}})$, $\HFLR^{\circ}_{X}(L,\fro)$ satisfy the following. 
	\begin{enumerate}
		\item They admit a combinatorial description via real grid diagrams, making them algorithmically computable. 
		\item There are crossing change maps $C_{A}^{\pm}$ ($C_{B}^{\pm}$) on $\HFLR^-_{X}(L,\fro)$ associated to equivariant crossing changes of type $A$ (type $B$). 
		\item There are saddle maps $\sigma$ and $\mu$ on $\HFLR^-_{X}(L,\fro)$ associated to attachment of pairs of $1$-handles to an equivariant link cobordism. We actually have two pairs of $(\sigma,\mu)$ according to how the $1$-handles change the number of components in $L$.
		\item $\HFLR^-_{X}$ and $\widehat{\HFLR}_{X}$ groups of an oriented skein triple fit into an exact triangle. 
	\end{enumerate}

\end{prop}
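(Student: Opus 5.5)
The plan is to follow the strategy already used for the $OO$-theory in Section~\ref{sub:Structural properties in $S^3$}. The key point is that every structural property listed is proved in \cite{YXHFLR} by arguments that are \emph{local in the grid away from the fixed set}. So the only new input needed is a consistent combinatorial model of the $XX$-theory, together with its gradings.

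First, for (1), I will define $XX$-real grid diagrams. These are real grid diagrams in $(S^3,\tau_{\std})$ in which, on each strongly invertible component, both markings lying on the fixed diagonal are $X$'s. The $O$'s all come in $R$-symmetric pairs $(O_i,O_i')$, with a degree $(-2,-1)$ variable $U_i$ for each pair. Then:
\begin{itemize}
\item The complex $\GCR^-_X$ is generated by the real grid states over $\F[U_1,\ldots,U_k]$. Its differential counts real empty rectangles (equivalently, symmetric pairs and symmetric rectangles), with powers of $U_i$ recording the $O$-pairs, and with rectangles through any $X$ blocked.
\item Since all $V$- and $v$-variables are set to zero, the curvature vanishes, so $\partial^2=0$.
\item I will check that the real grid diagram is admissible and that real grid moves (commutation and stabilization away from the fixed set, and $\{1\}$-stabilization along the fixed set) correspond to the real Heegaard moves of Proposition~\ref{prop:existence of real HD for strongly invertible knots and real H moves} with $XX$ decoration. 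This identifies $\GHR^\circ_X$ with $\HFLR^\circ_X$, exactly as in \cite[Section~3]{YXHFLR}.
\item I will define the absolute gradings by the analogue of the formulas above, with the correction $\frac12 l_f$ now placed in $M^R_{\bfX}$ instead of $M^R_{\bfO}$, and check directly that they are integer valued.
\end{itemize}
An analogue of Proposition~\ref{prop:U,v action identification}, proved via \cite[Lemma~4.6.9]{OSS2015grid}, shows that all $U_i$ on a component act homotopically. This gives a single degree $(-2,-1)$ variable per component, as stated.

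Second, for (2)--(4), I will reuse the proofs of \cite[Proposition~5.2, Proposition~6.1, Theorem~7.1]{YXHFLR}.
\begin{itemize}
\item Each proof takes place on a pair of symmetric local grid pictures (a crossing change, a band, or a skein crossing) placed off the fixed diagonal.
\item Each proof builds pentagon or hexagon counting maps between the two grids and shows that the relevant composites are homotopic to multiplication by a $U$-variable.
\item Because these pictures avoid the fixed diagonal, the decoration of the fixed markings enters only through blocking. The proofs therefore go through verbatim, with $u^2$ replaced by $U$. This replacement accounts for the crossing-change composites now being $U$ rather than $u_1^2$.
\end{itemize}
For the saddle maps and the skein sequence, I will use the collapsed quotient $c\GCR^-_X$ obtained by setting all $U$'s equal. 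The tensor factors $W$ and $J$ will be read off from the count of new pairs of components, as in Theorem~\ref{thm:O-minus oriented skein triple}.

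The main obstacle will be the type $A$ crossing change. Its local picture straddles the fixed axis, so symmetric pentagons may pass near the fixed markings. I need to check that with $X$'s, rather than $O$'s, on the diagonal the domains relevant to the homotopy $C_+^A\circ C_-^A\simeq U$ still exist and are counted correctly. In the $OX$-theory, one of the contributing domains passes through a fixed $O$ and produces $u^2$. Here, if that domain now hits an $X$, it is blocked. To fix this, I will first try to move the crossing along the axis, using equivariant isotopy, so that the contributing domain passes through a paired $O$ instead. If that fails, I will settle for the composite being multiplication by some $U_i$, which is homotopic to $U$ by the homotopy-of-$U$-actions result above, and check the grading shifts accordingly.
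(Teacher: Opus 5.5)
Your proposal is correct and is essentially the paper's argument: the paper's entire proof is that the arguments of \cite[Sections~3, 5, 6, 7]{YXHFLR} (grid description, crossing change maps, saddle maps, skein exact triangle) carry over unchanged to the $XX$ decoration, with the $X$'s blocked and $U$ taking the role of $u^2$. The crossing-change obstacle you anticipate does not arise. As the paper notes when treating the $OO$-theory, those proofs use local grid pictures away from the fixed set. So the composites are multiplication by a paired $U_i$, which the homotopies among the $U_i$-actions identify with $U$ — exactly the mechanism your fallback describes.
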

\begin{proof}
These can be proved using exactly the argument from~\cite[Sections~3,5,6,7]{YXHFLR}.
\end{proof}

Now, we can deduce from (2) and a standard computation for the unknot that for any (oriented) strongly invertible \emph{knot} $(K,\fro)$ in $(S^3,\tau_{\mathrm{std}})$, $\HFLR^-_{X}(K,\fro)$ is a rank one module over $\F[U]$. Then we can define torsion order $\ord^{X}_{U}$ and a $\tau_{X}^R$ invariant, which provide lower bounds on equivariant unknotting numbers and equivariant slice genus. 

As we mentioned above, $OO$ and $XX$-theories are parallel before we distinguish the roles of $O$ and $X$, so the curvature comparison in Section~\ref{sub:Killing the curvature} works for $XX$-theory in place of $OO$-one verbatim. For a counterpart of Theorem~\ref{thm:exact triangle between OX and OO for knots}, we have the following: 
  
\begin{thm}\label{thm:exact triangle between OX and XX for knots}
Let $(K,\fra)$ be a strongly invertible knot with auxiliary data and $(K,\fro)$ be the underlying oriented strongly invertible knot.
Then there are exact triangles \[\begin{tikzcd}
		\widehat{\HFKR}(K,\fra) \arrow{rr}{(0,1/2)} & & \widehat{\HFKR}(K,\fra) \arrow{dl}{(-1,-1/2)}\\
		& \widehat{\HFKR}_{X}(K,\fro) \otimes_{\F[U]} \F[u] \arrow{ul}{(0,0)}\\
	\end{tikzcd};\]
	\[\begin{tikzcd}
		\HFKR^-(K,\fra) \arrow{rr}{(0,1/2)} & & \HFKR^-(K,\fra) \arrow{dl}{(-1,-1/2)}\\
		& \HFKR^-_{X}(K,\fro)  \otimes_{\F} (\F_{(0,0)}\oplus \F_{(-1,-1/2)}) \arrow{ul}{(0,0)}\\
	\end{tikzcd}.\]
The mark $(d,s)$ on an arrow means that the map shifts $(M^R,A^R)$ up by $(d,s)$. These triangles split naturally along real $\spinc$ structures.
\end{thm}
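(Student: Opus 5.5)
The plan is to mirror the proof of Theorem~\ref{thm:exact triangle between OX and OO for knots}, with the roles of $O$ and $X$ interchanged in the local quasi-stabilization. Start with a minimal strongly $\s^R$-admissible real Heegaard diagram $\cH$ for $(K,\fra)$, whose fixed base points are $O_1$ and $X_n$. At the fixed $O$-base point perform the analogue of the quasi-stabilization in Figure~\ref{fig:OX-OO stabilization} with $O$ and $X$ swapped. This moves $O_1$ off the fixed set and pairs it with a new $O_1'$, and puts a new fixed $X$ on the fixed set. The result $\cH_X$ is an $XX$-diagram for $(K,\fro)$. It has two new curves $\alpha_X,\beta_X$ meeting in two points $a,b$, so the generators split as $G_a\sqcup G_b$, each in bijection with the generators $G$ of $\cH$.

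Next, do the same domain count as before, now with the new $X$ blocked instead of the new $O$. Let $m$ and $n$ be the multiplicities at the region containing the new $X$ and at the outer region. The green regions now carry $O$-base points, and in the minus theory these are no longer blocked; they contribute the variables $U_1$ and $u_1$. Because of this I expect the direction of the cone to be reversed relative to the $OO$ case, so that the arrows carry degrees $(0,1/2)$ and $(-1,-1/2)$ as stated.

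For the hat version all $O$-regions are blocked. The argument is then identical to the $OO$ case: one of $C_a,C_b$ is a subcomplex, both are identified with $\widehat{\CFKR}(\cH)$, and the relative grading of $a,b$ is computed from the small bigon through the new $X$. That bigon has $M^R$ change $0$ and $A^R$ change $-1/2$ with respect to $M_\bfX$, which gives the shift $(0,1/2)$. The hat triangle then follows from the long exact sequence of the cone. The tensor factor in the third vertex should be read as the reduction of the $U$-module structure, that is, as $\widehat{\HFKR}_X$ compared with the $\F[u]$-module $\widehat{\HFKR}$.

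The minus version is the main obstacle. In $\CFKR^-_X(\cH_X)$ the base ring has only $U$-variables, with $U$ for the pair $(O_1,O_1')$ playing the role of $u_1^2$. In $\CFKR^-(\cH)$, by contrast, $u_1$ itself acts. I would first compare $\CFKR^-(\cH)$, viewed as an $\F[U]$-complex through $U=u_1^2$, with the cone: as an $\F[U]$-module, $\F[u]=\F[U]\oplus u\F[U]$, so $\CFKR^-(\cH)\cong\CFKR^-_{\mathrm{res}}\otimes(\F_{(0,0)}\oplus\F_{(-1,-1/2)})$ at the level of modules. The steps are then as follows. First, identify $C_a^-$ and $C_b^-$ with $\CFKR^-(\cH)$ after the identification of $U$ with $u_1^2$, using the domain equations to show that the $O_1$ and $O_1'$ multiplicities agree with the $O_1$ multiplicity in $\cH$. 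Second, show that $\CFKR^-_X(\cH_X)\otimes_\F(\F_{(0,0)}\oplus\F_{(-1,-1/2)})$ is quasi-isomorphic to the cone of a map $\CFKR^-(\cH)\to\CFKR^-(\cH)$ of degree $(0,1/2)$. To do this, tensor with the Koszul-type complex of $u^2-U$, using a modification of the lemma after Proposition~\ref{prop:tau_1=tau_2} (the quasi-isomorphism $C\otimes W\simeq C/(u_1-u_2)$). Third, take the long exact sequence. If the second step fails, the fallback is to compute the off-diagonal component directly, as in Remark~\ref{rmk:differential of OX->OO stabilization}, and check that the small bigon contributes a $v$-type term, which vanishes in the minus theory.
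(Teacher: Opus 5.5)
Your overall route is the one the paper intends: type-$P$ quasi-stabilize the old fixed point $O_0$ of a minimal $OX$ diagram $\cH$ and compare the resulting $XX$ diagram $\cH_X$ with $\cH$. The paper's proof only says that the argument of Theorem~\ref{thm:exact triangle between OX and OO for knots} goes through after the base change $U\mapsto u^2$.

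Your hat argument is correct. Let $n$ be the multiplicity of the merged region, i.e.\ $n_{O_0}$ in $\cH$. With all base points blocked, the corner equations give $n=0$ on the diagonal and $n=1$ from $b$ to $a$, and they exclude $a\to b$. This produces $\widehat{\HFKR}_X(K,\fro)$ itself as the third vertex. For the unknot the triangle is $\F\to\F\to\F^2$, so a literal doubling is impossible, and your reading of the tensor factor is the consistent one.

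\textbf{The minus version fails as you set it up.} Only the new fixed $X$ is blocked. Writing $p=n_{O_1}=n_{O_1'}$ for the new pair, the corner equations are $2p-n=1$ for $a\to b$, $2p-n=-1$ for $b\to a$, and $2p=n$ on the diagonal.
\begin{itemize}
\item Both off-diagonal equations have positive solutions, e.g.\ $(p,n)=(1,1)$ and $(0,1)$. Every index-one class of $\cH$ with $n_{O_0}=1$ contributes to both. So the cone is not ``reversed'': neither $C_a^-$ nor $C_b^-$ is a subcomplex. For the trefoil of Section~\ref{sub:example of full real complex}, $\partial l_2=u\,l_1$ yields both $\partial(b l_2)=a l_1$ and $\partial(a l_2)=U\,b l_1$.
\item Your first step is false. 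The diagonal blocks only see classes with even $n_{O_0}=2p$. Moreover, $C_a^-$ and $C_b^-$ are each free of rank $\vert G\vert$ over $\F[U]$, while $\CFKR^-(\cH)$ has rank $2\vert G\vert$ over $\F[U]$. So neither can be identified with $\CFKR^-(\cH)$.
\item Your second step then has nothing to stand on. The lemma you cite needs $u_1\simeq u_2$, and a Koszul resolution of $u^2-U$ does not by itself relate $\CFKR^-_X(\cH_X)$ to $\CFKR^-(\cH)$.
\item The fallback only removes the $v$-term of the small bigon. It leaves the odd part of $\partial_\cH$ in both off-diagonal blocks.
\end{itemize}

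\textbf{The missing idea.} Run the neck-stretching count of Proposition~\ref{prop: differential oftype changing quasi-stab} with $O$ and $X$ exchanged. Write $\partial_\cH=\sum_k u^k\partial^k$, where $u$ is the variable of $O_0$. In the basis $\{a\xv,b\xv\}$, labelled as in Figure~\ref{fig:OX-OO stabilization}, one gets
\[
\partial_{\cH_X}=\begin{bmatrix}\sum_k U^k\partial^{2k} & \sum_k U^k\partial^{2k+1}\\ \sum_k U^{k+1}\partial^{2k+1} & \sum_k U^k\partial^{2k}\end{bmatrix},
\]
up to $v$-terms that vanish in the minus theory. This is exactly $\partial_\cH$ written in the $\F[U]$-basis $\{u\xv,\xv\}$. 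Hence $\CFKR^-_X(\cH_X)$ is $\CFKR^-(\cH)$ with scalars restricted along $U\mapsto u^2$.

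It follows that
\[
\CFKR^-_X(\cH_X)\otimes_{\F[U]}\F[u]\cong\CFKR^-(\cH)\otimes_{\F[u]}\F[u_1,u_2]/(u_1+u_2)^2 .
\]
Tensor the free complex $\CFKR^-(\cH)$ with the short exact sequence $0\to\F[u]\xrightarrow{u_1+u_2}\F[u_1,u_2]/(u_1+u_2)^2\to\F[u]\to0$. This gives the minus triangle:
\begin{itemize}
\item the third vertex is $\HFKR^-_X\otimes_{\F[U]}\F[u]\cong\HFKR^-_X\otimes_\F(\F_{(0,0)}\oplus\F_{(-1,-1/2)})$;
\item the maps into and out of it have degrees $(-1,-1/2)$ and $(0,0)$;
\item the connecting map is induced by the formal derivative $d\partial_\cH/du$, of degree $(0,1/2)$.
\end{itemize}
Reducing modulo $U$, i.e.\ using $0\to C/u\xrightarrow{u}C/u^2\to C/u\to0$, recovers the hat triangle.
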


Here, we need a change of base ring $\F[U]\to \F[u]$, $U\mapsto u^2$ in order to make  $\HFKR^-_{X}(K,\fro)$ and $\HFKR^-(K,\fra)$ comparable, which leads to an extra $\F_{(0,0)}\oplus \F_{(-1,-1/2)}$ tensor factor in the hat version. Despite this difference, this can be proved in the same way as Theorem~\ref{thm:exact triangle between OX and OO for knots} and also admits a straightforward generalization to links, as Theorem~\ref{thm:spectral sequence relating OO and OX for links}.

It is obvious that as an $\F[u]$-module, $\HFKR^-_{X}(K,\fro) \otimes_{\F[U]} \F[u]$ has rank two and the two associated $\tau^R$ invariants are just $\tau_{X}^R$ and $\tau_{X}^R+1$. Arguing as for Corollary~\ref{cor:bounding tauR_max, tauR_min by tauR}, we can get $\tau_{X}^R \le \tau^R \le \tau_{X}^R+1$, but we do not pursue this observation further here.

\section{Based link invariants, base point action and quasi-stabilization}\label{sec:Based link invariants, base point action and quasi-stabilization}

In Section~\ref{sub:Exact triangle and a common generalization}, we considered a common generalization of the $OO$ and $OX$-theories for the purpose of generalizing the exact triangle from knots to links. In this section, we continue further in this direction by defining real link Floer theory for multi-based (generalized) strongly invertible links (see Definition~\ref{def:based strongly invertible links}) mimicking the construction in \cite{zemke2019link}. Based on this, we consider base point actions  and quasi-stabilization maps on the real link Floer complex/ homology as well as the relationship between them. The morphisms in the exact triangle from Section~\ref{sub:Exact triangle and a common generalization} and~\ref{sub:XX-theory} turn out to be special cases of the quasi-stabilization maps. Moreover, the contents of this section serve as a motivating special case of a real link Floer TQFT.

\subsection{Upgrade to a based link invariant}\label{sub:Upgrade to a based link invariant}
As a generalization of a multi-based link (see~\cite[Definition~2.1]{zemke2019link}) to the real category, we have the following.

\begin{defn}\label{def:based strongly invertible links}
A \emph{real $3$-manifold with a multi-based (generalized) strongly invertible link} is a triple $(Y,\tau,\LL)$, in which \begin{itemize}
    \item $(Y,\tau)$ is a closed real $3$-manifold;
    \item $\LL=(L,\bfO,\bfX)$ where $L$ is an oriented generalized strongly invertible link inside $(Y,\tau)$ and $(\bfO,\bfX)$ are two disjoint collections of base points on $L$ so that \begin{itemize}
        \item $\tau$ fixes $\bfX$ and $\bfO$ setwise, individually;
        \item every component of $L$ contains at least two base points, and $X$ and $O$ appear alternatively; 
        \item the strongly invertible components $L$ split into three subsets $SI_{OO}\cup SI_{OX}\cup SI_{XX}$, such that \begin{itemize}
            \item if $K\in SI_{OO}$, then both points in $K\cap \fix(\tau)$ are labeled by $O$;
            \item if $K\in SI_{XX}$, then both points in $K\cap \fix(\tau)$ are labeled by $X$;
            \item if $K\in SI_{OX}$, then there is exactly one $X$ and one $O$ on $\fix(\tau)\cap K$.
        \end{itemize} 
    \end{itemize}
\end{itemize}
For simplicity, we shall refer to $\LL=(L,\bfO,\bfX)$ as a multi-based (generalized) strongly invertible link when the background $3$-manifold is clear from context.
\end{defn}


\begin{defn}\label{def:heegaard diagram for based s.i.links}
Let $\LL=(L,\bfO,\bfX)$ be a multi-based (generalized) strongly invertible link in $(Y,\tau)$. We say an (embedded) real Heegaard diagram $\cH=(\Sigma,\bm\alpha,\bm\beta,\bfO,\bfX, \tau)$ represents $\LL$ if: 
    \begin{itemize}
        \item $(\Sigma,\bm\alpha,\bm\beta,\bfX,\tau)$ and $(\Sigma,\bm\alpha,\bm\beta,\bfO, \tau)$ are both multi-based real Heegaard diagrams for $(Y,\tau)$. As in Definition~\ref{def:O-heegaard diagram for s.i.links}, we only require $\bfX$ and $\bfO$ to be fixed as sets by $R$.
        \item When we forget $\tau$, $(\Sigma,\bm\alpha,\bm\beta,\bfO,\bfX)$ is an embedded Heegaard diagram for the multi-based link $(Y,\LL)$ in the sense of \cite[Definition~3.1]{zemke2019link}.
        \item The labels of $L\cap \fix(\tau)$ by $O$ and $X$ are recorded faithfully by the Heegaard diagram.
    \end{itemize} 
\end{defn}

Here, we change the notation for the diagrammatic involution from $R$ to $\tau$ to stress that from now on, we will consider \emph{embedded real Heegaard diagrams}, not just abstract ones. We need this, as we want real link Floer theory for multi-based (generalized) strongly invertible links to be a strong real Heegaard invariant in the sense of \cite{GM_real_naturality}, so it is necessary to record how the link embeds into the $3$-manifold via the Heegaard diagram.  

\begin{prop}\label{prop:existence of real HD for multi-based strongly invertible knots and real H moves}
Let $\LL=(L,\bfO,\bfX)$ be a multi-based generalized strongly invertible link in $(Y,\tau)$. Then there exists a real Heegaard diagram representing $\LL$. Moreover, if $\cH$ and $\cH'$ both represent $\LL$, then they can be connected by a sequence of real Heegaard moves:
\begin{itemize}
    \item $\{1\}$-stabilization along the fixed set away from the base points.
    \item $\Z/2$-(de)stabilizations (of index $(1,2)$) away from the fixed point set and the base points;
    \item real handleslide away from the base points;
    \item equivariant isotopy of $\alpha$ and $\beta$-curves away from the base points.
\end{itemize}
Moreover, for any real $\spinc$ structure $\s^R$ on $(Y,\tau)$, we can choose a strongly $\s^R$-admissible real Heegaard diagram $\cH$ representing $\LL$. If two real Heegaard diagrams representing 
$\LL$ are both strongly $\s^R$-admissible, then they can be connected by a sequence of real Heegaard moves such that each intermediate real Heegaard diagram is also strongly $\s^R$-admissible.
\end{prop}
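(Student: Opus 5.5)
Existence comes first. I would build the diagram from a real sutured Heegaard diagram of the complement, following the proof of Proposition~\ref{prop:existence of real HD for strongly invertible knots and real H moves}. The base point set $\bfO\cup\bfX$ cuts $L$ into arcs, and $\tau$ preserves this decomposition. On each component I first pass to a minimal set of base points compatible with the fixed-point labels.
\begin{itemize}
\item For $K\in SI_{OX}$, take the fixed $O$ and the fixed $X$.
\item For $K\in SI_{OO}$ or $SI_{XX}$, take the two fixed base points together with one interchanged pair of the opposite type.
\item For a pair $(K,K')$, take an $O$ and an $X$ on $K$ together with their images on $K'$.
\end{itemize}

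Removing an equivariant tubular neighbourhood of $L$ gives a real sutured manifold $(Y-\nu(L),\gamma,\tau)$. By \cite[Proposition~2.12]{BGX} it has a real sutured Heegaard diagram. Capping off the boundary components, with the sutures decorated by the chosen $O$ and $X$, yields a real Heegaard diagram for this minimal based link. I would then add the remaining base points by equivariant quasi-stabilizations performed away from the fixed set, inserting a new $O$--$X$ pair and its $\tau$-image at the same time. This produces a diagram for $\LL$ itself.

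Next I would make the diagram embedded. Choose an equivariant embedding of $\Sigma$ in $Y$ as a $\tau$-invariant Heegaard surface containing $\bfO\cup\bfX$. Then arrange that the link arcs descend into the two handlebodies exactly as in Definition~\ref{def:heegaard diagram for based s.i.links}, pushing them off equivariantly; this is possible because $\tau$ preserves both the arcs and the handlebodies' roles as prescribed by the real structure. The labels on $\fix(\tau)$ are recorded faithfully by construction.

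Uniqueness is the main step. The argument follows the non-real case \cite[Proposition~2.3]{zemke2019link}, combined with the equivariant Reidemeister--Singer type result for real Heegaard diagrams \cite{guth2025real,GM_real_naturality} and its sutured version \cite{BGX}. Given two diagrams $\cH$ and $\cH'$ representing $\LL$, I would proceed in three steps.
\begin{enumerate}
\item Remove the link: each diagram yields a real sutured diagram of the complement with the same based sutures.
\item Apply the real sutured moves theorem of \cite{BGX}. Since the sutures, and hence the base points, are fixed pointwise, every move can be chosen to avoid the base points.
\item Check that the $\Z/2$-stabilizations of index $(0,3)$ which appear in the closed setting are not needed here. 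In the based setting every component of $\Sigma\setminus\bm\alpha$ and of $\Sigma\setminus\bm\beta$ already contains a base point, so an index $(0,3)$ stabilization would change the base point configuration. This is why it is omitted from the list, in contrast to Proposition~\ref{prop:existence of real HD for strongly invertible knots and real H moves}.
\end{enumerate}
Because the link is embedded with fixed base points, I would take the equivariant isotopies to be ambient isotopies of $Y$ fixing $L$. The \emph{main obstacle} is that each move must respect the embedding of $\Sigma$, not just the abstract diagram. I would handle this as in \cite{GM_real_naturality}: connect the two embedded Heegaard surfaces through a generic one-parameter family of equivariant Morse functions compatible with $L$. Births and deaths of critical point pairs off $\fix(\tau)$ give the index $(1,2)$ $\Z/2$-stabilizations, and those on $\fix(\tau)$ give the $\{1\}$-stabilizations. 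Handleslides arise from equivariant gradient flow-line crossings, which generic symmetric metrics keep away from $L$.

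Admissibility is then routine. I would adapt \cite[Lemma~3.33--3.34]{guth2025real} as summarized in Convention~\ref{conv:setup of holomorphic structure}. First, any real diagram becomes strongly $\s^R$-admissible after equivariant finger moves (winding) along $\alpha$ curves, done in $\tau$-symmetric pairs and away from the base points. Second, given a sequence of moves between two admissible diagrams, I would perform sufficient winding on each intermediate diagram so that each one is admissible. Since winding is an equivariant isotopy, all intermediate diagrams stay admissible.
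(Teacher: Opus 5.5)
Your proposal is correct and follows essentially the same route as the paper, which simply reruns the argument of \cite[Proposition~2.4]{YXHFLR}: existence from a real sutured diagram of the link complement via \cite[Proposition~2.12]{BGX}, the real moves theorem, and the admissibility lemmas of \cite{guth2025real}, together with the remark, exactly as you make it, that fixing the base points makes the index-$(0,3)$ stabilizations unnecessary. One loose point: in the admissibility step, being an equivariant isotopy does not by itself keep the intermediate diagrams admissible, so that sentence should be replaced by the controlled winding argument of \cite[Lemma~3.33--3.34]{guth2025real}.
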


The argument for \cite[Proposition~2.4]{YXHFLR} still works in this case. Note that due to the based assumption, we no longer need to take care of the $(0,3)$-(de)stabilizations, which makes our lives even easier. 

With real Heegaard diagrams in hand, we define real link Floer groups associated to $\LL$. For the holomorphic data, we will keep using Convention~\ref{conv:setup of holomorphic structure}. To each based link $\LL$, we first associate a graded polynomial ring $\cR^-(\LL)=\F[\bfU_{\bfO}, \bfV_{\bfX}]$ which will serve as the base ring for the real link Floer complex. Note that $\bfO$ inside the data can be naturally divided into two subsets $\bfO^f\cup\bfO^p$ so that $\bfO^f$ is fixed pointwise by $\tau$ and $\bfO^p$ consists of pairs of the form $(O_i,O_i')$, which are interchanged by $\tau$. The number $\vert \bfO^f\vert$ is equal to $2 \vert SI_{OO}\vert +\vert SI_{OX}\vert$. The same remark holds for $\bfX$. We will abuse the notation $\bfO^p$ and  $\bfX^p$ to denote the set of \emph{pairs}. For each base point in $\bfO^f$ ($\bfX^f$), we have one variable $u$ ($v$) in $\bfU_{\bfO}$ ($\bfV_{\bfX}$) while for each pair in $\bfO^p$ ($\bfX^p$), we have one variable $U$ ($V$) in $\bfU_{\bfO}$ ($\bfV_{\bfX}$). As in Section~\ref{sub:Basic setup}, we grade the variables by  \[M_{\bfO}^R(u)=-1, \quad M_{\bfX}^R(u)=0;\quad M_{\bfO}^R(U)=-2, \quad M_{\bfX}^R(U)=0; \]
\[M_{\bfO}^R(v)=0, \quad M_{\bfX}^R(v)=-1;\quad M_{\bfO}^R(V)=0, \quad M_{\bfX}^R(V)=-2; \] and let \[A^R(-)=\frac{1}{2}(M_{\bfO}^R(-)-M_{\bfX}^R(-)).\] As before, we refer to $(M^R_{\bfO},A^R)$ as the bigrading, though the roles of $M^R_{\bfO}$ and $M^R_{\bfX}$ are more symmetric.

Fix a real $\spinc$ structure $\s^R$ on $(Y,\tau)$ and a strongly $\s^R$-admissible real Heegaard diagram $\cH$ representing $\LL$. The \emph{minus full real link Floer complex} associated to the multi-based (generalized) strongly invertible link $\LL$, $\fullCFLR^-(\LL,\s^R)$, is the module over $\cR^-(\LL)$ generated by \[\{\xv\in (\T_\alpha\cap \T_\beta)^R| \s^R(\xv)=\s^R \}.\] It is endowed with an endomorphism defined by \[\partial \xv =\sum_{\yv} \sum_{\phi\in \pi_2^R(\xv,\yv),\mu_R(\phi)=1}\# \widehat{\cM}_R(\phi) \prod_{O_k \in\bfO^f }  u_{k}^{n_{O_k}(\phi)} \prod_{X_t\in \bfX^f} v_t^{n_{X_t}(\phi)} \prod_{(O_i,O_i')\in \bfO^p} U_i^{n_{O_i}(\phi)} \prod_{(X_j,X_j')\in \bfX^p} V_j^{n_{X_j}(\phi)}\yv\] on generators and extended linearly over the base ring. At this stage, it might be better to use $\fullCFLR^-(\cH,\s^R)$  instead of $\fullCFLR^-(\LL,\s^R)$ since we haven't analyzed the diagram dependence of this theory. The abuse of notation will be justified in Theorem~\ref{thm:invariance and naturality of the full real link Floer complex}.

This $\partial$ is not necessarily a genuine differential, but its curvature can be deduced from the existing results above. The curvature of a pair of components or a strongly invertible component in $SI_{OO}$ is calculated in Section~\ref{sub:Basic setup}. The curvature for $K\in SI_{XX}$ is just $\omega^O_K$ with the roles of $X$ and $O$ interchanged, while curvature for $K\in SI_{OX}$ was calculated in \cite{YXHFLR} and cited in Section~\ref{sub:Killing the curvature}. The curvature of this complex $\omega_{\LL}\in \cR^-(\LL)$, characterized by \[\partial^2\xv=\omega_{\LL}\cdot \xv, \quad \forall \xv\] is actually an invariant of $\LL$. It is an easy exercise to see that the curvature does not depend on the real $\spinc$ structure $\s^R$ or the real Heegaard diagram $\cH$.

As in Section~\ref{sub:Basic setup}, the full real link Floer chain complex and derived homology theories admit grading structures under mild assumption on the real $\spinc$ structure. For simplicity, we assume the underlying link $L$ is null-homologous in $Y$. On the set of generators, we define \[M_{\bfO}(\xv,\yv)=\mu^R(\phi)-\sum_{O\in\bfO} n_O(\phi),\quad M_{\bfX}(\xv,\yv)=\mu^R(\phi)-\sum_{X\in\bfX} n_X(\phi),\] for $\phi\in \pi_2^R(\xv,\yv)$. $M_{\bfO}$ is well-defined independent of the choice of $\phi$ with value in $\Z$ when $c_1(\frs^R)$ is torsion; otherwise, it is only well-defined in $\Z/\fro(c_1(\frs^R))\Z$, where $2\fro(c_1(\frs^R))$ denotes the divisibility of its first Chern class of $\frs^R$. And a similar statement holds for $M_{\bfX}$. The difference \[A^R(\xv,\yv)= \frac{1}{2}(M_{\bfO}(\xv,\yv)-M_{\bfX}(\xv,\yv))\] serves as the real Alexander grading.

Besides the grading, we also have a filtration by $\Z^{\vert \bfO^f\vert+\frac{1}{2}\vert\bfO^p\vert} \oplus \Z^{\vert \bfX^f\vert+\frac{1}{2}\vert\bfX^p\vert}$ induced on by the powers of variables in $\bfU_{\bfO}$, $\bfV_{\bfX}$. 

We now provide a more precise statement on the invariance and naturality of $\fullCFLR^-(\LL,\s^R)$ which justifies the notation.

\begin{thm}\label{thm:invariance and naturality of the full real link Floer complex}
Fix $\LL\subset (Y,\tau)$ and $\s^R\in \rspinc(Y,\tau)$ as above. If $\cH$ and $\cH'$ are two strongly $\s^R$-admissible real Heegaard diagrams representing $\LL$, then there is $\Z^{\vert \bfO^f\vert+\frac{1}{2}\vert\bfO^p\vert} \oplus \Z^{\vert \bfX^f\vert+\frac{1}{2}\vert\bfX^p\vert}$-filtered map 
\[\Phi_{\cH\to \cH'}\colon \fullCFLR^-(\cH,\s^R) \to \fullCFLR^-(\cH',\s^R)\] which is a chain homotopy equivalence in the category of curved complexes of $\cR^-(\LL)$-modules. This will be referred to as a \emph{change of diagram map}. It preserves the relative gradings $M^R_{\bfO}$, $M^R_{\bfX}$ as well as $A^R$ when they are well-defined.  

If $\cH''$ is yet another strongly $\s^R$-admissible real Heegaard diagram representing $\LL$, then the change of diagram maps satisfy \[\Phi_{\cH\to \cH''}\simeq \Phi_{\cH'\to \cH''} \circ \Phi_{\cH\to \cH'}.\]    
\end{thm}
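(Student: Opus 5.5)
The proof follows the usual naturality package, adapted to curved complexes over $\cR^-(\LL)$. The guiding model is Zemke's argument for multi-based links \cite{zemke2019link}. The real holomorphic input is taken from \cite{guth2025real} and the naturality framework from \cite{GM_real_naturality}.

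By Proposition~\ref{prop:existence of real HD for multi-based strongly invertible knots and real H moves}, any two strongly $\s^R$-admissible diagrams for $\LL$ are joined by a sequence of real Heegaard moves: $\{1\}$-stabilizations along the fixed set, $\Z/2$-stabilizations of index $(1,2)$, real handleslides and equivariant isotopies. All of these take place away from the base points, and every intermediate diagram stays strongly $\s^R$-admissible. For each elementary move I will define a map and then set $\Phi_{\cH\to\cH'}$ to be the composite.

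\textbf{Isotopies and handleslides.} I take the real continuation maps and real triangle maps of \cite{guth2025real}. Each holomorphic polygon class $\psi$ is weighted by $\prod u_k^{n_{O_k}(\psi)}\prod v_t^{n_{X_t}(\psi)}\prod U_i^{n_{O_i}(\psi)}\prod V_j^{n_{X_j}(\psi)}$.

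The key check is that these maps commute with $\partial$ on the nose, even though $\partial^2=\omega_{\LL}\cdot\id$. I will run the usual degeneration argument for index-one real moduli spaces of triangles (respectively quadrilaterals). Besides the broken-strip ends, the boundary contains real boundary degenerations and real $\alpha$/$\beta$ disc bubbles. By \cite[Lemma~4.14--4.15]{guth2025real}, which computed the curvature in Section~\ref{sub:Basic setup}, these extra ends contribute the curvature term $\omega_{\LL}$ on each side. Since the moves avoid base points, the weights are the same on both sides, so the contributions cancel. Hence the triangle map is a morphism of curved complexes; strictly, $\partial f+f\partial=0$ holds because both complexes carry the same curvature.

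\textbf{Stabilizations.} For $\Z/2$- and $\{1\}$-stabilizations I use the standard map $\xv\mapsto\xv\times c$, as in \cite{YXHFLR,BGX}. A neck-stretching argument identifies the differentials. Only the $(1,2)$ type is needed here because the link is based.

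\textbf{Remaining properties.} The filtration and gradings are preserved because every map counts classes with non-negative multiplicities, weighted by the corresponding monomials, and the relative $M^R_{\bfO}$, $M^R_{\bfX}$ are computed from $\mu_R-n$. Each elementary map is a homotopy equivalence of curved complexes, with the inverse given by the reverse move; the homotopies come from quadrilateral counts, using associativity of real polygon maps.

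The main obstacle is the second assertion: that $\Phi$ is well defined up to homotopy, independent of the path of moves, which gives $\Phi_{\cH\to\cH''}\simeq\Phi_{\cH'\to\cH''}\circ\Phi_{\cH\to\cH'}$. Here I would invoke the criterion of \cite{GM_real_naturality}, that a strong real Heegaard invariant is natural. Concretely, I verify their list of axioms: functoriality, commutation of distinguished rectangles, handleswap invariance, and simple-handleswap and no-monodromy for embedded diagrams. For each axiom the unweighted real argument is already in \cite{GM_real_naturality}, as in Juh\'asz--Thurston--Zemke. The extra work is to check that the base-point weights and the curvature do not spoil those arguments. For most axioms this is automatic, since all moves avoid $\bfO\cup\bfX$ and the identities are proved by counting the same polygons.

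The subtle case is the simple handleswap and the moves near the fixed set, where $\{1\}$-stabilization interacts with fixed base points $u$ and $v$. I would first reduce to diagrams where the handleswap region is disjoint from $\fix(\tau)$. The loop there is then a $\Z/2$-equivariant doubling of the usual one, and Zemke's computation \cite{zemke2019link} applies verbatim with the real weights. Embeddedness of the diagrams, as in Definition~\ref{def:heegaard diagram for based s.i.links}, is what makes the no-monodromy axiom meaningful and lets the invariant be assembled as a transitive system, giving the theorem.
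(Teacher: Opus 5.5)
Your proposal rests on the same input as the paper, whose entire proof is the citation \cite[Theorem~2]{GM_real_naturality}. That theorem is this statement, and your outline (a map for each real Heegaard move, assembled via the naturality criterion for strong real Heegaard invariants) is a sketch of how it is proved, so the two routes coincide and the extra verifications you propose are not needed. Some of those verifications are imprecise as written, and the points below are exactly what the cited theorem handles:
\begin{itemize}
\item Index-one real triangles do not pick up $\omega_{\LL}$-type ends at all, since a real-index-two boundary degeneration would leave a triangle of negative index.
\item The genuinely delicate ends are the real-index-one $\{1\}$-boundary degenerations that exist in the minimally pointed case.
\item The reduction of every simple handleswap to a region disjoint from $\fix(\tau)$ is asserted without justification.
\end{itemize}
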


\begin{proof}
This is \cite[Theorem~2]{GM_real_naturality}.
\end{proof}

From this proposition, we can deduce that $\fullCFLR^-(\cH,\s^R)$ form a transitive system over the category of curved complexes over $\cR^-(\LL)$ indexed by the set of strongly $\s^R$-admissible real Heegaard diagrams representing $\LL$ in the sense of \cite[Definition~2.15]{zemke2019link}. Thus, we abuse $\fullCFLR^-(\LL,\s^R)$ to denote this transitive system as well as a specific representative of this system. 

Now for different multi-based generalized strongly invertible links, their full link Floer complexes form transitive systems over different categories since their base rings are in general distinct. This makes it hard to relate invariants associated to them. To remedy this, Zemke introduced the notion of colorings on based links (see \cite{Zemkequasistabandbasepointmoving}, \cite{zemke2019link}). He defined colorings as maps $\bfO\cup \bfX\to \bfP$ for some finite set $\bfP$, which induces a ring homomorphism $\cR^-(\LL)\to \cR^-_{\bfP}=\F[\bfU_{\bfP}]$. The dictionary between the base point convention in our paper and Zemke's is \[\bfO\mapsto \bfw, \quad \bfX\mapsto \bfz.\]  However, we find it more convenient to directly define a coloring as a ring homomorphism. 

\begin{defn}\label{def:coloring on link and colored real link Floer}
Let $\LL=(L,\bfO,\bfX)$ be a multi-based generalized strongly invertible link defined in Definition~\ref{def:based strongly invertible links}. A \emph{coloring} of $\LL$ is defined to be a graded homomorphism $\sigma\colon\cR^-(\LL)\to \cR$ for some graded ring $\cR$. This leads to the \emph{colored real link Floer complex} 
\[\fullCFLR^-(\LL^{\sigma},\s^R)=\fullCFLR^-(\LL,\s^R)\otimes_{\cR^-(\LL)} \cR,\] which comes with a differential $\partial^{\sigma}$ induced from $\partial$. When $\sigma(\omega_{\LL})=0$, we take the homology and define the \emph{colored real link Floer homology} \[\fullHFLR^-(\LL^{\sigma},\s^R)=H_*(\fullCFLR^-(\LL^{\sigma})).\]
\end{defn}

We will consider some special colorings which lead to homology theories generalizing $\HFLR^-$ and $\widehat{\HFLR}$ and their $OO$-counterparts. For convenience, we label the strongly invertible components by $1$, $\ldots$, $l_f$ and pairs of components by $1$, $\ldots$, $l_p$ as in Section~\ref{sub:Basic setup}. 

First, consider the ring \[\cR^-_{O}(L)=\F[u_1,\ldots,u_{l_f}, U_1,\ldots U_{l_p}] \] with $u_i$, $U_j$ graded as usual. There is a natural quotient map \[\sigma_-^{O}: \cR^-(\LL)\to \cR^-_{O}(L),\] which kills all the variables in $\bfV_{\bfX}$. For variables in $\bfU_{\bfO}$, $\sigma(u_{O_k})=u_i$ if $O_k$ lies on the $i$-th strongly invertible component; $\sigma(U_{O_k})=u_i^2$ if $(O_k,O_k')$ lies on the $i$-th strongly invertible component;  $\sigma(U_{O_k})=U_i$ if $(O_k,O_k')$ lies on the $i$-th pair of components. It is easy to check that $\sigma_-^{O}(\omega_{\LL})=0$, from which we define the \emph{$O$-minus version of real link Floer homology} $\HFLR^-(\LL^O,\s^R)$ as $\fullHFLR^-(\LL^{\sigma_-^O},\s^R)$. The chain complex $\fullCFLR^-(\LL^{\sigma_-^O},\s^R)$ will be denoted by $\CFLR^-(\LL^O,\s^R)$.

Alternatively, we can interchange the roles of $\bfO$ and $\bfX$ and consider ring \[\cR^-_{X}(L)=\F[v_1,\ldots,v_{l_f}, V_1,\ldots V_{l_p}] \] with $v_i$, $V_j$ graded as usual. Then there is a map \[\sigma_-^{X}: \cR^-(\LL)\to \cR^-_{X}(L),\] which kills all the variables in $\bfU_{\bfO}$. It is again obvious that $\sigma_-^X(\omega_{\LL})=0$, from which we define the \emph{$X$-minus version of real link Floer homology} $\HFLR^{-}(\LL^X,\s^R)$ as $\fullHFLR^-(\LL^{\sigma_-^X},\s^R)$. The chain complex $\fullCFLR^-(\LL^{\sigma_-^X},\s^R)$ will be denoted by $\CFLR^-(\LL^X,\s^R)$.

Next, we consider the natural quotient map \[\widehat{\sigma}: \cR^-(\LL)\to \cR^-(\LL)/(\bfV_{\bfX},\bfU_{\bfO})=\F,\] which kills all the variables in the polynomial ring. From this, we define the \emph{hat version of real link Floer homology} $\widehat{\HFLR}(\LL,\s^R)$ as $\fullHFLR^-(\LL^{\widehat{\sigma}},\s^R)$. The chain complex $\fullCFLR^-(\LL^{\widehat{\sigma}},\s^R)$ will be denoted by $\widehat{\CFLR}(\LL,\s^R)$.

One can deduce the invariance of the colored real link Floer complexes from that of the full real link Floer complex. Thus, each of them forms a transitive system over (possibly curved) complexes of $\mathrm{codomain}(\sigma)$-modules sharing the same index set as $\fullCFLR^-(\LL,\s^R)$. In particular, we have transitive systems $\CFLR^-(\LL^O,\s^R)$, $\CFLR^-(\LL^X,\s^R)$ and $\widehat{\CFLR}(\LL,\s^R)$, all indexed by strongly $\s^R$-admissible diagrams representing $\LL$, over the categories of chain complexes of $\cR^-_{O}(L)$, $\cR^-_{X}(L)$ and $\F$-modules, respectively. Besides having zero curvature which makes taking homology available, these theories have other advantages: The base rings depend only on the underlying oriented link $L$, so they are actually link invariants independent of base point information. Moreover, this theory admits well-defined type-changing quasi-stabilization maps, see Section~\ref{sub:quasi-stabilization}.

On the full real link Floer complex, we have a filtration given by powers of variables in $\bfU_{\bfO}$ and $\bfV_{\bfX}$. In the colored case, we have a similar filtration when the codomain of $\sigma$ is a (graded) polynomial ring. For example, we have $\Z^{l_p+l_f}$-filtration on $\CFLR^-(\LL^O,\s^R)$ and $\CFLR^-(\LL^X,\s^R)$. 

As in Section~\ref{sec:A different theory for strongly invertible links}, we can take direct sum over all real $\spinc$ structures in $\rspinc(Y,\tau)$ and get $\fullCFLR^-(\LL)$ as well as $\HFLR^\circ (\LL)$, which are link invariants. It is worth mentioning that both $OO$ and $OX$-theories of the underlying oriented link $L$ (with any choice of auxiliary data) can be recovered from the multi-based theory by choosing $\bfO$ and $\bfX$ properly. 

Before ending this subsection, we introduce an important coloring as well as a variation of the differential. This originated from a discussion with Gary Guth and Ciprian Manolescu. Consider $\cR^-_{s}(\LL)= \F[\bfu_{\bfO},\bfv_{\bfX}]$, in which we have one $u\in \bfu_{\bfO}$ ($v\in \bfv_{\bfX}$) for each pair in $\bfO^p$ ($\bfX^p$) and each point in $\bfO^f$ ($\bfX^f$). They share the same bi-grading as small $u$ and $v$ in $\cR^-(\LL)$. Now we define \[\sigma_s: \cR^-(\LL)\to \cR^-_{s}(\LL),\] which is the identity on variables associated to base points in $\bfO^f$ or $\bfX^f$ and is $U_O\mapsto u_{O}^2$ ($V_X\mapsto v_{X}^2$) for $(O,O')\in \bfO^p$ ($(X,X')\in \bfX^p$). Note that $\sigma_s$ is an injective ring map and the curvature can be written down explicitly using the previous calculation. First observe that $\sigma_s(\omega_{\LL})$ is zero if and only if $\LL$ is a knot with only one pair of $(O,X)$-base points. Next assume we are not in this case. Then \begin{itemize}
	\item for a pair of components $(K,K')$, $\omega_{K,K'}^s$ reads $u_1^2v_1^2+ u_2^2v_1^2\ldots u_n^2v_{n-1}^2+u_n^2v_n^2+ u_1^2v_n^2$ ($2n$ terms in total);
	\item for $K\in SI_{OX}$, $\omega_{K}^s$ reads $u_1^2v_1^2+ u_2^2v_1^2\ldots u_n^2v_{n-1}^2+u_n^2v_n^2$ ($2n-1$ terms in total);
	\item for $K\in SI_{OO}$, $\omega_{K}^s$ reads $u_1^2v_1^2+ u_2^2v_1^2\ldots u_n^2v_{n-1}^2+u_n^2v_n^2+ u_{n+1}^2v_n^2$ ($2n$ terms in total);
	\item for $K\in SI_{XX}$, $\omega_{K}^s$ reads $u_1^2v_1^2+ u_1^2v_2^2\ldots u_{n-1}^2v_{n}^2+u_n^2v_{n}^2+u_n^2v_{n+1}^2$ ($2n$ terms in total).
\end{itemize}  
Since we are working in characteristic $2$, each of these has a unique square root. For example, for $K\in SI_{OX}$, $\omega_{K}^s= (u_1v_1+ u_2v_1\ldots u_nv_{n-1}+u_nv_n)^2$. Following an idea from usual Lagrangian Floer homology \cite{FOOO09a}, we introduce a \emph{bounding chain} $\bL$, which is zero if $\LL$ is a knot with only one pair of $(O,X)$-base points and is the sum over all these square roots otherwise. Then, let $D=\partial + \bL \cdot \id$ be a new endomorphism on $\fullCFLR^-(\LL^{\sigma_s})$, which is a genuine differential, as $D^2=\partial^2+ \bL^2 \cdot \id=0$.

As a direct corollary of Theorem~\ref{thm:invariance and naturality of the full real link Floer complex}, we have the following:
\begin{cor}
Let $\LL\subset (Y,\tau)$ be a multi-based strongly invertible link and let $\s^R$ be a real $\spinc$ structure on $(Y,\tau)$, then $(\fullCFLR^-(\LL^{\sigma_s},\s^R),D)$ is a natural invariant associated to $\LL$. More precisely, for any real Heegaard diagrams $\cH$, $\cH'$ of $\LL$, the change of diagram map $\Phi_{\cH\to \cH'}$ from Theorem~\ref{thm:invariance and naturality of the full real link Floer complex} induces a homotopy equivalence \[\Phi_{\cH\to \cH'}^s\colon (\fullCFLR^-(\cH,\s^R)^{\sigma_s},D) \to (\fullCFLR^-(\cH',\s^R)^{\sigma_s} ,D).\] Moreover, if $\cH''$ is yet another real Heegaard diagram for $\LL$, then \[\Phi_{\cH'\to \cH''}^s\circ \Phi_{\cH\to \cH'}^s= \Phi_{\cH\to \cH''}^s .\] 
\end{cor}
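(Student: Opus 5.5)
The plan is to push the curved homotopy equivalence of Theorem~\ref{thm:invariance and naturality of the full real link Floer complex} through the injective coloring $\sigma_s$ and then check that adding the scalar term $\bL\cdot\id$ does not interfere with anything. First, base change along $\sigma_s$. Since $\Phi_{\cH\to\cH'}$, its homotopy inverse and all homotopies are $\cR^-(\LL)$-linear morphisms of curved complexes with common curvature $\omega_{\LL}$, tensoring with $\cR^-_s(\LL)$ via $\sigma_s$ gives $\cR^-_s(\LL)$-linear maps. These are morphisms of curved complexes with curvature $\sigma_s(\omega_{\LL})=\bL^2$, and the relation $\Phi_{\cH\to\cH''}\simeq\Phi_{\cH'\to\cH''}\circ\Phi_{\cH\to\cH'}$ survives the base change. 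We define $\Phi^s_{\cH\to\cH'}=\Phi_{\cH\to\cH'}\otimes\id$.

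Next, we check compatibility with $D$. A morphism $f$ in the curved category satisfies $f\partial=\partial f$. Since $\bL$ lies in the base ring $\cR^-_s(\LL)$ and $f$ is linear over it, we get $f(\bL\cdot x)=\bL\cdot f(x)$, and hence $fD=Df$. A homotopy $H$ with $f g+\id=\partial H+H\partial$ (characteristic $2$) likewise satisfies $\partial H+H\partial=DH+HD$, because the two $\bL$ terms cancel: $\bL H+H\bL=2\bL H=0$. Therefore $\Phi^s_{\cH\to\cH'}$ is a chain homotopy equivalence of genuine chain complexes $(\,\cdot\,,D)$, and the composition law holds up to chain homotopy for $D$. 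Here it is essential that $\bL$ depends only on $\LL$, i.e.\ it is the square root of the diagram-independent curvature, so it is the same element on both sides. This in turn rests on the earlier remark that $\omega_{\LL}$ is independent of $\cH$ and $\s^R$.

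The main obstacle is the strict equality $\Phi^s_{\cH'\to\cH''}\circ\Phi^s_{\cH\to\cH'}=\Phi^s_{\cH\to\cH''}$ as stated, since Theorem~\ref{thm:invariance and naturality of the full real link Floer complex} only gives chain homotopy. I would therefore read the statement as equality in the homotopy category, i.e.\ the transitive system sense of \cite[Definition~2.15]{zemke2019link}. Concretely, I would interpret $\Phi^s$ as the homotopy class of the map, or as the induced map on $H_*(\,\cdot\,,D)$, and derive equality there from the homotopy relation established above. With that interpretation the corollary follows formally.
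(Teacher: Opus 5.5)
Your argument is correct and is essentially the paper's. The corollary is stated as a direct consequence of Theorem~\ref{thm:invariance and naturality of the full real link Floer complex}, and the paper records exactly your key observation in Section~\ref{subsub:Base point actions on Dcomplex}: since $\bL$ lies in the base ring and is determined by $\LL$ alone, $\cR^-_s(\LL)$-linear chain maps and homotopies for $\partial^{\sigma_s}$ are the same as those for $D$. Your reading of the composition law as an equality up to chain homotopy (i.e.\ in the transitive-system sense) is the right one, since the theorem only provides $\simeq$.
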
 

For simplicity, from now on, we will abbreviate $(\fullCFLR^-(\LL^{\sigma_s},\s^R),D)$ as $(\fullCFLR^-(\LL^s,\s^R),D)$.

\subsection{Base point actions}\label{sub:base point action}

Following \cite{zemke2019link} (see also \cite{Sarkar_2015movingbasept} and \cite{Zemkequasistabandbasepointmoving}), we introduce base point actions on the real link Floer complexes. We present an algebraic definition in this subsection and a more geometric interpretation will be given in Section~\ref{subsub:Compositions}.

As above, we fix a multi-based (generalized) strongly invertible link $\LL=(L,\bfO,\bfX)$ in $(Y,\tau)$, a real $\spinc$ structure $\s^R\in \rspinc(Y,\tau)$ and a strongly $\s^R$-admissible real Heegaard diagram $\cH$ representing $\LL$.

If $O_c\in \bfO^f$ is a base point lying on the fixed set, we define \[\Phi_{O_c}^f\colon \fullCFLR^-(\LL,\s^R)\to \fullCFLR^-(\LL,\s^R)\]
\[\Phi_{O_c}^f(\xv)=u_{c}^{-1}\sum_{\yv}\sum_{\phi\in \pi_2^R(\xv,\yv),\mu^R(\phi)=1} n_{O_c}(\phi)\# \widehat{\cM}_R(\phi) U_{\bfO}^{n_{\bfO}(\phi)} V_{\bfX}^{n_{\bfX}(\phi)} \yv,\] in which $U_{\bfO}^{n_{\bfO}(\phi)}$ and $V_{\bfX}^{n_{\bfX}(\phi)}$ are shorthands for \[\prod_{O_k \in\bfO^f}  u_{k}^{n_{O_k}(\phi)} \cdot \prod_{(O_i,O_i')\in \bfO^p} U_i^{n_{O_i}(\phi)} \quad \text{and}\quad \prod_{X_t\in \bfX^f} v_t^{n_{X_t}(\phi)} \cdot  \prod_{(X_j,X_j')\in \bfX^p} V_i^{n_{X_j}(\phi)},\] respectively.

If $(O_c,O_c') \in \bfO^p$ is a pair of base points off the fixed set, we define \[\Phi_{(O_c,O_c')}^p\colon \fullCFLR^-(\LL,\s^R)\to \fullCFLR^-(\LL,\s^R)\]
\[\Phi_{(O_c,O_c')}^p(\xv)=U_{c}^{-1}\sum_{\yv\in (\T_{\alpha}\cap\T_{\beta})^R}\sum_{\phi\in \pi_2^R(\xv,\yv),\mu^R(\phi)=1} n_{O_c}(\phi)\# \widehat{\cM}_R(\phi) U_{\bfO}^{n_{\bfO}(\phi)} V_{\bfX}^{n_{\bfX}(\phi)} \yv.\]

Similarly, we can define $\Psi^f_{X}$ for $X\in \bfX^f$ and $\Psi^p_{(X,X')}$ for $(X,X') \in \bfX^p$. One can check easily from the definition that $\Phi_{O_c}^f$ is of bigrading $(0,1/2)$ while $\Phi_{(O_c,O_c')}^p$ is of bigrading $(1,1)$.

As for the usual $\Phi_{w}$ and $\Psi_{z}$ maps, these can be regarded as `formal derivatives' of the differential. More precisely, they can be alternatively characterized by  \[\Phi_{O}^f(\xv)=\frac{d}{du_O}(\partial \xv), \quad \Phi_{(O,O')}^p(\xv)=\frac{d}{dU_O}(\partial \xv);\]
\[\Psi_{X}^f(\xv)=\frac{d}{dv_X}(\partial \xv), \quad \Phi_{(X,X')}^p(\xv)=\frac{d}{dV_X}(\partial \xv),\]
when $\xv$ is a generator, and then extend the maps linearly over the base ring.

The base point actions are also well-defined on the colored real link Floer complex $\fullCFLR^-(\LL^{\sigma},\s^R)$ by replacing elements in $U_{\bfO}$ and $V_{\bfX}$ with their images under $\sigma$.

\begin{lem}\label{lem:commutation relation of base point actions and the differential}
On $\fullCFLR^-(\LL,\s^R)$, we have \[\partial\circ \Phi_{O}^f+ \Phi_{O}^f\circ\partial=0; \quad \partial\circ \Psi_{X}^f+ \Psi_{X}^f\circ\partial=0; \]
 \[\partial\circ \Phi_{(O,O')}^p+ \Phi_{(O,O')}^p\circ\partial= V_{X_1}+ V_{X_2}; \quad \partial\circ \Psi_{(X,X')}^p+ \Psi_{(X,X')}^p\circ\partial= U_{O_1}+ U_{O_2}. \] 
Here, $(X_1,X_1')$ and $(X_2,X_2')$ are the pairs of base points adjacent to $(O, O')$; $(O_1,O_1')$ and $(O_2,O_2')$ are characterized similarly. We may have $O_i=O_i'$ which is a fixed base point, in which case, we use $U_{O_1}$ to denote $u_{O_1}^2$ and similarly for $X_i$.
After coloring, if $\fullCFLR^-(\LL^{\sigma},\s^R)$ becomes a chain complex, i.e., $\sigma(\omega_{\LL})=0$, then all the base point actions become chain maps.
\end{lem}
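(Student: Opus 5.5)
The plan is to exploit the description of the base point actions as formal derivatives of the differential. Write $\partial$ on generators as a matrix with entries in $\cR^-(\LL)$, and extend $\frac{d}{du_O}$ (resp. $\frac{d}{dU_O}$, $\frac{d}{dv_X}$, $\frac{d}{dV_X}$) to a derivation of $\cR^-(\LL)$, hence to a derivation acting entrywise on endomorphisms of the free module $\fullCFLR^-(\cH,\s^R)$. By definition, $\Phi^f_O$ is the endomorphism whose matrix is the derivative of the matrix of $\partial$, and similarly for the other three actions. Since we work over $\F$, the Leibniz rule applied to $\partial\circ\partial$ gives
\[\frac{d}{du_O}(\partial^2)=\Phi_O^f\circ\partial+\partial\circ\Phi_O^f,\]
with no sign issues. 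This holds because $\partial$ commutes with scalars and the matrix of a composition is the product of matrices.

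The next step is to use the curvature formula $\partial^2=\omega_{\LL}\cdot\id$ established in Section~\ref{sub:Basic setup}, in Section~\ref{sub:Killing the curvature}, and in the discussion preceding Theorem~\ref{thm:invariance and naturality of the full real link Floer complex}. This reduces the lemma to computing partial derivatives of $\omega_{\LL}$, and I would go case by case through the four types of components. A fixed $u_O$ appears in $\omega_{\LL}$ only through $u_O^2$: in the terms $u_1^2V_1$ or $u_{n+1}^2V_n$ for $SI_{OO}$ and $SI_{OX}$, or $u_1^2v_1^2$ in the degenerate case. Its derivative therefore vanishes in characteristic $2$, which gives the first identity, and the same reasoning applies to $v_X$.

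For a paired variable $U_O$, I would look at the chain $\ldots+U_{i}V_{i-1}+U_iV_i+\ldots$. Here $U_O$ appears exactly twice, multiplied by the variables of the two adjacent $X$-pairs, so the derivative is $V_{X_1}+V_{X_2}$. When a neighbour is a fixed $X$ (in $SI_{OX}$ with the term $U_nv_n^2$, or in $SI_{XX}$), it contributes $v_X^2$, which matches the convention stated in the lemma. The symmetric argument gives the $\Psi^p$ identity.

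The main obstacle is justifying the formal-derivative characterization, and in particular that division by $u_c$ or $U_c$ is legitimate. The factor $n_{O_c}(\phi)$ kills every term with $n_{O_c}(\phi)=0$, and for a pair $(O_c,O_c')$ real symmetry gives $n_{O_c}=n_{O_c'}$, so the exponent of $U_c$ is $n_{O_c}(\phi)$ and differentiation produces exactly $n_{O_c}U_c^{n_{O_c}-1}$ mod $2$. Checking that the curvature identity holds as an identity of matrices over $\cR^-(\LL)$ (rather than only after some quotient) is the one point I would verify carefully against \cite[Lemma~4.14-4.15]{guth2025real}.

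Finally, after a coloring $\sigma$, the colored actions are obtained by applying $\sigma$ to the matrix entries, and the relations above push forward. If $\sigma(\omega_{\LL})=0$, then $\sigma$ kills the right-hand sides $V_{X_1}+V_{X_2}$ (or their analogues), because these are precisely the coefficient combinations forced to vanish. I would verify this for the colorings of interest, since literally one only gets $\sigma(\partial\omega_{\LL})$, and confirm that the actions are then chain maps.
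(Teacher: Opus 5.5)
Your argument for the four identities is the paper's own: differentiate $\partial^2=\omega_{\LL}\cdot\id$ entrywise using the Leibniz rule, which carries no signs over $\F$, and read off the partial derivatives of $\omega_{\LL}$. Your bookkeeping is correct: fixed variables occur only squared, and a paired $U_O$ occurs exactly in the two monomials with its adjacent $X$-pairs, where a fixed neighbour contributes $v_X^2$.

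The gap is in your last paragraph. It is not true that $\sigma(\omega_{\LL})=0$ forces $\sigma$ to kill $V_{X_1}+V_{X_2}$. Take $L=K\cup K'$ a pair of components with $O_1,X_1,O_2,X_2$ on $K$, so that $\omega_{\LL}=(U_1+U_2)(V_1+V_2)$. Color by $U_1,U_2\mapsto U$ and leave $V_1,V_2$ alone. Then $\sigma(\omega_{\LL})=0$, yet $\partial\Phi^p_{(O_1,O_1')}+\Phi^p_{(O_1,O_1')}\partial=V_1+V_2\neq 0$ on the colored complex, so this action is not a chain map. Your computation actually proves $\partial^{\sigma}\Phi^p_{(O,O')}+\Phi^p_{(O,O')}\partial^{\sigma}=\sigma\bigl(\frac{d\omega_{\LL}}{dU_O}\bigr)$. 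The correct hypothesis for the final sentence is therefore that $\sigma$ kills these partial derivatives, for instance when the two adjacent pairs receive the same color. This holds for $\sigma^O_-$, $\sigma^X_-$ and $\widehat\sigma$. The fixed-point actions $\Phi^f,\Psi^f$ are chain maps for every coloring, since their commutator with $\partial$ already vanishes before coloring.

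The paper's one-line proof, which differentiates ``$\partial^2=\omega_{\LL}\cdot\id$, or $\partial^2=0$'', does not close this hole either. Differentiating the colored relation $(\partial^{\sigma})^2=0$ with respect to a variable of the target ring gives the sum of the colored actions of all base points sent to that variable, not each action individually. It is also unavailable when a variable is killed. Your instinct to check the colorings of interest was right, but the general claim should be replaced by the stronger hypothesis rather than asserted.
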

\begin{proof}
All these follow immediately by taking derivatives $\frac{d}{du_{O}}$,  $\frac{d}{dU_{O}}$, etc. of the equations \[\partial^2=\omega_{\LL}\cdot\id, \text{ or } \partial^2=0.\]
\end{proof}

\begin{prop}\label{prop:natuality and invariance of base point action}
The base point actions commute with change of diagram maps characterized in \cite[Section~8]{GM_real_naturality} up to chain homotopy. Thus, we have well-defined base point actions on the transitive chain homotopy type invariants.
\end{prop}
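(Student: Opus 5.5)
The plan follows Zemke's argument for \cite[Lemma~4.9 and Proposition~4.12]{zemke2019link}: the change of diagram map $\Phi_{\cH\to\cH'}$ of Theorem~\ref{thm:invariance and naturality of the full real link Floer complex} is, by \cite[Section~8]{GM_real_naturality}, a composition of elementary maps. These are real triangle maps for equivariant isotopies and real handleslides, continuation maps for changes of the symmetric almost complex structure, and the maps associated to $\{1\}$- and $\Z/2$-(de)stabilizations, all performed away from the base points (Proposition~\ref{prop:existence of real HD for multi-based strongly invertible knots and real H moves}). It therefore suffices to show that each elementary map commutes with $\Phi^f_O$, $\Phi^p_{(O,O')}$, $\Psi^f_X$ and $\Psi^p_{(X,X')}$ up to a homotopy. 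For curved complexes, this means an $\cR^-(\LL)$-linear $H$ with $\Phi'F+F\Phi=\partial H+H\partial$. I will treat only $\Phi^f_O$ in detail; the other three cases are identical with $u_O$ replaced by $U_O$, $v_X$ or $V_X$.

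\textbf{Step 1 (formal derivative of the $A_\infty$-type relations).} Each elementary map $F$ is defined by counting real holomorphic polygons (or disks with a moving complex structure) weighted by $U_{\bfO}^{n_{\bfO}}V_{\bfX}^{n_{\bfX}}$. It satisfies $\partial' F+F\partial=0$ as an identity of $\cR^-(\LL)$-linear maps, because both complexes carry the same curvature $\omega_{\LL}$, and this identity comes from degenerations of index-one moduli spaces. I set $H=\frac{d}{du_O}F$, defined by inserting the factor $n_O(\psi)u_O^{-1}$ into the count. Differentiating the identity $\partial'F+F\partial=0$ with respect to $u_O$ and using the Leibniz rule gives
\[\Phi'^f_O F+\partial' H+H\partial+F\Phi^f_O=0,\]
since by definition $\Phi^f_O=\frac{d}{du_O}\partial$ on generators. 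This is exactly the required homotopy relation. For the stabilization maps, which are given by $\xv\mapsto \xv\times\theta$ in the closed-ended model, I instead check directly that the derivative vanishes or is already intertwined. Here I use the usual neck-stretching identification of the differentials, under which multiplicities at base points are unchanged because the stabilization happens away from the base points.

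\textbf{Step 2 (well-definedness of the derivative).} I need that $\frac{d}{du_O}$ is legitimate. One point to check is that $n_O(\psi)$ is additive under the boundary degenerations used in Step~1, which holds because the multiplicity at a point is additive under concatenation of domains. Another is that for a fixed base point $O\in\bfO^f$ the weight is $u_O^{n_O}$ with a single variable, while for a pair $(O,O')$ the weight is $U^{n_O}=U^{n_{O'}}$ by $\tau$-symmetry of real domains. The case needing care is where degenerations include boundary degenerations or real disk bubbles at fixed points; these are what produce curvature terms such as $u_O^2 V$. I will handle this by differentiating the full identity including its curvature term: $\frac{d}{du_O}(\omega_{\LL})$ appears on both sides and cancels because $\omega_{\LL}$ is the same for both diagrams. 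For colored complexes with $\sigma(\omega_{\LL})=0$, the relation descends after applying $\sigma$.

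\textbf{Step 3 (transitive system).} I will conclude with Zemke's formal argument: maps commuting up to homotopy with every elementary change of diagram map descend to endomorphisms of the transitive system. The main obstacle I expect is Step~1 for the stabilization and handleslide maps in the real setting. There I must confirm that the GM real triangle counts are literally of the weighted form above, so that the differentiation trick applies. If the $\{1\}$-stabilization along the fixed set introduces extra index-one real disks passing through the new fixed locus, I would verify by a direct domain computation, as in Remark~\ref{rmk:differential of OX->OO stabilization}, that such disks avoid all base points.
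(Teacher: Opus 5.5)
Your proposal is correct and is essentially the paper's argument. The paper likewise differentiates the on-the-nose identity $\Phi_{\cH\to\cH'}\circ\partial_{\cH}+\partial_{\cH'}\circ\Phi_{\cH\to\cH'}=0$ with respect to the relevant variable, following \cite[Lemma~3.2]{Zemkequasistabandbasepointmoving}, so the formal derivative of the change of diagram map is the required homotopy. Since this step is purely algebraic (a derivation of $\cR^-(\LL)$ applied entrywise to matrices in the generator bases), you can apply it directly to $\Phi_{\cH\to\cH'}$, which makes your decomposition into elementary maps and the geometric concerns of Step~2 unnecessary; note also that in this setting those elementary maps are real rectangle-counting maps $\Psi^{\bm\alpha\to\bm\alpha'}_{\bm\beta\to\bm\beta'}$ rather than triangle maps.
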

\begin{proof}
This can be proved in exactly the same way as \cite[Lemma~3.2]{Zemkequasistabandbasepointmoving} by observing that \[\Phi_{\cH\to \cH'}\circ \partial_{\cH} +\partial_{\cH'}\circ \Phi_{\cH\to \cH'} =0\] holds on the nose and using the formal differential expression of base point actions. 

\end{proof}

\begin{lem}\label{lem:commutation between base point actions}
Let $(Y,\tau,\LL)$ be a multi-based generalized strongly invertible link. Let $(X_i,X_i')$ or $(O_i,O_i')$ ($i=1,2$) be any pairs of base points on $\LL$, and let $X^f_i$, $O^f_i$ ($i=1,2$) be any fixed base points. Then we have \[\Phi_{(O_1,O_1')}^p\Phi^p_{(O_2,O_2')}+\Phi_{(O_2,O_2')}^p\Phi^p_{(O_1,O_1')}\simeq 0,\quad \Phi_{(O_i,O_i')}^p\Phi^f_{O^f_j}+\Phi_{O^f_j}^f\Phi^p_{(O_i,O_i')}\simeq 0;\]
\[\Psi_{(X_1,X_1')}^p\Psi^p_{(X_2,X_2')}+\Psi_{(X_2,X_2')}^p\Psi^p_{(X_1,X_1')}\simeq 0,\quad \Psi_{(X_i,X_i')}^p\Psi^f_{X_j^f}+\Psi_{X^f_j}^f\Psi^p_{(X_i,X_i')}\simeq 0.\]
Furthermore, we have 
\[\Phi^f_{O_i^f}\Psi^f_{X_j^f}+ \Psi^f_{X^f_j}\Phi^f_{O^f_i}\simeq 0,\quad \Phi^p_{(O_i,O_i')}\Psi^p_{(X_i,X_i')}+ \Psi^p_{(X_i,X_i')}\Phi^p_{(O_i,O_i')}+ N(O_j,X_i)\cdot \id\simeq 0;\]
\[\Phi^f_{O_j^f}\Psi^p_{(X_i,X_i')}+ \Psi^p_{(X_i,X_i')}\Phi^f_{O_j^f}\simeq 0,\quad \Phi^p_{(O_i,O_i')}\Psi^f_{X_j^f}+ \Psi^f_{X_j^f}\Phi^p_{(O_i,O_i')}\simeq 0.\]
Here, $N(O_j,X_i)\in \F$ denotes half of the number of segments in $L\setminus (\bfO\cup \bfX)$ with both ends in $\{O_j,O_j',X_i,X_i'\}$. (Using the symmetry, one can see easily that this number is even, so we can consider half of it modulo $2$.)
\end{lem}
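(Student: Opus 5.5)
The plan is to follow the argument of \cite[Lemma~4.8]{zemke2019link}, using the formal-derivative description of the base point actions. Since each base point action is a formal derivative of $\partial$ with respect to one variable, I will differentiate the curvature identity $\partial^2=\omega_{\LL}\cdot\id$ twice, once with respect to each of the two relevant variables.

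\textbf{Step 1: the second-derivative identity.} For two variables $a,b$ of $\cR^-(\LL)$, write $\partial=\sum_{\xv}\partial_{\xv}$ with coefficients in $\cR^-(\LL)$. Define the second-order operator $\Theta_{a,b}$ by
\[\Theta_{a,b}(\xv)=\frac{d^2}{da\,db}(\partial\xv),\]
which is the count of index one classes weighted by $n_a(\phi)n_b(\phi)$ divided by $ab$. When $a=b$ one uses the binomial coefficient $\binom{n_a}{2}$ instead. The Leibniz rule over $\F$ applied to $\partial^2=\omega_{\LL}\id$ gives
\[\frac{d^2(\partial^2)}{da\,db}=\partial\Theta_{a,b}+\Theta_{a,b}\partial+\frac{d\partial}{da}\frac{d\partial}{db}+\frac{d\partial}{db}\frac{d\partial}{da}=\frac{d^2\omega_{\LL}}{da\,db}\cdot\id.\]
This shows that the anticommutator of the two base point actions equals $\frac{d^2\omega_{\LL}}{da\,db}\cdot\id$ up to the null-homotopy $\Theta_{a,b}$.

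\textbf{Step 2: read off the second derivatives of the curvature.} I will use the explicit formulas from Section~\ref{sub:Basic setup} and Section~\ref{sub:Killing the curvature}, in which $\omega_{\LL}$ is a sum of monomials of the forms $U_iV_j$, $u^2V$, $Uv^2$ and $u^2v^2$ (the last only in the degenerate case).

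\begin{itemize}
\item \textbf{Two variables of the same type} (both $U/u$ or both $V/v$). No monomial is a product of two distinct $O$-variables, so the mixed derivative vanishes.
\item \textbf{A fixed variable $u$ or $v$ with anything.} The fixed variables occur only squared, so every first derivative in $u$ already carries a factor $2u=0$. The second derivative is therefore zero. This handles all the relations involving $\Phi^f$ or $\Psi^f$, including $\Phi^f\Psi^f$, $\Phi^f\Psi^p$ and $\Phi^p\Psi^f$.
\item \textbf{The mixed pair case $\frac{d^2}{dU_{O_j}dV_{X_i}}$.} This counts the monomials $U_{O_j}V_{X_i}$ in $\omega_{\LL}$. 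Each such monomial corresponds to adjacency of $O_j$ (or $O_j'$) with $X_i$ (or $X_i'$) along $L$, and by the $\tau$-symmetry each adjacency appears together with its mirror. The coefficient is therefore half the number of such segments, taken mod $2$, which is exactly $N(O_j,X_i)$.
\end{itemize}

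\textbf{Step 3: the main obstacle.} The delicate point is making the derivative calculus honest. The actions are defined with $u_c^{-1}n_{O_c}(\phi)$, so they are genuinely derivatives. However, because fixed variables appear squared, the identification of $\frac{d}{du}$ of the curvature must be checked at the level of the real $\partial^2$ count, in the spirit of \cite[Lemma~4.14--4.15]{guth2025real}, rather than only formally. Concretely, one needs to verify that the boundary degenerations contributing $u^2V$ terms give precisely those monomials, with no hidden odd power of $u$.

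I would also check that $\Theta_{a,b}$ is well defined; admissibility gives the needed finiteness. Finally, the relations descend to colored complexes and commute with change of diagram maps, as in Proposition~\ref{prop:natuality and invariance of base point action}.
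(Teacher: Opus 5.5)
Your proposal is correct and is essentially the paper's argument. The paper differentiates the first-order identities of Lemma~\ref{lem:commutation relation of base point actions and the differential} (which are themselves derivatives of $\partial^2=\omega_{\LL}\cdot\id$) once more, using $\frac{d}{dU_O}U_O=1$ and $\frac{d}{du_O}u_O^2=0$ over $\F$; this is exactly your second-derivative computation, with the null-homotopy given by the mixed second derivative of $\partial$. The concern in your Step~3 is unnecessary: once $\partial^2=\omega_{\LL}\cdot\id$ holds as a matrix identity over $\cR^-(\LL)$ with the explicit $\omega_{\LL}$, entrywise formal differentiation is purely algebraic, so no further holomorphic-curve check is needed.
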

\begin{proof}
All these can be proved by differentiating equations from Lemma~\ref{lem:commutation relation of base point actions and the differential} and noting that \[\frac{d}{dU_O} U_O=1, \quad \frac{d}{du_O} u_O^2=0;\]
\[\frac{d}{dV_X} V_X=1, \quad \frac{d}{dv_X} v_X^2=0,\]  when we work over $\F$.
\end{proof}

\begin{lem}\label{lem:squares of the base point action}
The endomorphisms $\Psi^p_{(X,X')}$ and $\Phi^p_{(O,O')}$ satisfy \[(\Psi^p_{(X,X')})^2\simeq 0 \quad \text{and}\quad (\Phi^p_{(O,O')})^2\simeq 0.\]
When the component of $L$ containing $O$($X$) has more than one $O$-base point, the endomorphisms $\Psi^f_{X}$ and $\Phi^f_{O}$ satisfy \[(\Psi^f_{X})^2\simeq U_{O_a} \quad \text{and}\quad (\Phi^f_{O})^2\simeq V_{X_a},\] in which $(X_a,X_a')$($(O_a,O_a')$) is the pair of base points adjacent to $O(X)\in \fix(\tau)$; when that component lies in $SI_{OX}$ and is minimally pointed, they satisfy \[(\Psi^f_{X})^2\simeq u_{O}^2 \quad \text{and}\quad (\Phi^f_{O})^2\simeq v_{X}^2,\] 
if there is another component in the link; \[(\Psi^f_{X})^2\simeq 0 \quad \text{and}\quad (\Phi^f_{O})^2\simeq 0,\] if $\LL$ is just a minimally pointed strongly invertible knot.
\end{lem}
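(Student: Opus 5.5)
The plan is to follow the proof of \cite[Lemma~4.9]{zemke2019link}, replacing the ordinary second derivative by a \emph{divided} second derivative. Over $\F$, differentiating $\partial^2=\omega_{\LL}\cdot\id$ twice with respect to the same variable gives nothing useful, since $\frac{d^2}{dU^2}(fg)=f''g+2f'g'+fg''$ and the cross term vanishes. Instead, for a variable $W\in\{u_O,U_O,v_X,V_X\}$ I define the $\cR^-(\LL)$-linear operator $\mathcal{D}^{(2)}_W$ on monomials by
\[\mathcal{D}^{(2)}_W(W^n)=\binom{n}{2}W^{n-2},\]
and extend it to endomorphisms by applying it coefficientwise to $\partial$ on generators. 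For $W=U_O$ this is the map
\[H_{(O,O')}(\xv)=\sum_{\yv}\sum_{\phi}\binom{n_{O}(\phi)}{2}\#\widehat{\cM}_R(\phi)\,U_{\bfO}^{n_{\bfO}(\phi)}V_{\bfX}^{n_{\bfX}(\phi)}U_O^{-2}\,\yv.\]
The coefficient $\binom{n}{2}$ is an integer, reduced mod $2$, so the operator is well defined over $\F$ and preserves finiteness of sums under strong admissibility.

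The key algebraic identity is the divided Leibniz rule
\[\mathcal{D}^{(2)}_W(fg)=\mathcal{D}^{(2)}_W(f)\,g+f'g'+f\,\mathcal{D}^{(2)}_W(g),\]
which I will check on monomials via $\binom{a+b}{2}=\binom a2+ab+\binom b2$. It also holds for compositions of endomorphisms whose matrix entries are polynomials, because composition multiplies the coefficients. Applying it to $\partial\circ\partial=\omega_{\LL}\cdot\id$ and using that the first derivative of $\partial$ is the base point action (the formal-derivative description) gives
\[\partial\circ\mathcal{D}^{(2)}_W(\partial)+\mathcal{D}^{(2)}_W(\partial)\circ\partial+\Phi_W\circ\Phi_W=\mathcal{D}^{(2)}_W(\omega_{\LL})\cdot\id .\]
Hence $\Phi_W^2\simeq \mathcal{D}^{(2)}_W(\omega_{\LL})$, with the explicit null-homotopy $\mathcal{D}^{(2)}_W(\partial)$. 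Since $\partial$ anticommutes with the change of diagram maps on the nose (as in Proposition~\ref{prop:natuality and invariance of base point action}), the same relation should also hold on the transitive system and after coloring.

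It then remains to compute $\mathcal{D}^{(2)}_W(\omega_{\LL})$ from the explicit curvature formulas in Sections~\ref{sub:Basic setup} and~\ref{sub:Killing the curvature}.
\begin{itemize}
\item For a pair variable $U_O$ (or $V_X$), every monomial of $\omega_{\LL}$ is linear in it, so $\mathcal{D}^{(2)}=0$. This gives $(\Phi^p_{(O,O')})^2\simeq0$ and $(\Psi^p_{(X,X')})^2\simeq 0$.
\item For a fixed $O$ on a component carrying more than one $O$-base point, $u_O$ appears in $\omega_{\LL}$ exactly in the single monomial $u_O^2V_{X_a}$, where $(X_a,X_a')$ is the adjacent pair. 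Since $\binom22=1$, we get $(\Phi^f_O)^2\simeq V_{X_a}$.
\item For a minimally pointed $SI_{OX}$ component in a genuine link, the contribution is $u_O^2v_X^2$, which gives $v_X^2$.
\item For a minimally pointed knot alone, $\omega_{\LL}=0$, which gives $0$.
\end{itemize}
The $\Psi$ statements follow by the same computation with the roles of $\bfO$ and $\bfX$ interchanged.

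The main obstacle I expect is justifying that $\mathcal{D}^{(2)}_W$ of a composition of holomorphic-disk-counting maps really obeys the Leibniz rule. This amounts to checking that multiplicities $n_W$ are additive under juxtaposition of domains and that no factor of $2$ is hidden. Additivity holds since the coefficient of a composite is a product of monomials, so the rule reduces to the monomial identity above. The other point needing care is reading off from the case analysis which monomials of $\omega_{\LL}$ involve $u_O$. In particular, in the fixed-point case I must confirm that $u_O$ never appears linearly, which would contribute a derivative-squared cross term.
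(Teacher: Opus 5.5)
Your proposal is correct and is essentially the paper's argument: the paper writes $\partial=\sum_k\partial_k W^k$ and uses the null-homotopy $H=\sum_k\binom{k}{2}\partial_k W^{k-2}$, which is exactly your $\mathcal{D}^{(2)}_W(\partial)$. Its direct expansion of $(\Phi_W)^2+\partial H+H\partial$ is your divided Leibniz rule, and what remains is the coefficient of $W^2$ in $\omega_{\LL}$, read off from the curvature formulas exactly as you do. One small remark: a linear occurrence of $u_O$ in $\omega_{\LL}$ would be harmless anyway, since $\binom{1}{2}=0$, so only the $u_O^2$-coefficient matters.
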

\begin{proof}
The strategy of \cite[Lemma~4.9]{zemke2019link} applies to our case. We will focus on $\Phi_{(O,O')}^p$ and $\Phi_{O}^f$ and the same argument works for $\Psi^p_{(X,X')}$ and $\Psi^f_{X}$. 

For $\Phi_{(O,O')}^p$, we write $\partial=\sum_{k=0}^\infty \partial_k U_{O}^k$, in which $\partial_k$ is a matrix with entries in $\F[U_{\bfO},V_{\bfX}]$ not involving $U_O$. Then we have \[\Phi_{(O,O')}^p=\sum_{k=0}^\infty k\partial_k U_{O}^{k-1}\quad \text{and} \quad \partial^{2}=\sum_{k=0}^\infty\sum_{m+n=k} \partial_m\partial_n U_{O}^{k}.\]

Using the expression of $\omega_{\LL}$, we know that \[\sum_{m+n=k} \partial_m\partial_n =0\quad \text{for all $k\ge 2$}. \]

Define \[H\colon= \sum_{k=0}^\infty\frac{k(k-1)}{2}\partial_{k} U_{O}^{k-2},\] which provides a null-homotopy of $(\Phi_{(O,O')}^p)^2$. 

For $\Phi_{O}^f$, we can similarly write $\partial=\sum_{k=0}^\infty \partial_k u_{O}^k$, in which $\partial_k$ is a matrix with entries in $\F[U_{\bfO},V_{\bfX}]$ not involving $u_O$. Then we have \[\Phi_{O}^f=\sum_{k=0}^\infty k\partial_k u_{O}^{k-1}\quad \text{and}\quad \partial^{2}=\sum_{k=0}^\infty\sum_{m+n=k} \partial_m\partial_n u_{O}^{k}.\]

Now we have \[\sum_{m+n=k} \partial_m\partial_n =0\quad \text{for all $k\ge 3$},\quad \text{while } \sum_{m+n=2} \partial_m\partial_n =V_{X_a}\] if the component lies in $SI_{OX}$ and is not minimally pointed. When the component is minimally pointed but $L$ has some other components, then \[\sum_{m+n=k} \partial_m\partial_n =0\quad \text{for all $k\ge 3$},\quad \text{while } \sum_{m+n=2} \partial_m\partial_n =v_{X}^2.\] Finally, if $\LL$ is simply a minimally pointed strongly invertible knot, then \[\sum_{m+n=k} \partial_m\partial_n =0\quad \text{for all $k\ge 2$},\] as $\omega_{\LL}=0$ in this case. Define \[H\colon= \sum_{k=0}^\infty\frac{k(k-1)}{2}\partial_{k} u_{O}^{k-2}\] as above.

Then we calculate that \begin{align*}
(\Phi_{O}^f)^2+\partial H+H\partial &= \sum_{m,n\ge 0}^\infty\frac{(m+n)(m+n-1)}{2}\partial_{m}\partial_{n} u_{O}^{m+n-2}\\
&= \sum_{k=0}^\infty\frac{k(k-1)}{2}\partial_{k} u_{O}^{k-2}\sum_{m+n=k}\partial_m\partial_n\\
&= \sum_{m+n=2}\partial_m\partial_n.
\end{align*} 

The result then follows from the equations we derived from $\omega_{\LL}$.
\end{proof}
\subsubsection{Base point actions on $\fullCFLR^-(\LL^s,\s^R)$}\label{subsub:Base point actions on Dcomplex}

In this subsection, we consider the induced base point actions on $\fullCFLR^-(\LL^s,\s^R)$. Before changing the differential from $\partial^{\sigma_s}$ to $D$, $\Psi$ and $\Phi$ maps are naturally inherited by $\fullCFLR^-(\LL^s,\s^R)$. Then one observes that \[D= \partial^{\sigma_s}+\bL \cdot \id,\] with $\bL\in \cR^-_s(\LL)$ living inside the base ring. Thus, for any $\cR^-_s(\LL)$-module homomorphism $f: \fullCFLR^-(\LL^s,\s^R) \to \fullCFLR^-(\LL^s,\s^R)$, $f$ is a chain map for $\partial^{\sigma_s}$ if and only if $f$ is a chain map for $D$. Similarly, when $f$ and $g$ are both chain maps, they are homotopic as chain maps on $(\fullCFLR^-(\LL^s,\s^R), \partial^{\sigma_s})$ if and only if they are homotopic as chain maps on $(\fullCFLR^-(\LL^s,\s^R), D)$. Based on this, all the properties we have seen above remain true on $(\fullCFLR^-(\LL^s,\s^R), D)$. We record them in the following proposition. 

\begin{prop}
For any multi-based strongly invertible link $\LL\subset (Y,\tau)$, we have base point actions $\Phi^f_{O}$, $\Phi^p_{(O,O')}$ and $\Psi^f_{X}$, $\Psi^p_{(X,X')}$ on $\fullCFLR^-(\LL^s,\s^R)$ which are well-defined self-morphisms in the sense that they commute with change of diagram maps (but they are not necessarily chain maps). They satisfy the following properties:
\begin{itemize}
\item $[D,\Phi_{O}^f]=0$, $[D,\Phi_{(O,O')}^p]= v_{X_1}^2+ v_{X_2}^2$, where we use brackets as shorthand for commutators and $X_1$, $X_2$ have the same meaning as in Lemma~\ref{lem:commutation relation of base point actions and the differential}. Similar relations hold for $\Psi^f$ and $\Psi^p$.
\item $[\Phi_{(O_1,O_1')}^p,\Phi^p_{(O_2,O_2')}]$, $[\Phi_{(O_i,O_i')}^p,\Phi^f_{O^f_j}]$,   $[\Phi^f_{O_i^f},\Psi^f_{X_j^f}]$, $[\Phi^p_{(O_i,O_i')},\Psi^f_{X_j^f}]$ and their counterparts with $O$ and $X$ interchanged are all homotopic to zero, while $[\Phi^p_{(O_i,O_i')},\Psi^p_{(X_i,X_i')}] \simeq N(O_j,X_i)\cdot \id$. Here, we borrow notation from Lemma~\ref{lem:commutation between base point actions}.
\item  $(\Psi^p_{(X,X')})^2$ and $(\Phi^p_{(O,O')})^2$ are always homotopic to zero. $(\Psi^f_{X})^2\simeq 0 \simeq (\Phi^f_{O})^2$ when $\LL$ is just a minimally pointed strongly invertible knot while $(\Psi^f_{X})^2\simeq v_{O_a}^2$ and $(\Phi^f_{O})^2\simeq v_{X_a}^2$.
\end{itemize}
\end{prop}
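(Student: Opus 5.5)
The plan is to transfer everything from the colored complex $(\fullCFLR^-(\LL^s,\s^R),\partial^{\sigma_s})$ to $(\fullCFLR^-(\LL^s,\s^R),D)$. This rests on the observation made just before the statement: $\bL\cdot\id$ is multiplication by a ring element, so it commutes with every $\cR^-_s(\LL)$-module endomorphism. For any module maps $f,g,H$ we therefore have
\[[D,f]=[\partial^{\sigma_s},f]+[\bL\cdot\id,f]=[\partial^{\sigma_s},f],\qquad f+g+[D,H]=f+g+[\partial^{\sigma_s},H].\]
Commutators with $D$ and null-homotopies thus agree with those for $\partial^{\sigma_s}$, even though $\partial^{\sigma_s}$ is curved. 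Note also that the base point actions are defined as formal derivatives of $\partial$, and then pushed through $\sigma_s$. I do not differentiate $D$ itself; this is why I define the actions on the $s$-complex as $\sigma_s$ applied to the maps on $\fullCFLR^-(\LL,\s^R)$.

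\textbf{Well-definedness.} By the Corollary after Theorem~\ref{thm:invariance and naturality of the full real link Floer complex}, the change of diagram maps for $D$ are $\Phi^s_{\cH\to\cH'}=\Phi_{\cH\to\cH'}\otimes\sigma_s$. Proposition~\ref{prop:natuality and invariance of base point action} says the base point actions commute with $\Phi_{\cH\to\cH'}$ up to a homotopy $K$ with respect to $\partial$. After tensoring with $\cR^-_s(\LL)$, $K\otimes\sigma_s$ is a homotopy with respect to $\partial^{\sigma_s}$, and hence, by the identity above, with respect to $D$.

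\textbf{First bullet.} Apply $\sigma_s$ to Lemma~\ref{lem:commutation relation of base point actions and the differential}. This gives $[\partial^{\sigma_s},\Phi^f_O]=0$ and $[\partial^{\sigma_s},\Phi^p_{(O,O')}]=\sigma_s(V_{X_1}+V_{X_2})=v_{X_1}^2+v_{X_2}^2$. Here a fixed neighbour $X_i$ contributes $v_{X_i}^2$, which is consistent with the convention in that lemma. The $\Psi$ versions are handled in the same way.

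\textbf{Second bullet.} The homotopies in Lemma~\ref{lem:commutation between base point actions} come from differentiating $\partial^2=\omega_\LL$ twice. In each case the homotopy is a second formal derivative of $\partial$, so the relations hold as homotopies of module maps relative to $\partial$, without assuming $\partial^2=0$. Pushing them through $\sigma_s$ gives the stated relations, including the term $N(O_j,X_i)\cdot\id$. Since this is a constant in $\F$, it is unaffected by $\sigma_s$.

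\textbf{Third bullet.} Lemma~\ref{lem:squares of the base point action} gives $(\Phi^p)^2\simeq0$ and $(\Psi^p)^2\simeq0$, with the explicit homotopy $H=\sum\binom{k}{2}\partial_kU_O^{k-2}$. For the fixed point actions it gives $(\Phi^f_O)^2\simeq V_{X_a}$ or $v_X^2$, or $0$ in the minimal knot case, and similarly $(\Psi^f_X)^2\simeq U_{O_a}$. Applying $\sigma_s$ yields $v_{X_a}^2$ and $u_{O_a}^2$ respectively; the statement's ``$v_{O_a}^2$'' should presumably read $u_{O_a}^2$.

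\textbf{Main obstacle.} The only delicate point is that the homotopies in the earlier lemmas were constructed with respect to a curved $\partial$. I need to check that each identity of the form $f+g=[\partial,H]$ is an honest identity of module maps, derived from the curvature equation, and not one that used $\partial^2=0$ along the way. That is what licenses transporting it to $D$. I would verify this by rechecking the computation in Lemma~\ref{lem:squares of the base point action} with $\partial^2=\omega_\LL$ retained throughout.
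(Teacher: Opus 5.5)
Your proposal is correct and follows the paper's own argument. Since $D=\partial^{\sigma_s}+\bL\cdot\id$, where the bounding chain $\bL$ lies in the base ring and depends only on $\LL$, commutators with $D$ and null-homotopies agree with those for $\partial^{\sigma_s}$. Hence naturality and every relation obtained by formally differentiating $\partial^2=\omega_{\LL}\cdot\id$ and pushing through $\sigma_s$ carry over. Your identity $[D,f]=[\partial^{\sigma_s},f]$ holds for arbitrary module maps, which is slightly sharper than the paper's remark (phrased only for chain maps) and is what the statements about the non-chain maps $\Phi^p,\Psi^p$ actually need. You are also right that $(\Psi^f_X)^2\simeq v_{O_a}^2$ should read $u_{O_a}^2$.
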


\subsection{Quasi-stabilizations}\label{sub:quasi-stabilization}

In this section, we consider quasi-stabilization maps between real link Floer complexes of multi-based links with the same underlying oriented strongly invertible link but different base point data. We will first investigate quasi-stabilizations happening in pairs in Section~\ref{subsub:Quasi-stabilizations without type change}, which follow \cite{Zemkequasistabandbasepointmoving} closely and then consider quasi-stabilizations that change the decomposition $SI_{OO}\cup SI_{OX}\cup SI_{XX}$ in Section~\ref{subsub:Type-changing quasi-stabilizations}, which generalize Section~\ref{sub:Exact triangle and a common generalization}. Finally, we move to consider quasi-stabilization maps on $(\fullCFLR^-(\LL^s), D)$, the colored chain complex with shifted differential from Section~\ref{subsub:Quasi-stabilization maps on Dcomplex}.

\subsubsection{Quasi-stabilization without type-change}\label{subsub:Quasi-stabilizations without type change}

Let $\LL=(L,\bfO,\bfX)$ be a multi-based generalized strongly invertible link in a closed real $3$-manifold $(Y,\tau)$. We still fix some $\s^R \in \rspinc(Y,\tau)$.

Consider two new pairs of points $(O,O')$ and $(X,X')$ on $L$ so that $\tau(O)=O'$, $\tau(X)=X'$ and $O$, $X$ lie on the same component of $L\setminus (\bfO\cup \bfX)$. Then $\LL^+=(L,\bfO\cup \{O,O'\}, \bfX\cup \{X,X'\})$ is a multi-based strongly invertible link that fits into Definition~\ref{def:based strongly invertible links}. The four new base points may lie on a single strongly invertible knot component $K\subset L$ or on a pair of components $(K,K')$ interchanged by $\tau$. 
Under these assumptions, we will define quasi-stabilization maps \[S_{O,X}^+\colon\fullCFLR^-(\LL^{\sigma},\s^R)\to \fullCFLR^-((\LL^+)^{\sigma'},\s^R), \quad  S_{O,X}^-\colon\fullCFLR^-((\LL^+)^{\sigma'},\s^R)\to \fullCFLR^-(\LL^{\sigma},s^R),\] if along the short segment in $L\setminus (\bfO \cup \bfX)$ following the orientation of $L$, $X$ appears before $O$. Here, $\sigma$ is a coloring of $\LL$ and $\sigma'$ is an extension of it. (Note that $\cR^-(\LL)$ naturally embeds into $\cR^-(\LL^+)$ as a subring, so it makes sense to talk about extension.) This would be a well-defined chain map if $(O,O')$ is not immediately adjacent to the fixed set in $\LL$, i.e., the nearest $X$-base points in $\LL$ form a pair $(X_a,X_a')$ and $\sigma'(V_X)$ equals $\sigma(V_{X_a})$. 

If instead $X$ appears after $O$ along this arc, then we define maps $S^+_{X,O}$ and $S^-_{X,O}$. 
Interchanging the roles of $\bfX$ and $\bfO$, we define alternative quasi-stabilization maps $T^{\pm}_{O,X}$ as well as $T^{\pm}_{X,O}$ in a similar fashion. They would be chain maps when $(X,X')$ is not adjacent to a fixed $O$ on $\LL$ and the extended coloring $\sigma'$ is chosen so that $(O,O')$ shares the same coloring with the other pair of $O$-base points adjacent to $(X,X')$. Here, we follow~\cite{zemke2019link} and call these \emph{type-$S$} quasi-stabilization maps and \emph{type-$T$} quasi-stabilization maps, respectively.

Suppose that we have fixed a strongly $\s^R$-admissible real Heegaard diagram $\cH=(\Sigma,
\bm\alpha,\bm\beta,\bfO,\bfX,\tau)$ for $(Y,\tau,\LL)$, and $(O,O')$, $(X,X')$ are the pairs of new base points on $L$ labeled using the convention above. Suppose that along the orientation of $L$, $O$ comes after $X$. Write $(O_a,O_a')$, $(X_a,X_a')$ for the adjacent pairs of $X$, $O$-base points and assume that along the orientation of $L$, the base points read $O_a,X,O,X_a$. We always add a prime when we want to consider the $\tau$-image of a base point. By Definition~\ref{def:heegaard diagram for based s.i.links}, we know that the oriented arc from $O_a$ to $X_a$ lies in $U_\alpha$, the $\alpha$-handlebody.

Let $A$ be the component of $\Sigma\setminus \bm\alpha$ containing that arc, so that $B=\tau(A)$ is the component of $\Sigma\setminus \bm\beta$ containing the oriented arc from $X_a'$ to $O_a'$. We will perform the construction from \cite[Section~4]{zemke2019link} in a symmetric way. We first pick a point $p\in A\setminus (\bm\alpha\cup\bm\beta\cup\bfO\cup\bfX\cup \fix(\tau))$ and a new curve $\alpha_s$ in $A$ through $p$ which cuts $A$ into two pieces that contain $X_a$ and $O_a$, respectively. Applying $\tau$, we get $p'=\tau(p)\in B$ and $\beta_s=\tau(\alpha_s)$ passing through $p'$.

Let $\cH_0=(D^2,\alpha_0,\beta'_0,O,X)$ be the piece of Heegaard diagram (cf. Figure~\ref{fig:quasi stab}) that appears in \cite[Section~4.1]{zemke2019link}. On $D^2$, $\alpha_0$ is a properly embedded arc that cuts it into two pieces and separates the $O$ and $X$ base points while $\beta'_0$ is a closed curve which intersects $\alpha_0$ twice and bounds a disk containing both $X$ and $O$. Let $\overline{\cH_0}$ be the diagram $(-D^2, \beta_0, \alpha_0', O', X')$, which is obtained from $\cH_0$ by reversing the orientation on $D^2$, interchanging the roles of $\alpha$ and $\beta$-curves while keeping the positions of $X$ and $O$ unaffected. Now, we form the real Heegaard diagram $\cH_0^R$ by taking the disjoint union $\cH_0\cup \overline{\cH_0}$ and define $R$ to be the natural orientation-reversing involution sending $\cH_0$ to $\overline{\cH_0}$.

\begin{figure}
    \begin{overpic}[width=0.6\textwidth]{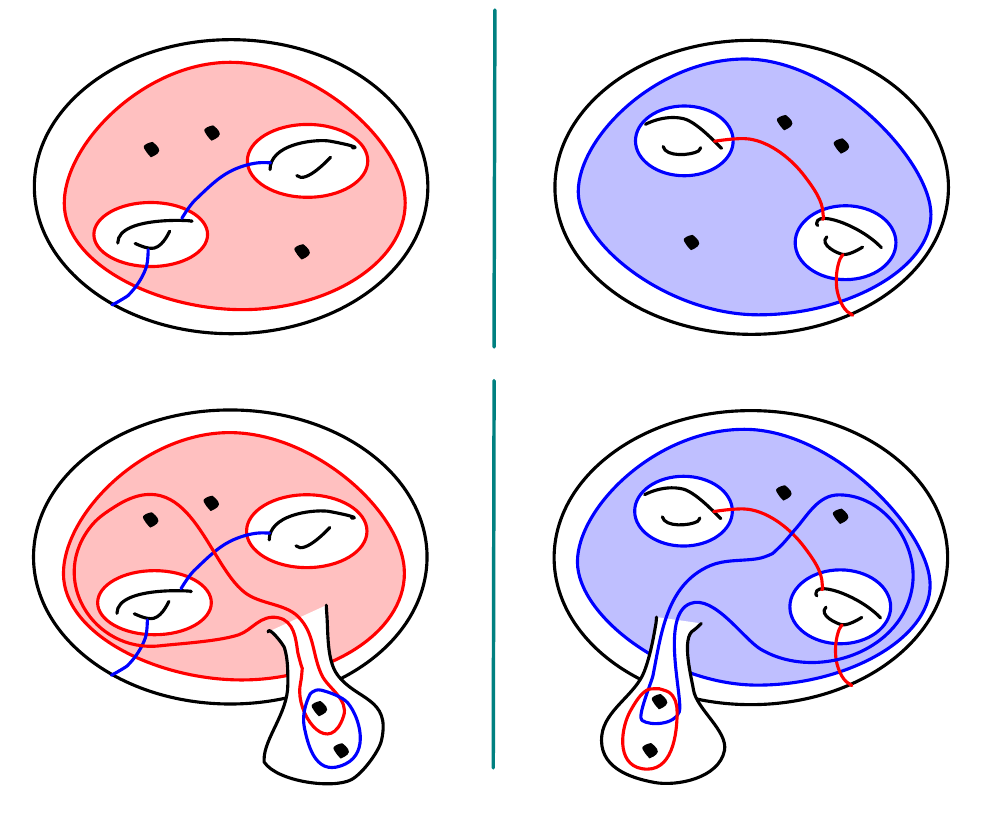}
    	\put(30,0) {$\cH_0$}
    	\put(62,0) {$\bar{\cH}_0$}
    	
    	\put(29,22) {$\alpha_0$}
    	\put(62,22) {$\beta_0$}
    	\put(66,6) {$\alpha'_0$}
    	\put(27,6) {$\beta'_0$}
    	
    	\put(30,54) {$p$}
    	\put(69,55) {$p'$}
    	
    	\put(13,70) {$O_a$}
    	\put(20,71) {$X_a$}
    	
    	\put(75,72) {$X_a'$}
    	\put(81,70) {$O_a'$}
    	
    	\put(13,33) {$O_a$}
    	\put(20,34) {$X_a$}
    	
    	\put(75,35) {$X_a'$}
    	\put(81,33) {$O_a'$}
    	
    	\put(61,4) {$O'$}
    	\put(31.5,8.2) {$X$}
    	\put(62,8.5) {$X'$}
    	\put(35,4) {$O$}
    \end{overpic}
    \caption{A real Heegaard diagram for a type-$S$ quasi-stabilization: the top row shows part of the original real Heegaard diagram and the bottom draws the result of performing a pair of special connected sums at $\{p,p'\}$.} 
    \label{fig:quasi stab}
\end{figure}

Finally, we paste $\cH_0^R$ to $\cH$ equivariantly at $(p,p')$ in a way that $\alpha_0$ is lined up with $\alpha_s$, while $\beta_0'$ is lined up with $\beta_s$. See Figure~\ref{fig:quasi stab} for an illustration. In a word, we are performing the two \emph{special connected sums} defined in~\cite{MOHFandintegersurgeryonlinks} equivariantly between $\cH_0^R$ and $\cH$. The new diagram will be denoted by $\cH^+$ and the two pairs of new curves will inherit labels $(\alpha_0,\beta_0)$ and $(\beta_0',\alpha_0')$. $\cH^+$ is a real Heegaard diagram for the multi-based generalized strongly invertible link $(Y,\tau,\LL^+)$.

There are two pairs of new intersection points in $(\alpha_0\cap \beta_0')\cup (\beta_0\cap \alpha_0')$, distinguished by their gradings $M_{\bfO}$ and  $M_{\bfX}$. We follow the notation in \cite{zemke2019link} and write $\theta^{\bfO}$ ($\xi^{\bfO}$) for the pair with higher (lower) $M_{\bfO}$. $\theta^{\bfX}$ and $\xi^{\bfX}$ are characterized similarly. It is easy to see that \[\theta^{\bfX}=\xi^{\bfO}, \quad \theta^{\bfO}=\xi^{\bfX}.\]

With the diagrams and notations prepared, we now define the type-$S$ quasi-stabilization maps by formulas \[S_{O,X}^+(\xv)=\xv\otimes \theta^{\bfO};\]
\[S_{O,X}^-(\xv\otimes \theta^{\bfO})=0 \text{ and } S_{O,X}^-(\xv\otimes \xi^{\bfO})=\xv \]
on the generators and extend them over $\cR$ equivariantly. 

Similarly, type-$T$ quasi-stabilization maps are defined by \[T_{O,X}^+(\xv)=\xv\otimes \theta^{\bfX};\]
 \[T_{O,X}^-(\xv\otimes \theta^{\bfX})=0 \text{ and } T_{O,X}^-(\xv\otimes \xi^{\bfX})=\xv \]
on the generators and extend it over $\cR$ equivariantly. 

When the new base point $X$ appears after $O$ along the orientation of $L$, the segment in $L\setminus (\bfO\cup \bfX)$ containing them lies in $U_{\beta}$; along it, the base points would read $X_a,O,X,O_a$. In this case, we form a diagram $\cH^+_{\beta}$ using a variation $\cH_{0,\beta}^R$ of $\cH_{0}^R$ with the roles of $\alpha$ and $\beta$-curves interchanged. Then the same formulas lead to maps $S_{X,O}^+$, $S_{X,O}^-$, $T_{X,O}^+$ and $T_{X,O}^-$.

\begin{prop}\label{prop:differential on H+ using a sufficiently stretched almost cplx str}
Let $\cH$ and $\cH^+$ be real Heegaard diagrams considered above representing $(Y,\tau,\LL)$ and $(Y,\tau,\LL^+)$, respectively. Then as modules (over $\F[U_{\bfO},V_{\bfX}]$), we have an isomorphism
\[\fullCFLR^-(\cH^+,\s^R)\cong \fullCFLR^-(\cH,\s^R)\otimes_{\F} \left \langle \theta^{\bfO}, \xi^{\bfO} \right \rangle\otimes_{\F} \F[U_O,V_X].\]

When the symmetric almost complex structure is sufficiently stretched, we have an identification of differentials 
\[\partial_{\cH^+}=
\begin{bmatrix}\partial_{\cH}& U_{O}+U_{O_a} \\
V_{X}+V_{X_a} &\partial_{\cH}\\
\end{bmatrix}.\] 
A similar result holds for $\cH_{\beta}^+$.
\end{prop}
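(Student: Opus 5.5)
The proof follows \cite[Proposition~5.3]{zemke2019link} (see also \cite{MOHFandintegersurgeryonlinks}), done equivariantly: we work only with the ``upper'' half $\cH_0$ of $\cH_0^R$ and let the involution $\tau$ determine the rest.

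\textbf{Module isomorphism.} Invariant generators of $\cH^+$ are $\tau$-invariant points of $\T_{\alpha}\cap\T_{\beta}$ for the enlarged tori. Such a point consists of a real generator $\xv$ of $\cH$, one point of $\alpha_0\cap\beta_0'$, and its $\tau$-image in $\beta_0\cap\alpha_0'$. The new curves bound small disks in $A$ and $B=\tau(A)$, so no other pattern of intersection points can occur. Hence the invariant generators are exactly $\xv\otimes\theta^{\bfO}$ and $\xv\otimes\xi^{\bfO}$. The base ring picks up the two new variables $U_O$ and $V_X$, which gives the stated isomorphism. Real $\spinc$ structures are unaffected because the connected sums are local.

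\textbf{Identifying the differential.} Neck-stretch equivariantly along the pair of circles $\partial\nu(p)$ and $\partial\nu(p')$. A real index-one class $\phi\in\pi_2^R(\xv\otimes a,\yv\otimes b)$ splits into a real class $\phi_{\cH}$ on $\cH$ and a class $\phi_0$ on $\cH_0$, whose $\tau$-image accounts for the $\overline{\cH_0}$ half. The point is that a real disk in $\mathrm{Sym}(\Sigma^+)$ is determined by its doubled non-real model. Passing through this doubling, the count of real disks reduces to counting ordinary disks on $\cH\#\cH_0$ that are matched with their conjugates. In the doubled picture the degeneration is exactly Zemke's. His analysis, with matching conditions along the two necks related by $\tau$, gives the following.
\begin{enumerate}
\item The diagonal entries are the classes with $\phi_0$ constant, i.e.\ $n_O=n_X=0$ at the new regions. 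These are counted by $\partial_{\cH}$, where the multiplicity at $p$ is zero or compatible via the matching.
\item The entry from $\theta^{\bfO}$ to $\xi^{\bfO}$ comes from the two bigons in $\cH_0$: one through $X$, with $\phi_{\cH}$ constant, and one covering the region that meets $p$, glued to the annular domain in $A$ passing through $X_a$. By the matching condition the latter contributes exactly $V_{X_a}$. Together these give $V_X+V_{X_a}$.
\item Symmetrically, the entry from $\xi^{\bfO}$ to $\theta^{\bfO}$ is $U_O+U_{O_a}$.
\end{enumerate}
Any real class that changes the new coordinate must have its $\cH_0$ part be one of these bigons, counted together with its conjugate. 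Each such bigon has a unique real holomorphic representative, since it is a real Maslov index one bigon (cf.\ Remark~\ref{rmk:differential of OX->OO stabilization}).

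As a consistency check, the square of the matrix is $\partial_{\cH}^2+(U_O+U_{O_a})(V_X+V_{X_a})$. This should equal $\omega_{\LL^+}$: inserting $X,O$ between $O_a$ and $X_a$ replaces the term $U_{O_a}V_{X_a}$ by $U_{O_a}V_X+U_OV_X+U_OV_{X_a}$, and the required identity holds in characteristic two. If $O_a$ or $X_a$ is fixed, read $u_{O_a}^2$ or $v_{X_a}^2$ as in Lemma~\ref{lem:commutation relation of base point actions and the differential}. The case of $\cH_\beta^+$ is identical with the roles of $\alpha$ and $\beta$ interchanged.

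\textbf{Main obstacle.} The hard step is transferring the neck-stretching and gluing to real moduli spaces. One must show three things:
\begin{itemize}
\item transversality holds within symmetric almost complex structures stretched equivariantly along both necks at once;
\item the real index agrees with $\mu_R$ for the glued classes;
\item no real index-one class degenerates asymmetrically, with the two necks interacting.
\end{itemize}
I would first try to deduce all of this from the doubling identification and the gluing results of \cite{guth2025real} and \cite{GM_real_naturality}, treating the pair of necks as a single $\Z/2$-equivariant neck.
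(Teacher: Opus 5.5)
Your proposal takes the same approach as the paper. The paper's entire proof is to repeat Zemke's neck-stretching computation equivariantly along the pair of special connected-sum circles at $p$ and $p'=\tau(p)$, and your more detailed breakdown of the entries and your curvature consistency check are in line with that. One citation slip: the paper attributes the non-equivariant computation to \cite[Proposition~5.3]{Zemkequasistabandbasepointmoving}, not to a result of \cite{zemke2019link}.
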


\begin{proof}
This can be proved as ~\cite[Proposition~5.3]{Zemkequasistabandbasepointmoving} by  neck-stretching along the pair of curves along which we perform the special connected sums in the real Heegaard diagram $\cH^+$ or $\cH^+_{\beta}$.
\end{proof}

\begin{cor}\label{cor:T,S stab are chain maps under suitable coloring}
Let $\LL$ and $\LL^+$ be set up as above. Fix a coloring $\sigma$ on $\LL$ and consider an extension $\sigma'$ of $\sigma$, then \begin{enumerate}
    \item assuming $X_a\ne X_a'$, $S^+_{O,X}$ and $S^-_{O,X}$ are chain maps if and only if $\sigma'(V_X)=\sigma'(V_{X_a})$.
    \item assuming $O_a\ne O_a'$, $T^+_{O,X}$ and $T^-_{O,X}$ are chain maps if and only if $\sigma'(U_{O})=\sigma'(U_{O_a})$.
\end{enumerate}
Similar statements hold for $S^{\pm}_{X,O}$ and $T^{\pm}_{X,O}$.
\end{cor}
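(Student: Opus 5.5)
We work directly from the matrix form of $\partial_{\cH^+}$ in Proposition~\ref{prop:differential on H+ using a sufficiently stretched almost cplx str}. With respect to the splitting $\fullCFLR^-(\cH^+)\cong \fullCFLR^-(\cH)\otimes\langle\theta^{\bfO},\xi^{\bfO}\rangle$ (extended over the new variables), and with a sufficiently stretched symmetric almost complex structure, the colored differential is
\[\partial^{\sigma'}_{\cH^+}=\begin{bmatrix}\partial_{\cH}& \sigma'(U_{O})+\sigma'(U_{O_a}) \\ \sigma'(V_{X})+\sigma'(V_{X_a}) &\partial_{\cH}\end{bmatrix}.\]
The first step is to fix conventions. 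We need to say which entry maps $\theta^{\bfO}$-components to $\xi^{\bfO}$-components. By the grading computation ($\theta^{\bfO}$ has the higher $M_{\bfO}$), the entry carrying the $U$-variables must decrease $M_{\bfO}$. So the $U$-entry goes from $\theta^{\bfO}$ to $\xi^{\bfO}$, and the $V$-entry goes from $\xi^{\bfO}$ to $\theta^{\bfO}$. We also need the hypothesis $X_a\neq X_a'$: it ensures $V_{X_a}$ is a genuine pair variable, not $v^2$ of a fixed point, so the adjacent-pair description in the proposition applies verbatim.

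For (1), we compute the defect of $S^+_{O,X}$ directly. We have $\partial_{\cH^+}S^+(\xv)=\partial_{\cH^+}(\xv\otimes\theta^{\bfO})=\partial_{\cH}\xv\otimes\theta^{\bfO}+(\sigma'(U_O)+\sigma'(U_{O_a}))\xv\otimes\xi^{\bfO}$, or the analogous expression with the $V$-entry, depending on which entry lands in the $\theta\to\xi$ slot. Meanwhile $S^+\partial_{\cH}\xv=\partial_{\cH}\xv\otimes\theta^{\bfO}$. So $S^+$ is a chain map if and only if the off-diagonal coefficient leaving the $\theta^{\bfO}$ summand vanishes after coloring.

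The same computation applies to $S^-$. The composite $S^-\partial_{\cH^+}$ applied to $\xv\otimes\xi^{\bfO}$ produces $\partial\xv$ plus $S^-$ of the off-diagonal term landing on $\theta^{\bfO}$, and $S^-$ kills that term. Applied to $\xv\otimes\theta^{\bfO}$, it produces the coefficient of $\xi^{\bfO}$, which must vanish. Hence both maps detect exactly the same entry. Once the conventions are fixed, this entry is $\sigma'(V_X)+\sigma'(V_{X_a})$. The converse direction of the ``if and only if'' is immediate: if the entry is nonzero, evaluating on any generator $\xv$ gives a nonzero defect, because the module is free over the base ring and $\xv\otimes\xi^{\bfO}$ is a basis element.

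For (2), we run the same argument with $\theta^{\bfX}=\xi^{\bfO}$ and $\xi^{\bfX}=\theta^{\bfO}$. Since $T^+$ includes via $\xi^{\bfO}$, the relevant defect is the other off-diagonal entry, $\sigma'(U_O)+\sigma'(U_{O_a})$. This requires $O_a\neq O_a'$ so that the proposition's description with $U_{O_a}$ applies. The $\beta$-versions $S^{\pm}_{X,O}$ and $T^{\pm}_{X,O}$ follow from the corresponding matrix for $\cH^+_\beta$.

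The main obstacle is pinning down which off-diagonal entry is which: the grading bookkeeping for $\theta^{\bfO}$ versus $\xi^{\bfO}$ must match the roles of the entries in Proposition~\ref{prop:differential on H+ using a sufficiently stretched almost cplx str}, and that proposition's matrix ordering is ambiguous as written. I would settle this by checking degrees: $U_O+U_{O_a}$ has $M_{\bfO}$-degree $-2$, and its placement must be consistent with $M_{\bfO}(\theta^{\bfO})-M_{\bfO}(\xi^{\bfO})=1$ and with $\partial$ having degree $-1$.
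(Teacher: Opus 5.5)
Your route is the intended one: the paper treats this corollary as immediate from the matrix in Proposition~\ref{prop:differential on H+ using a sufficiently stretched almost cplx str}. However, the one substantive step, deciding which way each off-diagonal entry points, is done backwards, and the rest of your write-up silently contradicts it.

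You write that the $U$-entry goes from $\theta^{\bfO}$ to $\xi^{\bfO}$. The degree count gives the opposite. Every term of $\partial$ lowers $M_{\bfO}$ by exactly $1$, while $U_O$ and $U_{O_a}$ lower it by $2$. So a term $(U_O+U_{O_a})\,\xv\otimes\eta$ in $\partial(\xv\otimes\zeta)$ forces $M_{\bfO}(\eta)=M_{\bfO}(\zeta)+1$, that is, $\zeta=\xi^{\bfO}$ and $\eta=\theta^{\bfO}$. Dually, $V_X$ and $V_{X_a}$ lower $M_{\bfX}$ by $2$, and $\theta^{\bfX}=\xi^{\bfO}$ carries the higher $M_{\bfX}$. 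Hence the $V$-entry maps $\xv\otimes\theta^{\bfO}$ to $\xv\otimes\xi^{\bfO}$. This agrees with the $D$-version later in the paper: for a type-$S$ quasi-stabilization with $\sigma'(V_X)=\sigma(V_{X_a})$, it is the lower-left ($\theta\to\xi$) entry that vanishes.

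This matters because the orientation of the entries is the entire content of the corollary. Under the convention you actually fixed, your own defect computation gives
$\partial_{\cH^+}S^+(\xv)+S^+\partial_{\cH}(\xv)=(\sigma'(U_O)+\sigma'(U_{O_a}))\,\xv\otimes\xi^{\bfO}$.
You would then conclude that $S^{\pm}_{O,X}$ are chain maps iff $\sigma'(U_O)=\sigma'(U_{O_a})$, and $T^{\pm}_{O,X}$ iff $\sigma'(V_X)=\sigma'(V_{X_a})$. That is exactly the two conditions swapped.

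Your sentences ``this entry is $\sigma'(V_X)+\sigma'(V_{X_a})$'' and ``the relevant defect is the other off-diagonal entry, $\sigma'(U_O)+\sigma'(U_{O_a})$'' state the correct answers. But they are asserted against your own convention rather than derived from it.

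Replace the convention with the correct one above and the rest goes through:
\begin{itemize}
\item For $S^{\pm}$, the $V$-entry leaving $\theta^{\bfO}$ is the only obstruction, for both $S^+$ and $S^-$.
\item For $T^{\pm}$, the same computation via $\theta^{\bfX}=\xi^{\bfO}$ isolates the $U$-entry.
\item Your converse, using that the colored complex is free over the target ring of $\sigma'$, is fine.
\item Your treatment of $S^{\pm}_{X,O}$ and $T^{\pm}_{X,O}$ through $\cH^+_\beta$ is fine.
\end{itemize}
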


\begin{thm}\label{thm:invariance and naturality of paired quasi-stab maps}
Assume we have new base points $(O,O')$ and $(X,X')$ on $\LL=(L,\bfO,\bfX)$ as in the first paragraph of this subsection so that we  can consider the quasi-stabilized link $\LL^+$. Fix a coloring $\sigma$ on $\LL$ and extend it to $\sigma'$ on $\LL^+$ satisfying $\sigma'(V_X)=\sigma(V_{X_a})$. Then $S_{O,X}^{\pm}$ are well-defined $\Z^{\vert \bfO^f\vert+\frac{1}{2}\vert\bfO^p\vert} \oplus \Z^{\vert \bfX^f\vert+\frac{1}{2}\vert\bfX^p\vert}$-filtered chain maps on the colored real link Floer complex, which are relative grading-preserving when the grading structure exists. More precisely, the maps $S_{O,X}^{\pm}$ defined using different strongly $\s^R$-admissible diagrams commute with change of diagram maps, thus leading to well-defined morphisms between transitive systems $\fullCFLR^-(\LL^{\sigma},\s^R)$ and $\fullCFLR^-((\LL^+)^{\sigma'},\s^R)$. 

Similar results hold for $S_{X,O}^{\pm}$, $T_{O,X}^{\pm}$ and $T_{X,O}^{\pm}$. 
\end{thm}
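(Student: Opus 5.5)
The argument follows Zemke's proof of invariance of quasi-stabilization maps \cite[Section~4]{zemke2019link} (see also \cite{Zemkequasistabandbasepointmoving}), carried out equivariantly. There are three steps: chain maps for a fixed diagram, independence of the auxiliary choices, and commutation with every real Heegaard move of Proposition~\ref{prop:existence of real HD for multi-based strongly invertible knots and real H moves}. These together give a morphism of transitive systems.

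\textbf{Fixed diagram.} By Proposition~\ref{prop:differential on H+ using a sufficiently stretched almost cplx str}, for a sufficiently stretched symmetric almost complex structure the differential on $\cH^+$ is the displayed $2\times 2$ matrix. After coloring with $\sigma'(V_X)=\sigma(V_{X_a})$, the lower-left entry vanishes, so the matrix is upper triangular. Corollary~\ref{cor:T,S stab are chain maps under suitable coloring} then shows that $S^{+}_{O,X}$ (inclusion onto the $\theta^{\bfO}$ column) and $S^-_{O,X}$ (projection onto the $\xi^{\bfO}$ row) are chain maps. Filtration and relative grading preservation are read off directly: $\theta^{\bfO}$ and $\xi^{\bfO}$ differ by a bigon containing only new base points, so the maps are homogeneous. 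For a non-stretched structure, I compose with the change of almost complex structure map, which is itself a change of diagram map by Theorem~\ref{thm:invariance and naturality of the full real link Floer complex}.

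\textbf{Independence of choices.} The construction depends on the point $p$ in the region $A$ and on the curve $\alpha_s$. Moving $p$ equivariantly within $A$, along with $p'=\tau(p)$ in $B$, is realized by a real isotopy and handleslides of $\alpha_0,\beta_0'$ (and, symmetrically, $\beta_0,\alpha_0'$) over curves of $\cH$ away from the fixed set. I would show that the relevant triangle maps send $\xv\otimes\theta^{\bfO}$ to $\Psi(\xv)\otimes\theta^{\bfO}$ plus terms in the kernel of $S^-$, where $\Psi$ is the change of diagram map on $\cH$. This is the real analogue of \cite[Lemma~4.x]{zemke2019link}. The key input is that, in the stretched limit, holomorphic triangles on $\cH^+$ split as a triangle on $\cH$ together with a small triangle in $\cH_0\cup\overline{\cH_0}$, and the count of the latter is $1$ on the top-graded generator. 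Since the two special connected sums are performed in $\tau$-swapped regions, the real moduli space on the doubled piece is identified with the ordinary moduli space on $\cH_0$, so Zemke's local computation transfers directly.

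\textbf{Naturality under moves.} For isotopies, handleslides and $\Z/2$-stabilizations away from the fixed set and the new base points, the same neck-stretching shows that $S^\pm$ commutes with the corresponding triangle or stabilization maps. For $\{1\}$-stabilizations along the fixed set, which occur away from $A\cup B$, the maps are supported in disjoint regions and commute on the nose. Transitivity in Theorem~\ref{thm:invariance and naturality of the full real link Floer complex} then upgrades this to a morphism of transitive systems.

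\textbf{Main obstacle.} I expect the hardest step to be the triangle-map degeneration in the real setting, namely proving that sufficiently stretched real holomorphic triangles on $\cH^+$ decompose as products with the correct counts. Real moduli spaces carry orientation and transversality subtleties near the fixed locus. I would handle this by first arranging, via the preceding step, that all neck-stretching occurs along the disjoint circles around $p$ and $p'$, which lie away from $\fix(\tau)$. Then the real condition simply identifies the configuration near $p'$ with the conjugate of the one near $p$, and Zemke's gluing argument applies verbatim.

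The cases $S_{X,O}^\pm$ and $T^\pm$ follow by the same argument using $\cH^+_\beta$, with the roles of $\bfO$ and $\bfX$ interchanged.
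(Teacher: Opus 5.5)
Your overall architecture matches the paper's proof. That proof is just Proposition~\ref{prop:differential on H+ using a sufficiently stretched almost cplx str} and Corollary~\ref{cor:T,S stab are chain maps under suitable coloring}, followed by Zemke's argument for \cite[Theorem~A]{Zemkequasistabandbasepointmoving}, and your fixed-diagram step is correct.

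The gap is in the naturality step, which you build on \emph{triangle} maps and a ``small triangle'' in $\cH_0\cup\overline{\cH_0}$. In the real theory there are no such triangle maps. A move $\bm\alpha\to\bm\alpha'$ must be accompanied by $\bm\beta\to\bm\beta'=\tau(\bm\alpha')$, and the intermediate diagram $(\Sigma,\bm\alpha',\bm\beta)$ is not real. The change of diagram map for a real isotopy or handleslide is therefore a count of real \emph{rectangles} on the quadruple $(\Sigma,\bm\alpha',\bm\alpha,\bm\beta,\bm\beta')$; this is how the paper sets it up in the type-$Q$ case.

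On $\cH^+$, both the arc of $\alpha_s$ through $p$ and the small circle $\beta_0'$ near $p$ get perturbed. So after neck-stretching, the local piece at $p$ is an ordinary holomorphic rectangle on four curves (two parallel copies of $\alpha_0$, two of $\beta_0'$). Its conformal structure is fixed by the real symmetry, and it must match the $\Sigma$-side curve through the divisor $\rho^p$; the piece at $p'$ is then its conjugate. This is not Zemke's local triangle count, so it does not transfer ``verbatim''. You must show that these divisor-constrained rectangle counts put the stabilized change of diagram map into the form
\[\Phi_{\cH^+\to(\cH')^+}=\begin{bmatrix}\Phi_{\cH\to\cH'} & * \\ 0 & \Phi_{\cH\to\cH'}\end{bmatrix}\]
in the $(\theta^{\bfO},\xi^{\bfO})$ basis. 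One route is a Maslov index formula plus a divisor-path degeneration of the kind in Lemma~\ref{lem:local-real-maslov} and the proof of Proposition~\ref{prop:quasi-stab quadruples}, now for a non-real local piece off the fixed set.

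Two smaller points need fixing.
\begin{itemize}
\item Your intermediate claim that $\xv\otimes\theta^{\bfO}\mapsto\Phi_{\cH\to\cH'}(\xv)\otimes\theta^{\bfO}$ ``plus terms in the kernel of $S^-$'' is too weak. The kernel of $S^-_{O,X}$ is the $\theta^{\bfO}$-span, so such extra terms would break $\Phi\circ S^+=S^+\circ\Phi$. The claim also says nothing about the $\xi^{\bfO}$-diagonal entry needed for $S^-$. What you need is exactly the triangular form above.
\item The point $p$ must stay off $\fix(\tau)$, but $A\setminus\fix(\tau)$ need not be connected. $A$ is planar, and $\fix(\tau)\cap A$ consists of arcs ending on $\bm\alpha\cap\bm\beta$, which can separate it (for instance when $A$ is a disk crossed by the fixed circle). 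So ``moving $p$ equivariantly within $A$'' does not by itself connect all admissible choices. Independence of $p$ needs either a restricted class of choices or a separate argument for choices on opposite sides of $\fix(\tau)$. Zemke's argument does not address this real-specific issue.
\end{itemize}
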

\begin{proof}
Using Proposition~\ref{prop:differential on H+ using a sufficiently stretched almost cplx str} and Corollary~\ref{cor:T,S stab are chain maps under suitable coloring}, this can be proved as \cite[Theorem~A]{Zemkequasistabandbasepointmoving}.
\end{proof}

In the classical link Floer theory, quasi-stabilizations are actually one of the basic pieces of the decorated cobordism. We expect this to be true in the real case. We draw several examples in Figure~\ref{fig:examples for dividing set of quasi-stab} when the quasi-stabilization happens on a strongly invertible component. 

\begin{figure}
	\centering
	\begin{overpic}[width=0.7\textwidth]{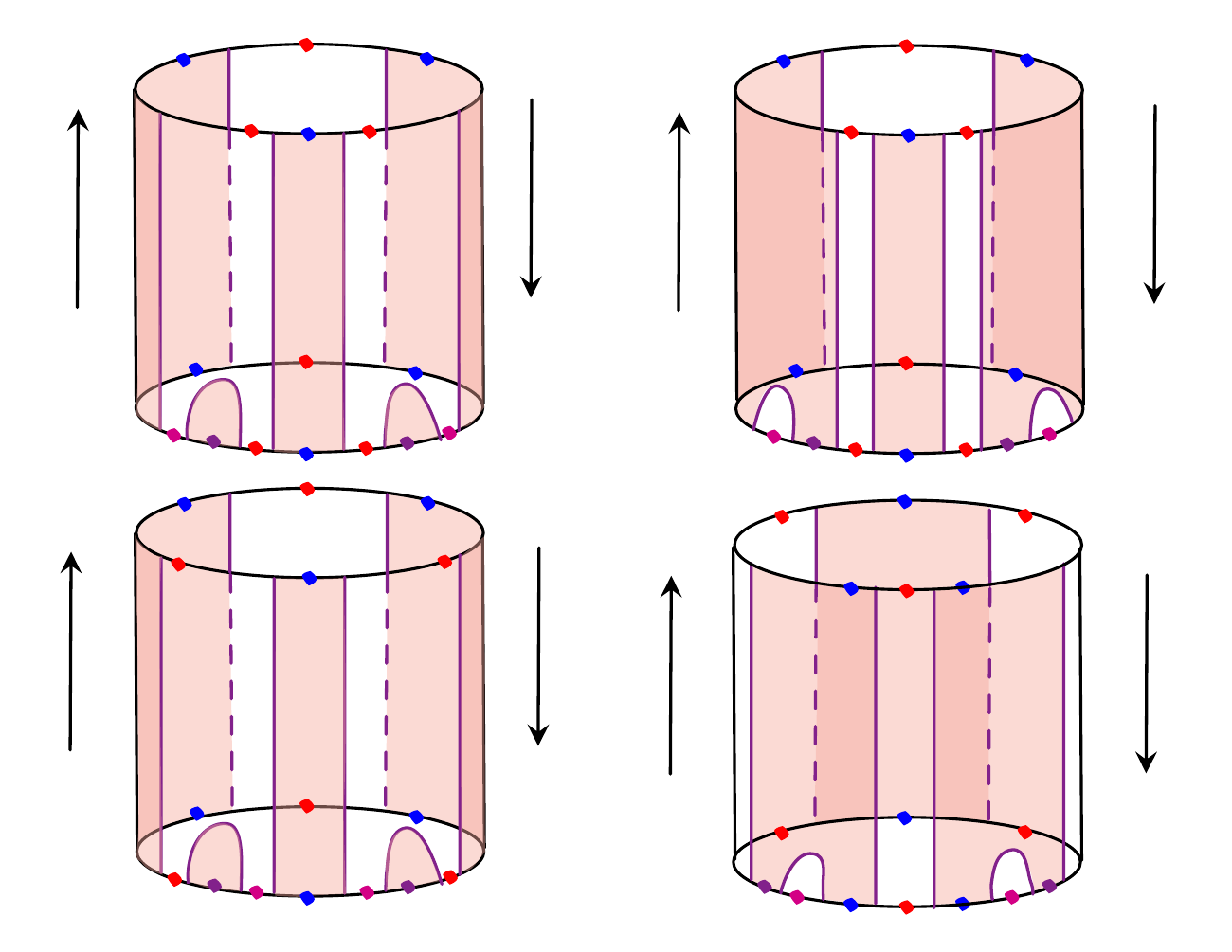}
		
		\put(3,48) {$S_{O,X}^-$}
		\put(40,48) {$S_{O,X}^+$}
		
		\put(52,48) {$T_{O,X}^-$}
		\put(90,48) {$T_{O,X}^+$}
		
		\put(3,12) {$S_{X,O}^-$}
		\put(40,12) {$S_{X,O}^+$}
		
		\put(52,11) {$T_{X,O}^-$}
		\put(90,11) {$T_{X,O}^+$}

	\end{overpic}
	\caption{These are the dividing set interpretations of type-$S$ and $T$ quasi-stabilizations. Here, old and new $X$-base points are marked in red and pink, respectively, while old and new $O$-base points are marked in blue and purple.}
	\label{fig:examples for dividing set of quasi-stab}
\end{figure}

\subsubsection{Type-changing quasi-stabilizations}\label{subsub:Type-changing quasi-stabilizations}

Let $\LL=(L,\bfO,\bfX)\subset (Y,\tau)$, $\s^R \in \rspinc(Y,\tau)$ and let $\cH$ be a real Heegaard diagram representing $\LL$ be as in the previous subsections. We now further assume that $L\cap \fix(\tau)\ne \emptyset$, i.e., $L$ has some strongly invertible components. Let $K\subset L$ be a strongly invertible component. Without loss of generality, we assume there is a fixed $X$-base point $X_0$ on $K$, so that $K\in SI_{XX}\cup SI_{OX}$. Now we replace $X_0$ with a pair of $X$-base points and a fixed $O$-base point. The real Heegaard diagram is modified correspondingly as shown in Figure~\ref{fig:OX-OO stabilization}, according to how $\LL$ embeds in $(Y,\tau)$, which provides us with a real Heegaard diagram $\cH^+$ representing another multi-based strongly invertible link with the same underlying link. We denote this new based link by $\LL^+=(L,\bfO',\bfX')$. It is easy to see that $\LL^+$ depends on $\LL$ (as a link embedding in $(Y,\tau)$), $K\subset L$ (together with modification of embedding, so that it is locally described by one of the diagrams in Figure~\ref{fig:OX-OO stabilization}) and the fixed base point $X$ on $K$, but does not depend on the diagram $\cH$ chosen. Note that when getting from $\LL$ to $\LL^+$, $K$ moves from $SI_{XX}$ to $SI_{OX}$ or from $SI_{OX}$ to $SI_{OO}$.


The curvature difference seen in Section~\ref{sub:Killing the curvature} serves as an essential obstruction for defining quasi-stabilization maps on the full real link Floer complex. Nevertheless, after choosing suitable coloring, we obtain some interesting type-changing quasi-stabilization maps. These are new to the real theory and have no counterparts from \cite{Zemkequasistabandbasepointmoving}.

We will define maps \[Q_{X\mapsto O}^+: \fullCFLR^-(\cH,\s^R)\to \fullCFLR^-(\cH^+,\s^R) \quad Q_{O\mapsto X}^-: \fullCFLR^-(\cH^+,\s^R)\to \fullCFLR^-(\cH,\s^R),\]
which lead to chain maps between properly colored $\fullCFLR^{-}(\LL^{\sigma},\s^R)$ and $\fullCFLR^{-}((\LL^+)^{\sigma'},\s^R)$ as well as chain maps between  $(\fullCFLR^{-}(\LL^{s},\s^R),D)$ and $(\fullCFLR^-((\LL^+)^{s},\s^R),D)$.

Given any real Heegaard diagram $\cH$ for $\LL$, assume it is strongly $\s^R$-admissible as usual. Then we can form $\cH^+$ as a connected sum of $\cH$ with the standard piece $\cH_U$ or $\cH_U'$ shown in Figure~\ref{fig:std_pieces_for_type-changing_quasi_stab}.


\begin{figure}
	\centering
	\begin{overpic}[width=0.6\textwidth]{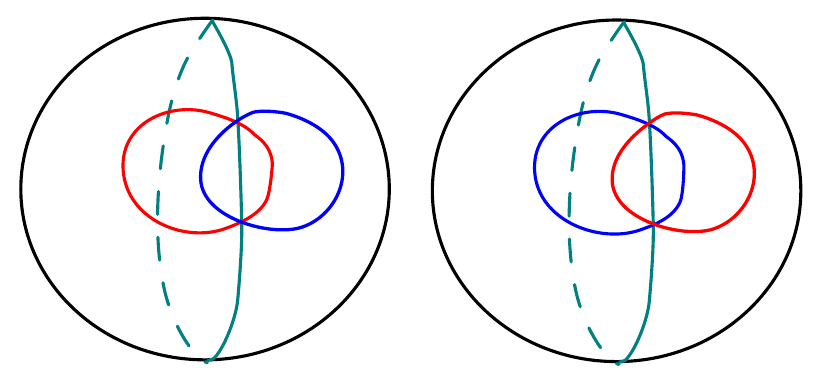}
		\put(25,-2) {$\cH_U$}
		\put(73,-2) {$\cH'_U$}
		
		\put(79,33) {$c$}
		\put(81,16) {$d$}
		
		\put(30,33) {$a$}
		\put(30,16) {$b$}
		
		\put(76,25) {$O_1$}
		\put(76,10) {$O_2$}
		
		\put(26,25) {$O_1$}
		\put(26,10) {$O_2$}
		
		\put(20,25) {$X$}
		\put(33,25) {$X'$}
		
		\put(70,25) {$X$}
		\put(83,25) {$X'$}
		
	\end{overpic}
	\caption{Two standard $OO$-real Heegaard diagrams for the strongly invertible unknot in $(S^3,\tau_{\std})$.}
	\label{fig:std_pieces_for_type-changing_quasi_stab}
\end{figure}

\begin{prop}\label{prop: differential oftype changing quasi-stab}
Let $\cH$ and $\cH^+$ be given as above. If we expand $\partial_{\cH}$ as \[\partial_{\cH}=\sum_{k=0}^{\infty}  v^{k}\partial_{v}^k,\] where $v$ is the variable associated to the chosen fixed $X$-base point $X_0$, then in terms of the basis $\{a\xv, b\xv\}$, \[\partial_{\cH^+}= \begin{bmatrix}
	\sum_{k=0}^{\infty} V^{k} \partial_{v}^{2k}  & \sum_{k=0}^{\infty} V^{k} \partial_{v}^{2k+1} \\
	\sum_{k=0}^{\infty} V^{k+1}  \partial_{v}^{2k+1} & \sum_{k=0}^{\infty} V^{k} \partial_{v}^{2k} \\
\end{bmatrix}  + \begin{bmatrix}
0& u_0+u_1 \\
(u_0+u_1)V & 0\\
\end{bmatrix}\] if $\LL$ is a minimally pointed knot and $u_0$ is the variable associated to the unique old $O$-base point; \[\partial_{\cH^+}= \begin{bmatrix}
\sum_{k=0}^{\infty} \partial_{v}^{2k} V^{k} & \sum_{k=0}^{\infty} \partial_{v}^{2k+1} V^{k} \\
\sum_{k=0}^{\infty} \partial_{v}^{2k+1} V^{k+1} & \sum_{k=0}^{\infty} \partial_{v}^{2k} V^{k}\\
\end{bmatrix}  + \begin{bmatrix}
0& u_1 \\
u_1 V & 0\\
\end{bmatrix},\] otherwise. Here $V$ is the new variable associated to the new pair $(X,X')$ in $\LL^+$.
\end{prop}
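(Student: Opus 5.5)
We would argue by a degeneration of the type-changing connected sum, as in Remark~\ref{rmk:differential of OX->OO stabilization} and Proposition~\ref{prop:differential on H+ using a sufficiently stretched almost cplx str}. Generators of $\cH^+$ are $a\xv$ and $b\xv$, with $\xv$ a generator of $\cH$. Every real domain $\cD^+\in\pi_2^R(\epsilon\xv,\epsilon'\yv)$ on $\cH^+$ restricts to a domain on the $\cH_U$ piece and a domain on $\cH$. The $\cH$ part is a domain $\cD\in\pi_2^R(\xv,\yv)$ in which the region of $\cH$ that contained $X_0$ now meets the local piece.

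\textbf{Step 1 (multiplicity bookkeeping).} Let $k=n_{X_0}(\cD)$, which is the multiplicity of $\cD$ at the connected-sum region. In $\cH_U$, this region is subdivided into the regions containing $X$, $X'$, the new fixed $O_1$, and the outer region. The corner conditions at $a,b$ are the same equations $m+n=\dots$ used in the proof of Theorem~\ref{thm:exact triangle between OX and OO for knots}.

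From them we expect the following. When $k$ is even, the domain runs from $a$ to $a$ or from $b$ to $b$. When $k$ is odd, it runs from $a$ to $b$ or from $b$ to $a$.

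The pair $X,X'$ is covered $\lfloor k/2\rfloor$ times, or $\lceil k/2\rceil$ times in the $b\to a$ direction. This is because the symmetric disk crosses the new paired region on both sides, and that is how $v^k$ turns into $V^{\lfloor k/2\rfloor}$ or $V^{\lceil k/2\rceil}$.

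The domains that in addition carry multiplicity on the new $O_1$ region are exactly the small real bigons between $a$ and $b$. Their contributions are $u_1$ from $b$ to $a$ and $u_1V$ from $a$ to $b$ (the latter being the complementary bigon through $X,X'$).

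\textbf{Step 2 (counting holomorphic curves).} We stretch the neck along the real pair of connected-sum circles and use the real analogue of the matching argument in \cite[Proposition~5.3]{Zemkequasistabandbasepointmoving}, citing \cite{guth2025real}. Under stretching, index-one real curves for $\cD^+$ with $\cD\ne$ constant should correspond bijectively to index-one real curves for $\cD$ in $\cH$. The matching condition at the neck on the $\cH_U$ side is a real unknot disk with a unique representative, once one checks that its count is $1$ mod $2$ exactly in the case predicted by parity. This produces the entries $V^{k}\partial_v^{2k}$ and $V^{k}\partial_v^{2k+1}$, and $V^{k+1}\partial_v^{2k+1}$ in the lower-left corner. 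The constant-$\cD$ contributions are the bigons of Step 1, each with a unique representative.

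\textbf{Step 3 (the minimally pointed case).} When $\LL$ is a minimally pointed knot, the region containing $X_0$ also touches the unique old $O$ (the complement in $\cH$ has that single region structure). The analogous bigon passing through the old $O$ then has index one and contributes the extra $u_0$ term. Otherwise the old $O$ lies in a different region, and only $u_1$ appears.

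As a consistency check, we would verify that $\partial_{\cH^+}^2$ equals the $OO$ curvature computed in Section~\ref{sub:Killing the curvature}. This follows from the relation $\partial_\cH^2=\omega$, rewritten in $v$-degrees.

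The main obstacle is Step 2. We must justify that the neck-stretching and matching argument works for real curves near the fixed set, since the connected sum is taken at a fixed point rather than at a free pair as in Proposition~\ref{prop:differential on H+ using a sufficiently stretched almost cplx str}. In particular, the parity count of the matched real disks on $\cH_U$ must be exactly one in each multiplicity $k$. We would first try to reduce this to an explicit model computation on the genus-zero real diagram $\cH_U$, for instance by comparison with the real grid computation of Example~\ref{ex:unknot}.
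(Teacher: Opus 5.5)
Your framework matches the paper's: you stretch the neck at the fixed connected-sum point, write classes as $\phi\#\phi_0$, let the parity of $n_{X_0}(\phi)=n_{O_2}(\phi_0)$ decide whether the $a/b$ coordinate switches, and read $u_1$, $u_1V$ off the two bigons through $O_1$. The paper organizes the cases by the index formula $\mu_R(\phi\#\phi_0)=\mu_R(\phi)+n_{O_1}(\phi_0)$: either $\phi$ is constant and $n_{O_1}(\phi_0)=1$, or $\mu_R(\phi)=1$ and $n_{O_1}(\phi_0)=0$.

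There is one minor slip. In Step~1 you put the extra $V$ on the $b\to a$ side. In fact the classes from $a\xv$ to $b\yv$, namely $B_{O_2}+B_X+B_{X'}+k(2B_{O_2}+B_X+B_{X'})$, are the ones covering $X,X'$ exactly $k+1$ times. That is why $V^{k+1}\partial_v^{2k+1}$ sits in the lower-left entry, and it agrees with your own $u_1V$ from $a$ to $b$.

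The essential gap is Step~3, which does not work. There is no local bigon through the old $O$, because that base point lives in $\cH$. The term $u_0\,a\xv$ appears in $\partial(b\xv)$ for every $\xv$, so it must come from a class whose $\cH$-part starts and ends at $\xv$ with $n_O=n_{X_0}=1$. These are the $\{1\}$-boundary degenerations of $\cH$, with domain $[\Sigma]$ and real index one. They exist only when $|\bfO|=|\bfX|=1$, which is the actual reason the $u_0$ terms appear only for a minimally pointed knot. They are not counted by $\partial_\cH$. Consequently your Step~2 claim, that index-one curves on $\cH^+$ with nonconstant $\cH$-part correspond bijectively to index-one curves of $\cH$, fails in exactly this case.

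The missing argument has two parts. First, glue these degenerations to the local classes $B_{O_2}$ (giving $u_0$ from $b$ to $a$) and $B_{O_2}+B_X+B_{X'}$ (giving $u_0V$ from $a$ to $b$). Second, show each glued count is odd, which is the delicate point. $\{1\}$-boundary degenerations come in pairs $u$ and $\overline{\tau}\circ u\circ\iota$, so the doubling map $\Pi_2(\xv,M^R)\to\pi_2^R(\xv)$ is two-to-one and a naive count is even. The paper fixes this by recording the orientation induced on the fixed locus of the source, which makes the doubling map a bijection. It then notes that the two local classes induce different orientations, so each is matched by exactly one degeneration.

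Your $\partial^2$ consistency check does show these entries must be present: for a minimally pointed knot $\partial_\cH^2=0$, while the curvature of $\LL^+$ is $(u_0^2+u_1^2)V$. It cannot, however, supply the holomorphic count.
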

\begin{proof}
We will focus on the case $\cH^+=\cH\#\cH_U$, as the other cases can be argued in exactly the same way. We count moduli spaces of real holomorphic curves using a symmetric almost complex structure on $\cH^+$ that is sufficiently stretched along the neck between $\cH$ and $\cH_U$. The cylindrical reformulation of real Heegaard Floer homology will be used, see~\cite{GM_real_naturality} or \cite{BGX}. In \cite[Section~7]{BGX}, we performed the counts for classes with multiplicity zero at the regions containing base points on real sutured diagrams. Now we sketch an argument that works for link diagrams and take base points into consideration. See~\cite{GMX_Functoriality} for the argument at the $3$-manifold level, from which the author develops the following proof. 

A class of pseudo-holomorphic disks in $\cH^+=\cH\#\cH_U$ can be written as $\phi \# \phi_0$, where $\phi$ is a disk in $\cH$ and $\phi_0$ is a disk in $\cH_U$. Consider $\phi \# \phi_0 \in \pi_2^R(z_1\xv ,z_2 \yv )$, where $\xv$, $\yv$ are real generators on $\cH$ and $z_i\in \{a,b\}$. It can be computed easily that 
\[\mu_R(\phi_0)= n_{O_1}(\phi_0)+n_{O_2}(\phi_0);\] 
\begin{align*}
	\mu_R(\phi\# \phi_0)&= \mu_R(\phi) +\mu_R(\phi_0)-n_{O_2}(\phi_0)\\
	 &= \mu_R(\phi) +n_{O_1}(\phi_0).
\end{align*}
Hence, for index 1 curves, either $ \mu_R(\phi) = 0$ and $n_{O_1}(\phi_0) = 1$ or $\mu_R(\phi) = 1$ and $n_{O_1}(\phi_0) = 0$.

Now, choose a sequence of neck lengths $T_i$ limiting to infinity, and consider a sequence $u_i$ of $J(T_i)$-holomorphic curves in the class $\phi\# \phi_0$. We can extract a subsequence converging to a collection broken curves $\cU$ into $\Sigma \times [0,1] \times \R$ representing $\phi$, broken curves $\cU_0$ into $\Sigma_U \times [0,1] \times \R$ representing $\phi_0$, and a collection $\cV$ mapping into the connected sum region $(S^1 \times \R) \times [0,1] \times \R$. Then as \cite[Proposition~7.2]{BGX}, we can rule out ghost components from in $\cU$ and $\cU_0$ and prove that \begin{itemize}
	\item $\cU$ consists of a single embedded curve;
	\item $\cV$ consists of constant curves;
	\item $\cU_0$ consists of an index 1 curve in the former case and an index $n_{O_2}(\phi_0)$ curve in the latter case.
\end{itemize} 

In the former case, $\phi$ is constant and $\phi_0$ is one of the two bigons which covers $O_1$. This contributes $u_1$ and $u_1V$ to the second term in the differential. 

When $\mu_R(\phi) = 1$, there are several cases to consider. First, assume that $\phi$ is not the class of a cylindrical $\{1\}$-boundary degeneration.  In this case, if $n_{O_2}(\phi)=2k$ is even, then $z_1=z_2$, and we splice in symmetric disk bubbles (the total count of possible curves obtained this way is 1). In this case, $n_X(\phi_0)=k$ and $n_{X_0}(\phi)=2k$ leading to the two diagonal entries $\sum_{k=0}^{\infty} V^{k} \partial_{v}^{2k}$ in the first matrix.
If $n_{O_2}(\phi) = 2k+1$ is odd, then $\{z_1,z_2\}=\{a,b\}$. In this case, $u^{-1}(X_0)$ consists of $2k$ points which are swapped by the involution and a single point fixed by the involution. For those pairs of points, we splice in $2k$ disks with boundary an entire $\alpha$ or $\beta$-curve, while to the fixed point, we splice in a disk with an $\alpha$-$\beta$-boundary. Since $\phi$ and $\phi_0$ match at the connected sum point, $n_{X_0}(\phi)= 2k+1$, but $n_X(\phi_0)$ needs a more careful discussion:\begin{itemize}
\item When $z_1=a$ and $z_2=b$, $\phi_0$ takes the form $B_{O_2}+ B_{X}+B_{X'}+ k (2B_{O_2}+ B_{X}+B_{X'})$;
\item When $z_1=b$ and $z_2=a$, $\phi_0$ takes the form $B_{O_2}+ k (2B_{O_2}+ B_{X}+B_{X'})$. 
\end{itemize} 
Here, we denote the small bigon containing $P$ in $\cH_U$ by $B_P$ for $P=O_1,X,X'$ or $O_2$. Summing over all $k\ge 0$, we get the off-diagonal entries in the first matrix.

Finally, we consider the case that $\phi$ is a $\{1\}$-boundary degeneration with real index 1. Such curves exist only when $\vert\bfX\vert= \vert\bfO\vert=1$ and these curves have domain $[\Sigma]$. As noted in \cite{GM_real_naturality}, bubbles of this kind appear in pairs: indeed, if $u: (S, \partial S) \to (\Sigma \times \R \times \R, C \times \{1/2\} \times \R)$ is a $\{1\}$-boundary degeneration, so is $\overline{\tau}\circ u \circ \iota$, where $\iota$ is the source involution. For trivial orbit disk bubbles, the doubling map
\begin{align*}
	\Pi_2(\xv,M^R) \to \pi_2^R(\xv)
\end{align*}
is two-to-one to its image, in which $\Pi_2(\xv,M^R)$ denotes the set of homotopy class of boundary degenerations with boundary in $M^R$ based at $\xv$ regarded as a point in $M^R\cap \T_{\alpha}$. However, the two symmetrized classes can be distinguished with an extra piece of data. The punctures on a holomorphic curve
\begin{align*}
	u: (S, \partial S, \sigma) \to (\Sigma \times [0,1] \times \R, \bm\alpha \cup \bm\beta \times \partial [0,1] \times \R, \underline{\tau}),
\end{align*}
are labeled as positive or negative. This induces an orientation on those components $c_0$ of $\mathrm{Fix}(\iota)$ which have boundary approaching one positive puncture and one negative puncture. 

Boundary degenerations naturally induce an orientation on the fixed point set of the source. Therefore, if $p \in C_0 \subset C$ for some connected component of $C$, and $\mathfrak o_p$ is an orientation of the connected component of $S^\sigma$ containing $p$, define $\pi_2^R(\xv, \mathfrak o_p)$ to be the class of $\{1\}$-boundary degenerations at $p$ equipped with a choice of orientation compatible with $\mathfrak o_p$. The doubling map
\begin{align*}
	\Pi_2(\xv,M^R) \to \pi_2^R(\xv, \frak o_p)
\end{align*}
is now a bijection. 

Note that $B_{O_1}$ and $B_{O_1}+B_{X}+B_{X'}$ induce different orientation on the fixed set. 
Thus, for each class in $\pi_2^R(\xv, \frak o_p)$, there is only one class in $\pi_2^R(a, b) \cup \pi_2^R(b, a) $ which induces an orientation of the fixed point set which is compatible. These give rise to the terms $u_0$ and $u_0V$ in the minimally pointed case. This concludes the analysis of all moduli spaces and the proof of the proposition.
\end{proof}

Fix any coloring $\sigma$ on $\LL$. We extend it to one on $\LL^+$ by $V \mapsto \sigma(v)^2$ and leave $u_1$ unspecified for now. Performing a basis change from $\{a\xv, b\xv\}$ to $\{a\xv+v\cdot b\xv, b\xv\}$, the differential can be rewritten as follows. 
\begin{cor}\label{cor:type-changing quasi-stab differential in new basis}
When the coloring is extended to $\LL^+$ in a way that $\sigma'(V)= \sigma(v)^2$, in terms of the new basis $\{a\xv+v\cdot b\xv, b\xv\}$, we have 
\[\partial_{\cH^+}=\begin{bmatrix}
	\partial_{\cH}+\sigma'(u_0+u_1)\sigma(v) & * \\
0 & \partial_{\cH}+\sigma'(u_0+u_1)\sigma(v) \\
\end{bmatrix} \]if $\LL$ is a minimally pointed knot and $u_0$ is the variable associated to the unique old $O$-base point; \[\partial_{\cH^+}=\begin{bmatrix}
\partial_{\cH}+\sigma'(u_1)\sigma(v) & * \\
0 & \partial_{\cH}+\sigma'(u_1)\sigma(v) \\
\end{bmatrix} \] otherwise.
Here, $*= \sum_{k=0}^{\infty} \sigma (v)^{2k} \partial^{2k+1} + \sigma'(u_0+u_1)\sigma(v)$ in the former case and  $*= \sum_{k=0}^{\infty} \sigma (v)^{2k} \partial^{2k+1} + \sigma'(u_1)\sigma(v)$ in the latter.
\end{cor}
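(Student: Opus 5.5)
This follows from Proposition~\ref{prop: differential oftype changing quasi-stab} by a change of basis, so the plan is direct matrix algebra. First apply $\sigma'$ to the matrix of Proposition~\ref{prop: differential oftype changing quasi-stab}. Write $A=\sum_k \sigma(v)^{2k}\partial_v^{2k}$ and $B=\sum_k \sigma(v)^{2k}\partial_v^{2k+1}$, using $\sigma'(V)=\sigma(v)^2$. Set $c=\sigma'(u_0+u_1)$ in the minimally pointed knot case and $c=\sigma'(u_1)$ otherwise. The colored differential in the basis $\{a\xv,b\xv\}$ is then
\[\begin{bmatrix} A & B+c\\ \sigma(v)^2(B+c) & A\end{bmatrix}.\]
Here the convention is that the column indexed by $b\xv$ records $\partial(b\xv)$. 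The key identity is that, after coloring, $\partial_{\cH}=\sum_k\sigma(v)^k\partial_v^k=A+\sigma(v)B$.

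Next, compute in the new basis $e_1=a\xv+\sigma(v)b\xv$, $e_2=b\xv$. I first fix the convention $\partial(a\xv)=A(a\xv)+\sigma(v)^2(B+c)(b\xv)$, with the lower-left entry being the $b$-component of $\partial(a\xv)$. Then
\[\partial e_1 = A a\xv+\sigma(v)^2(B+c)b\xv+\sigma(v)\big((B+c)a\xv + A b\xv\big).\]
Collecting terms, this equals $(A+\sigma(v)B+\sigma(v)c)a\xv+\sigma(v)(A+\sigma(v)B+\sigma(v)c)b\xv$, which is $(\partial_{\cH}+c\,\sigma(v))e_1$. So the lower-left entry vanishes and the upper-left entry is $\partial_{\cH}+c\sigma(v)$.

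For $e_2$, we have $\partial e_2=(B+c)a\xv+Ab\xv$. Writing $a\xv=e_1+\sigma(v)e_2$ gives
\[\partial e_2=(B+c)e_1+(A+\sigma(v)B+\sigma(v)c)e_2.\]
So the lower-right entry is again $\partial_{\cH}+c\sigma(v)$, and the off-diagonal entry is $*=B+c$.

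The main thing to check carefully is the bookkeeping. This means checking the row/column convention of the matrix in Proposition~\ref{prop: differential oftype changing quasi-stab}, and that the variables commute so that scalars $\sigma(v)$ pass through the $\partial_v^k$. Everything is over $\F$, so signs are irrelevant.

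A second point to verify is the precise form of $*$. My computation gives $B+c$, whose constant correction is $c$ rather than $c\sigma(v)$. I would reconcile this with the stated form of $*$ by rechecking which column convention the proposition uses. The triangularity and the diagonal entries are unaffected by this, and they are what matter for defining $Q^{\pm}$.
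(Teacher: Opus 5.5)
Your argument is the paper's own: apply $\sigma'$ to the matrix of Proposition~\ref{prop: differential oftype changing quasi-stab}, use $\partial_{\cH}=A+\sigma(v)B$ after coloring, and change basis to $\{a\mathbf{x}+\sigma(v)\,b\mathbf{x},\,b\mathbf{x}\}$. Your value $*=\sum_k\sigma(v)^{2k}\partial^{2k+1}+c$ is the correct one, and the extra factor $\sigma(v)$ on the constant term in the statement is a typo, not a convention issue, for three reasons: your column convention is the one used in the proposition's proof (compare $\partial(bl_1)=(u_1+u_0)\,al_1$ in the trefoil example); the transposed convention would not make the matrix triangular in this basis at all; and a degree count shows that the $e_1$-coefficient of $\partial e_2$ must have the bigrading $(-1,-1/2)$ of $u_1$, not the bigrading $(-1,0)$ of $u_1v$.
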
 

As for the type-$T$ and $S$ stabilization maps, these \emph{type-$Q$} quasi-stabilization maps have simple expressions on these special Heegaard diagrams. On generators, they are defined by 
\[Q^+_{X\mapsto O} (\xv)=a\xv+v \cdot b\xv \]
\[Q^-_{O\mapsto X} (a\xv)=v\cdot \xv ,\quad Q^-_{O\mapsto X} (b\xv)=\xv\] or \[Q^+_{X\mapsto O} (\xv)=d\xv+ v \cdot c\xv \]
\[Q^-_{O\mapsto X} (d\xv)= v\cdot \xv ,\quad Q^-_{O\mapsto X} (c\xv)=\xv\] and  extended linearly to the whole colored complex.

Similarly, we can introduce \emph{type-$P$} quasi-stabilization maps 
\[P_{O\mapsto X}^+: \fullCFLR^-(\cH,\s^R)\to \fullCFLR^-(\cH^+,\s^R) \quad P_{O\mapsto X}^-: \fullCFLR^-(\cH^+,\s^R)\to \fullCFLR^-(\cH,\s^R);\]

These maps are defined when there is a fixed base point $O_0\in \bfO^f$ on $\LL$ and we form $\LL^+$ by replacing $O_0$ with a fixed $X$-base point and a pair of $O$-base points, which is exactly one of the pictures in Figure~\ref{fig:OX-OO stabilization} with the roles of $O$ and $X$ interchanged.

\begin{thm}\label{thm:invariance and naturality of type changing quasi-stab maps}
Assume $\LL$ and $\LL^+$ are set up as above and we have already extended $\sigma$ to 
$\sigma'$ by $V\mapsto \sigma(v)^2$. Then $Q_{O\mapsto X}^{-}$ and $Q_{X\mapsto O}^+$ are well-defined chain maps between transitive systems $\fullCFLR^{-}(\LL^\sigma,\s^R)$ and $\fullCFLR^{-}((\LL^+)^{\sigma'},\s^R)$ when one of the following holds: \begin{enumerate}
	\item $\LL$ is a knot with a single pair of base points and $\sigma'(u_1)=\sigma(u_0)$;
	\item there is more than one pair of base points on $\LL$ and $\sigma'(u_1)=0$;
	\item $\sigma(V_{\bfX})=\sigma'(V_{\bfX'})=0$.
\end{enumerate} 
When the coloring is chosen properly, these are filtered by powers of variables in the codomain of $\sigma$ and are relative grading-preserving. Similar results hold for $P_{O\mapsto X}^{+}$ and $P_{X\mapsto O}^-$.
\end{thm}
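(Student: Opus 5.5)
The plan has two parts: first show that $Q^{\pm}$ are chain maps on the special diagrams $\cH^+=\cH\#\cH_U$ (or $\cH\#\cH_U'$) under conditions (1)--(3); then show they commute up to homotopy with the change of diagram maps of Theorem~\ref{thm:invariance and naturality of the full real link Floer complex}.

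\textbf{Chain map property.} This is a direct matrix computation using Corollary~\ref{cor:type-changing quasi-stab differential in new basis}. In the basis $\{a\xv+v\,b\xv,\ b\xv\}$ we have $Q^+_{X\mapsto O}(\xv)=a\xv+v\,b\xv$, so $Q^+$ is inclusion onto the first summand. In the same basis, $Q^-$ sends $a\xv+v\,b\xv\mapsto v\xv+v\xv=0$ and $b\xv\mapsto \xv$, so $Q^-$ is projection onto the second summand. The differential is upper triangular with both diagonal blocks equal to $\partial_{\cH}+c\,\sigma(v)$, where $c=\sigma'(u_0+u_1)$ in the minimally pointed case and $c=\sigma'(u_1)$ otherwise. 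Hence $\partial_{\cH^+}Q^+=Q^+(\partial_{\cH}+c\sigma(v))$ and $Q^-\partial_{\cH^+}=(\partial_{\cH}+c\sigma(v))Q^-$. Both are chain maps exactly when $c\,\sigma(v)=0$. Conditions (1) and (2) give $c=0$. Under condition (3), $\sigma(v)=0$ (we interpret $v\in\bfV_{\bfX}$), and $V\mapsto\sigma(v)^2=0$ as well. One should also check that the stated identification uses $\sigma'(V)=\sigma(v)^2$, which is exactly what makes the basis change clean: the lower-left entry $\sum V^{k+1}\partial_v^{2k+1}$ cancels against $v$ times the diagonal difference.

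For gradings and filtrations, I would check that $a$ and $b$ differ by $(M^R,A^R)=(0,1/2)$, as computed in Theorem~\ref{thm:exact triangle between OX and OO for knots}, and that $v$ has degree $(0,1/2)$. Then $a\xv$ and $v\,b\xv$ are homogeneous of the same bigrading, so $Q^\pm$ preserve relative gradings. Since $Q^\pm$ only multiply by $1$ or $v$, they respect the filtration once $\sigma$ lands in a polynomial ring.

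\textbf{Naturality.} Following \cite[Theorem~A]{Zemkequasistabandbasepointmoving}, I would show that for real Heegaard moves on $\cH$ performed away from the connected-sum region, the change of diagram maps satisfy $\Phi_{\cH^+_1\to\cH^+_2}\simeq \Phi_{\cH_1\to\cH_2}\otimes \mathrm{id}$ in the $\{a,b\}$ splitting. This uses the same neck-stretching degeneration argument as Proposition~\ref{prop: differential oftype changing quasi-stab}, now applied to triangle counts and continuation maps: holomorphic triangles on $\cH\#\cH_U$ split as triangles on $\cH$ glued to small triangles on $\cH_U$, with the same bubble-splicing bookkeeping in the variables $v$ and $V$. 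This gives exactly the triangular form, so $\Phi$ intertwines $Q^\pm$. Independently of $\cH$, one also needs invariance under the choice of connected-sum point $p\in\fix(\tau)$ near $X_0$ and of almost complex structure. Sliding $p$ along the fixed arc can be realized by equivariant isotopies and handleslides across the new curves, and these again commute with $Q^\pm$ by the stretched computation. Every diagram for $\LL^+$ is connected to one of the form $\cH\#\cH_U$ by moves supported away from the new base points, and $Q^\pm$ are defined on such diagrams. Transitivity of $\Phi$ therefore yields morphisms of transitive systems. The $P$-maps follow by interchanging $O$ and $X$.

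\textbf{Main obstacle.} I expect the hardest step to be the triangle-map degeneration. One has to control real boundary degenerations and orientation data, as in the $\{1\}$-degeneration analysis in the proof of Proposition~\ref{prop: differential oftype changing quasi-stab}, for holomorphic triangles in the minimally pointed case (1). I would first try to reduce this case to the non-minimal case by a type-$S$ quasi-stabilization, using Theorem~\ref{thm:invariance and naturality of paired quasi-stab maps}.
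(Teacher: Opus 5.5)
Your chain-map verification matches the paper's. In the basis $\{a\xv+v\,b\xv,\ b\xv\}$ of Corollary~\ref{cor:type-changing quasi-stab differential in new basis}, $Q^+_{X\mapsto O}$ is inclusion of the first summand and $Q^-_{O\mapsto X}$ is projection onto the second, and each of (1)--(3) kills the diagonal correction.

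\textbf{The gap is in the naturality step.} You propose to prove $\Phi_{\cH_1^+\to\cH_2^+}\simeq\Phi_{\cH_1\to\cH_2}\otimes\mathrm{id}$ in the $\{a,b\}$ splitting. That statement is false here. In general $\mathrm{diag}(\Phi,\Phi)$ is not even a chain map for $\partial_{\cH^+}$, because it would have to commute separately with the even and odd parts of $\partial_{\cH}$ in $v$. The reason is that the type-$Q$ connected sum is taken \emph{at} $X_0$ itself, identified with $O_2\in\cH_U$; there is no auxiliary point $p$ near $X_0$ to choose. So a rectangle in $\cH$ with $n_{X_0}=2k+1$ must be matched with a local piece having $n_{O_2}=2k+1$. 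Such local pieces have corners $(a,b')$ or $(b,a')$, so they switch the $a/b$ component. What is true and needed is Proposition~\ref{prop:quasi-stab quadruples}: the real rectangle map on $\cH\#\cH_U$ has the same even/odd shape as the differential,
\[
\Phi_{\cH\#\cH_U\to\cH'\#\cH_U}=\begin{bmatrix}\sum_k V^k\Phi_v^{2k} & \sum_k V^k\Phi_v^{2k+1}\\ \sum_k V^{k+1}\Phi_v^{2k+1} & \sum_k V^k\Phi_v^{2k}\end{bmatrix}.
\]
After $V\mapsto\sigma(v)^2$ this is upper triangular in the basis $\{a\xv+v\,b\xv,\ b\xv\}$. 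Both diagonal blocks equal $\Phi_{\cH\to\cH'}$, and the upper-right block $\sum_k\sigma(v)^{2k}\Phi_v^{2k+1}$ is generally nonzero. That shape makes the squares commute with $Q^\pm$ on the nose, with no homotopy needed. Your remark ``this gives exactly the triangular form'' points toward this, but it contradicts your $\Phi\otimes\mathrm{id}$ claim. Moreover, the cross terms are not produced by ``the same bubble-splicing bookkeeping''.

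\textbf{What the proof of that proposition actually requires.} The diagonal entries do follow as in Proposition~\ref{prop: differential oftype changing quasi-stab}. The off-diagonal entries need a new local computation on the quadruple $(\Sigma_U,\alpha_U',\alpha_U,\beta_U,\beta_U')$.
\begin{enumerate}
\item Use the index formula $\mu_R(\psi_0)=n_{O_1}(\psi_0)+n_{O_2}(\psi_0)$ (Lemma~\ref{lem:local-real-maslov}). Then $0=\mu_R(\psi\#\psi_0)=\mu_R(\psi)+n_{O_1}(\psi_0)$ forces $\mu_R(\psi)=n_{O_1}(\psi_0)=0$.
\item Run a moving-divisor degeneration argument. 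It shows that the matched count of local rectangles with $n_{O_1}=0$ and $n_{O_2}=2k+1$ is odd, with $n_X=k+1$ for corners $(a,b')$ and $n_X=k$ for corners $(b,a')$.
\end{enumerate}

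\textbf{Further corrections.}
\begin{itemize}
\item The relevant change-of-diagram maps are real \emph{rectangle} counts on $(\bm\alpha',\bm\alpha,\bm\beta,\bm\beta')$, not triangle maps.
\item Index-$0$ rectangles cannot cover $O_1$, so no $u_0$ or $u_1$ terms and no $\{1\}$-boundary degenerations enter this computation. The obstacle you anticipate in case (1) is therefore not where the difficulty lies.
\item Your fallback, reducing (1) to the non-minimal case by a type-$S$ quasi-stabilization, is not available. For a minimally pointed knot the adjacent $X$-base point is the fixed point $X_0$, which violates the hypothesis $X_a\neq X_a'$ needed for $S^\pm_{O,X}$.
\end{itemize}
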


Using Corollary~\ref{cor:type-changing quasi-stab differential in new basis}, one checks directly that when $\sigma$ and $\sigma'$ satisfy one of the conditions above, then $Q_{X\mapsto O}^+$ and $Q_{O\mapsto X}^-$ are chain maps (i.e., $\partial_{\cH^+}\circ Q_{X\mapsto O}^+= Q_{X\mapsto O}^+ \circ \partial_{\cH}$ and $\partial_{\cH}\circ Q_{O\mapsto X}^-= Q_{O\mapsto X}^- \circ \partial_{\cH^+}$). To see that they are well-defined on the level of transitive system, we need to show that the definition is independent of the Heegaard diagram $\cH$ chosen. To be precise, we will consider a type-$Q$ quasi-stabilization that can be represented by a fixed point connected sum with $\cH_U$, since the other cases are essentially the same. What we need to show is that the squares \begin{equation}\label{eq:invariance of type-changing quasi-stab}
	\begin{tikzcd}
		\fullCFLR^-(\cH,\s^R)^\sigma  \arrow[r,"^{Q_{X\mapsto O}^+}"] \arrow[d,"\Phi_{\cH\to \cH'}"] & \fullCFLR^-(\cH\# \cH_U,\s^R)^{\sigma'} \arrow[d,"\Phi_{\cH\#\cH_U\to \cH'\#\cH_U}"] \\
		\fullCFLR^-(\cH',\s^R)^\sigma    \arrow[r,"^{Q_{X\mapsto O}^+}"] & \fullCFLR^-(\cH'\# \cH_U,\s^R)^{\sigma'} \\	 
	\end{tikzcd}\quad \quad\begin{tikzcd}
		\fullCFLR^-(\cH\# \cH_U,\s^R)^{\sigma'} \arrow[r,"^{Q_{O\mapsto X}^-}"] \arrow[d,"\Phi_{\cH\#\cH_U\to \cH'\#\cH_U}"]  & \fullCFLR^-(\cH,\s^R)^\sigma \arrow[d,"\Phi_{\cH\to \cH'}"]  \\
		\fullCFLR^-(\cH'\# \cH_U,\s^R)^{\sigma'} \arrow[r,"^{Q_{O\mapsto X}^-}"]  & \fullCFLR^-(\cH',\s^R)^\sigma \\	 
	\end{tikzcd}
\end{equation}
commute for any pair of diagrams $\cH$ and $\cH'$ representing $\LL$.

Proposition~\ref{prop:existence of real HD for multi-based strongly invertible knots and real H moves} allows us to reduce to the case that $\cH$ and $\cH'$ differ by a single real Heegaard move as the independence of the choice of a symmetric almost complex structure is tautological. The commutativity of squares when $\Phi_{\cH\to \cH'}$ is a $\{1\}$ or $\Z/2$-stabilization is also obvious. Thus, we will focus on the case that $\Phi_{\cH\to \cH'}$ is a real rectangle counting map.

Let $(\Sigma,\bm \alpha',\bm \alpha,\bm \beta,\bm \beta', \bfO,\bfX,\tau)$ be the real Heegaard quadruple which underlies the real rectangle counting map $\Phi_{\cH\to \cH'}=\Psi_{\bm \beta\to \bm\beta'}^{\bm \alpha\to \bm \alpha'}$. Then $\Phi_{\cH\#\cH_U\to \cH'\#\cH_U}$ is provided by counting rectangles in $(\Sigma ,\bm \alpha',\bm \alpha,\bm \beta,\bm \beta',\bfO,\bfX,\tau)$ $\# (\Sigma_U, \alpha_U',\alpha_U,\beta_U,\beta_U', \{O_1,O_2\}, \{X,X'\},\tau_U)$ where the latter diagram is shown in Figure~\ref{fig:triple_H_u} and the connected sum identifies $O_2$ with the distinguished $X$-base point $X_0$.

\begin{figure}
	\centering
	\begin{overpic}[width=0.9\textwidth]{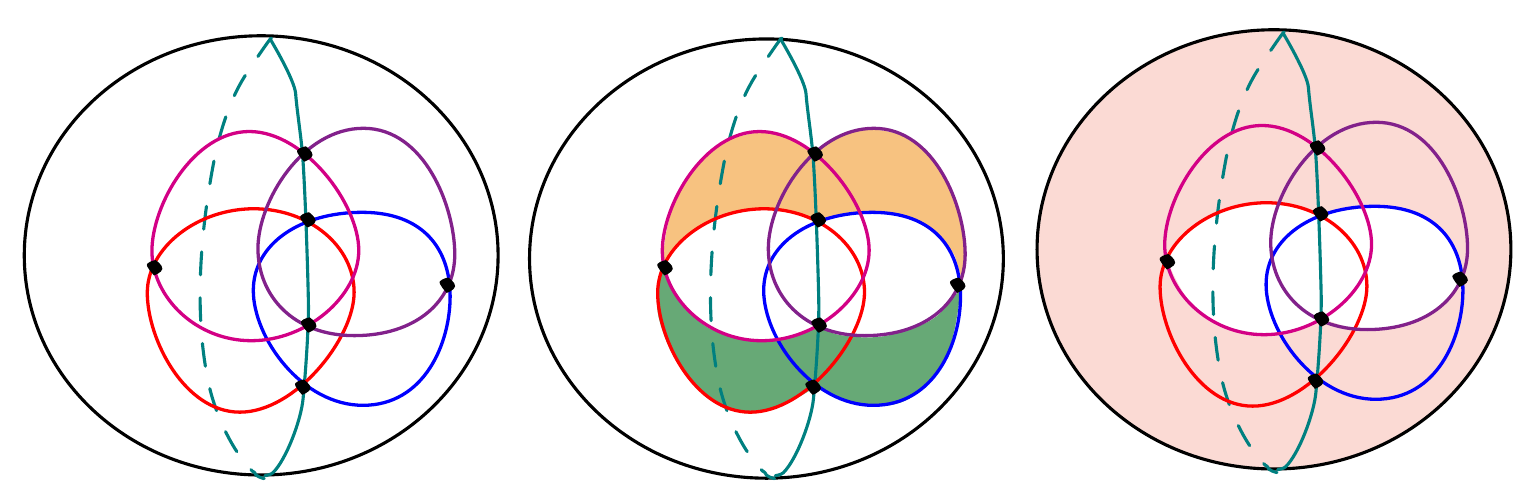}
		\put(27,16) {$\Theta_{\beta_U,\beta'_U}$}
		\put(5,17) {$\Theta_{\alpha'_U,\alpha_U}$}
		
		\put(28.5,21) {$\beta_U$}
		\put(8,21) {$\alpha_U$}
		
		\put(28,7) {$\beta'_U$}
		\put(8,7) {$\alpha'_U$}
		
		\put(19,14.5) {$O_1$}
		\put(18,4) {$O_2$}
		
		\put(13.5,14) {$X$}
		\put(24,14) {$X'$}
		
		\put(20,24) {$a$}
		\put(20,19) {$a'$}
		
		\put(20.5,12) {$b$}
		\put(21.5,7.5) {$b'$}
		
		\put(52,14.5) {$O_1$}
		\put(51,4) {$O_2$}
	
		\put(46.5,14) {$X$}
		\put(57,14) {$X'$}
		
		\put(85,14.5) {$O_1$}
		\put(84,4) {$O_2$}
		
		\put(79.5,14) {$X$}
		\put(90,14) {$X'$}
		
	\end{overpic}
	\caption{The quadruple diagram associated to $\cH_U$ and real rectangles on it.}
	\label{fig:triple_H_u}
\end{figure}

\begin{prop}\label{prop:quasi-stab quadruples}
Let $\cH$, $\cH'$ and $\cH_U$ be set up as above. Write \[\Phi_{\cH\to \cH'}=\sum_{k=0}^{\infty}  v^k \Phi_{v}^{k},\] where $v$ is the variable associated to the chosen fixed $X$-base point $X_0$. Then in terms of the basis $\{a\xv, b\xv\}$ and $\{a'\xv, b'\xv\}$, \[\Phi_{\cH\#\cH_U\to \cH'\#\cH_U}= \begin{bmatrix}
		\sum_{k=0}^{\infty} V^{k} \Phi_{v}^{2k}  & \sum_{k=0}^{\infty} V^{k}\Phi_{v}^{2k+1}  \\
		\sum_{k=0}^{\infty} V^{k+1}  \Phi_{v}^{2k+1} & \sum_{k=0}^{\infty}  V^{k} \Phi_{v}^{2k}\\
	\end{bmatrix}.\]
Here $V$ is the new variable associated to the new pair $(X,X')$ in $\LL^+$.
\end{prop}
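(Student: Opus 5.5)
The plan is to follow the proof of Proposition~\ref{prop: differential oftype changing quasi-stab} closely, replacing Heegaard diagrams by the real Heegaard quadruple $(\Sigma,\bm\alpha',\bm\alpha,\bm\beta,\bm\beta')\#(\Sigma_U,\alpha_U',\alpha_U,\beta_U,\beta_U')$ and index one disks by index zero real rectangles. We neck-stretch along the connected sum circle identifying $O_2$ with $X_0$. A real rectangle class on the connected sum then decomposes as $\psi\#\psi_0$. Here $\psi$ is a real rectangle on the quadruple underlying $\Phi_{\cH\to\cH'}$, and $\psi_0$ is a class on the local quadruple of Figure~\ref{fig:triple_H_u} with corners at $\Theta_{\alpha'_U,\alpha_U}$, $\Theta_{\beta_U,\beta'_U}$ and at $z_1\in\{a,b\}$, $z_2\in\{a',b'\}$. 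The multiplicities must match at the neck: $n_{O_2}(\psi_0)=n_{X_0}(\psi)$.

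First, we compute the index formula. In analogy with $\mu_R(\phi\#\phi_0)=\mu_R(\phi)+n_{O_1}(\phi_0)$, we expect $\mu_R(\psi\#\psi_0)=\mu_R(\psi)+n_{O_1}(\psi_0)$, up to a constant that vanishes for the small triangles and rectangles in Figure~\ref{fig:triple_H_u}. We check this on the small rectangles pictured and then extend by additivity under adding periodic domains and bigons $B_{O_1},B_{O_2},B_X,B_{X'}$ of the local quadruple.

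In the setting of the theorem the rectangle map counts index $0$ classes, and the only index $0$ rectangles available are those with $n_{O_1}(\psi_0)=0$ and $\mu_R(\psi)=0$. These are exactly the rectangles contributing to $\Phi_{\cH\to\cH'}$. Put $n_{X_0}(\psi)=m$. We then enumerate the local classes $\psi_0$ with $n_{O_2}=m$ and $n_{O_1}=0$, mirroring the parity analysis of the disk case.
\begin{itemize}
\item For $m=2k$, the corners satisfy $z_1\leftrightarrow z_2$ with $a\mapsto a'$ and $b\mapsto b'$. The class is the small rectangle plus $k(2B_{O_2}+B_X+B_{X'})$, so $n_X(\psi_0)=k$.
\item For $m=2k+1$, the corners switch between $a$ and $b$. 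The class from $a$ to $b'$ picks up one extra $B_X+B_{X'}$ compared with the class from $b$ to $a'$. This gives the entries $V^{k}\Phi_v^{2k+1}$ and $V^{k+1}\Phi_v^{2k+1}$, in the positions dictated by the convention used for $\partial_{\cH^+}$.
\end{itemize}
As a sanity check, setting the quadruple to the trivial one should recover the first matrix of Proposition~\ref{prop: differential oftype changing quasi-stab} with $\partial$ replaced by $\Phi$.

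Second, we carry out the degeneration analysis. For a sequence of $J(T_i)$-holomorphic real rectangles, Gromov compactness gives limits $\cU$ in $\Sigma$, $\cU_0$ in $\Sigma_U$, and $\cV$ in the neck. As in \cite[Proposition~7.2]{BGX} and the proof of Proposition~\ref{prop: differential oftype changing quasi-stab}, $\cU$ is a single embedded rectangle, $\cV$ is constant, and $\cU_0$ is the local rectangle with splicings. At the points of $u^{-1}(X_0)$, the $2k$ points swapped by the involution receive paired disks with boundary an entire curve. When $m$ is odd, the single fixed point receives a disk with $\alpha$-$\beta$ type boundary. Gluing then shows that the count of spliced local curves is $1$ modulo $2$, which gives the stated matrix.

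The main obstacle is justifying this count of $1$ for the local spliced rectangles, since there is no $\R$-translation and the matching condition fixes the positions of the preimages of $X_0$. We would argue by a standard matching/degeneration argument: deform the matching points toward the corners so that the local rectangles split off as triangles at the top-degree $\Theta$ generators, whose counts are $1$.

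We must also rule out boundary degeneration contributions, which appeared in the disk case through the $u_0$ terms. For rectangle maps these have index too large, because $n_{O_1}$ would be forced positive. Alternatively, they come in canceling pairs via the orientation argument with $\pi_2^R(\xv,\fro_p)$. Since no $u_0,u_1$ terms appear in the claimed formula, we would verify that every class with $n_{O_1}(\psi_0)>0$ has index at least $1$ and hence does not contribute.
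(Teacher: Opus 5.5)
Your overall architecture matches the paper's. You neck-stretch the connected-sum quadruple and write classes as $\psi\#\psi_0$ with $n_{X_0}(\psi)=n_{O_2}(\psi_0)$. You use the local formula $\mu_R(\psi_0)=n_{O_1}(\psi_0)+n_{O_2}(\psi_0)$ to get $\mu_R(\psi\#\psi_0)=\mu_R(\psi)+n_{O_1}(\psi_0)$, hence $\mu_R(\psi)=n_{O_1}(\psi_0)=0$. You identify the moduli space as a fibered product over the matching map, and you track $n_X(\psi_0)=k+1$ versus $k$ for the corners $(a,b')$ versus $(b,a')$.

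The gap is the step you flag yourself: showing $\sum_{\psi_0}\#\cM^{J_0}_R(\psi_0,\bm d)\equiv 1 \pmod 2$ for a generic real divisor $\bm d\in\Sym^{2k+1}(\Diamond)^R$. Your proposed degeneration, pushing the matching points to the corners so the rectangle splits into triangles at the $\Theta$ generators, is not available in the real setting. The involution of $\Diamond$ fixes the corners $z_1\in\{a,b\}$ and $z_2\in\{a',b'\}$ and the arc joining them, and it swaps $\Theta_{\alpha'_U,\alpha_U}$ with $\Theta_{\beta_U,\beta'_U}$. Two things follow.
\begin{itemize}
\item The two ways of breaking $\Diamond$ into a pair of triangles are exchanged by the involution, so no symmetric two-triangle breaking exists; real rectangles are conformally rigid.
\item For $m=2k+1$, one point of $\bm d$ is fixed by the involution and lies on the fixed arc. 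It can only escape toward $z_1$ or $z_2$, never toward a $\Theta$ corner.
\end{itemize}
Reusing the disk-case picture of splicing in boundary degenerations does not compute this count either. There is no $\R$-translation to absorb the positions of the matched points.

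The missing idea is to push the whole real divisor into the $\alpha$-$\beta$ cylindrical end at the fixed corner $z_1$. The points should be spread apart, with the unpaired one on the symmetry line. Then run a cobordism argument over the resulting one-parameter family $\bm d_t$; breakings at finite $t$ cancel in pairs. By the index lemma, the ends as $t\to\infty$ are products of three pieces:
\begin{itemize}
\item real index-$2$ classes in $\pi_2^R(a,a)$ or $\pi_2^R(b,b)$ (covering $O_2$ twice) through the swapped pairs of points, with odd count by Ozsv\'ath--Szab\'o's count of index-$2$ disks through a prescribed point;
\item an index-$1$ $\alpha_U$-$\beta_U$ bigon with $n_{O_1}=0$ and $n_{O_2}=1$ through the fixed point, with odd count; this is the piece that switches $a$ and $b$;
\item the unique index-$0$ rectangle with $n_{O_1}=n_{O_2}=0$ and the correct corners.
\end{itemize}
Every piece lives in $(S^2,\alpha_U,\beta_U)$ or is the small rectangle, so every count is known. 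That is why the divisor should be sent to the $\alpha$-$\beta$ end rather than to the $\Theta$ corners.

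A minor correction: $\{1\}$-boundary degenerations on the $\cH$ side are excluded because they carry $\mu_R(\psi)\ge 1$, which is incompatible with total index $0$. The reason is not that $n_{O_1}$ becomes positive.
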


Assuming Proposition~\ref{prop:quasi-stab quadruples}, we now finish the proof of Theorem~\ref{thm:invariance and naturality of type changing quasi-stab maps}. Let $\xv$ be any generator on the diagram $\cH$, then \[\Phi_{\cH\#\cH_U\to \cH'\#\cH_U}^s (Q_{X\mapsto O}^+(\xv))= \Phi_{\cH\#\cH_U\to \cH'\#\cH_U}^s(a\xv+ v\cdot b\xv)= a'\Phi_{\cH\to \cH'}^s(\xv) + b'\Phi_{\cH\to \cH'}^s(\xv);\]
\[Q_{X\mapsto O}^+(\Phi_{\cH\to \cH'}^s(\xv))= a'\Phi_{\cH\to \cH'}^s(\xv) + v\cdot b'\Phi_{\cH\to \cH'}^s(\xv),\]
so $\Phi_{\cH\#\cH_U\to \cH'\#\cH_U}^s \circ Q_{X\mapsto O}^+= Q_{X\mapsto O}^+\circ \Phi_{\cH\to \cH'}^s$. Similarly, one can compute 
\[\Phi_{\cH\to \cH'}^s (Q_{O\mapsto X}^-(a\xv))=Q_{O\mapsto X}^-(\Phi_{\cH\#\cH_U\to \cH'\#\cH_U}^s(a\xv))= 0;\]

\[\Phi_{\cH\to \cH'}^s (Q_{O\mapsto X}^-(b\xv))= 
Q_{O\mapsto X}^-(\Phi_{\cH\#\cH_U\to \cH'\#\cH_U}^s(b\xv))= \Phi_{\cH\to \cH'}^s(\xv),\] which imply $\Phi_{\cH\to \cH'}^s\circ Q_{O\mapsto X}^-= Q_{O\mapsto X}^-\circ \Phi_{\cH\#\cH_U\to \cH'\#\cH_U}^s.$ Thus, diagrams in \eqref{eq:invariance of type-changing quasi-stab} commute. The graded and filtered version can be easily verified.

To prove Proposition~\ref{prop:quasi-stab quadruples}, we need the following calculation of real Maslov index. The argument is modeled on the proof of an analogous statement in $3$-manifold case, which will appear in~\cite{GMX_Functoriality}. Here, we sketch an argument for the link case. The author thanks Gary Guth for his help in improving this argument.

\begin{lemma}\label{lem:local-real-maslov}
Consider the real quadruple shown in the left of Figure~\ref{fig:triple_H_u}. If $\psi_0 \in \pi_2^R(\Theta_{\alpha'_U,\alpha_U}, z_1, \Theta_{\beta_U,\beta'_U}, z_2)$ is a class of symmetric rectangles, with $z_1\in \{a,b\}$ and  $z_2\in \{a',b'\}$, 
	\begin{align*}
		\mu_R(\psi_0) = n_{O_1}(\psi_0) + n_{O_2}(\psi_0).
	\end{align*}
\end{lemma}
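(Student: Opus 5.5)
The plan is to reduce the real Maslov index of a symmetric rectangle to an ordinary Maslov index, computed with a Lipshitz-type formula on the quadruple. For real classes, the real index is half the index of the underlying non-equivariant class: $\mu_R(\psi_0)=\tfrac12\mu(\psi_0)$, where $\mu(\psi_0)$ is the index of $\psi_0$ viewed as a rectangle on the doubled diagram. This is the same relation used for bigons when deriving $\mu_R(\phi_0)=n_{O_1}(\phi_0)+n_{O_2}(\phi_0)$ in the proof of Proposition~\ref{prop: differential oftype changing quasi-stab}. I will therefore compute $\mu(\psi_0)$ through Sarkar's index formula for polygons,
\[\mu(\psi)=e(\psi)+n_{\mathbf{p}}(\psi)-\tfrac{d-2}{2}\cdot(\text{number of curves per tuple}),\]
where $e$ is the Euler measure, $n_{\mathbf p}$ is the sum of corner point measures, and $d=4$. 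This formula will be applied to the genus-zero quadruple $(\Sigma_U,\alpha'_U,\alpha_U,\beta_U,\beta'_U)$ on the left of Figure~\ref{fig:triple_H_u}.

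Both sides of the desired identity are additive under juxtaposition of domains. Using this, I will reduce to generators. Any class $\psi_0\in\pi_2^R(\Theta_{\alpha'_U,\alpha_U},z_1,\Theta_{\beta_U,\beta'_U},z_2)$ is a fixed reference rectangle plus a real periodic/bigon combination. Concretely, it is one of the four small symmetric rectangles (one for each choice of $z_1\in\{a,b\}$, $z_2\in\{a',b'\}$, as drawn in the middle and right of Figure~\ref{fig:triple_H_u}), plus real bigon classes from $\pi_2^R(a,b)$, $\pi_2^R(b,a)$ on the $(\alpha_U,\beta_U)$ and $(\alpha'_U,\beta'_U)$ diagrams, plus real triply-/doubly-periodic domains.

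I then check each type. For the real bigons the identity already holds by the computation $\mu_R(\phi_0)=n_{O_1}(\phi_0)+n_{O_2}(\phi_0)$ in the proof of Proposition~\ref{prop: differential oftype changing quasi-stab}. For periodic domains, i.e. the small bigons between $\alpha_U,\alpha'_U$ and between $\beta_U,\beta'_U$, together with the whole sphere $[\Sigma_U]$, I use the usual $c_1$/Euler characteristic count. The sphere has real index $2=n_{O_1}+n_{O_2}$. The thin $\alpha_U$–$\alpha'_U$ and $\beta_U$–$\beta'_U$ bigon classes, which connect to the top generators $\Theta$, are symmetric pairs of index zero avoiding the $O$'s. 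For the four small rectangles, I read off Euler measure and corner measures directly from the picture. Each should have $e=1$ with corners of total measure $1$, minus the correction term, giving $\mu=0$ when the rectangle avoids $O_1,O_2$ and $\mu=2$ when it covers exactly one of them once; halving gives the claim.

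The main obstacle is justifying $\mu_R=\tfrac12\mu$ for symmetric rectangles rather than bigons, given that the fixed locus of the source involution may hit the corners at $\Theta$-generators. I would first try to argue via the doubling: a symmetric rectangle in the real quadruple corresponds to an ordinary rectangle in the quotient, with its totally real boundary condition along the fixed arc. The index then splits as in \cite{guth2025real} and \cite{GMX_Functoriality}. If that route is delicate, the fallback is a direct comparison: glue $\psi_0$ with a real bigon to reduce to the index-zero holomorphic small rectangle, which is the $z_1=a$, $z_2=a'$ case used in the nearest-point map.
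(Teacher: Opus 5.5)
\textbf{The gap: $\mu_R=\tfrac12\mu$ is false.} Your argument rests on the reduction $\mu_R(\psi_0)=\tfrac12\mu(\psi_0)$, and this identity is false. It already fails for the bigons of $\cH_U$, so it cannot be the relation behind $\mu_R(\phi_0)=n_{O_1}(\phi_0)+n_{O_2}(\phi_0)$ in the proof of Proposition~\ref{prop: differential oftype changing quasi-stab}. Here are two counterexamples.
\begin{itemize}
\item The small bigon $B_{O_1}$ has ordinary Maslov index $\tfrac12+\tfrac14+\tfrac14=1$. Yet it is the real index one disk that produces the $u_1$ term in that proposition, so $\mu_R(B_{O_1})=1$.
\item The other bigon through $O_1$ is $B_{O_1}+B_X+B_{X'}=[S^2]-B_{O_2}$. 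It has ordinary index $\tfrac32+\tfrac34+\tfrac34=3$ but real index $1$.
\end{itemize}
More generally, take a real class $pB_{O_1}+qB_{O_2}+r(B_X+B_{X'})$ between $a$ and $b$. Its ordinary index is $\mu=p+q+2r$ and its real index is $\mu_R=p+q$, where $p+q-2r=\pm1$. Hence $2\mu_R-\mu=\pm1$, never $0$.

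\textbf{Why halving fails.} The real index is the index of the half-problem, whose boundary lies on the $\alpha$-type curves and on the fixed circle $C$ of $\tau_U$. Cutting a symmetric domain along $C$ does not halve the Euler measure or the corner terms at points of $C$. For example, the half of $B_{O_1}$ cut off by $C$ is itself a bigon with edges on $\alpha_U$ and $C$, of index $1$.

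\textbf{Consequence for your base cases.} The computation breaks exactly where the lemma has content. A class with corners $(a,b')$ or $(b,a')$ differs from one with corners $(a,a')$ or $(b,b')$ by splicing in such a real bigon, whose ordinary index is odd. With your normalization (base rectangle $\mu=0$), halving therefore gives a half-integer. In particular, your predicted value $\mu=2$ for rectangles covering one $O$ exactly once cannot occur.

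Two smaller slips:
\begin{itemize}
\item A rectangle with four convex corners has Euler measure $0$, not $1$.
\item The $(a,b')$ and $(b,a')$ classes are not small rectangles missing the $O$'s, since $n_{O_1}+n_{O_2}$ is odd on them.
\end{itemize}

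\textbf{How to repair it.} Your decomposition is the right skeleton, and it becomes the paper's proof once you work with $\mu_R$ directly and never pass through $\mu$.
\begin{itemize}
\item Check the formula on the class whose domain is a single component of $S^2\smallsetminus(\alpha_U\cup\alpha'_U\cup\beta_U\cup\beta'_U)$ with multiplicity one. There $\mu_R=0=n_{O_1}+n_{O_2}$.
\item Observe that both sides change by the same amount when you splice in any real bigon of the diagram. This is the bigon formula, read as a statement about real indices.
\item Likewise, both sides change by the same amount when you splice in a real periodic class such as $[S^2]$ or $2B_{O_i}+B_X+B_{X'}$.
\item Note that any two of the classes in question are related by such splicings.
\end{itemize}
Your ``fallback'' is exactly this argument, provided you compute the real indices of the base rectangle and of the spliced pieces directly rather than by halving. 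One way to do so is from the half-domain, viewed as a polygon with edges on $\alpha'_U$, $\alpha_U$ and $C$.
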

\begin{proof}
The formula can be proved following \cite[Lemma 5.8]{Zemke_2026}. This can be checked directly on the class whose domain is a single component of $S^2 \smallsetminus (\alpha_U \cup \alpha'_U \cup \beta_U \cup \beta'_U)$ with multiplicity one. Both sides of the formula are unchanged by splicing in any of the bigons of the diagram, and any two classes with the same corners are related by such splicings. 
\end{proof}

As a direct corollary, we know that there are only two index 0 rectangles with $n_{O_1}(\psi_0) = n_{O_2}(\psi_0) = 0$, shown in orange and green in the middle frame of Figure~\ref{fig:triple_H_u}. Besides, the rectangles with $n_{O_1}(\psi_0) = 0$ and $n_{O_2}(\psi_0) = 1$ have index 1 for either choice of corners.

\begin{proof}[Proof of Proposition~\ref{prop:quasi-stab quadruples}]
	
Rectangles taking $a\xv$ to $a'\yv$ or $b\xv$ to $b'\yv$ can be analyzed in exactly the way as in Proposition~\ref{prop: differential oftype changing quasi-stab}. These rectangles contribute 
the two diagonal terms $\sum_{k=0}^{\infty} V^{k} \Phi_{v}^{2k}$.

It remains to consider classes of rectangles in 
$$\pi_2^R(\Theta_{\alpha'_U,\alpha_U} \bm \Theta_{\bm\alpha',\bm\alpha}, z_1 \xv, \Theta_{\beta_U,\beta'_U} \bm{\Theta}_{\bm\beta,\bm\beta'}, z_2\yv)$$ for $(z_1,z_2)=(a,b')$ or $(b,a')$. As in Proposition~\ref{prop: differential oftype changing quasi-stab}, a class of rectangle in the connected sum diagram can be written as $\psi \# \psi_0$, where $\psi$ is a rectangle in $\cH$ and $\psi_0$ is a rectangle in $\cH_U$. Such classes $\psi \# \psi_0$ have $n_{X_0}(\psi) = n_{O_2}(\psi_0) = 2k+1  \equiv 1 \mod 2$. Then we consider a sequence neck length $T_i$ limiting to infinity and a sequence of $J(T_i)$-holomorphic rectangles in class $\psi\# \psi_0$. After extracting a converging subsequence, we get a limit broken curves which is the union of three collections $\cU$, $\cV$ and $\cU_0$, belonging to $\Sigma\times \Diamond$, $S^1\times \R\times \Diamond$ and $\Sigma_U\times \Diamond$, respectively. Again, we can rule out ghost components from $\cU$ and $\cU_0$ and prove that $\cV$ must be constant.

Then, a gluing argument identifies 
\begin{align*}
	\cM^{J(T)}_R(\psi\#\psi_0) = \cM^{J}_R(\psi)\times_\rho \cM^{J_0}_R(\psi_0).
\end{align*}

Note that \begin{align*}
	0= \mu_R(\psi\# \psi_0)&= \mu_R(\psi) +\mu_R(\psi_0)-n_{O_2}(\phi_0)\\
	&= \mu_R(\psi) +n_{O_1}(\psi_0).
\end{align*}

Then the non-negativity of indices and multiplicities of holomophic curves implies $\mu_R(\psi)=n_{O_1}(\psi_0)=0$. Thus, the space $\cM^{J}_R(\psi)$ is zero dimensional. Define $\rho^{X_0}: \cM_R(\psi) \ra \Sym^{2k+1}([0,1]\times \R)^R$ as $(\pi_{[0,1]\times \R}\circ u)((\pi_{\Sigma}\circ u)^{-1}(X_0))$. Now the remaining problem is to analyze the union of moduli spaces $\bigcup_{u\in \cM(\psi)} \cM^{J_0}_R(\psi_0, \rho^{X_0}(u))$. As in \cite[Section~7]{BGX}, it suffice consider $\cM^{J_0}_R(\psi_0, \bm d)$ for a generic $\bm d \in \Sym^{2k+1}(\Diamond)^R$. We claim that
\begin{align*}
	\sum_{
		\substack{
			\psi_0 \in \pi_2^R(\Theta_{\alpha'_U,\alpha_U},a, \Theta_{\beta_U,\beta'_U},b')\\
			n_{O_2}(\psi_0) = 2k+1 \\
			n_{O_1}(\psi_0) = 0
		}
	}
	\#\cM^{J_0}_R(\psi_0, \bm d) \equiv 1 \mod 2.
\end{align*} A similar result holds with the roles of $a,b$ interchanged.

A standard cobordism argument shows that this count is independent of the choice of $\bm d$ (see~\cite[Lemma~7.3]{BGX}). Choose a path $\bm d_t$ in $\Sym^{2k+1}(\Diamond)^R$ avoiding the diagonal so that 
\begin{enumerate}
	\item $\bm d_1 = \bm d$;
	\item as $t \to \infty$, points in $\bm d_t$ limit to $\infty$ in the $\alpha$-$\beta$ cylindrical end of $\Diamond$;
	\item identifying the $\alpha$-$\beta$ end of $\Diamond$ with $[0,1] \times (-\infty, 0]$, the points of $\bm d_t$ are spaced out by distance at least $t$ as $t$ gets large.
	\item The $[0,1]$ component of each $\bm d_t$ approaches a fixed $s_0$ in $(0,1)$.
\end{enumerate}
If $\bm \cD = \{\bm d_t\}_{t \in [1,\infty)}$, consider the 1-dimensional moduli space
\begin{align*}
	\cM_{R}(\bm \cD) = \coprod_{
		\substack{
			\psi_0 \in \pi_2^R(\Theta_{\alpha'_U,\alpha_U},a, \Theta_{\beta_U,\beta'_U},b')\\
			n_{O_2}(\psi_0) = 2k+1 \\
			n_{O_1}(\psi_0) = 0
		}
	} \cM^{J_0}_R(\psi_0, \bm \cD).
\end{align*}
The ends of this moduli space correspond to 
\begin{enumerate}
	\item Curves at $t = 1$;
	\item Degenerations at finite $t$;
	\item Limiting curves as $t \to \infty$.
\end{enumerate}
A standard argument shows that ends of the second kind appear in canceling pairs. Hence, it follows that the choice of divisor depends only on its path component in $\Sym^k(\Diamond)^R \smallsetminus \mathrm{Diag}(\Diamond)$. It will be apparent that the count is independent of the connected component as well.

By the Maslov index calculations of Lemma~\ref{lem:local-real-maslov}, it must be that the limiting curves consist of $|\bm d|-1$ flow lines on $(S^2, \alpha_U, \beta_U)$ representing (symmetric pairs) of index 2 classes in $\pi_2^R(a,a)$ or $\pi_2^R(b,b)$ which match a fixed point $d$ in $\{1/2\} \times \R$. The remaining configuration is an index 1 rectangle with $n_{O_2}(\psi_0) = 1$, which breaks into an index 1 bigon $\phi_0$ with $n_{O_2}(\phi_0) = 1$ and an index 0 rectangle $\psi_0^0$ with $n_{O_2}(\psi_0^0) = 0$. 

Write $\cR_0$, $\cB_0$ and $\cS_0$ for the sets of classes appearing in the limit:
\begin{align*}
	\cR_0 &= \{\, \psi_0^0 \in \pi_2^R(\Theta_{\alpha'_U,\alpha_U}, b, \Theta_{\beta_U,\beta'_U}, b') \; : \; n_{O_1}(\psi_0^0) = n_{O_2}(\psi_0^0) = 0 \,\},\\
	\cB_0 &= \{\, \phi_0 \in \pi_2^R(a,b) \; :  \; n_{O_1}(\phi_0) = 0, \; n_{O_2}(\phi_0) = 1 \,\},\\
	\cS_0 &= \{\, \phi \in \pi_2^R(a,a) \; : n_{O_1}(\phi) = 0, \; n_{O_2}(\phi) = 1 \,\}.
\end{align*}
Appealing to the gluing results of \cite[Proposition A.1]{Lipshitz2005ACR}, it follows that the ends at $t = \infty$ correspond to the (zero-dimensional) space
\begin{align}\label{eqn:stab-rect-count}
	\left( \coprod_{\psi_0^0 \in \cR_0} \cM_R(\psi_0^0) \right) \times
	\left( \coprod_{\phi_0 \in \cB_0} \cM_R(\phi_0, d) \right) \times
	\left( \coprod_{\phi \in \cS_0} \cM_R(\phi, d) \right)^{|\bm d| - 1}.
\end{align}
The third term of equation \eqref{eqn:stab-rect-count} also contributes a single point to the mod 2 count according to \cite[Lemma 6.4]{HolomorphicdiskslinkinvariantsandthemultivariableAlexanderpolynomial}. Since $\bm \cD$ limits to $a \in \alpha_U \cap \beta_U$, the index 1 rectangle configuration must break into one of the two index 1 bigons in the complement of $O_1$ with boundary in $\alpha_U \cup \beta_U$, from which it follows that the second term in equation \eqref{eqn:stab-rect-count} is also $1 \text{ mod } 2$. Finally, there is a single index 0 rectangle with $n_{O_2}(\psi_0^0) = n_{O_1}(\psi_0^0) = 0$ which has the correct asymptotics. Hence, the total count is odd and path component independent as we claimed.

Finally, as in Proposition~\ref{prop: differential oftype changing quasi-stab}, we need to keep track of $n_{X}(\psi_0)$. Assume that $n_{O_2}(\psi_0)=2k+1$, as above. When $\psi_0$ has vertices at $a$ and $b'$, $n_X(\psi_0)=k+1$, while when $\psi_0$ has vertices at $b$ and $a'$, $n_X(\psi_0)=k$. See the right frame of Figure~\ref{fig:triple_H_u} for an example with $n_{O_2}(\psi_0)=n_X(\psi_0)=1$. Thus, we have the two summations off-diagonal as shown. This concludes the proof of Proposition~\ref{prop:quasi-stab quadruples}.
\end{proof}

\subsubsection{Quasi-stabilization maps on $\fullCFLR^-(\LL^s,\s^R)$}\label{subsub:Quasi-stabilization maps on Dcomplex}

In this subsection, we focus on how the various quasi-stabilization maps act on the chain complex $(\fullCFLR^-(\LL^s,\s^R), D)$. By a coloring of $\LL^s$, we mean a ring homomorphism with domain $\cR^-_s(\LL)$. We first deal with type-$S$ and type-$T$ quasi-stabilizations. 

\begin{prop}\label{prop:differential of paired quasi-stab on D complex}
Let $\cH^+$ and $\cH$ be set up as in  Proposition~\ref{prop:differential on H+ using a sufficiently stretched almost cplx str}. Fix some coloring $\sigma$ on $\LL^s$ and let $\sigma'$ be an extension to $\LL^{+,s}$. When the symmetric almost complex structure is sufficiently stretched, we have an identification of differentials 
\[D_{\cH^+}^{\sigma'}=
\begin{bmatrix}
	D_{\cH}^{\sigma}& \sigma'(U_{O}+U_{O_a}) \\
	0 &D_{\cH}^{\sigma}\\
\end{bmatrix},\] when we are performing a type-$S$ quasi-stabilization and $\sigma'(V_X)=\sigma (V_{X_a})$, \[D_{\cH^+}^{\sigma'}=
\begin{bmatrix}
	D_{\cH}^{\sigma}& 0 \\
	\sigma'(V_{X}+V_{X_a}) &D_{\cH}^{\sigma}\\
\end{bmatrix},\]
when we are performing a type-$T$ quasi-stabilization and $\sigma'(U_O)=\sigma (U_{O_a})$. 

\end{prop}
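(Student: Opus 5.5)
The plan is to deduce the statement from Proposition~\ref{prop:differential on H+ using a sufficiently stretched almost cplx str} by coloring and then adding the bounding chain. By that proposition, for a sufficiently stretched symmetric almost complex structure, and in the basis $\{\xv\otimes\theta^{\bfO},\xv\otimes\xi^{\bfO}\}$,
\[\partial_{\cH^+}=\begin{bmatrix}\partial_{\cH}& U_{O}+U_{O_a}\\ V_{X}+V_{X_a}&\partial_{\cH}\end{bmatrix}.\]
Applying $\sigma_s$ and then $\sigma'$ turns this into the same matrix with entries $\sigma'(\cdot)$. By definition $D_{\cH^+}^{\sigma'}=\partial^{\sigma'}_{\cH^+}+\sigma'(\bL_{\LL^+})\cdot\id$, and the scalar $\sigma'(\bL_{\LL^+})$ only enters the two diagonal blocks. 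So two things remain to be shown. First, in the type-$S$ case the lower-left entry vanishes, which is immediate from $\sigma'(V_X)=\sigma(V_{X_a})$ in characteristic $2$; the type-$T$ case is symmetric, with the upper-right entry killed by $\sigma'(U_O)=\sigma(U_{O_a})$. Second, and this is the real content, I need the identity
\[\sigma'(\bL_{\LL^+})=\sigma(\bL_{\LL}),\]
which makes each diagonal block equal to $D^\sigma_{\cH}$.

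To prove this identity I would compare the explicit square roots listed at the end of Section~\ref{sub:Upgrade to a based link invariant}. Inserting $X,O$ between $O_a$ and $X_a$ on the component (and $X',O'$ on the $\tau$-image) changes the relevant summand of $\bL$ in only one place: the single monomial $u_{O_a}v_{X_a}$ is replaced by the three monomials
\[u_{O_a}v_X+u_Ov_X+u_Ov_{X_a},\]
and every other monomial is unchanged. I would check this case by case for a pair of components and for the three types $SI_{OO}$, $SI_{OX}$, $SI_{XX}$. The point is that the quasi-stabilization happens away from the fixed set, so the two end terms involving fixed base points are untouched. Under a coloring of $\LL^s$ with $\sigma'(v_X)=\sigma(v_{X_a})$, the middle and last of the three new terms cancel. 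What survives is $\sigma(u_{O_a}v_{X_a})$, so the identity holds. For type $T$ the dual cancellation uses $\sigma'(u_O)=\sigma(u_{O_a})$.

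One point needs care: the hypothesis is stated on the big variables, $\sigma'(V_X)=\sigma(V_{X_a})$, while the cancellation needs equality of the small variables $v_X,v_{X_a}$. I would interpret a coloring of $\LL^s$ as already identifying the $v$'s, noting that $\sigma'(v_X)^2=\sigma(v_{X_a})^2$ forces equality whenever the target ring is reduced. If that fails, I would keep the nilpotent difference and show it is absorbed by a change of basis.

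I expect the main obstacle to be the degenerate case where $\LL$ is a knot with a single pair of $(O,X)$-base points. Here $\bL_{\LL}=0$ by convention, while $\bL_{\LL^+}$ is a nonzero sum, so the clean monomial-replacement argument does not apply. I would handle this case by direct computation. After coloring, $\sigma'(\bL_{\LL^+})$ reduces to a single term like $\sigma(u_{O_a}v_{X_a})$. I would then show either that this term vanishes under the stated coloring hypothesis, for instance because $O_a$ and $X_a$ are then adjacent to each other on both sides, or that it can be removed by a basis change of the form $\xi\mapsto\xi+c\,\theta$ that preserves the triangular shape of the matrix. I would also check that when $(O_a,O_a')$ or $(X_a,X_a')$ is a fixed point, with the convention $U_{O_a}=u_{O_a}^2$, the same three-term replacement still holds.
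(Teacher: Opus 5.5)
Your argument is correct and is the paper's proof made explicit. The paper only says the statement follows immediately from Proposition~\ref{prop:differential on H+ using a sufficiently stretched almost cplx str} and the definition of $D$; what ``immediately'' hides is your identity $\bL_{\LL^+}+\bL_{\LL}=(u_{O_a}+u_O)(v_X+v_{X_a})$, which vanishes once the hypothesis is read, as you do, as $\sigma'(v_X)=\sigma(v_{X_a})$ (and dually via $\sigma'(u_O)=\sigma(u_{O_a})$ for type $T$).

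Drop the two fallbacks, though, because neither works.
\begin{itemize}
\item A scalar added to both diagonal blocks cannot be removed by a basis change $\xi\mapsto\xi+c\,\theta$. Such a change conjugates an upper-triangular block matrix without altering its diagonal, so a nilpotent discrepancy $\sigma'(u_O+u_{O_a})(\sigma'(v_X)+\sigma(v_{X_a}))$ would survive.
\item The single minimally pointed knot does not need to be handled. It is excluded by the standing assumption $X_a\neq X_a'$ (resp.\ $O_a\neq O_a'$) under which the paper defines type-$S$ (resp.\ type-$T$) maps. It also cannot be rescued algebraically: in that case the cited matrix for $\partial_{\cH^+}$ would square to $\omega_{\LL^+}+u_{O_a}^2v_{X_a}^2$, so the input proposition itself fails there.
\end{itemize}
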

\begin{proof}
This follows from Proposition~\ref{prop:differential on H+ using a sufficiently stretched almost cplx str} and the definition of $D$ immediately.
\end{proof}

As a corollary of this and Theorem~\ref{thm:invariance and naturality of paired quasi-stab maps}, we have the following:
\begin{cor}
Under the same coloring assumption as in Proposition~\ref{prop:differential of paired quasi-stab on D complex}, $S^{\pm}_{O,X}$ are well-defined morphisms between transitive systems  $(\fullCFLR^-(\LL^s,\s^R), D)^{\sigma}$ and $(\fullCFLR^-(\LL^{+,s},\s^R), D)^{\sigma'}$. Similar results hold for $S_{X,O}^{\pm}$, $T_{O,X}^{\pm}$ and $T_{X,O}^{\pm}$. 
\end{cor}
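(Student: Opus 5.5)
The statement to prove is the last corollary: the maps $S^{\pm}_{O,X}$ (and their $S_{X,O}$, $T$ variants) are well-defined morphisms between the transitive systems $(\fullCFLR^-(\LL^s,\s^R),D)^{\sigma}$ and $(\fullCFLR^-(\LL^{+,s},\s^R),D)^{\sigma'}$. The plan splits this into two parts: first show that these maps are chain maps for $D$, and then show that they commute with the change of diagram maps $\Phi^s_{\cH\to\cH'}$.

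For the chain map property, I would work on the stretched diagram $\cH^+$ and use the matrix form of $D^{\sigma'}_{\cH^+}$ from Proposition~\ref{prop:differential of paired quasi-stab on D complex}. Take the type-$S$ case with basis ordered $\{\theta^{\bfO},\xi^{\bfO}\}$. The map $S^+_{O,X}$ is the inclusion $\xv\mapsto \xv\otimes\theta^{\bfO}$ into the first summand. Because the lower-left entry of the matrix vanishes, $D_{\cH^+}(\xv\otimes\theta^{\bfO})=(D_{\cH}\xv)\otimes\theta^{\bfO}$, which is exactly $S^+_{O,X}\circ D_{\cH}$. The map $S^-_{O,X}$ is the projection onto the $\xi^{\bfO}$-coordinate. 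Composing it with the matrix kills the upper-right entry $\sigma'(U_O+U_{O_a})$ and leaves $D_{\cH}$, so $S^-_{O,X}\circ D_{\cH^+}=D_{\cH}\circ S^-_{O,X}$. The type-$T$ case is the mirror image: the matrix is lower triangular, and the roles of $\theta^{\bfX}=\xi^{\bfO}$ and $\xi^{\bfX}=\theta^{\bfO}$ are swapped. The $X,O$-ordered variants use $\cH^+_\beta$ in the same way. This step is essentially the same as the argument for $\partial$, with the extra term $\bL\cdot\id$ appearing on both diagonal blocks.

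For naturality, I would use the observation from Section~\ref{subsub:Base point actions on Dcomplex}: since $D=\partial^{\sigma_s}+\bL\cdot\id$ and $\bL$ lies in the base ring, a module map is a chain map (or a chain homotopy) for $D$ if and only if it is one for $\partial^{\sigma_s}$. The maps $\Phi^s_{\cH\to\cH'}$ are the colored versions of the maps in Theorem~\ref{thm:invariance and naturality of the full real link Floer complex}. Theorem~\ref{thm:invariance and naturality of paired quasi-stab maps} already gives commutation with these maps, up to homotopy, on $\fullCFLR^-$ colored so that $\sigma'(V_X)=\sigma(V_{X_a})$. To get the result for $\LL^s$, I would precompose the coloring with $\sigma_s$. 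The condition $\sigma'(V_X)=\sigma(V_{X_a})$ then matches the hypothesis of Proposition~\ref{prop:differential of paired quasi-stab on D complex}, so the commuting squares and homotopies from that theorem transfer verbatim to $D$.

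The main obstacle is a mismatch of curvatures. On $\LL$ the curvature is killed by the bounding chain $\bL$, while on $\LL^+$ it is killed by $\bL^+$, and these differ by the square root of the new curvature terms, which is roughly $u_Ov_X+u_Ov_{X_a}$. So I must check that, under the coloring condition, $\sigma'(\bL^+)=\sigma(\bL)$, since otherwise the diagonal blocks would not both equal $D_{\cH}$. Proposition~\ref{prop:differential of paired quasi-stab on D complex} implicitly asserts this equality. I would verify it directly from the explicit square-root formulas listed in Section~\ref{sub:Upgrade to a based link invariant}: the two new monomials either cancel in pairs, or they become a difference of two terms that vanishes once $v_X$ and $v_{X_a}$ are identified. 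If they do not cancel exactly, I would instead compensate for the difference in constant by rescaling the identification.
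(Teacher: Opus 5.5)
Your proposal is correct and follows essentially the paper's route. The paper obtains this corollary directly from two ingredients: the triangular form of $D^{\sigma'}_{\cH^+}$ in Proposition~\ref{prop:differential of paired quasi-stab on D complex}, which gives the chain-map property, and Theorem~\ref{thm:invariance and naturality of paired quasi-stab maps} together with the fact that chain maps and homotopies for $D$ and $\partial^{\sigma_s}$ coincide, which gives naturality.

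The bounding-chain matching you flag holds exactly, since $\bL^+-\bL=(u_{O_a}+u_O)(v_X+v_{X_a})$. This vanishes once $v_X$ is identified with $v_{X_a}$ (type $S$) or $u_O$ with $u_{O_a}$ (type $T$). Your estimate $u_Ov_X+u_Ov_{X_a}$ omits the $u_{O_a}$ terms, and without them the type-$T$ case would not cancel. The ``rescaling'' fallback is therefore unnecessary, and it could not repair an additive discrepancy anyway.
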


Next, we move to type-changing quasi-stabilizations $P$ and $Q$. For concreteness, we will again focus on type-$Q$ maps associated to local changes shown on the left of Figure~\ref{fig:OX-OO stabilization}. Note that now $\cR^-_s(\LL)$ is naturally a subring of $\cR^-_s(\LL^+)$ by identifying the variables associated to the old $X_0$ and the new pair $(X,X')$. Then it follows from Corollary~\ref{cor:type-changing quasi-stab differential in new basis}, in the new basis $\{a\xv+v\cdot b\xv,b\xv\}$, we have \[D_{\cH^+}= \begin{bmatrix}
 D_{\cH} & * \\
 0 & D_{\cH}\\
\end{bmatrix}. \] Here $*$ is the same as in Corollary~\ref{cor:type-changing quasi-stab differential in new basis}.

Combining this calculation with Proposition~\ref{prop:quasi-stab quadruples}, we see that 
\begin{thm}\label{thm: invariance and naturality of P,Q on D complex}
$Q_{X\mapsto O}^+$ and $Q_{O\mapsto X}^-$ are well-defined morphisms between $(\fullCFLR^-(\LL^s,\s^R), D)$ and $(\fullCFLR^-(\LL^{+,s},\s^R), D)$ after including $\cR^-_s(\LL)$ as a subring of $\cR^-_s(\LL^+)$. A similar result holds for $P_{O\mapsto X}^+$ and $P_{X\mapsto O}^-$.  
\end{thm}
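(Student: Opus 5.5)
The proof splits into two parts: first check that $Q^{\pm}$ are chain maps for $D$, then check that they commute with the change of diagram maps. Both parts reuse computations already done for the colored complexes, with $\cR^-_s(\LL)$ viewed as a subring of $\cR^-_s(\LL^+)$ by sending $v_{X_0}$ to the variable $v$ of the new pair $(X,X')$.

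\emph{Compatibility of the bounding chains.} Under $\sigma_s$ the new variable $V$ goes to $v^2$. So the ring inclusion is exactly the extension $\sigma'(V)=\sigma(v)^2$ required in Corollary~\ref{cor:type-changing quasi-stab differential in new basis}. The first check is a bookkeeping identity between $\bL_{\LL^+}$ and $\bL_{\LL}$. The list of square roots of curvature monomials for $SI_{OX}$ and $SI_{OO}$ shows that passing from $\LL$ to $\LL^+$ changes the square root by at most the term $u_1v$ contributed by the new fixed $O$. In the minimally pointed case the change is $(u_0+u_1)v$ instead, because $\bL_{\LL}=0$ there. Hence $\bL_{\LL^+}=\bL_{\LL}+u_1v$, respectively $\bL_{\LL^+}=\bL_{\LL}+(u_0+u_1)v$.

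\emph{Chain map property.} In the basis $\{a\xv+v\cdot b\xv,\, b\xv\}$, Corollary~\ref{cor:type-changing quasi-stab differential in new basis} gives diagonal entries $\partial_{\cH}+u_1v$ (resp.\ $+(u_0+u_1)v$). Adding $\bL_{\LL^+}$ and using the identity above, the diagonal entries become exactly $D_{\cH}$. The lower-left entry is zero, so $D_{\cH^+}$ is upper triangular with $D_{\cH}$ on the diagonal. In this basis $Q^+_{X\mapsto O}$ is the inclusion of the first summand. $Q^-_{O\mapsto X}$ is the projection onto the second summand, since it kills $a\xv+v\cdot b\xv$ and sends $b\xv\mapsto\xv$. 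For a block upper-triangular differential the inclusion of the sub and the projection to the quotient are chain maps, with no condition on the coloring. This is why none of the vanishing hypotheses of Theorem~\ref{thm:invariance and naturality of type changing quasi-stab maps} are needed here. The same argument works for the picture with $c,d$ and, swapping the roles of $O$ and $X$, for $P^{\pm}$.

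\emph{Naturality.} By Proposition~\ref{prop:existence of real HD for multi-based strongly invertible knots and real H moves}, it suffices to treat one real Heegaard move at a time. Stabilizations and changes of almost complex structure are handled exactly as in Theorem~\ref{thm:invariance and naturality of type changing quasi-stab maps}. For a rectangle-counting map, Proposition~\ref{prop:quasi-stab quadruples} expresses $\Phi_{\cH\#\cH_U\to\cH'\#\cH_U}$ in terms of the $v$-expansion of $\Phi_{\cH\to\cH'}$. After specializing $V=v^2$, this matrix is the block matrix of $\Phi_{\cH\to\cH'}$ conjugated to the basis $\{a\xv+v b\xv, b\xv\}$. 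The squares in \eqref{eq:invariance of type-changing quasi-stab} then commute by the same two-line computation as before. These maps are also chain maps for $D$, because $\bL$ is a scalar in the base ring, as noted in Section~\ref{subsub:Base point actions on Dcomplex}.

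The main obstacle is the bookkeeping identity $\bL_{\LL^+}=\bL_{\LL}+u_1v$. It requires matching the labelings of the curvature square roots before and after stabilization, including the minimally pointed knot case where $\bL_{\LL}=0$ but $\bL_{\LL^+}\neq 0$. I would verify it component by component, since all other components contribute identical summands to both sides.
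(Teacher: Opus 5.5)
Your proposal is correct and follows the paper's argument. The paper also rewrites $D_{\cH^+}$ in the basis $\{a\xv+v\cdot b\xv,\, b\xv\}$ as upper triangular with $D_{\cH}$ on the diagonal (via Corollary~\ref{cor:type-changing quasi-stab differential in new basis}), reads off $Q^{+}$ and $Q^{-}$ as inclusion of the subcomplex and projection to the quotient, and obtains naturality from Proposition~\ref{prop:quasi-stab quadruples}; your explicit identity $\bL_{\LL^+}=\bL_{\LL}+u_1v$ (resp.\ $(u_0+u_1)v$ in the minimally pointed case) is exactly the step the paper leaves implicit when it cancels the diagonal correction term.
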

\begin{proof}
This follows from the definition of type-$P$, $Q$ quasi-stabilizations and the upper-triangular shape that $D_{\cH^+}$ takes.
\end{proof}

In \cite{zemke2019link}, the dividing set on link cobordism satisfies that two base points lie in same connected component of the complement of the dividing set if and only if they share the same color. In most cases from Theorem~\ref{thm:invariance and naturality of type changing quasi-stab maps}, we set some variables to zero to make $Q^{\pm}$ chain maps. These are not valid colorings in Zemke's setting (he colored base points, but we generalize it to ring homomorphisms) and the author does not find any reasonable dividing set for them. However, when we move from $\partial$ to $D$, $Q^{\pm}$ and $P^{\pm}$ are naturally chain maps, so they admit interesting dividing set interpretations. Examples are provided in Figure~\ref{fig:dividing sets for P and Q}.

\begin{figure}
	\centering
	\begin{overpic}[width=0.7\textwidth]{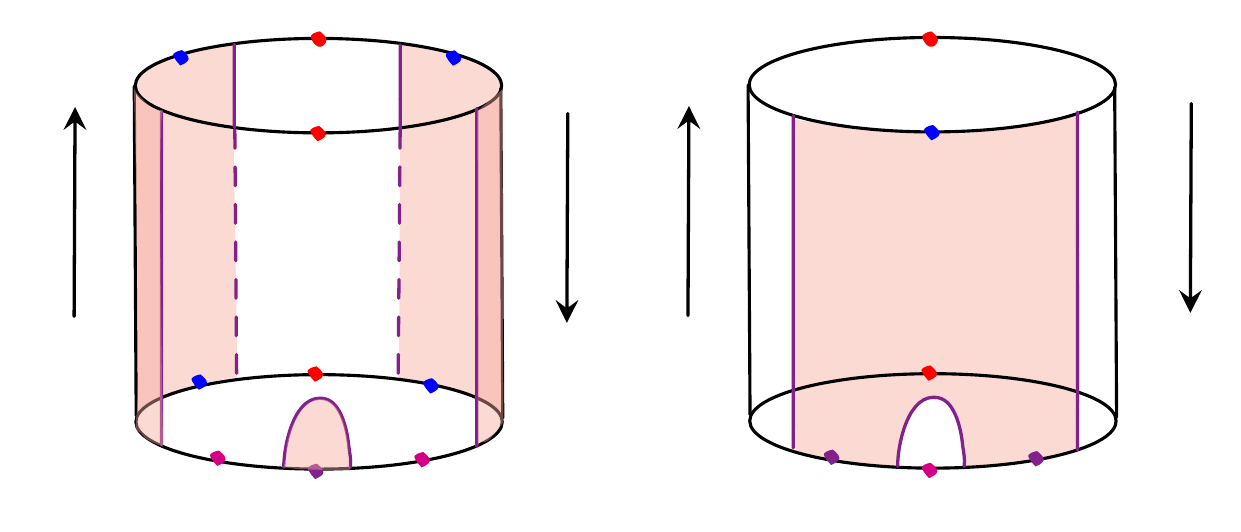}

		\put(0,12) {$Q_{O\mapsto X}^-$}
		\put(41,12) {$Q_{O\mapsto X}^+$}
		\put(91,12) {$P_{O\mapsto X}^+$}
		\put(51,12) {$P_{X\mapsto O}^-$}

	\end{overpic}
	\caption{These are the dividing set interpretations of type-$Q$, $P$ quasi-stabilizations on $\fullCFLR^-(\LL^s,\s^R)$. The coloring convention is the same as in Figure~\ref{fig:examples for dividing set of quasi-stab}.}
	\label{fig:dividing sets for P and Q}
\end{figure}

In Section~\ref{subsub:Compositions}, we will see that after coloring the chain complex suitably, $\Psi^p$ and $\Phi^p$ can be expressed as compositions of type-$T$ and $S$ quasi-stabilization maps, while $\Psi^f$ and $\Phi^f$ can be expressed as compositions of type-$P$ and $Q$ quasi-stabilization maps, leading to a geometric interpretation of the base point actions in terms of decorated cobordism.

\subsection{Relationship between base point actions and quasi-stabilization maps}\label{sub:Relationship between base point actions and quasi-stabilization maps}

In this subsection, we analyze the relationship between base point actions and quasi-stabilization maps and further investigate their properties. Some of these results follow \cite[Section~4.3]{zemke2019link} closely, but additional new phenomena appear, as in Lemma~\ref{lem:commutation relation of base point actions and the differential} and~\ref{lem:commutation between base point actions}. As we remarked in Section~\ref{subsub:Base point actions on Dcomplex}, the homotopy relation in $(\fullCFLR^-(\LL^s,\s^R), \partial^{\sigma_s})$ is the same as in $(\fullCFLR^-(\LL^s,\s^R), D)$. In the following, $\simeq$ means homotopy for both $\partial$ and $D$ unless otherwise stated.

\subsubsection{Compositions}\label{subsub:Compositions}
\begin{lem}\label{lem:T simequ Psi S etc}
Let $\LL=(L,\bfO,\bfX)$ be a multi-based strongly invertible link and new base points $(O,O')$, $(X,X')$ be set up as in Section~\ref{subsub:Quasi-stabilizations without type change}, with adjacent pairs $(O_a,O_a')$, $(X_a,X_a')$. Let $\sigma$ be a coloring on $\LL$ and $\sigma'$ be its extension to $\LL^+$. If both \begin{enumerate}
    \item $O_a\ne O_a'$ and $\sigma'(U_{O})=\sigma'(U_{O_a})$;
    \item $X_a\ne X_a'$ and $\sigma'(V_{X})=\sigma'(V_{X_a})$ 
\end{enumerate}
hold, then \[T_{O,X}^+\simeq \Psi^p_{(X,X')}S_{O,X}^+ \quad \text{and}\quad T_{O,X}^-\simeq S_{O,X}^-\Psi^p_{(X,X')}.\]
Similarly, \[S_{O,X}^+\simeq \Phi^p_{(O,O')}T_{O,X}^+ \quad \text{and}\quad S_{O,X}^-\simeq T_{O,X}^-\Phi^p_{(O,O')}.\]
\end{lem}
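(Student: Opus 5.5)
We work throughout on the stabilized diagram $\cH^+$ of Proposition~\ref{prop:differential on H+ using a sufficiently stretched almost cplx str}, with a sufficiently stretched symmetric almost complex structure, and compute every map explicitly in the splitting $\fullCFLR^-(\cH,\s^R)\otimes\langle\theta^{\bfO},\xi^{\bfO}\rangle$. By Theorem~\ref{thm:invariance and naturality of paired quasi-stab maps} and Proposition~\ref{prop:natuality and invariance of base point action}, all maps in the statement are well defined on transitive systems, so it is enough to prove the relations on this one diagram. Hypotheses (1) and (2) make both the type-$S$ and the type-$T$ maps chain maps (Corollary~\ref{cor:T,S stab are chain maps under suitable coloring}). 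They also make both off-diagonal entries $\sigma'(U_O+U_{O_a})$ and $\sigma'(V_X+V_{X_a})$ vanish, so the colored differential is $\mathrm{diag}(\partial_\cH,\partial_\cH)$.

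\textbf{Step 1: compute $\Psi^p_{(X,X')}$.} We use the formal-derivative description $\Psi^p_{(X,X')}=\frac{d}{dV_X}\partial_{\cH^+}$, taken before coloring. From the matrix in Proposition~\ref{prop:differential on H+ using a sufficiently stretched almost cplx str}, the entries $\partial_\cH$ do not involve $V_X$, since $X$ is a new base point. Hence
\[
\Psi^p_{(X,X')}=\begin{bmatrix}0&0\\1&0\end{bmatrix},
\]
which sends $\xv\otimes\theta^{\bfO}$ to $\xv\otimes\xi^{\bfO}$. We must check that the stretched-structure identification holds at the level of the full $V_X$-dependence, not only modulo higher order terms. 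The reason is that any domain with $n_X>0$ and index one either is the small bigon or involves $\partial_\cH$-type domains crossing the neck; the latter have $n_X=n_{X_a}$, and their $X_a$-dependence is already recorded.

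\textbf{Step 2: compose.} Using $\theta^{\bfX}=\xi^{\bfO}$, we get $\Psi^p_{(X,X')}S^+_{O,X}(\xv)=\Psi^p(\xv\otimes\theta^{\bfO})=\xv\otimes\xi^{\bfO}=\xv\otimes\theta^{\bfX}=T^+_{O,X}(\xv)$. Similarly, $S^-_{O,X}\Psi^p$ kills $\xv\otimes\xi^{\bfO}$ and sends $\xv\otimes\theta^{\bfO}$ to $\xv$. This is exactly $T^-_{O,X}$, because $\xi^{\bfX}=\theta^{\bfO}$. These identities therefore hold on the nose, and hence up to homotopy. The second pair of identities follows in the same way from $\Phi^p_{(O,O')}=\frac{d}{dU_O}\partial_{\cH^+}=\begin{bmatrix}0&1\\0&0\end{bmatrix}$.

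\textbf{Main obstacle.} The delicate point is Step 1, namely justifying that the base point actions computed from the stretched differential agree with the ones defined intrinsically. We also need that no $V_X$-dependence hides inside corrections to $\partial_\cH$ when the complex structure is not fully stretched. I would handle this by appealing to Proposition~\ref{prop:natuality and invariance of base point action}: changing the almost complex structure intertwines the base point actions up to homotopy. This is why the conclusion is stated as $\simeq$ rather than as an equality.
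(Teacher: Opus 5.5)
Your proposal is correct and follows the paper's route: the paper invokes the stretched-differential formula of Proposition~\ref{prop:differential on H+ using a sufficiently stretched almost cplx str} and repeats Zemke's proof of \cite[Lemma~4.10]{zemke2019link}, which is exactly your computation $\Psi^p_{(X,X')}=\frac{d}{dV_X}\partial_{\cH^+}=\begin{bmatrix}0&0\\1&0\end{bmatrix}$ and $\Phi^p_{(O,O')}=\begin{bmatrix}0&1\\0&0\end{bmatrix}$, followed by $\theta^{\bfX}=\xi^{\bfO}$, $\xi^{\bfX}=\theta^{\bfO}$ and naturality to pass to $\simeq$. The only flaw is your extra justification in Step 1: index-one diagonal domains with $n_X=n_{X_a}>0$, if they contributed, would give the diagonal blocks a nonzero $V_X$-derivative, so that remark works against your matrix rather than for it; the check is unnecessary anyway, because Proposition~\ref{prop:differential on H+ using a sufficiently stretched almost cplx str} already identifies $\partial_{\cH^+}$ over the full ring including $U_O$ and $V_X$, with diagonal blocks exactly $\partial_{\cH}$.
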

\begin{proof}
In view of Proposition~\ref{prop:differential on H+ using a sufficiently stretched almost cplx str}, this can be proved in the same way as~\cite[Lemma~4.10]{zemke2019link}.    
\end{proof}

Following \cite[Lemma~9.3]{Zemkequasistabandbasepointmoving}, we can interpret $\Psi^p$ and $\Phi^p$ using the quasi-stabilization maps.

\begin{lem}\label{lem:present p-base point action as composition of quasistab maps}
Consider $\LL$, $(O,O')$, $(X,X')$ and $(O_a,O_a')$, $(X_a,X_a')$ as above. If the base points read $O_a,X,O,X_a$ along the orientation on $\LL$ and (1) from Lemma~\ref{lem:T simequ Psi S etc} is true, then \[\Phi^p_{(O,O')}\simeq S_{O,X}^+S_{O,X}^- \quad \Phi^p_{(O,O')}\simeq S_{X_a,O}^+S_{X_a,O}^-.\]  

If the base points read $X_a,O,X,O_a$ along the orientation on $\LL$ and (2) from Lemma~\ref{lem:T simequ Psi S etc} is true, then \[\Psi^p_{(X,X')}\simeq T_{O,X}^+T_{O,X}^- \quad \Psi^p_{(X,X')}\simeq T_{X,O_a}^+T_{X,O_a}^-.\]  
\end{lem}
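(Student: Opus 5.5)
We follow the strategy of \cite[Lemma~9.3]{Zemkequasistabandbasepointmoving}, working directly on the stretched diagram $\cH^+$ of Proposition~\ref{prop:differential on H+ using a sufficiently stretched almost cplx str}. The lemma has four homotopies. We first prove $\Phi^p_{(O,O')}\simeq S_{O,X}^+S_{O,X}^-$, which is an endomorphism of $\fullCFLR^-(\LL^+)$. The other three follow by the same argument, after interchanging the roles of $\bfO$ and $\bfX$ and of $\alpha$ and $\beta$ (using $\cH^+_\beta$).

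\textbf{Step 1.} Choose the diagram $\cH^+$ for $\LL^+$ obtained from a diagram $\cH$ of $\LL$ by the equivariant pair of special connected sums. Use a sufficiently stretched symmetric almost complex structure, so that the complex splits as $\fullCFLR^-(\cH)\otimes\langle\theta^{\bfO},\xi^{\bfO}\rangle\otimes\F[U_O,V_X]$. The differential is then
\[\partial_{\cH^+}=\begin{bmatrix}\partial_{\cH}& U_O+U_{O_a}\\ V_X+V_{X_a}&\partial_{\cH}\end{bmatrix}.\]
Since $\partial_{\cH}$ does not involve $U_O$, the formal derivative description of base point actions (Section~\ref{sub:base point action}) gives
\[\Phi^p_{(O,O')}=\frac{d}{dU_O}\partial_{\cH^+}=\begin{bmatrix}0&1\\0&0\end{bmatrix}.\]
This map sends $\xv\otimes\xi^{\bfO}$ to $\xv\otimes\theta^{\bfO}$ and kills $\xv\otimes\theta^{\bfO}$.

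\textbf{Step 2.} From the defining formulas, $S^+_{O,X}S^-_{O,X}$ also sends $\xv\otimes\xi^{\bfO}$ to $\xv\otimes\theta^{\bfO}$ and kills $\theta^{\bfO}$-components. So on this particular diagram and almost complex structure, the two maps agree on the nose.

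\textbf{Step 3.} The statement is about transitive systems, so we must pass to arbitrary diagrams. $\Phi^p$ commutes with change of diagram maps up to homotopy by Proposition~\ref{prop:natuality and invariance of base point action}. $S^\pm_{O,X}$ do so by Theorem~\ref{thm:invariance and naturality of paired quasi-stab maps}, which is where hypothesis (1) (with its coloring condition) is needed, so that they are chain maps. Hence the homotopy persists.

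The second identity $\Phi^p_{(O,O')}\simeq S^+_{X_a,O}S^-_{X_a,O}$ views $(O,O')$ as quasi-stabilized together with the pair $(X_a,X_a')$ on the other side. Here the local segment lies in $U_\beta$, so we use $\cH^+_\beta$, where the analogous matrix computation again identifies $d/dU_O$ with the off-diagonal identity. The $\Psi^p$ statements are symmetric, with $T$-maps and $\theta^{\bfX}=\xi^{\bfO}$.

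The main obstacle is bookkeeping rather than analysis. In the second form of each identity, the roles of the old and new base points are swapped relative to the construction of $\cH^+$. We must check that the stretched differential still has the stated block form with respect to the correct variable $U_O$ (and not $U_{O_a}$), and that the coloring hypothesis makes the relevant off-diagonal term exactly $U_O+U_{O_a}$ with no fixed-point correction. Once that is checked, no further holomorphic curve counting is needed.
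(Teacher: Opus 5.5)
This is the paper's argument. Its proof likewise reads off $\Phi^p_{(O,O')}=\frac{d}{dU_O}\partial_{\cH^+}$ from the stretched differential of Proposition~\ref{prop:differential on H+ using a sufficiently stretched almost cplx str} and compares it with the defining formulas for $S^{\pm}_{O,X}$; your Step~3 just makes explicit the passage to transitive systems that the paper leaves implicit.

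One correction to Step~3: the coloring condition that makes $S^{\pm}_{O,X}$ (and $\Phi^p_{(O,O')}$) chain maps is $\sigma'(V_X)=\sigma'(V_{X_a})$. That is condition (2) of Lemma~\ref{lem:T simequ Psi S etc}, not (1), by Corollary~\ref{cor:T,S stab are chain maps under suitable coloring} and Lemma~\ref{lem:commutation relation of base point actions and the differential}. The labels (1)/(2) in the statement appear to be swapped, and under (1) alone Theorem~\ref{thm:invariance and naturality of paired quasi-stab maps} does not apply.
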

\begin{proof}
This follows from the expression of $\partial_{\cH^+}$ in terms of $\partial_{\cH}$ in Proposition~\ref{prop:differential on H+ using a sufficiently stretched almost cplx str} and the definition of quasi-stabilization maps in Section~\ref{subsub:Quasi-stabilizations without type change}.
\end{proof}

For the fixed point actions, we have a similar characterization. 

\begin{lem}\label{lem:present f-base point action as composition of quasistab maps}
Let $\LL$ be a multi-based generalized strongly invertible link and $K$ be a component of $L$ in $SI_{OX}\cup SI_{XX}$. Fix a base point $X^*$ on $K\cap \bfX^f$ and consider the link $\LL^+$ given by a type-$Q$ quasi-stabilization at $X^*$, on which we have a new fixed base point $O^*$. Then we have  \[\Phi^f_{O^*}\simeq Q_{X^*\mapsto O^*}^+Q_{O^*\mapsto X^*}^- \] as endomorphisms whenever the coloring makes the type-$Q$ quasi-stabilization well-defined.

Similarly, if $K$ belongs to $SI_{OO}\cup SI_{OX}$ and $O^*$ is a fixed base point on $K$, we can form $\LL^+$ by performing the local change in Figure~\ref{fig:OX-OO stabilization} with the roles of $O$ and $X$ interchanged. Call the new $X$-base point $X^*$. Then we have 
\[\Phi^f_{X^*}\simeq P_{O^*\mapsto X^*}^+P_{X^*\mapsto O^*}^-\] as endormorphisms whenever the coloring makes the type-$P$ quasi-stabilization well-defined.
\end{lem}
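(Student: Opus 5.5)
The plan is to work entirely on the special diagram $\cH^+=\cH\#\cH_U$ (or $\cH\#\cH'_U$), where both sides of the claimed relation can be written as explicit $2\times 2$ matrices over the colored base ring. The naturality of $\Phi^f_{O^*}$ (Proposition~\ref{prop:natuality and invariance of base point action}) and of the type-$Q$ maps (Theorem~\ref{thm:invariance and naturality of type changing quasi-stab maps}) means that a homotopy on one diagram is enough. It is then inherited by the transitive system, since both sides commute with change of diagram maps up to homotopy.

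\emph{First step: compute $\Phi^f_{O^*}$.} I will use the formal-derivative description $\Phi^f_{O^*}=\frac{d}{du_1}\partial_{\cH^+}$, where $u_1$ is the variable of the new fixed base point $O^*$. By Proposition~\ref{prop: differential oftype changing quasi-stab}, $u_1$ appears only in the second matrix. The first matrix involves only $V$ and the $\partial_v^k$, so it contributes nothing to the derivative. In the basis $\{a\xv,b\xv\}$ this gives $\Phi^f_{O^*}(b\xv)=a\xv$ and $\Phi^f_{O^*}(a\xv)=V\, b\xv$, in both the minimally pointed case and the general case; in the former, $u_0$ drops out under $\frac{d}{du_1}$. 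After coloring, $V$ becomes $\sigma(v)^2$.

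\emph{Second step: compute the composition.} From the formulas, $Q^+_{X^*\mapsto O^*}Q^-_{O^*\mapsto X^*}(b\xv)=a\xv+v\, b\xv$ and $Q^+_{X^*\mapsto O^*}Q^-_{O^*\mapsto X^*}(a\xv)=v\,a\xv+v^2 b\xv$. It is cleanest to pass to the adapted basis $e=a\xv+v\,b\xv$, $f=b\xv$ of Corollary~\ref{cor:type-changing quasi-stab differential in new basis}, where $\partial_{\cH^+}$ is upper triangular. In this basis $Q^+Q^-$ sends $f\mapsto e$ and $e\mapsto 0$. Meanwhile $\Phi^f_{O^*}$ sends $f\mapsto e+v f$ and $e\mapsto v e$. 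So the two maps agree up to the diagonal correction $\sigma(v)\cdot\id$.

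\emph{Third step, and the expected main obstacle: show that this correction is null-homotopic, or vanishes, under the admissible colorings.} I would try three things in turn.
\begin{itemize}
\item First, in case (3) of Theorem~\ref{thm:invariance and naturality of type changing quasi-stab maps}, where $\sigma(v)^2=0$, I would check whether the coloring convention effectively kills $\sigma(v)$. If so, the correction vanishes.
\item Second, I would look for an explicit homotopy $H$ with $\partial H+H\partial=\sigma(v)\id$. The natural candidate is built from the off-diagonal entry $*=\sum_k \sigma(v)^{2k}\partial_v^{2k+1}+\sigma'(u_1)\sigma(v)$. The idea is to compare with Lemma~\ref{lem:commutation relation of base point actions and the differential}: there the commutator of $\partial$ with a derivative-type map produces a base-ring element, so the analogous derivative of the upper-triangular block form should produce $\sigma(v)$ times the identity.
\item Third, if neither works, I would recheck the orientation and sign conventions for the bigons $B_{O_1}$ and $B_{O_1}+B_X+B_{X'}$ in Proposition~\ref{prop: differential oftype changing quasi-stab}. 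A transposed convention, with the $u_1$ and $u_1V$ entries in swapped positions, would change $\Phi^f_{O^*}$ to exactly $e\mapsto 0$, $f\mapsto e$, which matches $Q^+Q^-$ on the nose.
\end{itemize}

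The $P$-statement is the same argument with the roles of $O$ and $X$ interchanged, using $\Psi^f_{X^*}=\frac{d}{dv}\partial$ and the mirrored local piece.
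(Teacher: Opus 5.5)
Your Steps 1 and 2 are correct, and they carry out the paper's own argument more carefully. The paper's proof differentiates only the off-diagonal entry $*$ of the triangular form in Corollary~\ref{cor:type-changing quasi-stab differential in new basis} with respect to $\sigma'(u_1)$.

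That entry is really $\sum_k\sigma(v)^{2k}\partial_v^{2k+1}+\sigma'(u_1)$ (resp.\ $+\sigma'(u_0+u_1)$), not $+\sigma'(u_1)\sigma(v)$ as printed. Your formula $f\mapsto e+vf$ already uses the correct version. Its $u_1$-derivative is $f\mapsto e$, $e\mapsto 0$, which is exactly $Q^+_{X^*\mapsto O^*}Q^-_{O^*\mapsto X^*}$.

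The one-line argument drops something your computation correctly detects: the diagonal entries $\partial_{\cH}+\sigma'(u_1)\sigma(v)$ also depend on $u_1$. Their derivative is precisely your correction $\sigma(v)\cdot\id$. So everything hinges on your Step 3, and there the proposal has a genuine gap. None of the three options is carried out, and only the first can succeed.

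\emph{Option (i) works.} Under condition (3) of Theorem~\ref{thm:invariance and naturality of type changing quasi-stab maps} one has $\sigma(v)=0$. Then $\Phi^f_{O^*}=Q^+Q^-$ on the nose on $\cH\#\cH_U$, hence on the transitive system.

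\emph{Option (ii) cannot work in general, already under condition (1).} Take the minimally pointed strongly invertible unknot in $SI_{OX}$, with $\cH$ the genus-zero diagram (one generator, $\partial_{\cH}=0$). Let $\sigma=\id$ on $\F[u_0,v]$, with $\sigma'(u_1)=u_0$ and $\sigma'(V)=v^2$. By Proposition~\ref{prop: differential oftype changing quasi-stab}, the colored differential on $\cH\#\cH_U$ vanishes identically, so homotopic maps must be equal. Yet $\Phi^f_{O^*}$ and $Q^+Q^-$ differ by $v\cdot\id\neq 0$.

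Passing to $D$ does not help. With $\sigma_s$, the $D$-homology is $\F[u_0,u_1,v]/(u_0+u_1)$ generated by $[e]$; $\Phi^f_{O^*}$ acts there by $v$ and $Q^+Q^-$ by $0$. The paper's own trefoil example with $u_0=u_1$ fails the same way: $(\Phi^f_{O_1}+Q^+Q^-)(bl_1)=v\,bl_1$, and this is not a boundary.

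\emph{Option (iii) is mistaken}, for three reasons:
\begin{itemize}
\item With $u_1$ and $u_1V$ swapped, the $u_1$-part would send $e=a\xv+v\,b\xv$ to $u_1(b\xv+v^3a\xv)$, which destroys the triangular form.
\item It contradicts the paper's explicit trefoil differential, $\partial bl_1=(u_1+u_0)al_1$ and $\partial al_1=(u_1+u_0)V\,bl_1$.
\item It would give $\Phi^f_{O^*}(f)=v^2a\xv$, not $e$.
\end{itemize}

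What your computation actually proves is the identity $\Phi^f_{O^*}=Q^+_{X^*\mapsto O^*}Q^-_{O^*\mapsto X^*}+\sigma(v)\cdot\id$. The claimed homotopy therefore holds exactly when $\sigma(v)\cdot\id\simeq 0$, for example under condition (3). By symmetry, the type-$P$ relation is off by the image of the variable of the replaced fixed $O$-base point. The lemma as stated needs this extra hypothesis.
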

\begin{proof}
We will prove the first statement, the second follows with the roles of $O$ and $X$ interchanged. This follows from differentiating $*$ in Corollary~\ref{cor:type-changing quasi-stab differential in new basis} with respect to $\sigma'(u_1)$. Here, we note that $\sigma'(u_1)$ does not appear in $\partial^k$ for any $k$ as $\partial^k$ comes from $\cH$, while $O_1$ lives on $\cH_U$.

\end{proof}

In Figure~\ref{fig:base point action as composition of quasi-stab}, we provide the decorated cobordism interpretations of Lemma~\ref{lem:present p-base point action as composition of quasistab maps} and \ref{lem:present f-base point action as composition of quasistab maps}. Those for $\Phi^p$ or $\Psi^p$ always record the coloring faithfully, while the one for $\Phi^f$ ($\Psi^f$) is only faithful when we work with $(\fullCFLR^-(\LL^s),D)$.
\begin{figure}
    \centering
    \begin{overpic}[width=0.7\textwidth]{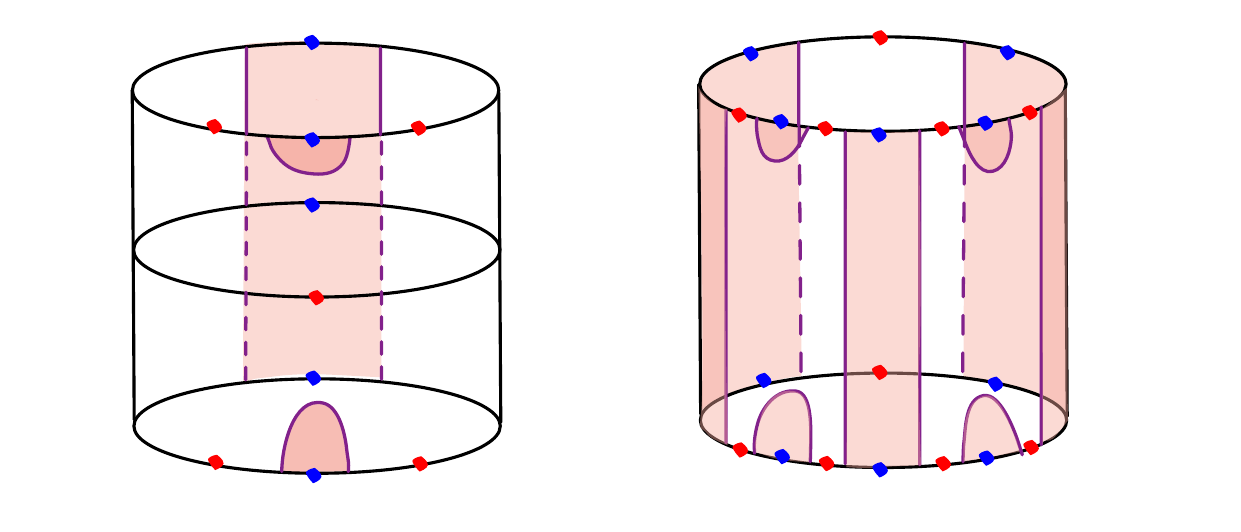}
    	\put(24,-1) {$\Phi_O^f$}
    	\put(72,-1) {$\Phi_{(O,O')}^p$}
    \end{overpic}
    \caption{The decorated cobordism interpretation of Lemma~\ref{lem:present p-base point action as composition of quasistab maps} and \ref{lem:present f-base point action as composition of quasistab maps}.}
    \label{fig:base point action as composition of quasi-stab}
\end{figure}

\begin{lem}\label{lem:composition of type T,S ,P, Qquasi-stab maps}
Type-$T$ and $S$ quasi-stabilization maps satisfy 
\[S_{O,X}^-S_{O,X}^+\simeq0,\quad T_{O,X}^-T_{O,X}^+\simeq0,\quad T_{O,X}^-S_{O,X}^+\simeq\id ,\quad S_{O,X}^-T_{O,X}^+\simeq\id \] and \[ S_{O,X}^+T_{O,X}^- +T_{O,X}^+S_{O,X}^-\simeq\id,\] when they are well-defined chain maps.

\noindent For type-$Q$ and $P$, we have 
\[Q_{O\mapsto X}^-Q_{X\mapsto O}^+\simeq 0, \quad P_{X\mapsto O}^-P_{O\mapsto X}^+\simeq 0\] when they are well-defined chain maps.
\end{lem}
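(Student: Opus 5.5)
The plan is to compute everything on the special quasi-stabilized diagrams, where all the maps have explicit formulas. By the invariance results (Theorem~\ref{thm:invariance and naturality of paired quasi-stab maps} and Theorem~\ref{thm:invariance and naturality of type changing quasi-stab maps}), the maps are well defined on transitive systems, so it is enough to check each relation on one choice of $\cH$ and $\cH^+$. In most cases I expect the relation to hold on the nose, so that the homotopies are in fact zero.

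\textbf{Type-$S$ and type-$T$ maps.} Work in the splitting of Proposition~\ref{prop:differential on H+ using a sufficiently stretched almost cplx str}, with basis $\{\theta^{\bfO},\xi^{\bfO}\}$, and use the identities $\theta^{\bfX}=\xi^{\bfO}$ and $\xi^{\bfX}=\theta^{\bfO}$.

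\begin{itemize}
\item $S^+_{O,X}(\xv)=\xv\otimes\theta^{\bfO}$, and $S^-_{O,X}$ kills $\theta^{\bfO}$. Hence $S^-S^+=0$.
\item $T^+_{O,X}(\xv)=\xv\otimes\theta^{\bfX}=\xv\otimes\xi^{\bfO}$, and $T^-_{O,X}$ kills $\theta^{\bfX}=\xi^{\bfO}$. Hence $T^-T^+=0$.
\item $T^-(\xv\otimes\theta^{\bfO})=T^-(\xv\otimes\xi^{\bfX})=\xv$, so $T^-S^+=\id$. Symmetrically, $S^-T^+=\id$.
\item $S^+T^-$ is the projection onto the $\theta^{\bfO}$-summand and $T^+S^-$ is the projection onto the $\xi^{\bfO}$-summand. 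Their sum is therefore the identity of $\fullCFLR^-(\cH^+)$.
\end{itemize}

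All of these are equalities on the nose. It remains only to note that these maps are chain maps under the coloring hypotheses of Corollary~\ref{cor:T,S stab are chain maps under suitable coloring}, which the statement assumes.

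\textbf{Type-$Q$ maps.} On $\cH^+=\cH\#\cH_U$ we have
\[
Q^-_{O\mapsto X}Q^+_{X\mapsto O}(\xv)=Q^-(a\xv+v\cdot b\xv)=v\xv+v\xv=0,
\]
and this holds on the nose over $\F$. The same computation applies to the $\cH'_U$ version, where $Q^+(\xv)=d\xv+v\,c\xv$ and $Q^-(d\xv)=v\xv$, $Q^-(c\xv)=\xv$. The type-$P$ relation follows by interchanging the roles of $O$ and $X$.

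\textbf{Expected obstacle.} The algebra above is trivial. The only care needed is the following: when the coloring sends $v$ to an element $\sigma(v)$, the cancellation $2\sigma(v)=0$ must still happen in characteristic $2$, which it does. In addition, the relation must descend to the colored transitive system, and this follows from the commuting squares \eqref{eq:invariance of type-changing quasi-stab} and their type-$S$/$T$ analogues. So no genuine homotopy has to be constructed.
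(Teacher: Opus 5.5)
Your proposal is correct and matches the paper's proof, which simply says the relations ``follow directly from the definitions.'' On the special diagrams $\cH^+$ (built from $\cH_0^R$ for types $S,T$, and from $\cH_U$ or $\cH'_U$ for types $Q,P$), every listed identity holds on the nose: for types $S,T$ this uses $\theta^{\bfX}=\xi^{\bfO}$ and $\theta^{\bfO}=\xi^{\bfX}$, and for types $Q,P$ it uses $v\xv+v\xv=0$ in characteristic $2$. The identities then pass to the transitive systems via the invariance theorems, and your case-by-case check is exactly the computation the paper leaves implicit.
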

\begin{proof}
These follow directly from the definitions in Section~\ref{sub:quasi-stabilization}.
\end{proof}


\subsubsection{Commutative relations}\label{subsub:Commutative relations}

Before analyzing commutativity between base point actions and quasi-stabilization maps, we first generalize the notion of base point actions. 

Suppose that $(A,A')$ is a pair of disjointly embedded subarcs of $L$ with endpoints in $\bfO$ so that $A'=\tau(A)$, with distinct endpoints. We define \[\Psi_{A,A'}:=\sum_{(X,X')\in (A\cup A')\cap \bfX^p}  \Psi_{(X,X')}^p,\] which is a chain map if and only if the two pairs of endpoints have the same color. (See Lemma~\ref{lem:commutation relation of base point actions and the differential}.)

Next, consider an arc $A\subset L$ satisfying $\tau(A)=A$ and having both endpoints in $\bfO$. $A$ necessarily lies on some strongly invertible component $K\subset L$. We distinguish three cases. \begin{enumerate}
    \item $A\subset K$ is a proper subarc and $\{O^f\}=A\cap \fix(\tau)\subset \bfO$. Let $(O,O')$ be the endpoints of $A$. We define \[\Psi_{A}:=\sum_{(X,X')\in A\cap \bfX^p} \Psi_{(X,X')}^p,\] which is a chain map if and only if $\sigma(u_{O^f})^2=\sigma (U_O)$. 
    \item $A\subset K$ is a proper subarc and $\{X^f\}=A\cap \fix(\tau)\subset \bfX$. Let $(O,O')$ be endpoints of $A$. We define \[\Psi_{A}:=\sum_{(X,X')\in A\cap \bfX^p} \Psi_{(X,X')}^p +\Psi_{X^f}^f,\] which is a chain map if and only if $\sigma(U_{O_a})=\sigma (U_O)$ in which $(O_a,O_a')$ is the pair of $O$-base points adjacent to $X^f$. 
    \item $A=K$, so that it has no endpoints. We define \[\Psi_{A}:=\sum_{(X,X')\in K\cap \bfX^p} \Psi_{(X,X')}^p +\sum_{X\in K\cap \bfX^f}\Psi_{X}^f.\] 
   This is a chain map if and only if 
   \begin{itemize}
    \item $\sigma(u_{O^f})^2=\sigma(U_{O_a})$, where $O^f$ and $X^f$ are the two fixed base points on $K$ and $(O_a,O_a')$ are the pair of $O$-base points adjacent to $X^f$, when $K\in SI_{OX}$; 
    \item $\sigma(u_{O^f_1})^2=\sigma(u_{O_2^f})^2$, where $O^f_i$($i=1,2$) are the two fixed base points on $K$, when $K\in SI_{OO}$; 
    \item $\sigma(U_{O_1})=\sigma(U_{O_2})$, where $(O_i,O_i')$($i=1,2$) are the two pairs of $O$-base points nearest to the fixed set, when $K\in SI_{XX}$; 
   \end{itemize}
\end{enumerate}

\begin{lem}\label{lem:commutative relation between Psi_A and quasi-stabilization}
Setup $\LL$, $(O,O')$, $(X,X')$ and $(O_a,O_a')$, $(X_a,X_a')$ as in Section~\ref{sub:quasi-stabilization}. Let $(A,A')$ be a pair of subarcs of $\LL$ for which $\Psi_{A,A'}$ is well-defined. Color $\LL$ so that $\Psi_{A,A'}$ is a chain map. Then, we have that 
\[ \Psi_{A,A'}S^{\circ}_{O,X}+S^{\circ}_{O,X}\Psi_{A,A'}\simeq 0\quad \text{and}\quad \Psi_{A,A'}T^{\circ}_{O,X}+T^{\circ}_{O,X}\Psi_{A,A'}\simeq 0, \] for $\circ=+,-$. 

Let $A$ be a subarc of $\LL$ for which $\Psi_{A}$ is well-defined. Color $\LL$ so that $\Psi_{A}$ is a chain map. Then, we have that 
\[ \Psi_{A}T^{\circ}_{O,X}+T^{\circ}_{O,X}\Psi_{A}\simeq 0; \] 
\[\Psi_{A}S^{\circ}_{O,X}+S^{\circ}_{O,X}\Psi_{A}\simeq 0, \] for $\circ=+,-$. 
\end{lem}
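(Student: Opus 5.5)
We compute both sides on the special diagram $\cH^+$ from Section~\ref{subsub:Quasi-stabilizations without type change}, with the symmetric almost complex structure sufficiently stretched. By Proposition~\ref{prop:differential on H+ using a sufficiently stretched almost cplx str} we have
\[\partial_{\cH^+}=\begin{bmatrix}\partial_{\cH}& U_{O}+U_{O_a}\\ V_X+V_{X_a}&\partial_{\cH}\end{bmatrix}\]
with respect to $\{\theta^{\bfO},\xi^{\bfO}\}$. The maps $S^\pm_{O,X}$ and $T^\pm_{O,X}$ are matrices with entries $0$ and $\id$ in this splitting. Both sides are independent of the diagram up to homotopy by Proposition~\ref{prop:natuality and invariance of base point action} and Theorem~\ref{thm:invariance and naturality of paired quasi-stab maps}, so computing on $\cH^+$ suffices.

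\textbf{Step 1: formal derivatives on $\cH^+$.} Each $\Psi^p_{(X_j,X_j')}$ with $X_j\neq X$ is $\frac{d}{dV_{X_j}}$ of the differential. Differentiate the matrix entrywise:
\begin{itemize}
\item On the diagonal this gives $\Psi^p_{(X_j,X_j')}$ computed on $\cH$.
\item The off-diagonal entry $V_X+V_{X_a}$ contributes $1$ exactly when $X_j=X_a$.
\end{itemize}
The fixed-point terms $\Psi^f_{X^f}$ appearing in $\Psi_A$ behave the same way, because $X^f$ is never one of the new points.

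\textbf{Step 2: the cases $(X,X')\notin A\cup A'$.} If $X_a\notin A\cup A'$, then $\Psi_{A,A'}$ on $\cH^+$ is $\mathrm{diag}(\Psi_{A,A'},\Psi_{A,A'})$. This matrix commutes on the nose with the $0/\id$ matrices $S^\pm$ and $T^\pm$, so the anticommutator vanishes over $\F$.

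If $X_a\in A\cup A'$ while $(X,X')\notin A\cup A'$, then the arc endpoint $O$ or $O_a$ separates $X$ from $X_a$. In that case $\Psi_{A,A'}$ acquires the off-diagonal term $\begin{bmatrix}0&0\\1&0\end{bmatrix}$. We then check the four products directly. For example,
\[S^-\begin{bmatrix}0&0\\ 1&0\end{bmatrix}S^+=\id,\]
and this would \emph{not} vanish. So the coloring hypothesis must intervene here.

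\textbf{Step 3: the cases $(X,X')\in A\cup A'$.} When both $(X,X')$ and $(X_a,X_a')$ lie in $A\cup A'$, the sum $\Psi_{A,A'}$ on $\cH^+$ includes $\Psi_{(X,X')}+\Psi_{(X_a,X_a')}$. The off-diagonal derivative terms from $X$ and $X_a$ then cancel mod $2$, and we are reduced to the diagonal case of Step 2.

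\textbf{Main obstacle.} The difficulty is the boundary case where an endpoint of $A$ lies between the new points and their neighbours. This is where the chain-map hypothesis on $\Psi_{A,A'}$ enters.

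I expect to handle it as in \cite[Lemma~4.12]{zemke2019link}. The plan is to write the off-diagonal correction as $[\partial_{\cH^+},H]$ plus the anticommutator, using $H=\begin{bmatrix}0&0\\ \Phi&0\end{bmatrix}$ with $\Phi$ a suitable base-point action. I would then use the relations in Lemma~\ref{lem:commutation between base point actions} (notably the $N(O_j,X_i)$ term) to absorb the remainder. If this fails, the alternative is to rewrite $S^\pm$ via Lemma~\ref{lem:T simequ Psi S etc}, e.g.\ $S^+_{O,X}\simeq\Phi^p_{(O,O')}T^+_{O,X}$, and reduce to the anticommutation of $\Psi$ with $\Phi^p$ from Lemma~\ref{lem:commutation between base point actions}.

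The self-conjugate arc $\Psi_A$ is treated identically. Its extra fixed-point summand $\Psi^f_{X^f}$ only affects the diagonal blocks, since $X^f$ is untouched by the quasi-stabilization.
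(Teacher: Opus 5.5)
Your reduction to the stretched diagram $\cH^+$ and the formal-derivative computation in Steps~1--3 are the right argument; this is the proof the paper imports from \cite[Lemma~4.14]{zemke2019link}. The gap is your ``main obstacle''. You claim that $(X_a,X_a')$ can be counted in $\Psi_{A,A'}$ (or $\Psi_A$) while $(X,X')$ is not, because ``the arc endpoint $O$ or $O_a$ separates $X$ from $X_a$'', and you leave this case open. That configuration cannot occur, and noticing this is the missing step. The arcs are subarcs of $\LL$, so their endpoints lie in the \emph{original} set $\bfO$. On $\LL^+$ the points read $O_a,X,O,X_a$. Between $X$ and $X_a$ there is only the new $O$, which cannot be an endpoint, and $O_a$ lies on the far side of $X$. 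Hence an arc of $\LL$ with endpoints in $\bfO$ contains $X$ if and only if it contains $X_a$, and by $\tau$-symmetry the same holds for $X'$ and $X_a'$. So the first case of your Step~2 together with Step~3 already covers everything, and the anticommutators vanish on the nose on $\cH^+$.

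Your proposed repairs could not have worked anyway. Suppose the entry $c$ of $\Psi_{A,A'}$ on $\cH^+$ mapping the $\theta^{\bfO}$-summand to the $\xi^{\bfO}$-summand were $1$. Then $\Psi_{A,A'}S^+_{O,X}+S^+_{O,X}\Psi_{A,A'}$ would equal $T^+_{O,X}$ on the nose, and composing with $S^-_{O,X}$ gives $\id$ on the nose; this is exactly your computation $S^-\begin{bmatrix}0&0\\1&0\end{bmatrix}S^+=\id$. So the anticommutator could not be null-homotopic unless the complex were contractible. No homotopy $H$ exists, and no rewriting of $S^{\pm}_{O,X}$ through $\Phi^p_{(O,O')}$ and $T^{\pm}_{O,X}$ could help. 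There is also a real-specific point hidden in your remark that $\Psi_A$ ``is treated identically''. For $T^{\pm}_{O,X}$ only $O_a\neq O_a'$ is assumed, so $X_a$ may be the fixed point $X^f$ of a self-conjugate arc $A$. The relevant entry of $\partial_{\cH^+}$ is then $V_X+v_{X^f}^2$, and $\frac{d}{dv_{X^f}}v_{X^f}^2=0$ over $\F$. So the mod~$2$ cancellation of Step~3 fails and $c=1$. This is harmless only because $T^+_{O,X}$ lands in the $\xi^{\bfO}$-summand and $T^-_{O,X}$ reads only the $\theta^{\bfO}$-summand. Consequently the $T$-maps commute on the nose with any term of $\Psi_A$ mapping $\theta^{\bfO}$ to $\xi^{\bfO}$. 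For the $T$-maps you need this observation rather than cancellation; for the $S$-maps it is the hypothesis $X_a\neq X_a'$ that guarantees $c=0$.
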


\begin{proof}
This follows from the proof of \cite[Lemma~4.14]{zemke2019link}. 
\end{proof}

We can define $\Phi_{A}$ and $\Phi_{A,A'}$ using arcs with endpoints in $\bfX$ and base point actions from $\bfO$. An analogue of Lemma~\ref{lem:commutative relation between Psi_A and quasi-stabilization} holds for them.

\begin{lem}
Setup $\LL$, $\LL^+$ as in Section~\ref{subsub:Type-changing quasi-stabilizations}. Assume the coloring satisfies one of the conditions from Theorem~\ref{thm:invariance and naturality of type changing quasi-stab maps}. Then we have \[\Phi_{A,A'} Q_{X\mapsto O}^{+} + Q_{X\mapsto O}^{+} \Phi_{A,A'}\simeq 0 \quad \text{and} \quad \Phi_{A} Q_{X\mapsto O}^{+} + Q_{X\mapsto O}^{+}\Phi_{A}\simeq 0;\] 
\[\Phi_{A,A'} Q_{O\mapsto X}^{-} + Q_{O\mapsto X}^{-} \Phi_{A,A'}\simeq 0 \quad \text{and} \quad \Phi_{A} Q_{O\mapsto X}^{-}+ Q_{O\mapsto X}^{-}\Phi_{A}\simeq 0.\] 
Similar expressions are true between type-$P$ quasi-stabilization maps and $\Psi$-actions.
\end{lem}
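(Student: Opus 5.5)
The approach is to work directly with the explicit matrix descriptions of the stabilized differential and the type-$Q$ maps. The key point is that $\Phi_{A}$ and $\Phi_{A,A'}$ are formal derivatives of $\partial$ with respect to the variables $U_{X_i}$ or $v_{X}$ attached to $X$-base points. These are variables that either do not change under the type-changing quasi-stabilization, or, for the distinguished $X_0$, get replaced by $V$ with $\sigma'(V)=\sigma(v)^2$.

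\textbf{Step 1: the derivative expansion.} Fix a diagram $\cH^+=\cH\#\cH_U$ as in Proposition~\ref{prop: differential oftype changing quasi-stab}. Write the relevant base point action on $\cH^+$ as a formal derivative $\frac{d}{dw}\partial_{\cH^+}$, summed over the base points in $A$ (or $A\cup A'$). Using the block form of $\partial_{\cH^+}$ in the basis $\{a\xv,b\xv\}$, differentiate entrywise.
\begin{itemize}
\item If the arc does not contain $X_0$, or the new pair $(X,X')$, the derivative passes through the $V^k$ factors. One gets the same block shape with $\partial_v^{k}$ replaced by $\frac{d}{dw}\partial_v^{k}$. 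The extra matrix $\begin{bmatrix}0&u_1\\u_1V&0\end{bmatrix}$ is killed, since $u_1$ belongs to an $O$-point.
\item If the arc contains the new pair, differentiate with respect to $V$ and compare with $\frac{d}{dv}\partial_{\cH}$. Here $\frac{d}{dv}v^{2k}=0$ in characteristic two. So only the odd terms survive, and these match the off-diagonal entries after the substitution $V=v^2$.
\end{itemize}

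\textbf{Step 2: the chain homotopy.} Having the block expression, pass to the basis $\{a\xv+v\,b\xv,\,b\xv\}$ of Corollary~\ref{cor:type-changing quasi-stab differential in new basis}. Then compute $\Phi Q^+_{X\mapsto O}+Q^+_{X\mapsto O}\Phi$ directly on a generator $\xv$. The expected outcome is that this anticommutator equals $\partial H+H\partial$, with $H$ built from a second derivative of $\partial_{\cH}$, as in the proof of Lemma~\ref{lem:squares of the base point action}; in the simplest cases it should vanish on the nose. The computation for $Q^-_{O\mapsto X}$ is the analogous one, since $Q^-$ is essentially the projection onto the $b$-coordinate twisted by $v$.

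\textbf{Step 3: naturality.} Naturality of both sides under change of diagram is already available. Proposition~\ref{prop:natuality and invariance of base point action} covers the base point actions, and Theorem~\ref{thm:invariance and naturality of type changing quasi-stab maps} covers $Q$. So it suffices to verify the relation on the special diagram $\cH\#\cH_U$. The type-$P$ statements follow by exchanging the roles of $O$ and $X$.

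\textbf{Main obstacle.} The hardest part is Step 2, in the case where $A$ passes through the fixed point being stabilized. There the variable $v$ of $X_0$ appears in $Q^{\pm}$ themselves (through $v\,b\xv$ and $v\cdot\xv$), and the derivative may produce cross terms like $\frac{d}{dv}(v)=1$ that are not obviously null-homotopic.

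My first attempt would be to check which coloring condition of Theorem~\ref{thm:invariance and naturality of type changing quasi-stab maps} is in force.
\begin{itemize}
\item Under condition (3), all $X$-variables vanish, so $v$ dies and these terms disappear.
\item Under conditions (1) and (2), I would try to absorb the stray term into the homotopy $H$. This uses that $\partial_{\cH}^2$ has no $v$-odd part, which can be read off from the curvature formula $\omega_{\LL}$.
\end{itemize}
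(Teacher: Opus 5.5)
There is a genuine gap, and it comes from misidentifying the operators. In the paper, $\Phi_A$ and $\Phi_{A,A'}$ are defined dually to $\Psi_A$ and $\Psi_{A,A'}$. The arcs have endpoints in $\bfX$, and the operators are sums of the $O$-actions $\Phi^f_O=\frac{d}{du_O}\partial$ and $\Phi^p_{(O,O')}=\frac{d}{dU_O}\partial$. So they are formal derivatives in the variables of \emph{$O$}-base points, not in $v$, $V$ or other $X$-variables. What you set up in Step~1 is therefore a computation with $\Psi$-type actions, which is not the statement of the lemma.

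This turns your computation inside out. The block $\begin{bmatrix}0&u_1\\ u_1V&0\end{bmatrix}$ that you discard is exactly what survives when $A$ passes through the new fixed point $O_1$: its $u_1$-derivative is $\Phi^f_{O_1}$ on $\cH^+$, the operator of Lemma~\ref{lem:present f-base point action as composition of quasistab maps}. Your ``main obstacle'', the cross terms from $\frac{d}{dv}v=1$ inside $Q^{\pm}$, arises only because you differentiate in the one variable on which $Q^{\pm}$ actually depend. Separately, Step~2 never constructs the homotopy $H$; it only states an expected outcome, so the argument is incomplete even on its own terms.

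The missing idea is short, and it is the paper's proof. Under the coloring hypotheses of Theorem~\ref{thm:invariance and naturality of type changing quasi-stab maps}, $Q^+_{X\mapsto O}$ and $Q^-_{O\mapsto X}$ are chain maps:
$\partial_{\cH^+}Q^+_{X\mapsto O}+Q^+_{X\mapsto O}\partial_{\cH}=0$ and $\partial_{\cH}Q^-_{O\mapsto X}+Q^-_{O\mapsto X}\partial_{\cH^+}=0$.
Since $Q^{\pm}$ are written using only $a$, $b$ and $v$, they are constant in the $O$-variables. Differentiating these identities in the $O$-variables along $A$ (resp.\ $A\cup A'$) therefore puts the derivative only on the differentials. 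This yields the four relations directly, with zero homotopy; no block computation or homotopy $H$ is needed.

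The one point requiring care is that the chain-map identities hold only after coloring. With $V\mapsto\sigma(v)^2$ imposed but $u_1$ left free, Corollary~\ref{cor:type-changing quasi-stab differential in new basis} gives
$\partial_{\cH^+}Q^+_{X\mapsto O}+Q^+_{X\mapsto O}\partial_{\cH}=(u_0+u_1)v\,Q^+_{X\mapsto O}$ in the minimally pointed case, and $u_1v\,Q^+_{X\mapsto O}$ otherwise. So you should differentiate compatibly with $\sigma,\sigma'$ and check that the derivative of this term dies for the arc at hand. For instance, when $\sigma'(u_1)=\sigma(u_0)$, the chain rule turns $\frac{d}{du_0}$ into $\Phi^f_{O_0}+\Phi^f_{O_1}$ on $\cH^+$, and the corresponding derivative of $(u_0+u_1)v$ vanishes in characteristic two. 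The type-$P$/$\Psi$ statement follows by exchanging the roles of $O$ and $X$.
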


\begin{proof}
Since we have assumed that the colorings satisfy  Theorem~\ref{thm:invariance and naturality of type changing quasi-stab maps}, $Q_{X\mapsto O}^+$ and $ Q_{O\mapsto X}^-$ are chain maps, i.e., \begin{equation}\label{eq:Q commutes with differential}
\partial_{\cH^+} Q_{X\mapsto O}^+ + Q_{X\mapsto O}^+ \partial_{\cH} =0 \quad \text{and}  \quad \partial_{\cH} Q_{O\mapsto X}^- + Q_{O\mapsto X}^- \partial_{\cH^+}=0
\end{equation} as maps between $\fullCFLR^-((\LL^+)^{\sigma'},\s^R)$ and $\fullCFLR^-(\LL^{\sigma},\s^R)$. Note that $\Phi_{A}$ and $\Phi_{A,A'}$ can be written as a formal differential of $\partial$ since each of $\Phi^p_{(O,O')}$ or $\Phi^f_{O}$ can. Then the result follows from differentiating Equations~\eqref{eq:Q commutes with differential}.  
\end{proof}

Note that whether we are working with $(\fullCFLR^-(\LL^s,\s^R), \partial^{\sigma_s})$ or $(\fullCFLR^-(\LL^s,\s^R), D)$, we need the coloring assumption from Theorem~\ref{thm:invariance and naturality of type changing quasi-stab maps}. This is because base point actions are derivatives of $\partial$, but not that of $D$.

We extract some interesting cases from Lemma~\ref{lem:commutative relation between Psi_A and quasi-stabilization}.

\begin{lem}\label{lem:special commu relation between Psi and quasi-stabilization}
Setup $\LL$, $(O,O')$, $(X,X')$ and $(O_a,O_a')$, $(X_a,X_a')$ as in Section~\ref{sub:quasi-stabilization}.\begin{enumerate}
    \item For any $O^f\in \bfO^f$ or $(O_i,O_i')\in \bfO^p$, we have \[S_{O,X}^{\circ}\Phi_{O^f}^f+\Phi_{O^f}^fS_{O,X}^{\circ}\simeq0 \quad \text{and}\quad S_{O,X}^{\circ}\Phi_{O_i}^p+\Phi_{O_i}^p S_{O,X}^{\circ}\simeq0. \]
    \item If $(X_i,X_i')\in \bfX^p$ does not lie on the pair of segments $([X_a,O_a],[X_a',O_a'])$, then \[S_{O,X}^{\circ}\Psi_{(X_i,X_i')}^p+\Psi_{(X_i,X_i')}^p S_{O,X}^{\circ}\simeq0. \]  For any $X\in \bfX^f$,
    \[S_{O,X}^{\circ}\Psi_{X}^f+\Psi_{X}^fS_{O,X}^{\circ}\simeq0.\]
    \item As we have assumed $X_a\ne X_a'$ when defining $S_{O,X}^{\pm}$, it is always true that \[S_{O,X}^{+}\Psi_{(X_a,X_a')}^p\simeq (\Psi_{(X,X')}^p +\Psi_{(X_a,X_a')}^p) S_{O,X}^{+} ;\] \[ S_{O,X}^{-} (\Psi_{(X,X')}^p +\Psi_{(X_a,X_a')}^p)\simeq \Psi_{(X_a,X_a')}^pS_{O,X}^{-}.\]
\end{enumerate}
\end{lem}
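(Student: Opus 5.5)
Parts (1) and (2) are special cases of Lemma~\ref{lem:commutative relation between Psi_A and quasi-stabilization} and its $\Phi$-analogue, and part (3) is obtained by computing directly with the stretched differential of Proposition~\ref{prop:differential on H+ using a sufficiently stretched almost cplx str}.

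For (1), apply the $\Phi$-analogue of Lemma~\ref{lem:commutative relation between Psi_A and quasi-stabilization}. The single action $\Phi^f_{O^f}$ equals $\Phi_A$ for the symmetric arc $A$ whose endpoints are the $X$-pair adjacent to $O^f$. Likewise $\Phi^p_{(O_i,O_i')}$ equals $\Phi_{A,A'}$ for the pair of arcs between the neighbouring $X$-points. The coloring hypotheses needed for these to be chain maps are the ones already implicit whenever the maps in question are chain maps.

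There is a more robust route, which I would use whenever the arc description causes trouble. In the stretched basis $\{\xv\otimes\theta^{\bfO},\xv\otimes\xi^{\bfO}\}$, $\partial_{\cH^+}$ is the $2\times 2$ matrix of Proposition~\ref{prop:differential on H+ using a sufficiently stretched almost cplx str}. Differentiating it with respect to $u_{O^f}$ or $U_{O_i}$ (with $O^f,O_i\neq O,O_a$) gives the diagonal matrix $\operatorname{diag}(\Phi,\Phi)$, where $\Phi$ is the action on $\cH$. The off-diagonal entries $U_O+U_{O_a}$ and $V_X+V_{X_a}$ do not involve these variables. Hence $\Phi$ commutes with $S^{\pm}_{O,X}$ on the nose, since those maps are simply the inclusion into, and the projection from, the first coordinate. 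The exceptional case $O_i=O_a$ contributes an extra entry $1$ in the upper right. That entry is annihilated by $S^-$ and does not affect $S^+$, because $S^+$ lands in the $\theta$-component and the differentiated matrix sends $\theta\mapsto \Phi\theta$. The same observation handles it.

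For (2), the same differentiation with respect to $V_{X_i}$ or $v_X$ gives diagonal matrices whenever $X_i\neq X_a$, and in that case the commutation again holds on the nose. The condition excluding the segments $[X_a,O_a]$ is exactly what makes $X_i\ne X_a$; the only other pair on those segments is the new $(X,X')$.

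For (3), differentiate $\partial_{\cH^+}$ with respect to $V_{X_a}$ and separately with respect to $V_X$. Each derivative equals $\operatorname{diag}(\Psi^p_{(X_a,X_a')},\Psi^p_{(X_a,X_a')})$ with an extra entry $1$ in the lower-left corner. The diagonal part of $\Psi_{(X_a,X_a')}$ on $\cH^+$ is $\Psi_{(X_a,X_a')}$ on $\cH$, and the corresponding part of $\Psi_{(X,X')}$ on $\cH^+$ is $0$, since $V_X$ does not appear in $\partial_{\cH}$. Therefore
\[
\Psi^p_{(X,X')}+\Psi^p_{(X_a,X_a')}=\begin{bmatrix}\Psi_{(X_a,X_a')}&0\\0&\Psi_{(X_a,X_a')}\end{bmatrix},
\]
because the two corner entries $1$ cancel mod $2$. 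This matrix intertwines $S^{+}$ with $\Psi^p_{(X_a,X_a')}$ on $\cH$, and likewise for $S^-$, on the nose.

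The main obstacle is justifying that we may compute in the sufficiently stretched almost complex structure. The base point actions are natural only up to homotopy (Proposition~\ref{prop:natuality and invariance of base point action}), and the identification of $S^{\pm}$ uses naturality (Theorem~\ref{thm:invariance and naturality of paired quasi-stab maps}). Combining these, equalities on one stretched diagram upgrade to homotopies on the transitive system, which is why the statement is phrased with $\simeq$. A second point to check carefully is the coloring assumption $\sigma'(V_X)=\sigma(V_{X_a})$: it is needed for $S^\pm$ to be chain maps, but the derivative computation must be done in the uncolored ring before specializing.
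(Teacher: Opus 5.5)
Your proposal is correct and is essentially the paper's argument. The paper extracts all three parts as special cases of Lemma~\ref{lem:commutative relation between Psi_A and quasi-stabilization} and its $\Phi$-analogue; for (3) this uses the arc around $X_a$, which on $\LL^+$ also contains $X$ and $O$. That lemma is proved by exactly the differentiation of the stretched matrix from Proposition~\ref{prop:differential on H+ using a sufficiently stretched almost cplx str} that you carry out, followed by naturality.

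There are two harmless slips. First, $S^-_{O,X}$ is the projection onto the $\xi^{\bfO}$-coordinate (it kills $\theta^{\bfO}$), not onto the first coordinate; this does not affect your diagonal-matrix arguments, and you use the correct description in the exceptional case $O_i=O_a$. Second, $\frac{d}{dV_X}\partial_{\cH^+}$ has zero diagonal, as you yourself note a sentence later, so only $\frac{d}{dV_{X_a}}\partial_{\cH^+}$ has diagonal $\Psi^p_{(X_a,X_a')}$.
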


\begin{lem}
Consider $\LL$ and $\LL^+$ from Section~\ref{subsub:Quasi-stabilizations without type change} so that $(X_a,O,X,O_a)$ are four consecutive base points on $\LL^+$. Suppose that $O_a\ne O_a'$, $X_a\ne X_a'$ and we have chosen a coloring $\sigma'$ with $\sigma'(U_{O})=\sigma'(U_{O_a})$ and $\sigma'(V_{X})=\sigma'(V_{X_a})$. Then, \[S_{O,X}^{-}\Psi_{(X,X')}^p S_{O,X}^{+}\simeq \id \simeq S_{O,X}^{-}\Psi_{(X_a,X_a')}^p S_{O,X}^{+}\quad \text{and}\quad T_{O,X}^{-}\Phi_{(O,O')}^p T_{O,X}^{+}\simeq \id \simeq T_{O,X}^{-}\Phi_{(O_a,O_a')}^p T_{O,X}^{+}.\]
\end{lem}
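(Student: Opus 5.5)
The plan is to argue exactly as in \cite[Lemma~4.11]{zemke2019link}, combining the composition rules of Lemma~\ref{lem:composition of type T,S ,P, Qquasi-stab maps}, the conversions of Lemma~\ref{lem:T simequ Psi S etc}, and the commutation relations of Lemma~\ref{lem:special commu relation between Psi and quasi-stabilization}. The hypotheses $O_a\ne O_a'$, $X_a\ne X_a'$, $\sigma'(U_O)=\sigma'(U_{O_a})$ and $\sigma'(V_X)=\sigma'(V_{X_a})$ are precisely conditions (1) and (2) of Lemma~\ref{lem:T simequ Psi S etc}, and by Corollary~\ref{cor:T,S stab are chain maps under suitable coloring} they make all four maps $S^\pm_{O,X}$, $T^\pm_{O,X}$ chain maps. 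So every identity used below is available.

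For the first equivalence, Lemma~\ref{lem:T simequ Psi S etc} gives $\Psi^p_{(X,X')}S^+_{O,X}\simeq T^+_{O,X}$. Hence
\[S^-_{O,X}\Psi^p_{(X,X')}S^+_{O,X}\simeq S^-_{O,X}T^+_{O,X}\simeq \id\]
by Lemma~\ref{lem:composition of type T,S ,P, Qquasi-stab maps}. Here I use that precomposing and postcomposing a homotopy with chain maps again gives a homotopy. For the version with $(X_a,X_a')$, I use item (3) of Lemma~\ref{lem:special commu relation between Psi and quasi-stabilization}, which says $S^+_{O,X}\Psi^p_{(X_a,X_a')}\simeq(\Psi^p_{(X,X')}+\Psi^p_{(X_a,X_a')})S^+_{O,X}$. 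Then
\[S^-\Psi^p_{(X_a,X_a')}S^+\simeq S^-\Psi^p_{(X,X')}S^+ + S^-S^+\Psi^p_{(X_a,X_a')}\simeq \id + 0,\]
using $S^-_{O,X}S^+_{O,X}\simeq 0$.

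The $T$-statements follow symmetrically. Lemma~\ref{lem:T simequ Psi S etc} gives $\Phi^p_{(O,O')}T^+_{O,X}\simeq S^+_{O,X}$, so $T^-\Phi^p_{(O,O')}T^+\simeq T^-S^+\simeq\id$. For $(O_a,O_a')$, I need the analogue of item (3) with the roles of $O$ and $X$ swapped, namely $T^+_{O,X}\Phi^p_{(O_a,O_a')}\simeq(\Phi^p_{(O,O')}+\Phi^p_{(O_a,O_a')})T^+_{O,X}$. This is the only step that is not a direct citation, and it is the expected obstacle.

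To prove it, I would compute directly from Proposition~\ref{prop:differential on H+ using a sufficiently stretched almost cplx str}. Write $\partial_{\cH^+}$ as the $2\times 2$ matrix with off-diagonal entries $U_O+U_{O_a}$ and $V_X+V_{X_a}$. Since the base point actions are formal derivatives of $\partial$ (taken before coloring), $\Phi^p_{(O,O')}$ on $\cH^+$ is the matrix with a single entry $1$ in the upper-right corner. Likewise $\Phi^p_{(O_a,O_a')}$ on $\cH^+$ is $\Phi^p_{(O_a,O_a')}$ on $\cH$ placed diagonally, plus the same off-diagonal $1$. Because $T^+(\xv)=\xv\otimes\theta^{\bfX}$ with $\theta^{\bfX}=\xi^{\bfO}$, the identity then holds on the nose. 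I would check that the neck-stretched almost complex structure computes the base point actions compatibly; this reduces to the fact that the moduli counts of Proposition~\ref{prop:differential on H+ using a sufficiently stretched almost cplx str} are recorded with their exact base point multiplicities.
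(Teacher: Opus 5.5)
Your proof is correct and follows the paper's argument. For the $S$-maps, you combine $\Psi^p_{(X,X')}S^+_{O,X}\simeq T^+_{O,X}$ with $S^-_{O,X}T^+_{O,X}\simeq\id$, and then pass to $(X_a,X_a')$ using item (3) of Lemma~\ref{lem:special commu relation between Psi and quasi-stabilization} together with $S^-_{O,X}S^+_{O,X}\simeq 0$. The paper dismisses the $T$-case as ``similar''; your on-the-nose check from Proposition~\ref{prop:differential on H+ using a sufficiently stretched almost cplx str} that $T^+_{O,X}\Phi^p_{(O_a,O_a')}=(\Phi^p_{(O,O')}+\Phi^p_{(O_a,O_a')})T^+_{O,X}$ supplies the omitted step correctly.
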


\begin{proof}
We will prove the statement for type-$S$ quasi-stabilization maps, the case of $T$ follows similarly. We know from Lemma~\ref{lem:composition of type T,S ,P, Qquasi-stab maps} that $S^{-}_{O,X}T^{+}_{O,X}\simeq \id$ and from Lemma~\ref{lem:T simequ Psi S etc} that $T^{+}_{O,X}\simeq \Psi_{(X,X')}^p S^+_{O,X}$. These two combine to give $S_{O,X}^{-}\Psi_{(X,X')}^p S_{O,X}^{+}\simeq \id$. 

On the other hand, we know from Lemma~\ref{lem:special commu relation between Psi and quasi-stabilization} that \[S_{O,X}^{-}\Psi_{(X,X')}^p S_{O,X}^{+}\simeq S_{O,X}^{-}\Psi_{(X_a,X_a')}^p S_{O,X}^{+}+ S_{O,X}^{-} S_{O,X}^{+}\Psi_{(X_a,X_a')}^p.\] Then the result follows from $S_{O,X}^{-} S_{O,X}^{+}\simeq 0$ (cf. Lemma~\ref{lem:composition of type T,S ,P, Qquasi-stab maps}) which leads to \[S_{O,X}^{-}\Psi_{(X,X')}^p S_{O,X}^{+}\simeq  S_{O,X}^{-}\Psi_{(X_a,X_a')}^p S_{O,X}^{+}.\]
\end{proof}

\begin{prop}\label{prop:commutation relation between quasi-stab at different pairs of points}
Let $\LL$ be a multi-based generalized strongly invertible link. Consider four pairs of new base points $(X_i,X_i')$, $(O_i,O_i')$ ($i=1,2$), so that the eight short segments of the form $(X_i,O_i)$, $(O_i',X_i')$ follow the orientation of $\LL$ and contain no other base point. Assume their relative positions also satisfy the assumption in the beginning of~Section~\ref{sub:quasi-stabilization}. Then we have \[S_{O_1,X_1}^{\circ_1}S_{O_2,X_2}^{\circ_2}\simeq S_{O_2,X_2}^{\circ_2} S_{O_1,X_1}^{\circ_1},\]
for $\circ_{1},\circ_{2}\in \{+,-\}$.
\end{prop}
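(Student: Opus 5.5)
The plan is to follow the proof of the analogous statement in Zemke's setting (\cite[Proposition~5.5]{Zemkequasistabandbasepointmoving}) and work entirely on a single real Heegaard diagram on which both quasi-stabilizations can be performed simultaneously. Fix a strongly $\s^R$-admissible diagram $\cH$ for $\LL$. By Theorem~\ref{thm:invariance and naturality of paired quasi-stab maps}, each $S^{\pm}_{O_i,X_i}$ is independent of the diagram up to the change of diagram maps. So it suffices to compute on diagrams built from $\cH$ by two equivariant special connected sums. The pair for $(O_1,X_1)$ is attached at points $(p_1,p_1')$, and the pair for $(O_2,X_2)$ at points $(p_2,p_2')$. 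We choose $p_1,p_2$ in the appropriate components of $\Sigma\setminus\bm\alpha$ (or $\Sigma\setminus\bm\beta$), away from each other, from the fixed set and from the connected sum disks. This gives diagrams $\cH^{+1}$, $\cH^{+2}$ and $\cH^{+1,+2}$. Because the local pieces $\cH_0^R$ are disjoint, the diagram $\cH^{+1,+2}$ is simultaneously the quasi-stabilization of $\cH^{+1}$ at $(p_2,p_2')$ and of $\cH^{+2}$ at $(p_1,p_1')$.

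On these diagrams the generators of $\fullCFLR^-(\cH^{+1,+2})$ are $\xv\otimes\theta_1\otimes\theta_2$ with $\theta_i\in\{\theta^{\bfO}_i,\xi^{\bfO}_i\}$. On generators, both compositions $S^{\circ_1}_{O_1,X_1}S^{\circ_2}_{O_2,X_2}$ and $S^{\circ_2}_{O_2,X_2}S^{\circ_1}_{O_1,X_1}$ are given by the same tensor formula. For example, $S^+S^+$ sends $\xv\mapsto\xv\otimes\theta^{\bfO}_1\otimes\theta^{\bfO}_2$ either way, and $S^-_{O_1,X_1}S^+_{O_2,X_2}$ sends $\xv\otimes\xi^{\bfO}_1\mapsto\xv\otimes\theta^{\bfO}_2$ and kills $\theta^{\bfO}_1$-terms, which is also what the reverse order does. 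So the two composites agree on the nose, provided that each quasi-stabilization map is computed with the differential identified as in Proposition~\ref{prop:differential on H+ using a sufficiently stretched almost cplx str}.

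The real content, and the main obstacle, is therefore the choice of almost complex structures. The formula for $S^{\pm}_{O_1,X_1}$ out of $\cH^{+2}$ is valid only for an almost complex structure on $\cH^{+1,+2}$ stretched along the neck at $(p_1,p_1')$. The formula for $S^{\pm}_{O_2,X_2}$ requires stretching at $(p_2,p_2')$. I would handle this by stretching both necks simultaneously. The neck-stretching argument of Proposition~\ref{prop:differential on H+ using a sufficiently stretched almost cplx str} is local to each neck: degenerations decompose into curves on $\cH$ plus curves on each $\cH_0^R$ piece, the two pieces being separated. One then checks that for a symmetric almost complex structure stretched sufficiently along both necks, the differential on $\cH^{+1,+2}$ takes the iterated block form
\[
\partial_{\cH^{+1,+2}}=\begin{bmatrix}\partial_{\cH^{+2}} & U_{O_1}+U_{O_{1,a}}\\ V_{X_1}+V_{X_{1,a}} & \partial_{\cH^{+2}}\end{bmatrix}
\]
with $\partial_{\cH^{+2}}$ itself in block form, and symmetrically with the roles of $1$ and $2$ exchanged.

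Given this, the almost complex structures used on the intermediate diagrams can be taken to be the restrictions of one doubly stretched structure. Any other choice differs by the (tautological) change of almost complex structure maps, which commute with the quasi-stabilization maps up to homotopy by Theorem~\ref{thm:invariance and naturality of paired quasi-stab maps}. The homotopy $S^{\circ_1}_{O_1,X_1}S^{\circ_2}_{O_2,X_2}\simeq S^{\circ_2}_{O_2,X_2}S^{\circ_1}_{O_1,X_1}$ then follows by composing these naturality homotopies. If the two new pairs are adjacent, so that $(X_{1,a},O_{1,a})$ involve $(X_2,O_2)$, the coloring hypotheses of Corollary~\ref{cor:T,S stab are chain maps under suitable coloring} must be imposed consistently in both orders. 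I would check this case separately, but expect the same argument to go through once the colorings agree.
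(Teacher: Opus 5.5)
Your proposal is correct and follows essentially the paper's route. The paper's proof simply invokes the argument of \cite[Section~8]{Zemkequasistabandbasepointmoving}, and you have transcribed that argument to the real setting: perform both symmetric pairs of special connected sums in disjoint local pieces of one diagram, use a symmetric almost complex structure stretched along both necks so that the block-form differential holds for either order, observe that the composites then agree on the nose, and let naturality absorb the remaining choices. The adjacent configuration you flag needs no new idea, since the coloring conditions making all four maps chain maps are part of the standing assumptions the statement refers to.
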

\begin{proof}
This follows from the argument in \cite[Section~8]{Zemkequasistabandbasepointmoving}.
\end{proof}
We have similar commutating relations between $S_{X_1,O_1}^{\circ_1}$ and $S_{X_2,O_2}^{\circ_2}$ when the order of the new base points changes as well as their type-$T$ counterparts.

\begin{prop}
Let $\LL=(L,\bfO,\bfX)$ be a multi-based generalized strongly invertible link. \begin{enumerate}
\item  For any distinct base points $X_1, X_2\in \bfX^f$, we have \[Q_{X_1\mapsto O_1}^+ Q_{X_2\mapsto O_2}^+ \simeq Q_{X_2\mapsto O_2}^+ Q_{X_1\mapsto O_1}^+\] whenever the links are colored so that all the maps are well-defined chain maps. Here, $O_1$ and $O_2$ denote the new fixed base points introduced during the quasi-stabilization.
\item Let $(X_1, O_1,X_1')$ be a triple of base points such that we can perform a quasi-stabilization $Q^-_{O_1\mapsto X_1}$ and $X_2 \in \bfX^f$ be a fixed $X$-base point. Then \[Q_{O_1\mapsto X_1}^- Q_{X_2\mapsto O_2}^+ \simeq Q_{X_2\mapsto O_2}^+ Q_{O_1\mapsto X_1}^-\] holds whenever the links are colored so that all the maps are well-defined chain maps.
  \item Let $(X_i, O_i,X_i')$ ($i=1,2$) be triples of base points such that we can perform quasi-stabilizations $Q^-_{O_i\mapsto X_i}$. Assume $(X_1,X_1')$ and $(X_2,X_2')$ are distinct pairs in $\bfX^p$, then \[Q_{O_1\mapsto X_1}^- Q_{O_2\mapsto X_2}^- \simeq Q_{O_2\mapsto X_2}^- Q_{O_1\mapsto X_1}^-\] holds whenever the links are colored so that all the maps are well-defined chain maps.
\end{enumerate}
\end{prop}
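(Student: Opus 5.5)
Since the three commutation statements concern type-changing quasi-stabilizations supported near disjoint points of the link, I would prove them by realizing both compositions on a single real Heegaard diagram, where the maps are given by explicit formulas, and checking that the formulas commute on the nose. By Theorem~\ref{thm:invariance and naturality of type changing quasi-stab maps} the maps $Q^{\pm}$ are well defined on transitive systems, so any convenient diagram may be used. In case (1), start from a strongly $\s^R$-admissible diagram $\cH$ for $\LL$ and form $\cH^{++}=(\cH\#_{X_1}\cH_U)\#_{X_2}\cH_U$, taking the fixed-point connected sums at the two distinct fixed points $X_1,X_2$ (using $\cH_U$ or $\cH'_U$ according to the embedding). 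This diagram represents the doubly stabilized link for either order of stabilization. Its generators are $z_1z_2\xv$ with $z_i\in\{a_i,b_i\}$.

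I would compute $Q^+_{X_2\mapsto O_2}\circ Q^+_{X_1\mapsto O_1}(\xv)$ directly from the generator formulas:
\[
(a_1+v_1b_1)(a_2+v_2b_2)\xv .
\]
The key point is that in the second stabilization the variable $v_2$ of $X_2$ is still the variable of a base point lying in the untouched part of the diagram, so the second map is given by the same formula whether or not the first stabilization has been done. The expression is symmetric in $1$ and $2$, so the two compositions agree exactly on generators. Cases (2) and (3) are handled the same way, using $Q^-(a\xv)=v\xv$ and $Q^-(b\xv)=\xv$. Because the maps act on different tensor factors, they commute on the nose.

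The main obstacle is justifying that each map, computed on the doubly stabilized diagram, really has the stated formula. The formulas were derived for a single connected sum $\cH\#\cH_U$ with a neck-stretched almost complex structure, which gives the differential in Proposition~\ref{prop: differential oftype changing quasi-stab}. Here we need a structure stretched along both necks at once, so that viewing $\cH\#_{X_1}\cH_U$ as the base diagram for the second stabilization is legitimate. I would handle this as follows.
\begin{itemize}
\item Repeat the degeneration argument in the proof of Proposition~\ref{prop: differential oftype changing quasi-stab}, stretching both necks simultaneously. Since the two necks are disjoint, the limiting curves split into a curve on $\cH$ together with independent pieces on the two copies of $\cH_U$.
\item Alternatively, relate the two stretched structures by a change-of-almost-complex-structure map. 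By Proposition~\ref{prop:quasi-stab quadruples}, applied with $\cH'=\cH$ and the quadruple degenerate in the $\alpha,\beta$ directions, this map commutes with $Q^\pm$.
\end{itemize}
Either route shows that the diagram-level identities descend to chain homotopies between the morphisms of transitive systems.

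Finally, I would check that the coloring conditions in Theorem~\ref{thm:invariance and naturality of type changing quasi-stab maps} are compatible with both orders of composition. The hypothesis "colored so that all maps are well-defined chain maps" gives this directly. In case (3), the assumption that $(X_1,X_1')$ and $(X_2,X_2')$ are distinct pairs is exactly what lets the two $\cH_U$ pieces be disjoint, so that the tensor-factor argument applies.
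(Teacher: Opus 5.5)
Your proposal is correct and follows the paper's own argument. The paper simply checks the relations at the chain level on a single diagram carrying both local pictures of Figure~\ref{fig:OX-OO stabilization}, where composites such as $(a_1+v_1b_1)(a_2+v_2b_2)\xv$ visibly commute. Your extra care about stretching both necks goes beyond what the paper supplies, since it treats independence of the almost complex structure as tautological. The simultaneous-stretching argument in your first bullet is the right justification. The appeal to Proposition~\ref{prop:quasi-stab quadruples} for change-of-almost-complex-structure maps is not really what that proposition covers, and you can drop it.
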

\begin{proof}
All these relations can be checked directly at the chain level using diagrams that bear local pictures like Figure~\ref{fig:OX-OO stabilization} for both quasi-stabilizations.
\end{proof}
We have similar results for type-$P$ quasi-stabilizalition maps but we omit the statements for conciseness. Next we consider commutative relations between different types of quasi-stabilization.

\begin{prop}\label{prop:far commutativity of P and Q}
Let $\LL=(L,\bfO,\bfX)$ be a multi-based generalized strongly invertible link. \begin{enumerate}
\item  For any fixed base points $X_1\in \bfX^f$ and $O_2\in \bfO^f$ we have \[Q_{X_1\mapsto O_1}^+ P_{O_2\mapsto X_2}^+ \simeq P_{O_2\mapsto X_2}^+ Q_{X_1\mapsto O_1}^+\] whenever the links are colored so that all the maps are well-defined chain maps. Here, $O_1$ and $X_2$ denote the new base points introduced during the quasi-stabilization.
\item Let $(X_1,O_1,X_1')$ be a triple of base points such that we can perform a quasi-stabilization $Q^-_{O_1\mapsto X_1}$ and $O_2 \in \bfO^f$ be a fixed $O$-base point. Suppose that $X_1$ is not adjacent to $O_2$. Then \[Q_{O_1\mapsto X_1}^- P_{O_2\mapsto X_2}^+ \simeq P_{O_2\mapsto X_2}^+ Q_{O_1\mapsto X_1}^-\] holds whenever the links are colored so that all the maps are well-defined chain maps.
\item Let $(X_1, O_1,X_1')$ and $(O_2, X_2,O_2')$  be triples of base points such that we can perform quasi-stabilizations $Q^-_{O_1\mapsto X_1}$ and $P_{X_2\mapsto O_2}^-$. Assume $(X_1,X_1')$ and $(O_2,O_2')$ are not adjacent on $\LL$, then \[Q_{O_1\mapsto X_1}^- P_{X_2\mapsto O_2}^- \simeq P_{X_2\mapsto O_2}^- Q_{O_1\mapsto X_1}^-\] holds whenever the links are colored so that all the maps are well-defined chain maps.
\end{enumerate}
\end{prop}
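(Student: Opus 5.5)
The plan is to mirror the proof of the preceding proposition on commutation of type-$Q$ maps and check each relation directly at the chain level, on a single real Heegaard diagram that realizes both quasi-stabilizations simultaneously as local pictures. By the naturality statements (Theorem~\ref{thm:invariance and naturality of type changing quasi-stab maps} and its type-$P$ analogue), each map is well-defined on the transitive system. It therefore suffices to pick one convenient strongly $\s^R$-admissible diagram $\cH$ for $\LL$ in which the two fixed base points involved ($X_1$ and $O_2$ in case (1), or the relevant triples in (2) and (3)) sit in disjoint disks meeting $\fix(\tau)$. We then form $\cH^{++}=\cH\#\cH_U\#\cH_U^{\vee}$, where $\cH_U^{\vee}$ denotes the $O\leftrightarrow X$ interchanged standard piece used for type-$P$ maps. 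Both orders of composition pass through the intermediate diagrams $\cH\#\cH_U$ and $\cH\#\cH_U^{\vee}$, and these can be taken to be sub-sums of $\cH^{++}$. Thus both sides of each relation are maps between the same pair of diagrams, and no extra change-of-diagram map is needed.

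Next I would write down the generators and differentials. Generators of $\cH^{++}$ have the form $z_1z_2\xv$ with $z_1\in\{a,b\}$ from $\cH_U$ and $z_2\in\{a,b\}$ from $\cH_U^{\vee}$. Applying Proposition~\ref{prop: differential oftype changing quasi-stab} twice, with neck-stretching along both connected-sum necks at once, the differential takes a tensor-like block form. In it, the expansions in the two distinguished variables ($v$ for $X_1$, and $u$ for $O_2$) interact only through $\partial_{\cH}$.

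I would then verify the relations on generators. The maps $Q^{\pm}$ and $P^{\pm}$ are given by explicit formulas involving only the local coordinate $z_i$ and multiplication by the local variable: for example $Q^+(\xv)=a\xv+v\,b\xv$, with the analogous $P^+(\xv)=a\xv+u\,b\xv$. In case (1), both orders send $\xv$ to $(a+v b)(a+u b)\xv$, so they agree on the nose. In cases (2) and (3), the $Q^-$ and $P^-$ formulas act on distinct tensor factors. A naive check again gives equality, provided the variable $v$ multiplied in by one map is not changed by the other.

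The main obstacle is exactly this last point, and it is where the non-adjacency hypotheses of (2) and (3) enter. When, say, $X_1$ is adjacent to $O_2$, the colorings needed to make each map a chain map interact. One condition forces $\sigma'(V)=\sigma(v)^2$, while the other may force some variable to vanish or be identified with another. Also, the change-of-basis $\{a\xv+v b\xv,b\xv\}$ of Corollary~\ref{cor:type-changing quasi-stab differential in new basis} no longer commutes with the other quasi-stabilization. I would first check that under non-adjacency the coloring extensions for the two stabilizations involve disjoint sets of variables, so the corresponding ring extensions commute. With that in place, the chain-level identities hold strictly and \emph{a fortiori} up to homotopy. If strict equality fails because of the off-diagonal terms $*$, I would instead produce a homotopy built from the derivative terms $\sum_k \sigma(v)^{2k}\partial^{2k+1}$, as in Lemma~\ref{lem:present f-base point action as composition of quasistab maps}.
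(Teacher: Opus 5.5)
Your proposal is correct and is essentially the paper's argument. The paper gives no separate proof here; as for the preceding proposition on commuting type-$Q$ maps, it relies on a direct chain-level check on one diagram carrying both local stabilization pictures, where the explicit formulas for $Q^{\pm}$ and $P^{\pm}$ act on different local factors and commute on the nose. Your fallback homotopy built from the off-diagonal terms $*$ is therefore unnecessary: these maps are module maps given by explicit formulas on generators, and their being chain maps is part of the hypothesis.
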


\begin{prop}\label{prop:far commutativity between T and S at different pairs of points}
Setup $\LL$, $(X_i,X_i')$, and $(O_i,O_i')$ ($i=1,2$) as in Proposition~\ref{prop:commutation relation between quasi-stab at different pairs of points}. Assume their relative position also satisfies the assumption in the beginning of Section~\ref{sub:quasi-stabilization} and $O_2$ is not adjacent to $X_1$. Then we have\[T_{O_1,X_1}^{\circ_1}S_{O_2,X_2}^{\circ_2}\simeq S_{O_2,X_2}^{\circ_2} T_{O_1,X_1}^{\circ_1},\]
for $\circ_{1},\circ_{2}\in \{+,-\}$. A similar relation holds between $T_{X_1,O_1}^{\circ_1}$ and $S_{X_2,O_2}^{\circ_2}$ when the order of $O$ and $X$ is changed. 
\end{prop}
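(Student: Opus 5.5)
We prove Proposition~\ref{prop:far commutativity between T and S at different pairs of points} by the same neck-stretching model used for Proposition~\ref{prop:commutation relation between quasi-stab at different pairs of points}. We fix a strongly $\s^R$-admissible diagram $\cH$ for $\LL$. We perform the two equivariant pairs of special connected sums of Section~\ref{subsub:Quasi-stabilizations without type change} at points $(p_1,p_1')$ and $(p_2,p_2')$. The first pair produces $(O_1,X_1)$ and is used for the type-$T$ map. The second produces $(O_2,X_2)$ and is used for the type-$S$ map. Since the two pairs of new base points lie on different short segments, the points $p_1,p_2$ can be chosen in different components of $\Sigma\setminus\bm\alpha$ (or in the same one, separated by the curves $\alpha_s$), away from each other and from $\fix(\tau)$.

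This yields a single doubly quasi-stabilized diagram $\cH^{++}$. By construction it is simultaneously the stabilization of $\cH^+_1$ at $(p_2,p_2')$ and of $\cH^+_2$ at $(p_1,p_1')$. Its generators are $\xv\otimes z_1\otimes z_2$ with $z_i\in\{\theta_i^{\bfO},\xi_i^{\bfO}\}$.

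We then apply Proposition~\ref{prop:differential on H+ using a sufficiently stretched almost cplx str} twice, stretching both necks simultaneously. The almost complex structures are chosen symmetric and stretched enough that both identifications hold at once. This gives
\[\partial_{\cH^{++}}=\partial_{\cH}\otimes\id\otimes\id+\begin{bmatrix}0&U_{O_1}+U_{O_{a,1}}\\ V_{X_1}+V_{X_{a,1}}&0\end{bmatrix}\otimes\id+\id\otimes\begin{bmatrix}0&U_{O_2}+U_{O_{a,2}}\\ V_{X_2}+V_{X_{a,2}}&0\end{bmatrix}.\]
Here $(O_{a,i},X_{a,i})$ denote the adjacent base points for the $i$-th stabilization. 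The hypothesis that $O_2$ is not adjacent to $X_1$ is used exactly at this point. It guarantees that the adjacent base points for the second stabilization are unchanged by the first, and vice versa. Hence the coloring conditions of Corollary~\ref{cor:T,S stab are chain maps under suitable coloring} can be imposed compatibly: $\sigma'(U_{O_1})=\sigma'(U_{O_{a,1}})$ for $T$ and $\sigma'(V_{X_2})=\sigma'(V_{X_{a,2}})$ for $S$. All four maps are then chain maps in either order.

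On $\cH^{++}$, the maps $T^{\pm}_{O_1,X_1}$ and $S^{\pm}_{O_2,X_2}$ are given by the explicit tensor formulas in the paper. One acts only on the first tensor factor $\langle\theta_1,\xi_1\rangle$ and the other only on the second. Therefore they commute on the nose at chain level. The intermediate diagram reached first, $\cH^+_1$ or $\cH^+_2$, is irrelevant once both are realized as sub-stabilizations of $\cH^{++}$.

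The main obstacle is justifying that the maps computed on $\cH^{++}$ agree with those defined on a general diagram. This is the content of the naturality statement Theorem~\ref{thm:invariance and naturality of paired quasi-stab maps}. We plan to follow \cite[Section~8]{Zemkequasistabandbasepointmoving}. Choose the diagram so that both stabilization regions are simultaneously available; this is possible by Proposition~\ref{prop:existence of real HD for multi-based strongly invertible knots and real H moves}. Then invoke invariance of each quasi-stabilization map under change of diagram to transport the on-the-nose commutation to a chain homotopy on transitive systems.

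A remaining subtlety is the case where both segments lie on the same strongly invertible component near $\fix(\tau)$. There, the equivariant placement of $p_i$ must avoid the fixed set. We handle this exactly as in the construction of $\cH^+$, since neither stabilization occurs at a fixed point. The case $T^{\circ_1}_{X_1,O_1}$ versus $S^{\circ_2}_{X_2,O_2}$ is identical, using $\cH_{0,\beta}^R$ in place of $\cH_0^R$.
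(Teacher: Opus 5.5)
The step where you use the non-adjacency hypothesis is wrong. You claim it guarantees that the adjacent base points of each stabilization are unchanged by the other. This fails when both new pairs sit on the same arc in the allowed order $O_a,X_1,O_1,X_2,O_2,X_a$ (here $O_2$ is not adjacent to $X_1$). Once $(X_1,O_1)$ is inserted, the $O$ preceding $X_2$ is $O_1$. Once $(X_2,O_2)$ is inserted, the $X$ following $O_1$ is $X_2$.

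Consequently your displayed uncolored formula for $\partial_{\mathcal{H}^{++}}$ cannot be the differential for any almost complex structure, under either reading of ``adjacent''. The cross terms cancel mod $2$, so it squares to $\omega_{\mathbb{L}^{++}}+(U_{O_1}+U_{O_a})(V_{X_2}+V_{X_a})$ instead of $\omega_{\mathbb{L}^{++}}$. Iterating Proposition~\ref{prop:differential on H+ using a sufficiently stretched almost cplx str} actually gives two different differentials, depending on the order of stretching. With neck~$1$ much longer than neck~$2$, the off-diagonal entries are $U_{O_1}+U_{O_a}$, $V_{X_1}+V_{X_2}$ for the first factor and $U_{O_2}+U_{O_a}$, $V_{X_2}+V_{X_a}$ for the second. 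In the opposite regime they are $U_{O_1}+U_{O_a}$, $V_{X_1}+V_{X_a}$ and $U_{O_2}+U_{O_1}$, $V_{X_2}+V_{X_a}$. So ``stretched enough that both identifications hold at once'' is impossible over the uncolored ring.

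Here is what the hypothesis really does. The neighbours that enter the chain-map conditions of Corollary~\ref{cor:T,S stab are chain maps under suitable coloring} are the $O$ preceding $X_1$ (for $T$) and the $X$ following $O_2$ (for $S$). Non-adjacency ensures these are old base points. Hence the conditions $\sigma'(U_{O_1})=\sigma'(U_{O_a})$ and $\sigma'(V_{X_2})=\sigma'(V_{X_a})$ do not depend on the order of composition; in the excluded order $X_2,O_2,X_1,O_1$ they do. These are exactly the identifications that make the order-dependent entries above agree. So only the colored differential has your tensor form.

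Even then, you still need to relate the two stretching regimes on $\mathcal{H}^{++}$ by a change-of-almost-complex-structure map compatible with the quasi-stabilization maps. That is the substantive part of Zemke's Section~8 argument, which you only defer.

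The paper avoids all of this by citing Zemke's Proposition~4.20, an algebraic reduction to relations already established here:
\begin{itemize}
\item $T^{+}_{O_1,X_1}\simeq\Psi^p_{(X_1,X_1')}S^{+}_{O_1,X_1}$ and $T^{-}_{O_1,X_1}\simeq S^{-}_{O_1,X_1}\Psi^p_{(X_1,X_1')}$ (Lemma~\ref{lem:T simequ Psi S etc});
\item commutation of the two $S$-maps (Proposition~\ref{prop:commutation relation between quasi-stab at different pairs of points});
\item commutation of $\Psi^p_{(X_1,X_1')}$ with $S^{\circ}_{O_2,X_2}$, from Lemma~\ref{lem:special commu relation between Psi and quasi-stabilization}(2).
\end{itemize}
In that route, the non-adjacency hypothesis is precisely what excludes case (3) of that lemma. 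There $X_1$ would be the $X$ following $O_2$, and an extra $\Psi^p_{(X_2,X_2')}$ term would appear.
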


\begin{proof}
This follows from the argument in~\cite[Proposition~4.20]{zemke2019link}. 
\end{proof}

Similar to Proposition~\ref{prop:far commutativity of P and Q} and \ref{prop:far commutativity between T and S at different pairs of points}, we have ``far commutativity'' between type-changing quasi-stabilizations $P$, $Q$ and type-keeping quasi-stabilizations $T$, $S$ when the real link Floer complex is colored so that they are all well-defined.

\subsection{Examples}\label{sub:example of full real complex}
In the classical knot Floer theory, the full knot complex and its local equivalence class are very powerful tools for investigating knot concordance. In this subsection, we provide the first examples of full  real knot Floer complexes along with an illustration of base point actions and type-changing quasi-stabilization. From this, we will observe 
\begin{itemize}
	\item how real knot Floer complexes differ from the usual ones structurally;
	\item how the non-commutativity of equivariant connected sum is reflected by the real knot Floer complexes;
	\item how type-changing quasi-stabilization affects the real knot Floer complex, which also reflects the non-existence of a K\"unneth principle. 
\end{itemize} 
For the last two items, one should compare \cite[Section~6.5]{YXHFLR}, in which we provided related examples at the homology level.

In the left frame of Figure~\ref{fig:LHT_and_RHT}, one sees a minimal real Heegaard diagram for the left-hand trefoil equipped with its unique strong inversion. Interchanging the roles of $\alpha$ and $\beta$-curves, we get a diagram for the right-hand trefoil.

\begin{figure}
	\centering
	\begin{overpic}[width=0.6\textwidth]{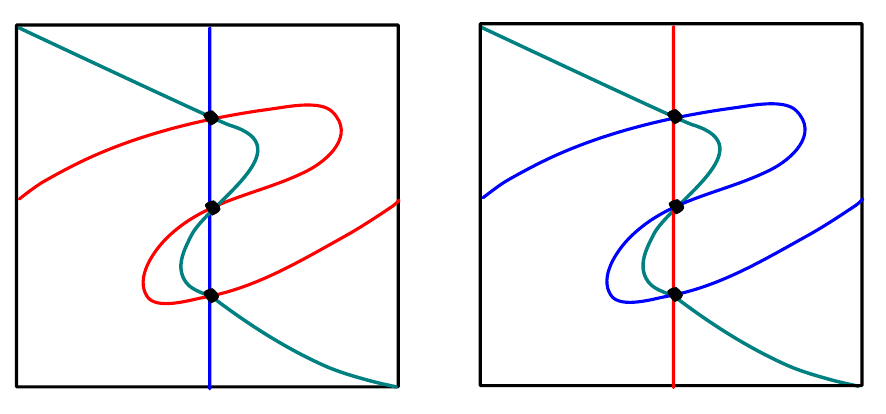}
		\put(25,34.5) {$l_1$}
		\put(25,20) {$l_2$}
		\put(20.5,9) {$l_3$}
		\put(27.5,28) {$O$}
		\put(19,15) {$X$}
		
		\put(77,34.5) {$r_1$}
		\put(77,20) {$r_2$}
		\put(72.5,9) {$r_3$}
		\put(79.5,28) {$O$}
		\put(70.5,15) {$X$}

	\end{overpic}
	\caption{Real Heegaard diagrams for left-hand and right-hand trefoils.}
	\label{fig:LHT_and_RHT}
\end{figure}

Using these genus one diagrams, we can calculate $\fullCFKR^-(\bar{3}_1)$ and $\fullCFKR^-(3_1)$ directly, which are shown in Diagram~\eqref{eq:full real complex for trefoils}. For these knots, the real knot complexes have no dependence on auxiliary data, so we omit them from the notation for simplicity. 


\begin{equation}\label{eq:full real complex for trefoils}
	\begin{tikzcd}
		l_1  & l_2 \arrow{d}{v} \arrow{l}{u}  \\
		& l_3  \\
	\end{tikzcd}\quad \quad \begin{tikzcd}
		r_3  \arrow{d}{v}  \\
		r_2 & r_1 \arrow{l}{u}   \\
	\end{tikzcd}
\end{equation}
From these diagrams, $\Phi^f$ and $\Psi^f$ actions can be read off directly. For $\bar{3}_1$, \[\Phi^{f}(l_1)= \Phi^{f}(l_3)=0 \quad \Phi^{f}(l_2)=l_1;\quad \Psi^{f}(l_1)= \Psi^{f}(l_3)=0 \quad \Psi^{f}(l_2)=l_3.\] For $3_1$, \[\Phi^{f}(r_2)= \Phi^{f}(r_3)=0 \quad \Phi^{f}(r_1)=r_2;\quad \Psi^{f}(r_1)= \Psi^{f}(r_2)=0 \quad \Psi^{f}(r_3)=r_2.\]

Now we perform an equivariant connected sum by identifying the $O$-base point of the left-hand trefoil with the $X$-base point of the right-hand trefoil. The full knot complex $\fullCFKR^-(\bar{3_1} \# 3_1)$ is shown on the left of Diagram~\eqref{eq:full real complex for connected sum of trefoils}. If we change the roles of left-hand trefoil and right-hand trefoil in the connected sum, we will get the complex shown on the right instead. In these diagrams, each horizontal arrow represents a differential of the form $y\to ux$, each vertical arrow represents a differential of the form $y\to vx$ while the diagonal arrows represent a differential of the form $y\to x$. By calculating their homology after setting $u$ or $v$ equal to zero, one can see that the chain complexes are not homotopy equivalent, as we have seen in \cite[Appendix~A]{YXHFLR} via suitably colored homologies.

\begin{equation}\label{eq:full real complex for connected sum of trefoils}
\begin{tikzpicture}
		\node(P0) at(0,5) {$l_1r_3$};
		\node(P1) at(2.5,5) {$l_2r_3$};
		\node(P2) at(0,2.5) {$l_1r_2$};
	    \node(P3) at(2.5,2.85) {$l_2r_2+l_3r_3$};
	    \node(P4) at(2.5,2.5) {$l_1r_2+l_2r_2$};
	    \node(P5) at(2.2,2.15) {$l_2r_2$};
	    \node(P6) at(5,2.5) {$l_2r_1$};
	    \node(P7) at(2.5,0) {$l_3r_2$};
	    \node(P8) at(5,0) {$l_3r_1$};
	    \draw[->] (P1)--(P0);
	    \draw[->] (P1)--(P3);
	    \draw[->] (P0)--(P2);
	    \draw[->] (P6)--(P4);
	    \draw[->] (P5)--(P2);
	    \draw[->] (P5)--(P7);
	    \draw[->] (P3)--(P2);
	    \draw[->] (P4)--(P7);
	    \draw[->] (P5)--(P7);
	    \draw[->] (P6)--(P8);
	    \draw[->] (P8)--(P7);
	    \draw[->] (P1)--(P2);  
	  
	  \node(P10) at(7,5) {$l_1r_3$};
	  \node(P11) at(9.5,5) {$l_2r_3$};
	  \node(P12) at(7,2.5) {$l_1r_2$};
	  \node(P13) at(9.5,2.85) {$l_2r_2+l_3r_3$};
	  \node(P14) at(9.5,2.5) {$l_1r_2+l_2r_2$};
	  \node(P15) at(9.2,2.15) {$l_2r_2$};
	  \node(P16) at(12,2.5) {$l_2r_1$};
	  \node(P17) at(9.5,0) {$l_3r_2$};
	  \node(P18) at(12,0) {$l_3r_1$};	  
	  \draw[->] (P11)--(P10);
	  \draw[->] (P11)--(P13);
	  \draw[->] (P10)--(P12);
	  \draw[->] (P16)--(P14);
	  \draw[->] (P15)--(P12);
	  \draw[->] (P15)--(P17);
	  \draw[->] (P13)--(P12);
	  \draw[->] (P14)--(P17);
	  \draw[->] (P15)--(P17);
	  \draw[->] (P16)--(P18);
	  \draw[->] (P18)--(P17);
	  \draw[->] (P16)--(P17);
	\end{tikzpicture} 
  \end{equation}
Again, $\Psi^f$ and $\Phi^f$ can be read off from the diagrams. 
\begin{remark}
In the usual knot Floer theory, the full knot Floer complexes satisfy a nice K\"unneth principle with respect to connected sum (see \cite{ConnectedsumsandinvolutiveknotFloerhomology} for example). This is not true in the real case, even on small knots such as the left and right-hand trefoils, as we have seen above. Moreover, though the real complexes of left and right-hand trefoils are the same as the usual complexes, in particular, take the form of standard stair case complexes, a seemingly unavoidable diagonal arrow appears in their connected sum. This is in great contrast with $\fullCFK^-(\bar{3_1} \# 3_1)$, which is a direct sum of squares and a single staircase complex, as $\bar{3_1} \# 3_1$ is $\HFK$-thin (cf.~\cite{CablesofthinknotsandborderedHeegaardFloerhomology}). 
\end{remark}

Finally, we perform a type-$Q$ quasi-stabilization to the minimally pointed left-hand trefoil. At the diagram level, this amounts to connected summing the left diagram in Figure~\ref{fig:LHT_and_RHT} with the left diagram in Figure~\ref{fig:std_pieces_for_type-changing_quasi_stab} by identifying a disk centered at $X$ with a disk centered at $O_2$ in an orientation-reversing fashion. Using Proposition~\ref{prop: differential oftype changing quasi-stab}, we see that 
\[\partial al_1= (u_1+u_0)V \cdot bl_1 \quad \partial al_2= u_0 \cdot al_1+ V\cdot bl_3+ (u_1+u_0)V\cdot bl_2 \quad \partial al_3= (u_1+u_0)V\cdot bl_3;\]
\[\partial bl_1= (u_1 +u_0) \cdot al_1 \quad \partial bl_2= u_0 \cdot bl_1+al_3 +(u_1 +u_0)\cdot al_2\quad \partial bl_3=(u_1 +u_0) \cdot al_3.\]
Taking derivatives with respect to the variables $u_0$, $u_1$ and $V$, we see that \[\Phi^f_{O_0}(al_1)= V\cdot bl_1\quad \Phi^f_{O_0}(al_2)= al_1+ V\cdot bl_1 \quad \Phi^f_{O_0}(al_3)= V\cdot bl_3;\] 
\[\Phi^f_{O_0}(bl_1)=al_1\quad \Phi^f_{O_0}(bl_2)= bl_1+ al_2 \quad \Phi^f_{O_0}(bl_3)= V\cdot al_3;\] 
\[\Phi^f_{O_1}(al_1)= V\cdot bl_1\quad \Phi^f_{O_1}(al_2)= V\cdot bl_1 \quad \Phi^f_{O_1}(al_3)= V\cdot bl_3;\] 
\[\Phi^f_{O_1}(bl_1)=  al_1\quad \Phi^f_{O_1}(bl_2)= bl_2 \quad \Phi^f_{O_1}(bl_3)= al_3;\] 
\[\Phi^p_{(X,X')}(al_1)= (u_0+u_1)\cdot bl_1\quad \Phi^p_{(X,X')}(al_2)= bl_3+ (u_0+u_1)\cdot bl_1 \quad \Phi^p_{(X,X')}(al_3)= (u_0+u_1)\cdot bl_3;\] 
\[\Phi^p_{(X,X')}(bl_1)=  \Phi^p_{(X,X')}(bl_2)= \Phi^p_{(X,X')}(bl_3)=0.\]
From these, one can also see that in the $OO$-theory, the roles of the two $O$- base points are not equivalent, as we remarked in Section~\ref{sub:Torsion order, real tau-invariant and module structure}. 

Now, we color the complex by $\sigma_s$ and compute that
\[D al_1= (u_1+u_0)v^2 \cdot bl_1 + (u_1+u_0)v \cdot al_1 \quad 
D bl_1= (u_1 +u_0) \cdot al_1 + (u_1+u_0)v \cdot bl_1;\]
\[D al_2= u_0 \cdot al_1+ v^2\cdot bl_3+ (u_1+u_0)v^2\cdot bl_2 + (u_1+u_0)v \cdot al_2 \quad D bl_2= u_0 \cdot bl_1+al_3 +(u_1 +u_0)\cdot al_2+ (u_1+u_0)v \cdot bl_1;\]
\[D al_3= (u_1+u_0)v^2\cdot bl_3 + (u_1+u_0)v \cdot al_3\quad  D bl_3=(u_1 +u_0) \cdot al_3 + (u_1+u_0)v \cdot bl_3.\]

Recall that $Q_{X\mapsto O}^+$ is given by $l_i\mapsto al_i+vbl_i$, while $Q_{O\mapsto X}^-$ is given by $al_i\mapsto vl_i$, $bl_i\mapsto l_i$. Then one can verify that $D$ commutes with the type-Q quasi-stabilization maps and $\partial$ commutes with them after setting $u_0=u_1$ or $v$ and $V$ both to zero. Readers can also try to verify Lemma~\ref{lem:present f-base point action as composition of quasistab maps} and \ref{lem:composition of type T,S ,P, Qquasi-stab maps} on this example. As a sanity check, one can compute that $\partial^2= (u_1^2+u_0^2)V \cdot \id $ and $D^2=0$.  One can also set $u=u_1$ ($u=u_1=0$) and compare the resulting homology $\HFKR_{O}^-$  ($\widehat{\HFKR}_O$) with the first example in Appendix~\ref{app:Calculation results for knots with small crossing numbers}.

\appendix
\section{$OO$-real grid homology for strongly invertible knots with small crossing numbers}\label{app:Calculation results for knots with small crossing numbers}

\maketitle
\end{comment}

{\bf Introduction}. This is prepared jointly by Zhenkun Li and Yonghan Xiao. The first author developed the computer program and made the table shown below, while the second author collected examples of strongly invertible knots and constructed real grid diagrams for them.

The appendix includes a list of small crossing strongly invertible knots and their real grid homology. We cover all involutions on knots with at most $7$ crossings, using the planar diagrams from \cite[Appendix]{Lobb2021ArefinementofKhovanovhomology} and some other strongly invertible knots that admit a real grid diagram of size less than $17$. The computer program \cite{ZhenkunLioythonprgram} is able to calculate minus version of real grid homology for strongly invertible knots with diagrams of size $\le 12$, hat version of real grid homology for strongly invertible knots with diagrams of size $\le 12$ and the real Alexander polynomial for strongly invertible knots with diagrams of size $\le 16$.

{\bf AI usage}. Here we explain how the appendix was jointly prepared by the human and an AI coding agent.
\begin{itemize}
	\item The translation of the mathematical construction into a computer-realizable algorithm was carried out by the human authors.
	\item The Python code was written by an AI coding agent and then reviewed and verified by the human authors.
	\item Knot data was collected by human and grid diagrams were also constructed and input into the python program by hand.
	\item The knot homologies and polynomials were output by the python program via JSON files.
	\item The AI coding agent wrote a python program with JSON files as input and LaTex code as output. This was used to generate the table below. 
	\item It has been checked by human that the data in the table agree with the original output from json files.
\end{itemize}

{\bf Code trustworthiness}.
The scripts that compute real invariants from grid diagrams were developed by Zhenkun Li with assistance from the coding agent, while the grid diagrams of various knots were collected by Yonghan Xiao. We would like to be transparent about the extent to which these scripts should be trusted.
\begin{itemize}
	\item The core functionality---building the set of generators for the real chain complex, finding all domains between generators, and computing the bi-grading of each generator---was written by hand and tested against one example: a size-6 grid diagram of the trefoil, which has 76 generators and 267 domains.
	\item The computations for the polynomial and the real homology (hat and minus versions) were generated by the coding agent. The following validations have been applied:
	\begin{itemize}
		\item Direct comparison on the trefoil knot, the only non-trivial example computable by hand.
		\item Verification that $d^2 = 0$ for all examples.
		\item An intermediate step of the computation produces the real homology of the tilde version, whose dimension is divisible by a high power of $2$ (e.g., $64$ for any strongly invertible knot on a size-14 grid). This divisibility holds for all examples we tested.
		\item Symmetry of the polynomials and homologies for small-crossing knots.
	\end{itemize}
\end{itemize}

{\bf Notation}.
\begin{itemize}
	\item For knot names, we use the names from Rolfsen table following \cite{knotinfo}, but we do not distinguish a knot with its mirror.
	\item For the hat version of the homology, each summand is of the form $(2a,m)^d$ where $a$ is the real Alexander grading, $m$ is the real Maslov grading, and $d$ is the dimension of the corresponding graded space.
	\item For the minus version, each summand is of the form $\bigg(U^o_{(2a,m)}\bigg)^d$. Here again $(a,m)$ encodes the grading information. The "power" of $U$ represents the $U$-torsion order, and $d$ represents the number of generators in the corresponding bi-grading and has the corresponding order. The first two factors are always of the form $U^{\infty}_{(2a,m)}$ representing the generator of the infinite $U$-towers.
\end{itemize}
We remind readers that the notation here is a little bit different from that in the main text, where we write the grading as $(M^R,A^R)$ instead of $(2A^R,M^R)$.
\vspace{0.2in}
\setlength{\tabcolsep}{3pt}
\begin{longtable}{m{0.25\textwidth} m{0.3\textwidth} m{0.45\textwidth}}
  \hline
  \centering \textbf{Knot} & \centering \textbf{Grid Diagram} & \centering \textbf{Real Invariants} \tabularnewline
  \hline
  \endfirsthead
  \hline
  \centering \textbf{Knot} & \centering \textbf{Grid Diagram} & \centering \textbf{Real Invariants} \tabularnewline
  \hline
  \endhead
  \hline
  \endfoot
  \centering $3_1^{OO}$ & \centering \includegraphics[width=0.25\textwidth]{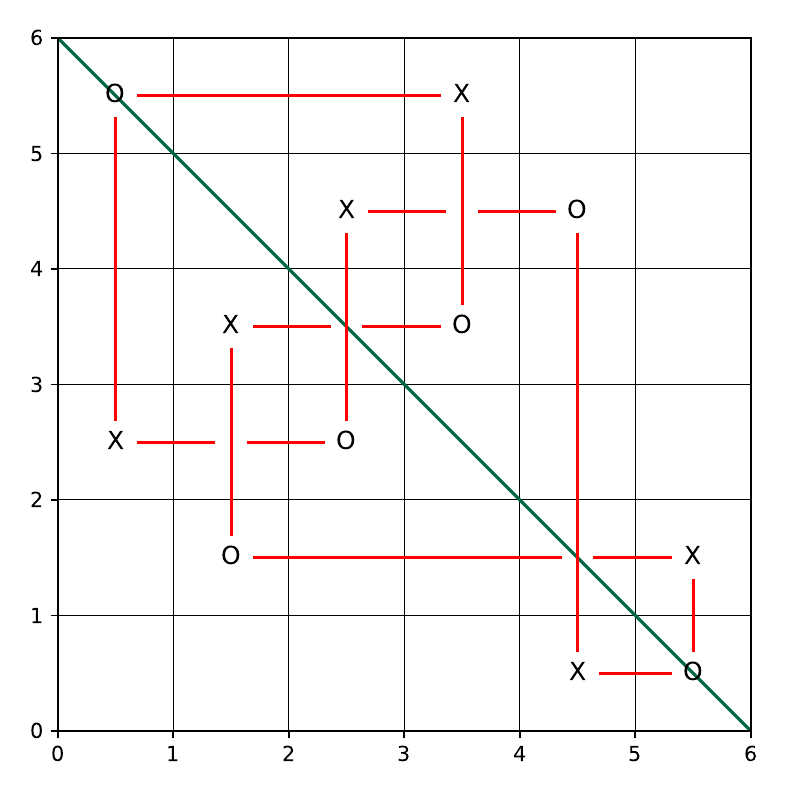} & \parbox[c]{\linewidth}{\centering\tiny
\textcolor{blue}{Polynomial Invariant}\\
$t^{-1} + 2 - t^{2}$\\[0.1in]
\textcolor{blue}{Real grid homology - hat version}\\
$(-1,0)\oplus (0,0)^{2}\oplus (2,1)$\\[0.1in]
\textcolor{blue}{Real grid homology - minus version}\\
$U^{\infty}_{(0,0)}\oplus U^{\infty}_{(2,1)}\oplus U_{(0,0)}$
} \tabularnewline
  \hline
  \centering $4_1^{OO}$ & \centering \includegraphics[width=0.25\textwidth]{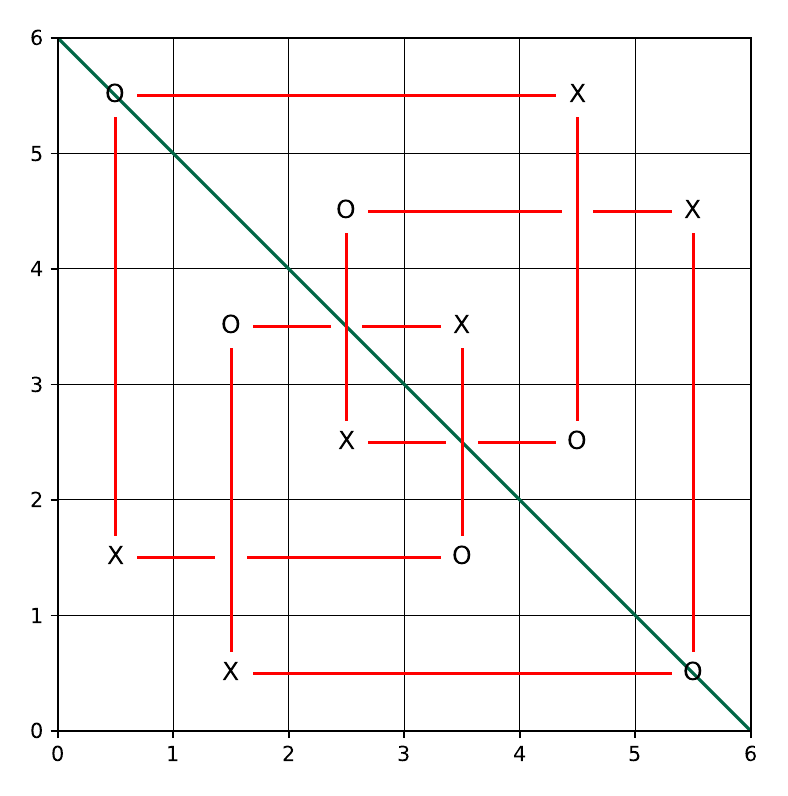} & \parbox[c]{\linewidth}{\centering\tiny
\textcolor{blue}{Polynomial Invariant}\\
$-t^{-1} + 2t + t^{2}$\\[0.1in]
\textcolor{blue}{Real grid homology - hat version}\\
$(-1,-1)\oplus (1,0)^{2}\oplus (2,0)$\\[0.1in]
\textcolor{blue}{Real grid homology - minus version}\\
$U^{\infty}_{(-1,-1)}\oplus U^{\infty}_{(1,0)}\oplus U_{(2,0)}$
} \tabularnewline
  \hline
  \parbox[c]{\linewidth}{\centering $(4_1^{OO})'$\\{\tiny Second symmetry}} & \centering \includegraphics[width=0.25\textwidth]{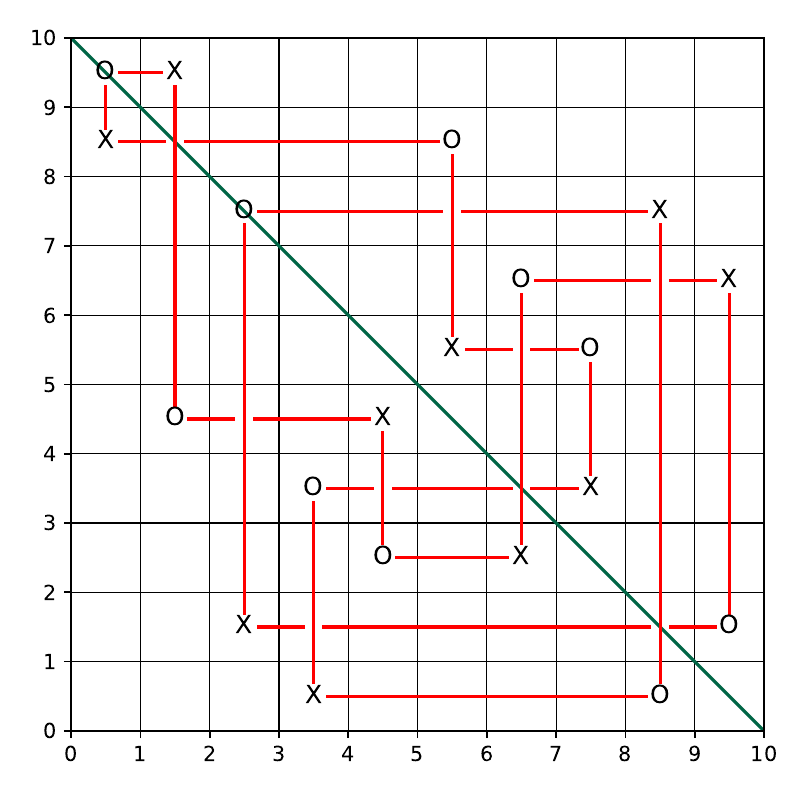} & \parbox[c]{\linewidth}{\centering\tiny
\textcolor{blue}{Polynomial Invariant}\\
$t^{-1} + 2 - t^{2}$\\[0.1in]
\textcolor{blue}{Real grid homology - hat version}\\
$(-1,0)\oplus (0,0)^{2}\oplus (2,1)$\\[0.1in]
\textcolor{blue}{Real grid homology - minus version}\\
$U^{\infty}_{(0,0)}\oplus U^{\infty}_{(2,1)}\oplus U_{(0,0)}$
} \tabularnewline
  \hline
  \centering $5_1^{OO}$ & \centering \includegraphics[width=0.25\textwidth]{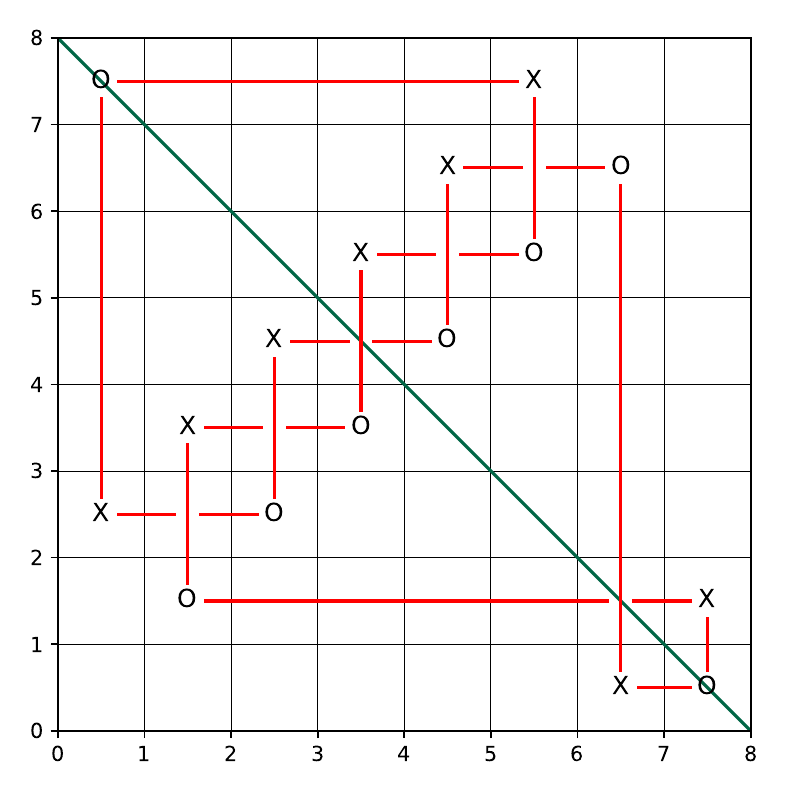} & \parbox[c]{\linewidth}{\centering\tiny
\textcolor{blue}{Polynomial Invariant}\\
$t^{-2} + 2t^{-1} - 2t + t^{3}$\\[0.1in]
\textcolor{blue}{Real grid homology - hat version}\\
$(-1,0)^{2}\oplus (-2,0)\oplus (1,1)^{2}\oplus (3,2)$\\[0.1in]
\textcolor{blue}{Real grid homology - minus version}\\
$U^{\infty}_{(1,1)}\oplus U^{\infty}_{(3,2)}\oplus U_{(-1,0)}\oplus U^{2}_{(1,1)}$
} \tabularnewline
  \hline
  \centering $5_2^{OO}$ & \centering \includegraphics[width=0.25\textwidth]{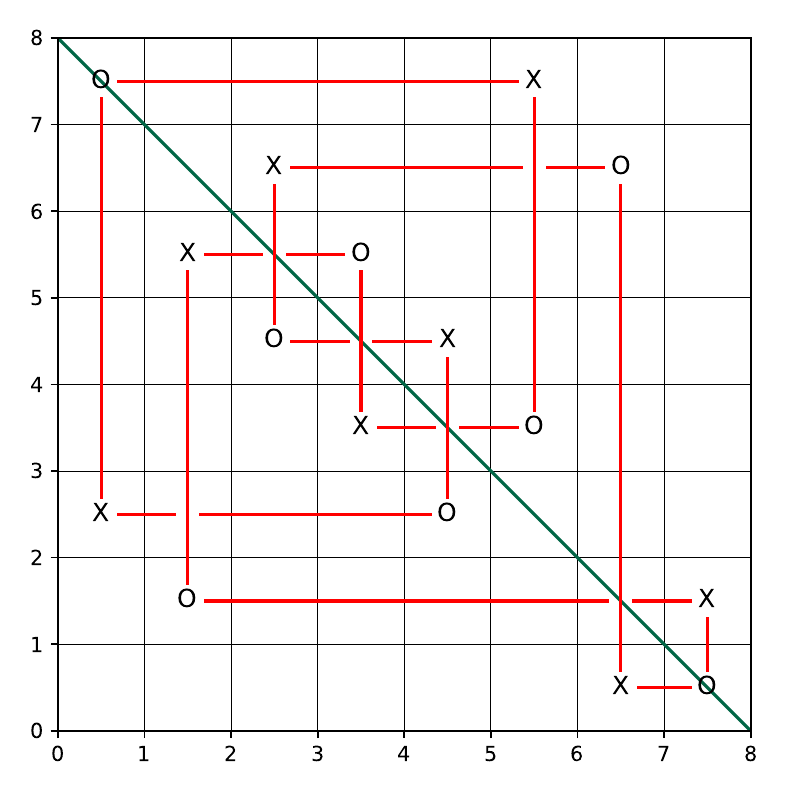} & \parbox[c]{\linewidth}{\centering\tiny
\textcolor{blue}{Polynomial Invariant}\\
$1 + t$\\[0.1in]
\textcolor{blue}{Real grid homology - hat version}\\
$(0,0)\oplus (1,0)$\\[0.1in]
\textcolor{blue}{Real grid homology - minus version}\\
$U^{\infty}_{(0,0)}\oplus U^{\infty}_{(1,0)}$
} \tabularnewline
  \hline
  \parbox[c]{\linewidth}{\centering $(5_2^{OO})'$\\{\tiny Second symmetry}} & \centering \includegraphics[width=0.25\textwidth]{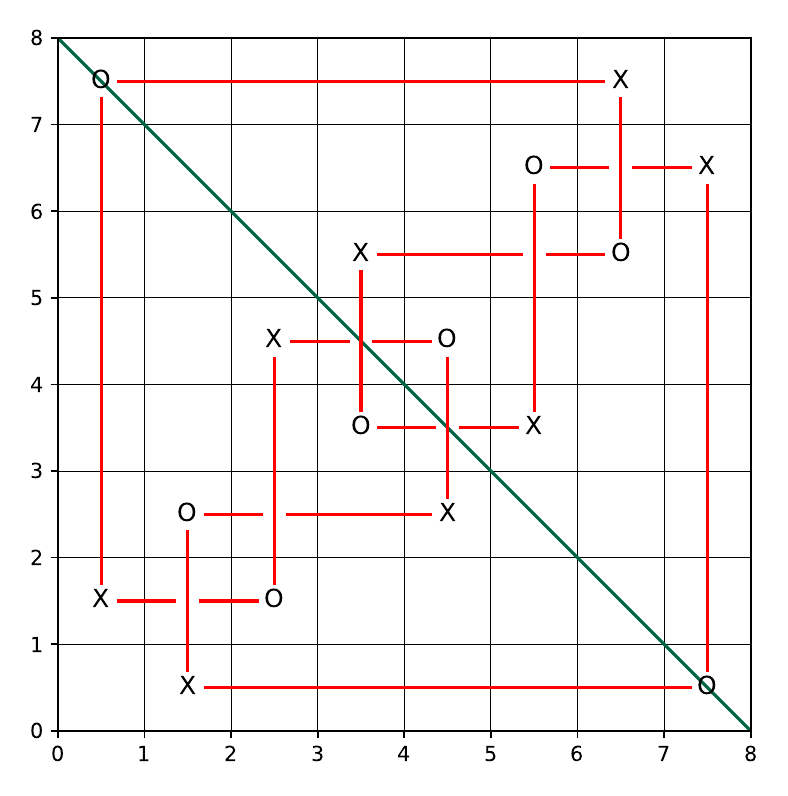} & \parbox[c]{\linewidth}{\centering\tiny
\textcolor{blue}{Polynomial Invariant}\\
$-2t^{-1} - 1 + 3t + 2t^{2}$\\[0.1in]
\textcolor{blue}{Real grid homology - hat version}\\
$(-1,-1)^{2}\oplus (0,-1)\oplus (1,0)^{3}\oplus (2,0)^{2}$\\[0.1in]
\textcolor{blue}{Real grid homology - minus version}\\
$U^{\infty}_{(-1,-1)}\oplus U^{\infty}_{(1,0)}\oplus U_{(0,-1)}\oplus \bigg(U_{(2,0)}\bigg)^{2}$
} \tabularnewline
  \hline
  \centering $6_1^{OO}$ & \centering \includegraphics[width=0.25\textwidth]{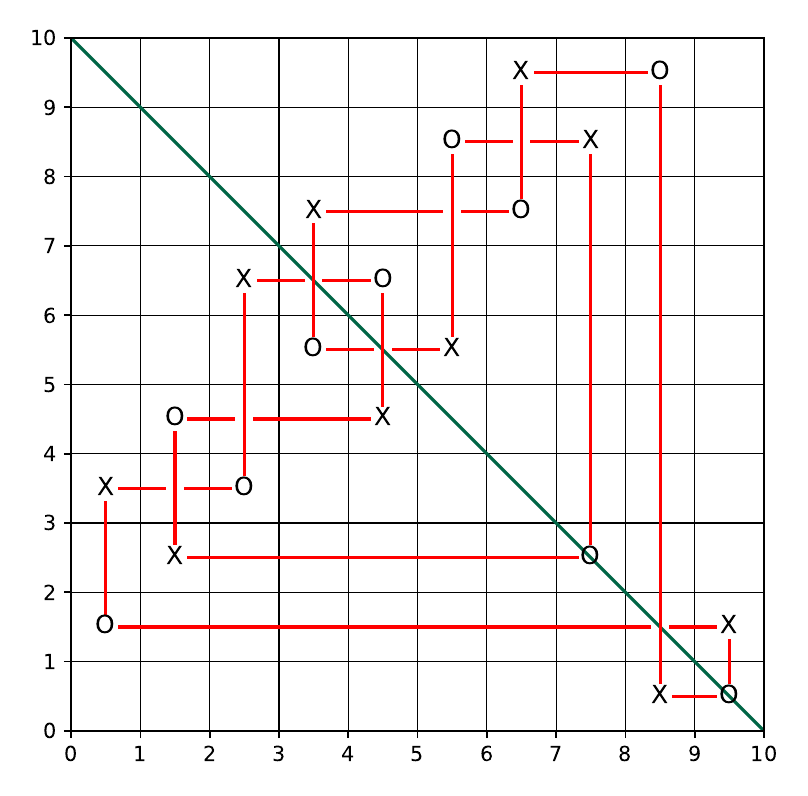} & \parbox[c]{\linewidth}{\centering\tiny
\textcolor{blue}{Polynomial Invariant}\\
$-2t^{-1} - 1 + 3t + 2t^{2}$\\[0.1in]
\textcolor{blue}{Real grid homology - hat version}\\
$(-1,-1)^{2}\oplus (0,-1)\oplus (1,0)^{3}\oplus (2,0)^{2}$\\[0.1in]
\textcolor{blue}{Real grid homology - minus version}\\
$U^{\infty}_{(-1,-1)}\oplus U^{\infty}_{(1,0)}\oplus U_{(0,-1)}\oplus \bigg(U_{(2,0)}\bigg)^{2}$
} \tabularnewline
  \hline
  \parbox[c]{\linewidth}{\centering $(6_1^{OO})'$\\{\tiny Second symmetry}} & \centering \includegraphics[width=0.25\textwidth]{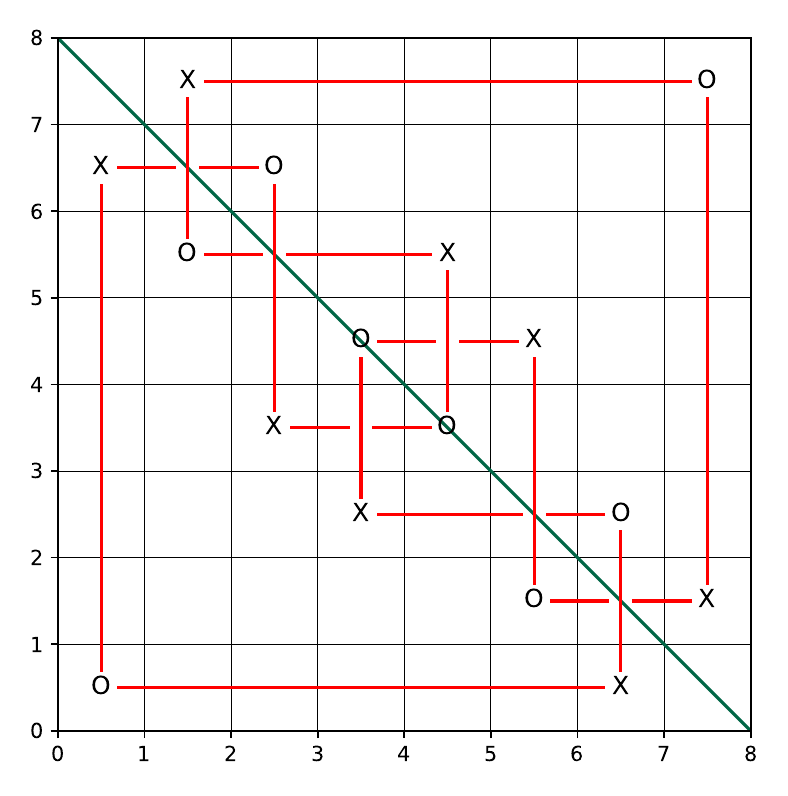} & \parbox[c]{\linewidth}{\centering\tiny
\textcolor{blue}{Polynomial Invariant}\\
$1 + t$\\[0.1in]
\textcolor{blue}{Real grid homology - hat version}\\
$(0,0)\oplus (1,0)$\\[0.1in]
\textcolor{blue}{Real grid homology - minus version}\\
$U^{\infty}_{(0,0)}\oplus U^{\infty}_{(1,0)}$
} \tabularnewline
  \hline
  \centering $6_2^{OO}$ & \centering \includegraphics[width=0.25\textwidth]{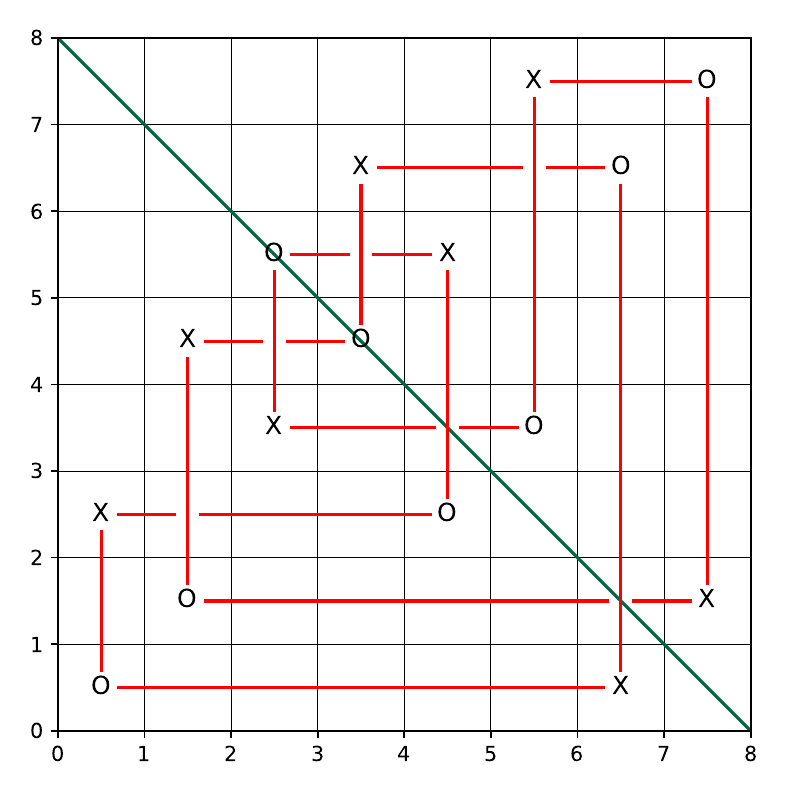} & \parbox[c]{\linewidth}{\centering\tiny
\textcolor{blue}{Polynomial Invariant}\\
$-t^{-2} + 4 + 2t - 2t^{2} - t^{3}$\\[0.1in]
\textcolor{blue}{Real grid homology - hat version}\\
$(-2,-1)\oplus (0,0)^{4}\oplus (1,0)^{2}\oplus (2,1)^{2}\oplus (3,1)$\\[0.1in]
\textcolor{blue}{Real grid homology - minus version}\\
$U^{\infty}_{(0,0)}\oplus U^{\infty}_{(2,1)}\oplus \bigg(U_{(1,0)}\bigg)^{2}\oplus U_{(3,1)}\oplus U^{2}_{(0,0)}$
} \tabularnewline
  \hline
  \parbox[c]{\linewidth}{\centering $(6_2^{OO})'$\\{\tiny Second symmetry}} & \centering \includegraphics[width=0.25\textwidth]{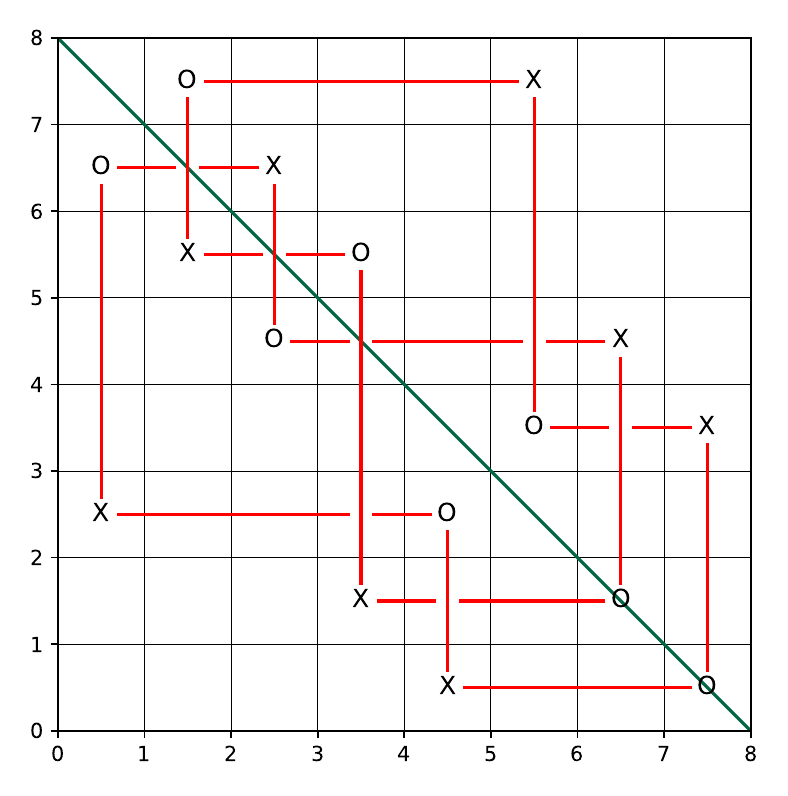} & \parbox[c]{\linewidth}{\centering\tiny
\textcolor{blue}{Polynomial Invariant}\\
$t^{-2} - 2 + 2t^{2} + t^{3}$\\[0.1in]
\textcolor{blue}{Real grid homology - hat version}\\
$(-2,-2)\oplus (0,-1)^{2}\oplus (2,0)^{2}\oplus (3,0)$\\[0.1in]
\textcolor{blue}{Real grid homology - minus version}\\
$U^{\infty}_{(-2,-2)}\oplus U^{\infty}_{(0,-1)}\oplus U_{(3,0)}\oplus U^{2}_{(2,0)}$
} \tabularnewline
  \hline
  \centering $6_3^{OO}$ & \centering \includegraphics[width=0.25\textwidth]{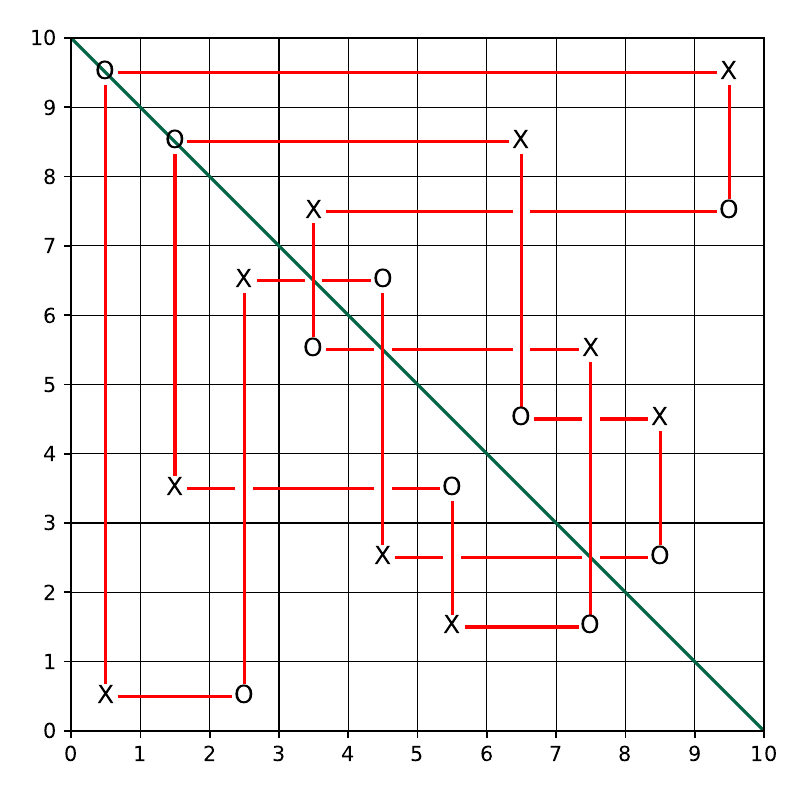} & \parbox[c]{\linewidth}{\centering\tiny
\textcolor{blue}{Polynomial Invariant}\\
$-t^{-2} - 2t^{-1} + 2 + 4t - t^{3}$\\[0.1in]
\textcolor{blue}{Real grid homology - hat version}\\
$(-1,-1)^{2}\oplus (-2,-1)\oplus (0,0)^{2}\oplus (1,0)^{4}\oplus (3,1)$\\[0.1in]
\textcolor{blue}{Real grid homology - minus version}\\
$U^{\infty}_{(-1,-1)}\oplus U^{\infty}_{(1,0)}\oplus U_{(-1,-1)}\oplus \bigg(U_{(1,0)}\bigg)^{2}\oplus U^{2}_{(3,1)}$
} \tabularnewline
  \hline
  \centering $7_1^{OO}$ & \centering \includegraphics[width=0.25\textwidth]{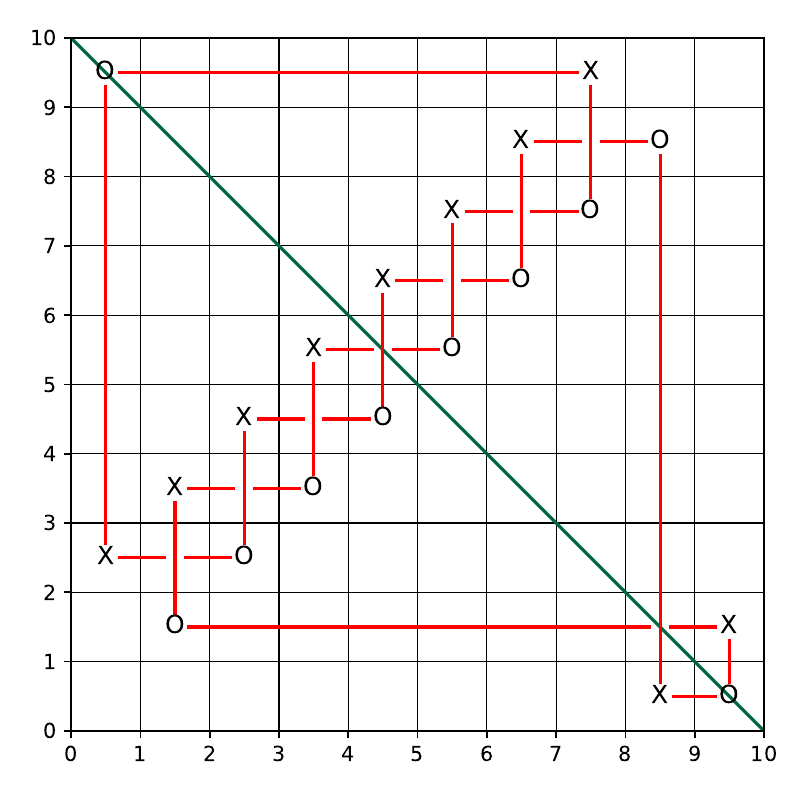} & \parbox[c]{\linewidth}{\centering\tiny
\textcolor{blue}{Polynomial Invariant}\\
$t^{-3} + 2t^{-2} - 2 + 2t^{2} - t^{4}$\\[0.1in]
\textcolor{blue}{Real grid homology - hat version}\\
$(-2,0)^{2}\oplus (-3,0)\oplus (0,1)^{2}\oplus (2,2)^{2}\oplus (4,3)$\\[0.1in]
\textcolor{blue}{Real grid homology - minus version}\\
$U^{\infty}_{(2,2)}\oplus U^{\infty}_{(4,3)}\oplus U_{(-2,0)}\oplus U^{2}_{(0,1)}\oplus U^{2}_{(2,2)}$
} \tabularnewline
  \hline
  \centering $7_2^{OO}$ & \centering \includegraphics[width=0.25\textwidth]{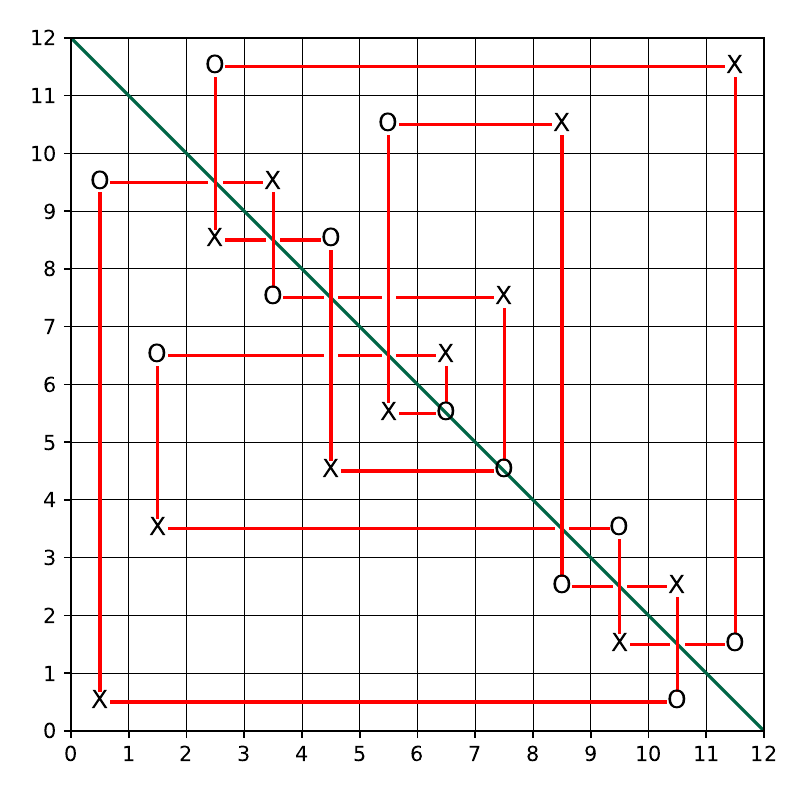} & \parbox[c]{\linewidth}{\centering\tiny
\textcolor{blue}{Polynomial Invariant}\\
$t^{-1} + 2 - t^{2}$\\[0.1in]
\textcolor{blue}{Real grid homology - hat version}\\
$(-1,0)\oplus (0,0)^{2}\oplus (2,1)$\\[0.1in]
\textcolor{blue}{Real grid homology - minus version}\\
$U^{\infty}_{(0,0)}\oplus U^{\infty}_{(2,1)}\oplus U_{(0,0)}$
} \tabularnewline
  \hline
  \parbox[c]{\linewidth}{\centering $(7_2)^{OO}$\\{\tiny Also twisted${}_5^{OO}$}} & \centering \includegraphics[width=0.25\textwidth]{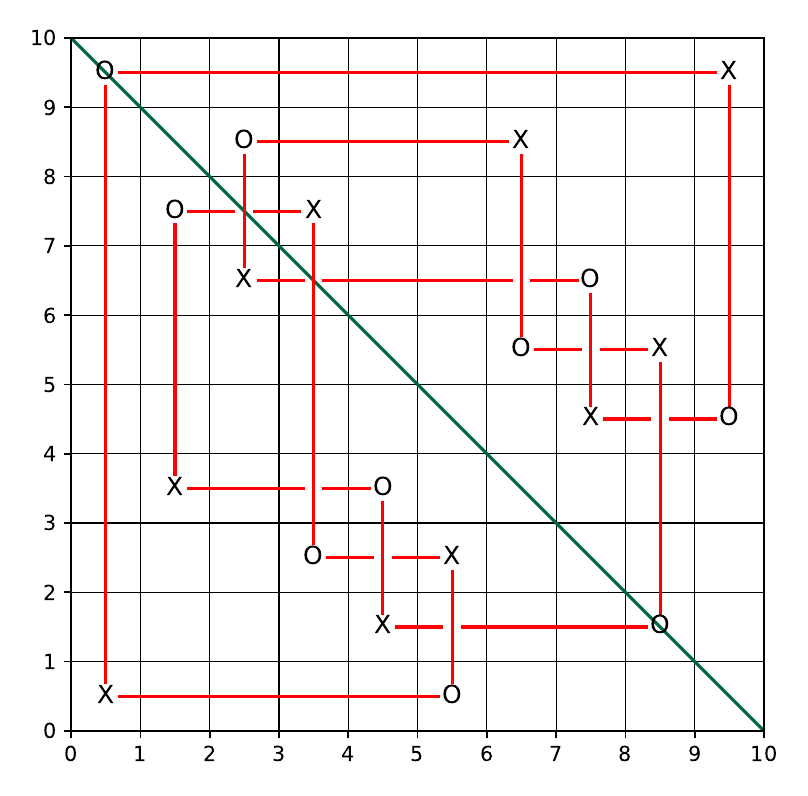} & \parbox[c]{\linewidth}{\centering\tiny
\textcolor{blue}{Polynomial Invariant}\\
$3t^{-1} + 4 - 2t - 3t^{2}$\\[0.1in]
\textcolor{blue}{Real grid homology - hat version}\\
$(-1,0)^{3}\oplus (0,0)^{4}\oplus (1,1)^{2}\oplus (2,1)^{3}$\\[0.1in]
\textcolor{blue}{Real grid homology - minus version}\\
$U^{\infty}_{(0,0)}\oplus U^{\infty}_{(2,1)}\oplus \bigg(U_{(0,0)}\bigg)^{3}\oplus \bigg(U_{(2,1)}\bigg)^{2}$
} \tabularnewline
  \hline
  \centering $7_3^{OO}$ & \centering \includegraphics[width=0.25\textwidth]{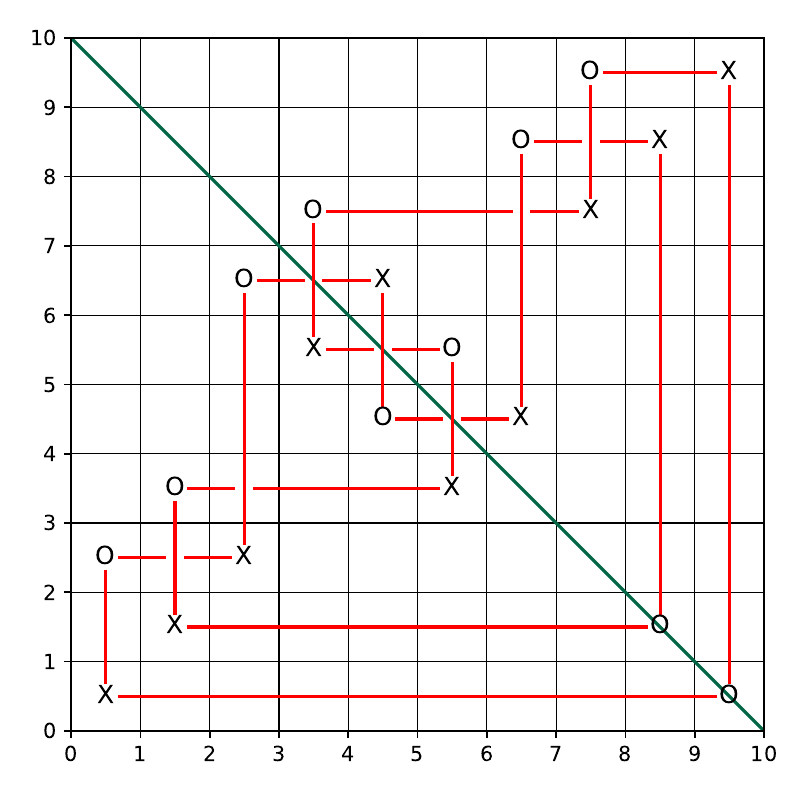} & \parbox[c]{\linewidth}{\centering\tiny
\textcolor{blue}{Polynomial Invariant}\\
$t^{-1} + 2 - t^{2}$\\[0.1in]
\textcolor{blue}{Real grid homology - hat version}\\
$(-1,0)\oplus (0,0)^{2}\oplus (2,1)$\\[0.1in]
\textcolor{blue}{Real grid homology - minus version}\\
$U^{\infty}_{(0,0)}\oplus U^{\infty}_{(2,1)}\oplus U_{(0,0)}$
} \tabularnewline
  \hline
  \parbox[c]{\linewidth}{\centering $(7_3^{OO})'$\\{\tiny Second symmetry}} & \centering \includegraphics[width=0.25\textwidth]{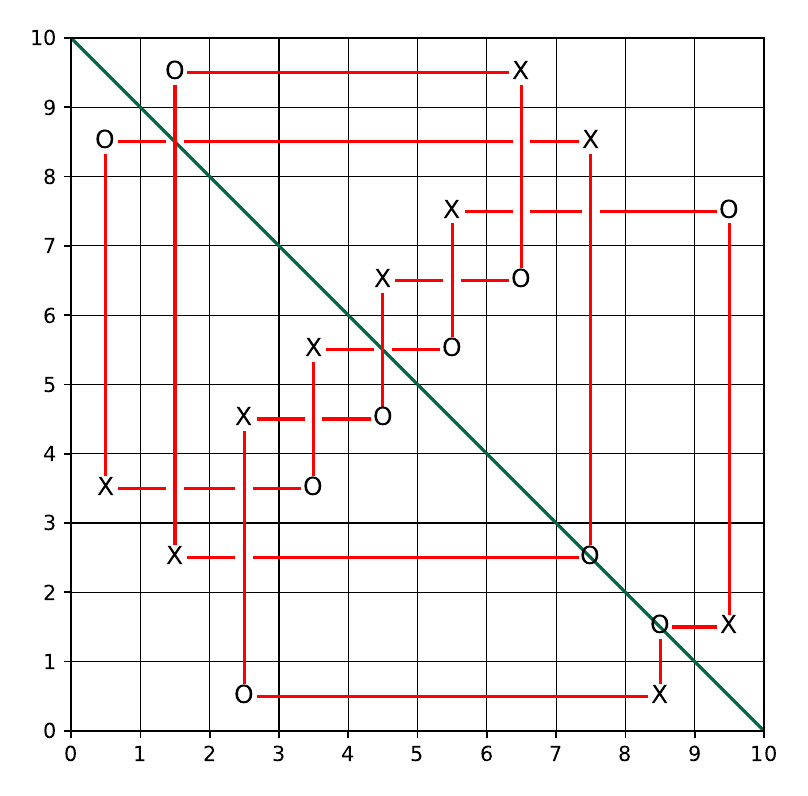} & \parbox[c]{\linewidth}{\centering\tiny
\textcolor{blue}{Polynomial Invariant}\\
$2t^{-2} + 3t^{-1} - 2 - 4t + t^{2} + 2t^{3}$\\[0.1in]
\textcolor{blue}{Real grid homology - hat version}\\
$(-1,0)^{3}\oplus (-2,0)^{2}\oplus (0,1)^{2}\oplus (1,1)^{4}\oplus (2,2)\oplus (3,2)^{2}$\\[0.1in]
\textcolor{blue}{Real grid homology - minus version}\\
$U^{\infty}_{(1,1)}\oplus U^{\infty}_{(3,2)}\oplus \bigg(U_{(-1,0)}\bigg)^{2}\oplus \bigg(U_{(1,1)}\bigg)^{2}\oplus U_{(3,2)}\oplus U^{2}_{(1,1)}$
} \tabularnewline
  \hline
  \centering $7_4^{OO}$ & \centering \includegraphics[width=0.25\textwidth]{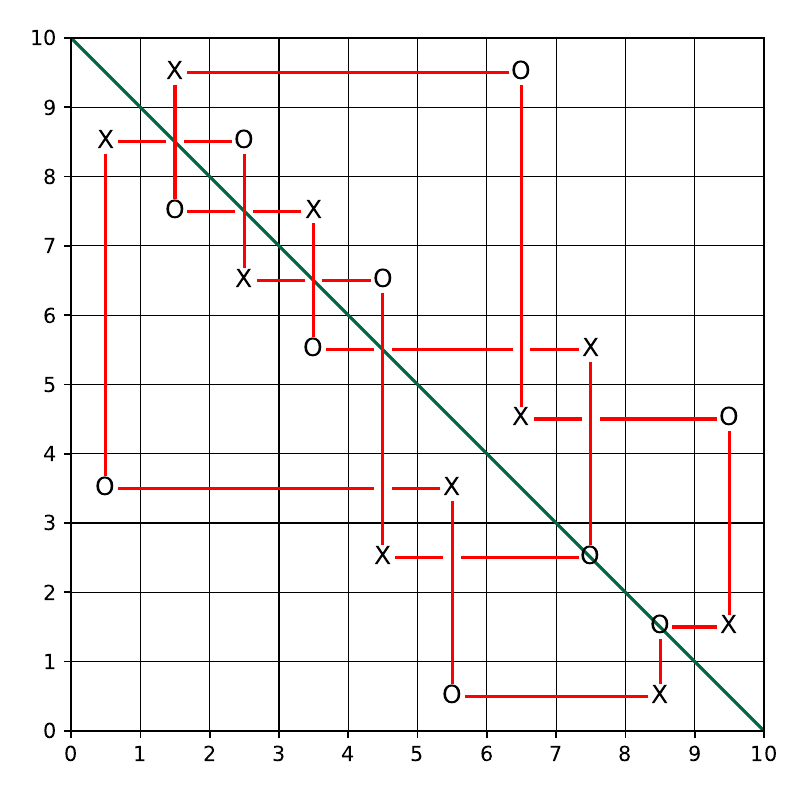} & \parbox[c]{\linewidth}{\centering\tiny
\textcolor{blue}{Polynomial Invariant}\\
$1 + t$\\[0.1in]
\textcolor{blue}{Real grid homology - hat version}\\
$(0,0)\oplus (1,0)$\\[0.1in]
\textcolor{blue}{Real grid homology - minus version}\\
$U^{\infty}_{(0,0)}\oplus U^{\infty}_{(1,0)}$
} \tabularnewline
  \hline
  \parbox[c]{\linewidth}{\centering $(7_4^{OO})'$\\{\tiny Second symmetry}} & \centering \includegraphics[width=0.25\textwidth]{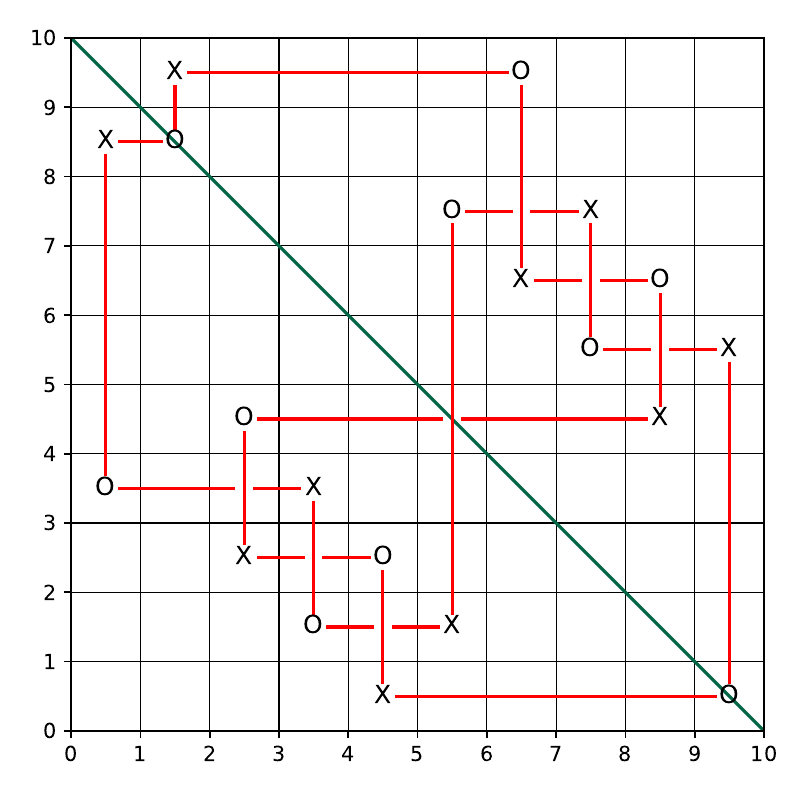} & \parbox[c]{\linewidth}{\centering\tiny
\textcolor{blue}{Polynomial Invariant}\\
$2t^{-1} + 3 - t - 2t^{2}$\\[0.1in]
\textcolor{blue}{Real grid homology - hat version}\\
$(-1,0)^{2}\oplus (0,0)^{3}\oplus (1,1)\oplus (2,1)^{2}$\\[0.1in]
\textcolor{blue}{Real grid homology - minus version}\\
$U^{\infty}_{(0,0)}\oplus U^{\infty}_{(2,1)}\oplus \bigg(U_{(0,0)}\bigg)^{2}\oplus U_{(2,1)}$
} \tabularnewline
  \hline
  \centering $7_5^{OO}$ & \centering \includegraphics[width=0.25\textwidth]{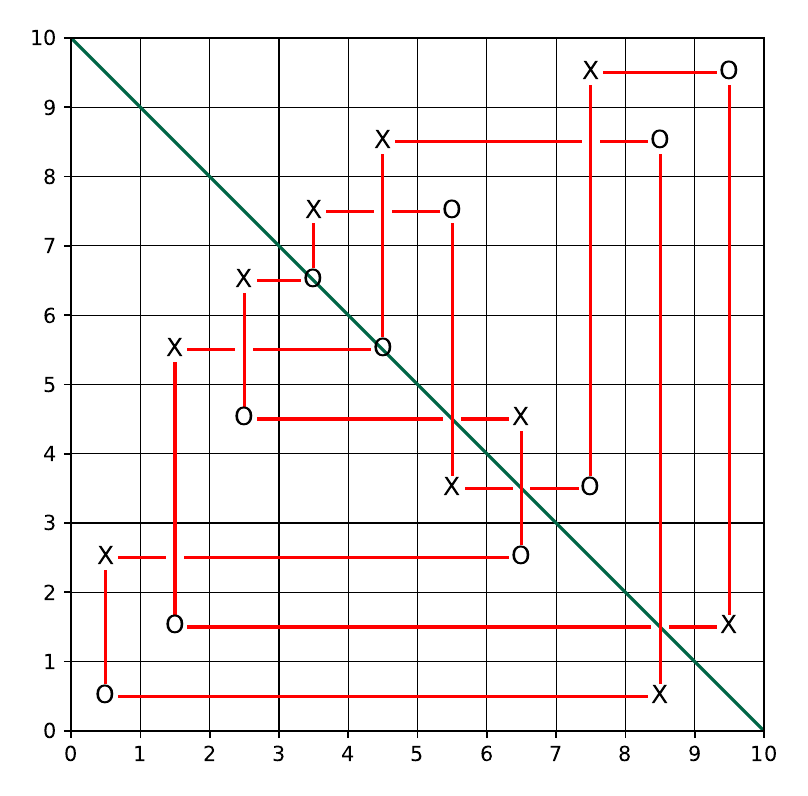} & \parbox[c]{\linewidth}{\centering\tiny
\textcolor{blue}{Polynomial Invariant}\\
$2t^{-1} + 3 - t - 2t^{2}$\\[0.1in]
\textcolor{blue}{Real grid homology - hat version}\\
$(-1,0)^{2}\oplus (0,0)^{3}\oplus (1,1)\oplus (2,1)^{2}$\\[0.1in]
\textcolor{blue}{Real grid homology - minus version}\\
$U^{\infty}_{(0,0)}\oplus U^{\infty}_{(2,1)}\oplus \bigg(U_{(0,0)}\bigg)^{2}\oplus U_{(2,1)}$
} \tabularnewline
  \hline
  \parbox[c]{\linewidth}{\centering $(7_5^{OO})'$\\{\tiny Second symmetry}} & \centering \includegraphics[width=0.25\textwidth]{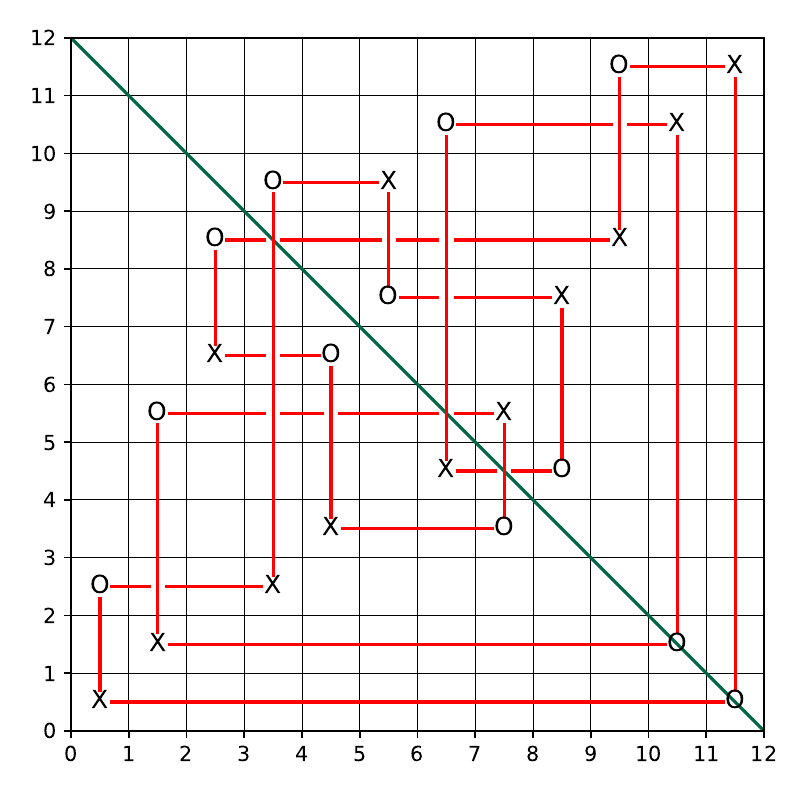} & \parbox[c]{\linewidth}{\centering\tiny
\textcolor{blue}{Polynomial Invariant}\\
$2t^{-2} + 4t^{-1} - 1 - 5t + 2t^{3}$\\[0.1in]
\textcolor{blue}{Real grid homology - hat version}\\
$(-1,0)^{4}\oplus (-2,0)^{2}\oplus (0,1)\oplus (1,1)^{5}\oplus (3,2)^{2}$\\[0.1in]
\textcolor{blue}{Real grid homology - minus version}\\
$U^{\infty}_{(1,1)}\oplus U^{\infty}_{(3,2)}\oplus \bigg(U_{(-1,0)}\bigg)^{2}\oplus U_{(1,1)}\oplus \bigg(U^{2}_{(1,1)}\bigg)^{2}\oplus U^{2}_{(3,2)}$
} \tabularnewline
  \hline
  \centering $7_6^{OO}$ & \centering \includegraphics[width=0.25\textwidth]{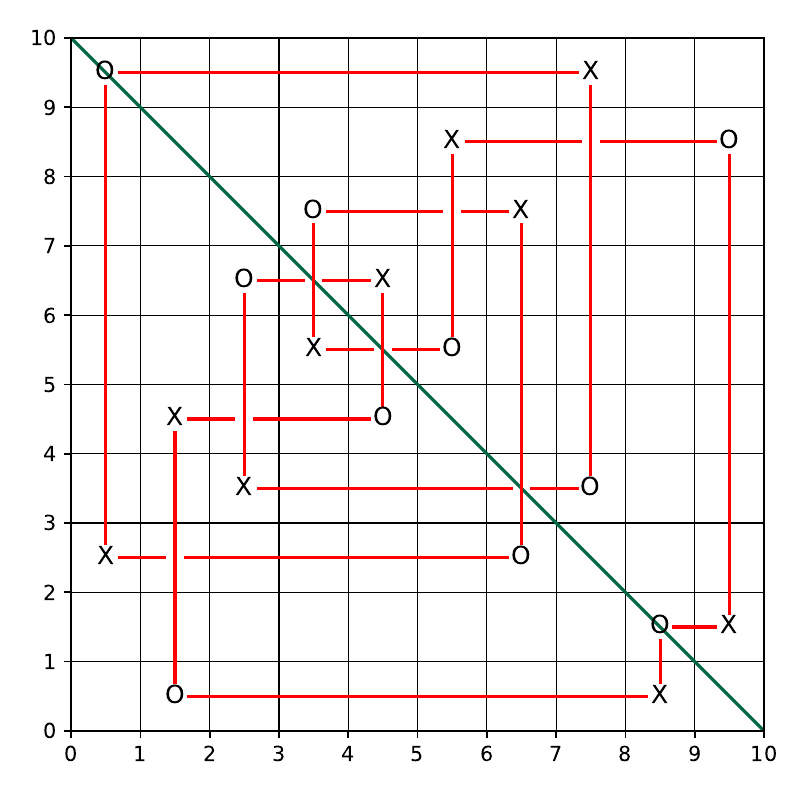} & \parbox[c]{\linewidth}{\centering\tiny
\textcolor{blue}{Polynomial Invariant}\\
$-t^{-2} - 2t^{-1} + 2 + 4t - t^{3}$\\[0.1in]
\textcolor{blue}{Real grid homology - hat version}\\
$(-1,-1)^{2}\oplus (-2,-1)\oplus (0,0)^{2}\oplus (1,0)^{4}\oplus (3,1)$\\[0.1in]
\textcolor{blue}{Real grid homology - minus version}\\
$U^{\infty}_{(-1,-1)}\oplus U^{\infty}_{(1,0)}\oplus U_{(-1,-1)}\oplus \bigg(U_{(1,0)}\bigg)^{2}\oplus U^{2}_{(3,1)}$
} \tabularnewline
  \hline
  \parbox[c]{\linewidth}{\centering $(7_6^{OO})'$\\{\tiny Second symmetry}} & \centering \includegraphics[width=0.25\textwidth]{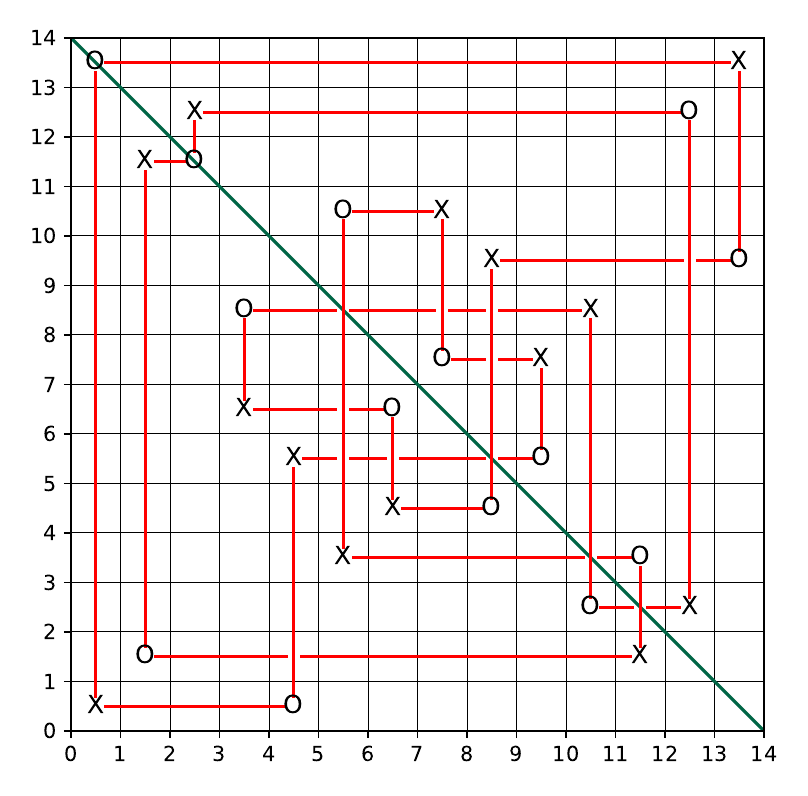} & \parbox[c]{\linewidth}{\centering\tiny
\textcolor{blue}{Polynomial Invariant}\\
$t^{-2} - 2 + 2t^{2} + t^{3}$
} \tabularnewline
  \hline
  \centering $7_7^{OO}$ & \centering \includegraphics[width=0.25\textwidth]{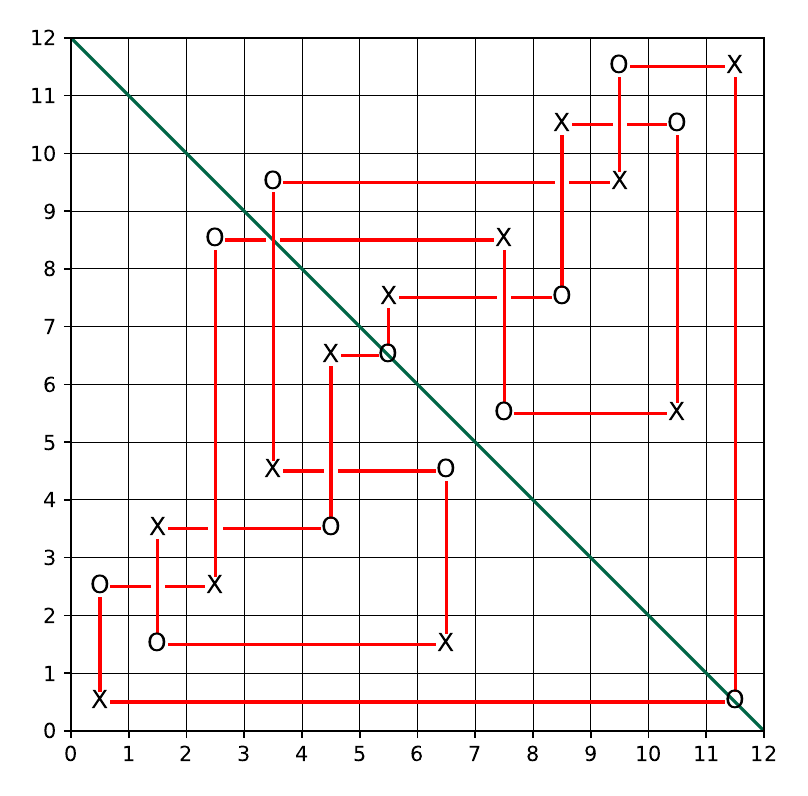} & \parbox[c]{\linewidth}{\centering\tiny
\textcolor{blue}{Polynomial Invariant}\\
$-t^{-2} - 2t^{-1} + 2 + 4t - t^{3}$\\[0.1in]
\textcolor{blue}{Real grid homology - hat version}\\
$(-1,-1)^{2}\oplus (-2,-1)\oplus (0,0)^{2}\oplus (1,0)^{4}\oplus (3,1)$\\[0.1in]
\textcolor{blue}{Real grid homology - minus version}\\
$U^{\infty}_{(-1,-1)}\oplus U^{\infty}_{(1,0)}\oplus U_{(-1,-1)}\oplus \bigg(U_{(1,0)}\bigg)^{2}\oplus U^{2}_{(3,1)}$
} \tabularnewline
  \hline
  \parbox[c]{\linewidth}{\centering $(7_7^{OO})'$\\{\tiny Second symmetry}} & \centering \includegraphics[width=0.25\textwidth]{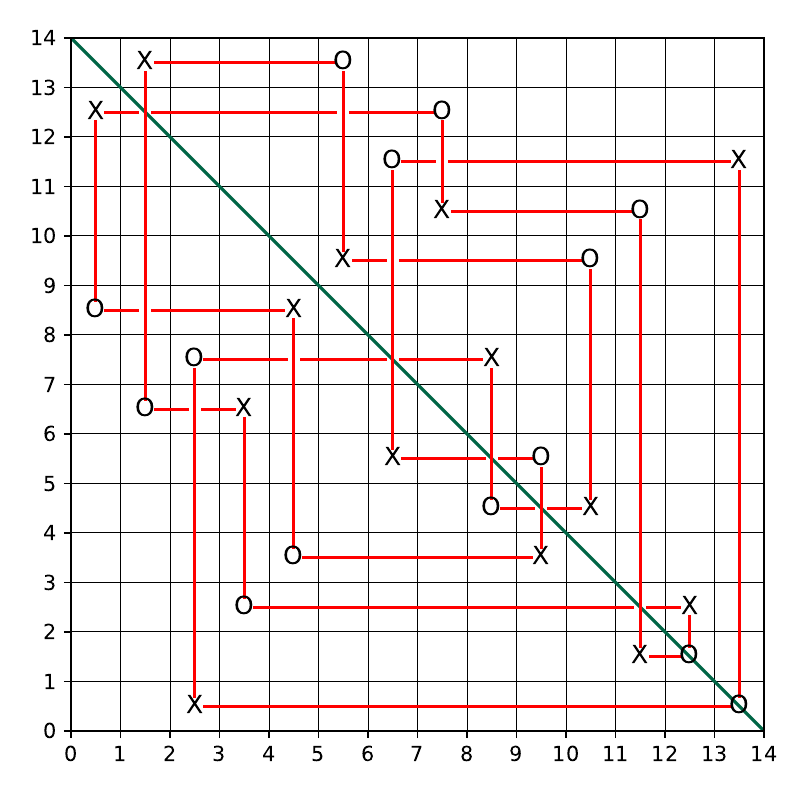} & \parbox[c]{\linewidth}{\centering\tiny
\textcolor{blue}{Polynomial Invariant}\\
$-t^{-2} - 2t^{-1} + 2 + 4t - t^{3}$
} \tabularnewline
  \hline
  \parbox[c]{\linewidth}{\centering $8_1^{OO}$\\{\tiny Also twisted${}_6^{OO}$}} & \centering \includegraphics[width=0.25\textwidth]{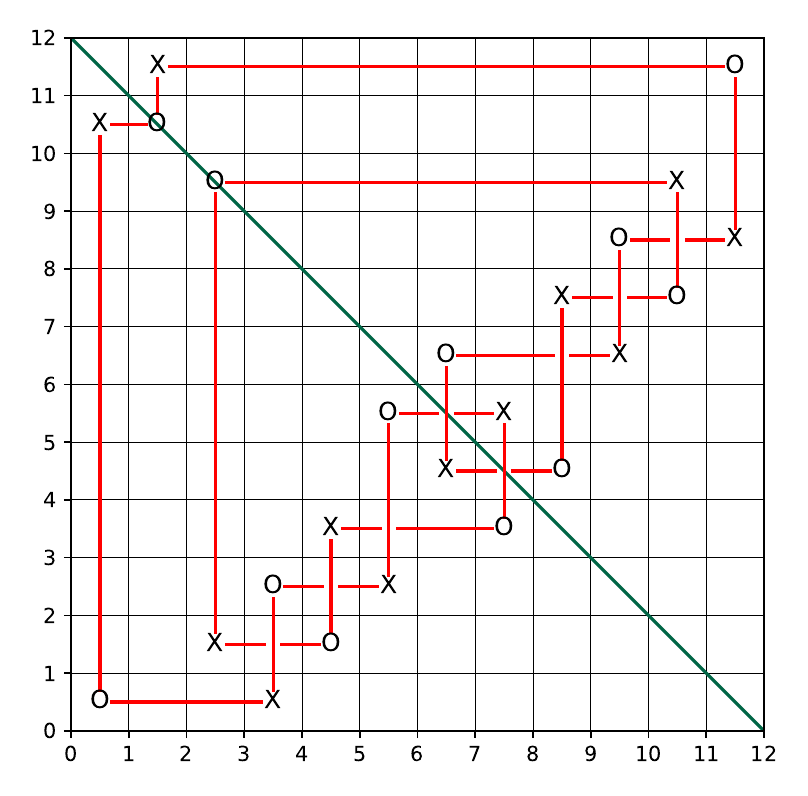} & \parbox[c]{\linewidth}{\centering\tiny
\textcolor{blue}{Polynomial Invariant}\\
$-3t^{-1} - 2 + 4t + 3t^{2}$\\[0.1in]
\textcolor{blue}{Real grid homology - hat version}\\
$(-1,-1)^{3}\oplus (0,-1)^{2}\oplus (1,0)^{4}\oplus (2,0)^{3}$\\[0.1in]
\textcolor{blue}{Real grid homology - minus version}\\
$U^{\infty}_{(-1,-1)}\oplus U^{\infty}_{(1,0)}\oplus \bigg(U_{(0,-1)}\bigg)^{2}\oplus \bigg(U_{(2,0)}\bigg)^{3}$
} \tabularnewline
  \hline
  \centering $8_2^{OO}$ & \centering \includegraphics[width=0.25\textwidth]{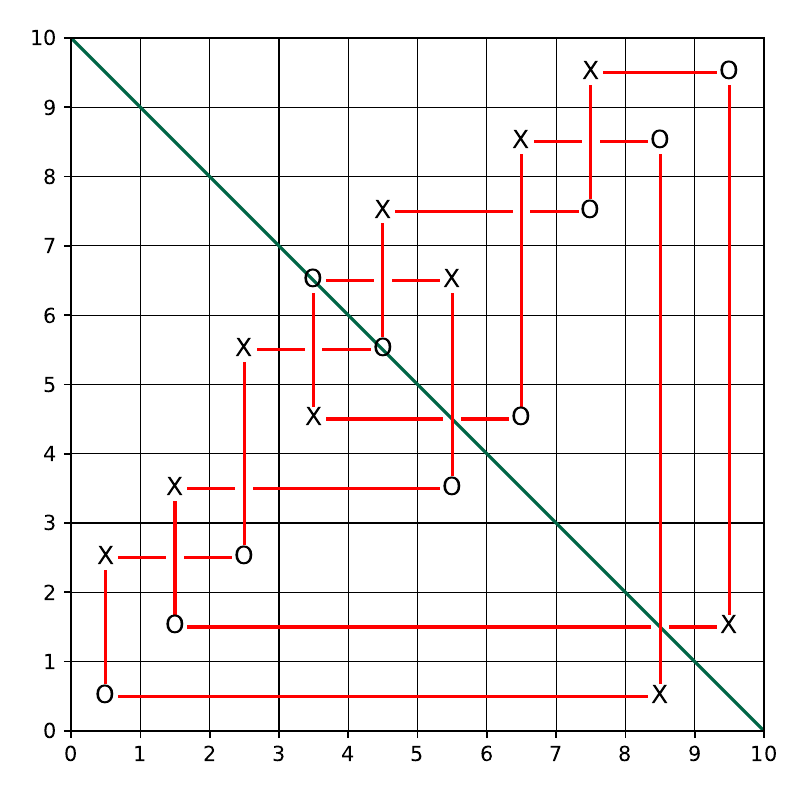} & \parbox[c]{\linewidth}{\centering\tiny
\textcolor{blue}{Polynomial Invariant}\\
$-t^{-3} + 4t^{-1} + 2 - 4t - 2t^{2} + 2t^{3} + t^{4}$\\[0.1in]
\textcolor{blue}{Real grid homology - hat version}\\
$(-1,0)^{4}\oplus (-3,-1)\oplus (0,0)^{2}\oplus (1,1)^{4}\oplus (2,1)^{2}\oplus (3,2)^{2}\oplus (4,2)$\\[0.1in]
\textcolor{blue}{Real grid homology - minus version}\\
$U^{\infty}_{(1,1)}\oplus U^{\infty}_{(3,2)}\oplus \bigg(U_{(0,0)}\bigg)^{2}\oplus \bigg(U_{(2,1)}\bigg)^{2}\oplus U_{(4,2)}\oplus U^{2}_{(-1,0)}\oplus U^{2}_{(1,1)}$
} \tabularnewline
  \hline
  \centering $8_3^{OO}$ & \centering \includegraphics[width=0.25\textwidth]{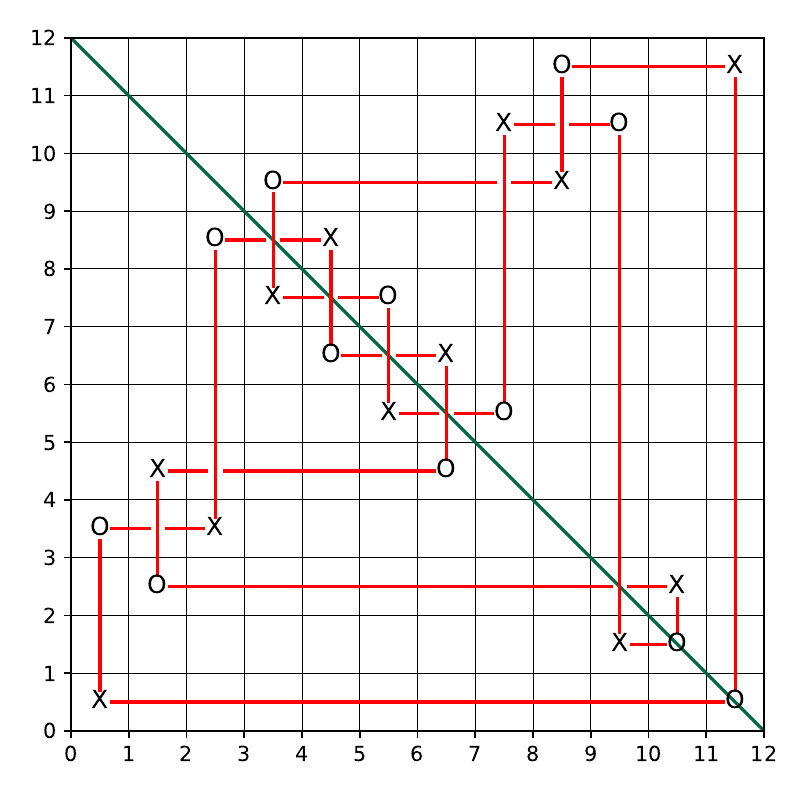} & \parbox[c]{\linewidth}{\centering\tiny
\textcolor{blue}{Polynomial Invariant}\\
$1 + t$\\[0.1in]
\textcolor{blue}{Real grid homology - hat version}\\
$(0,0)\oplus (1,0)$
} \tabularnewline
  \hline
  \centering $8_8^{OO}$ & \centering \includegraphics[width=0.25\textwidth]{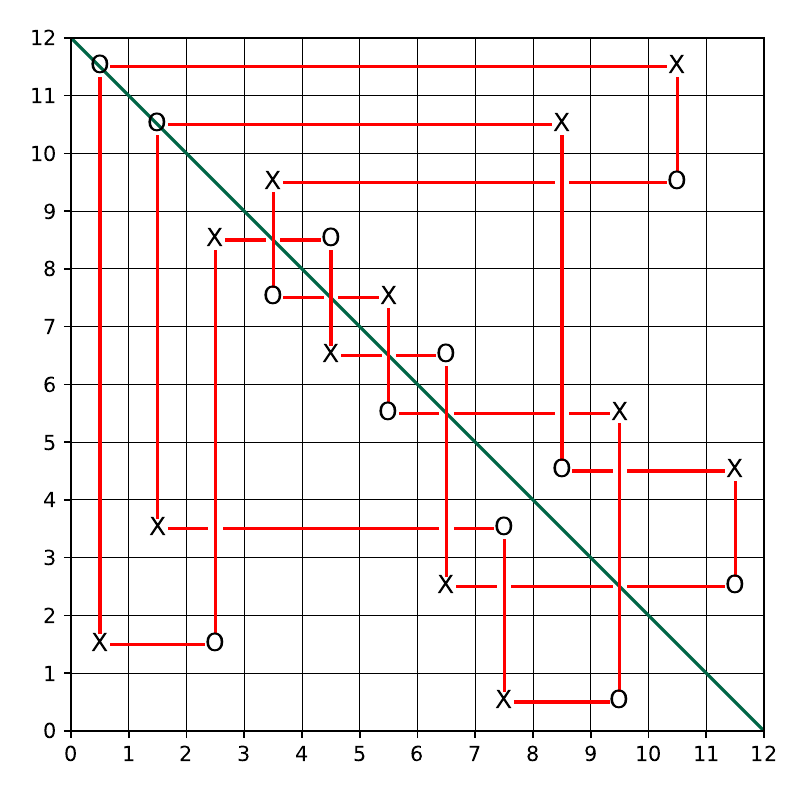} & \parbox[c]{\linewidth}{\centering\tiny
\textcolor{blue}{Polynomial Invariant}\\
$1 + t$\\[0.1in]
\textcolor{blue}{Real grid homology - hat version}\\
$(0,0)\oplus (1,0)$\\[0.1in]
\textcolor{blue}{Real grid homology - minus version}\\
$U^{\infty}_{(0,0)}\oplus U^{\infty}_{(1,0)}$
} \tabularnewline
  \hline
  \parbox[c]{\linewidth}{\centering $(8_8^{OO})'$\\{\tiny Second symmetry}} & \centering \includegraphics[width=0.25\textwidth]{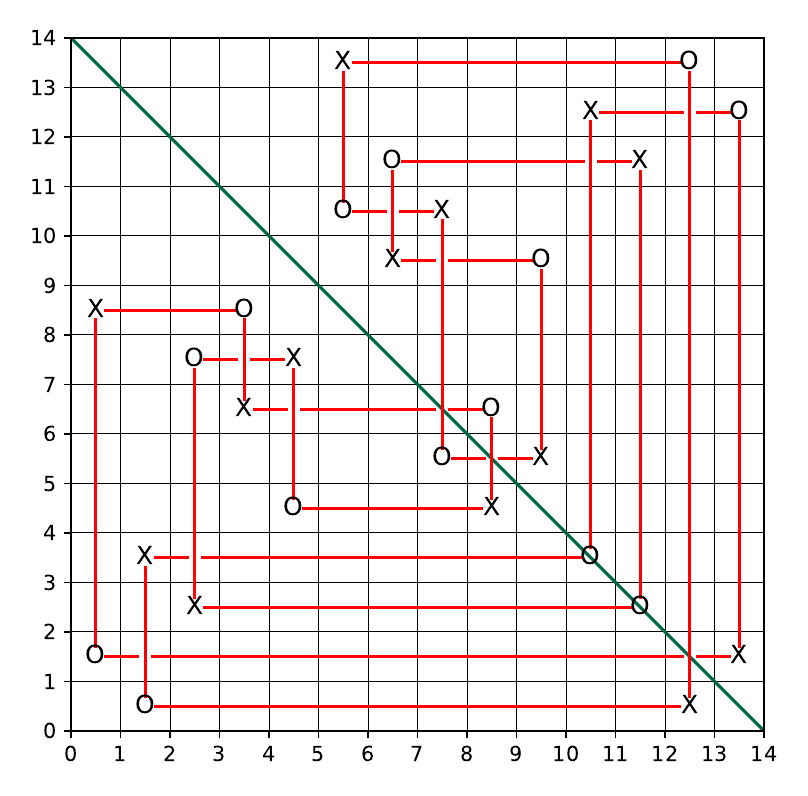} & \parbox[c]{\linewidth}{\centering\tiny
\textcolor{blue}{Polynomial Invariant}\\
$-2t^{-2} + 7 + 3t - 4t^{2} - 2t^{3}$
} \tabularnewline
  \hline
  \centering $8_9^{OO}$ & \centering \includegraphics[width=0.25\textwidth]{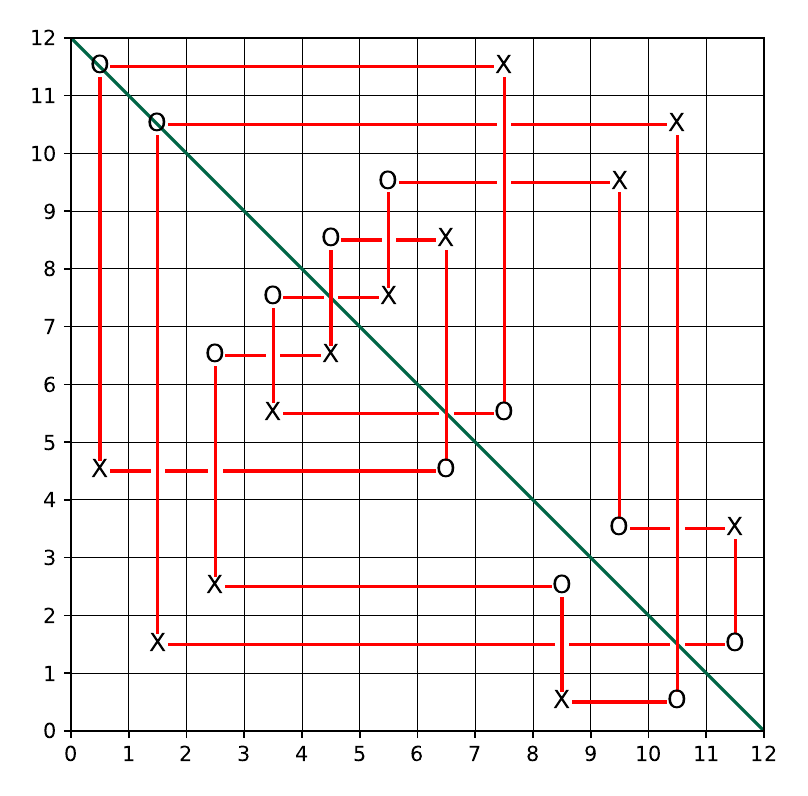} & \parbox[c]{\linewidth}{\centering\tiny
\textcolor{blue}{Polynomial Invariant}\\
$t^{-3} - 4t^{-1} + 6t + 2t^{2} - 2t^{3} - t^{4}$\\[0.1in]
\textcolor{blue}{Real grid homology - hat version}\\
$(-1,-1)^{4}\oplus (-3,-2)\oplus (1,0)^{6}\oplus (2,0)^{2}\oplus (3,1)^{2}\oplus (4,1)$\\[0.1in]
\textcolor{blue}{Real grid homology - minus version}\\
$U^{\infty}_{(-1,-1)}\oplus U^{\infty}_{(1,0)}\oplus \bigg(U_{(2,0)}\bigg)^{2}\oplus U_{(4,1)}\oplus U^{2}_{(-1,-1)}\oplus \bigg(U^{2}_{(1,0)}\bigg)^{2}\oplus U^{2}_{(3,1)}$
} \tabularnewline
  \hline
  \centering $8_{12}^{OO}$ & \centering \includegraphics[width=0.25\textwidth]{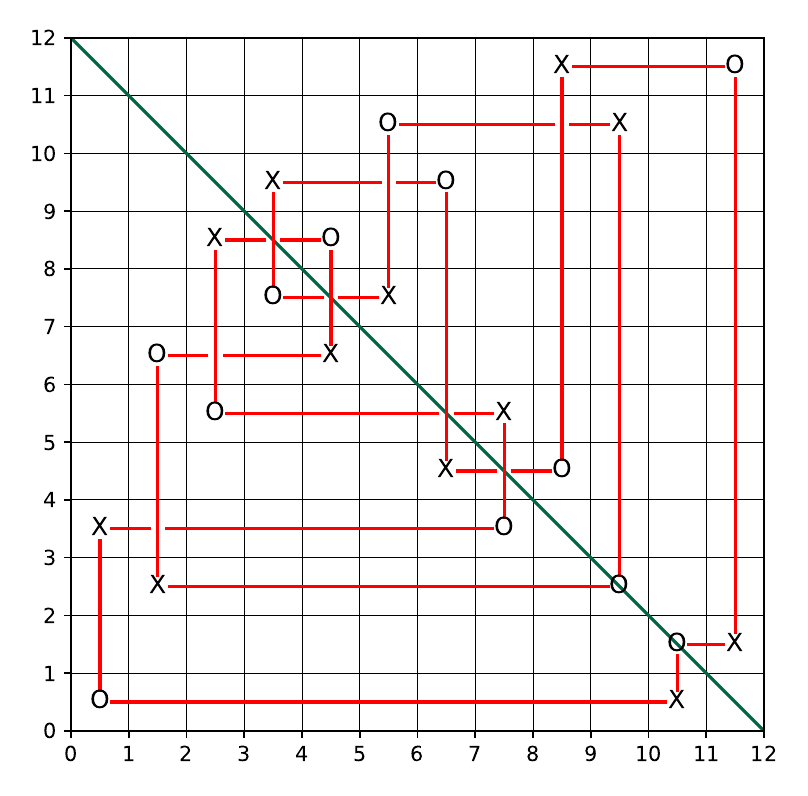} & \parbox[c]{\linewidth}{\centering\tiny
\textcolor{blue}{Polynomial Invariant}\\
$t^{-2} - 2 + 2t^{2} + t^{3}$\\[0.1in]
\textcolor{blue}{Real grid homology - hat version}\\
$(-2,-2)\oplus (0,-1)^{2}\oplus (2,0)^{2}\oplus (3,0)$\\[0.1in]
\textcolor{blue}{Real grid homology - minus version}\\
$U^{\infty}_{(-2,-2)}\oplus U^{\infty}_{(0,-1)}\oplus U_{(3,0)}\oplus U^{2}_{(2,0)}$
} \tabularnewline
  \hline
  \centering $8_{18}^{OO}$ & \centering \includegraphics[width=0.25\textwidth]{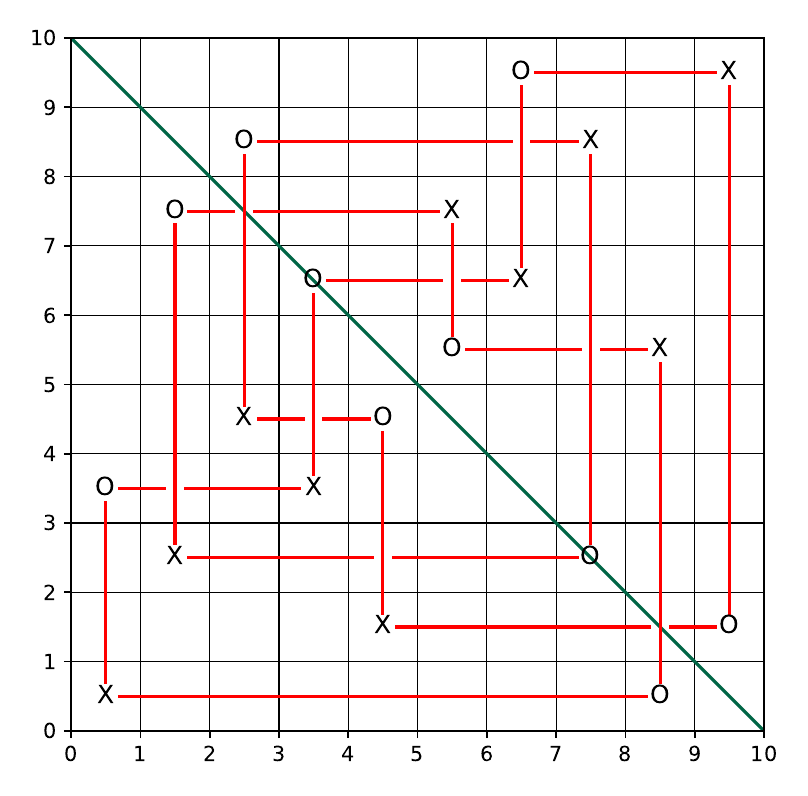} & \parbox[c]{\linewidth}{\centering\tiny
\textcolor{blue}{Polynomial Invariant}\\
$t^{-3} - 3t^{-1} + 1 + 5t + t^{2} - 2t^{3} - t^{4}$\\[0.1in]
\textcolor{blue}{Real grid homology - hat version}\\
$(-1,-1)^{3}\oplus (-3,-2)\oplus (0,0)\oplus (1,0)^{5}\oplus (2,0)^{2}\oplus (2,1)\oplus (3,1)^{2}\oplus (4,1)$\\[0.1in]
\textcolor{blue}{Real grid homology - minus version}\\
$U^{\infty}_{(-1,-1)}\oplus U^{\infty}_{(1,0)}\oplus U_{(1,0)}\oplus \bigg(U_{(2,0)}\bigg)^{2}\oplus U_{(3,1)}\oplus U_{(4,1)}\oplus U^{2}_{(-1,-1)}\oplus U^{2}_{(1,0)}$
} \tabularnewline
  \hline
  \centering $8_{19}^{OO}$ & \centering \includegraphics[width=0.25\textwidth]{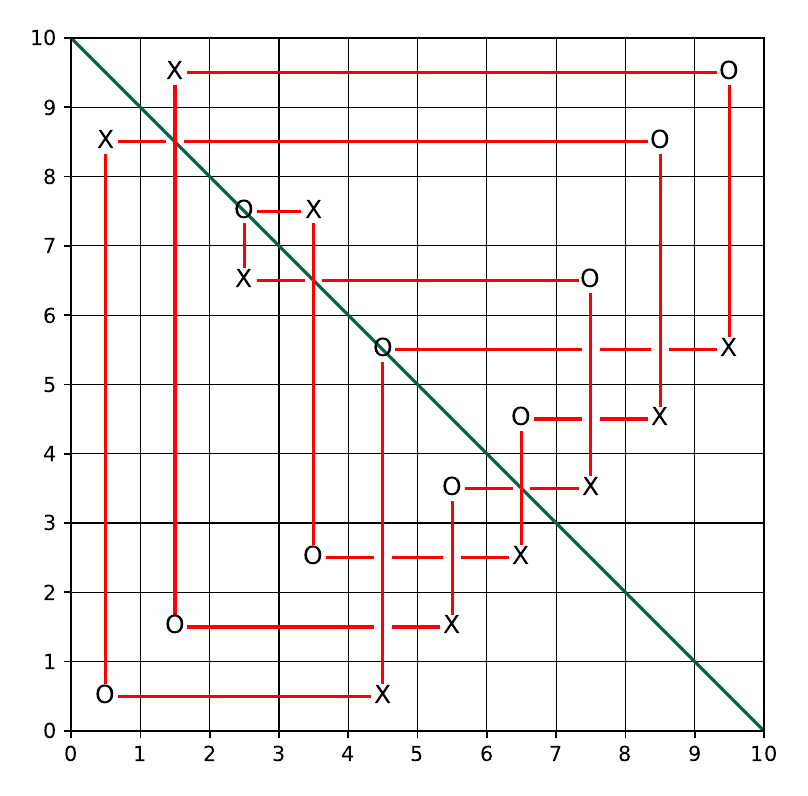} & \parbox[c]{\linewidth}{\centering\tiny
\textcolor{blue}{Polynomial Invariant}\\
$t^{-3} + 2t^{-2} + t^{-1} - 1 - t + t^{2} - t^{4}$\\[0.1in]
\textcolor{blue}{Real grid homology - hat version}\\
$(-1,0)\oplus (-2,0)^{2}\oplus (-3,0)\oplus (0,1)\oplus (1,1)\oplus (2,2)\oplus (4,3)$\\[0.1in]
\textcolor{blue}{Real grid homology - minus version}\\
$U^{\infty}_{(1,1)}\oplus U^{\infty}_{(4,3)}\oplus U_{(-1,0)}\oplus U_{(-2,0)}\oplus U^{2}_{(2,2)}$
} \tabularnewline
  \hline
  \centering $8_{20}^{OO}$ & \centering \includegraphics[width=0.25\textwidth]{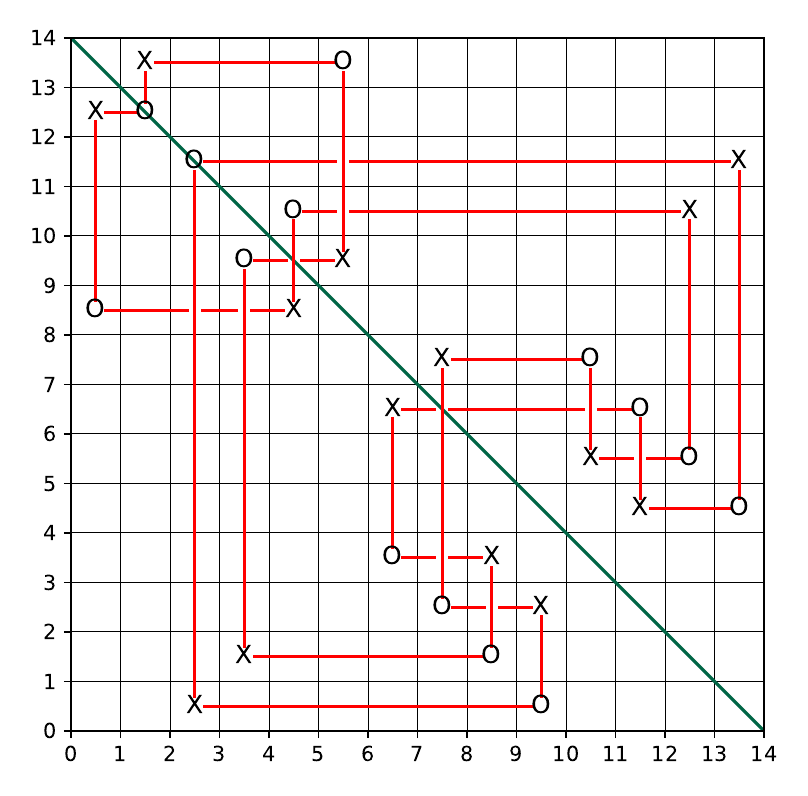} & \parbox[c]{\linewidth}{\centering\tiny
\textcolor{blue}{Polynomial Invariant}\\
$-t^{-2} - t^{-1} + 3 + 3t - t^{2} - t^{3}$
} \tabularnewline
  \hline
  \centering $9_3^{OO}$ & \centering \includegraphics[width=0.25\textwidth]{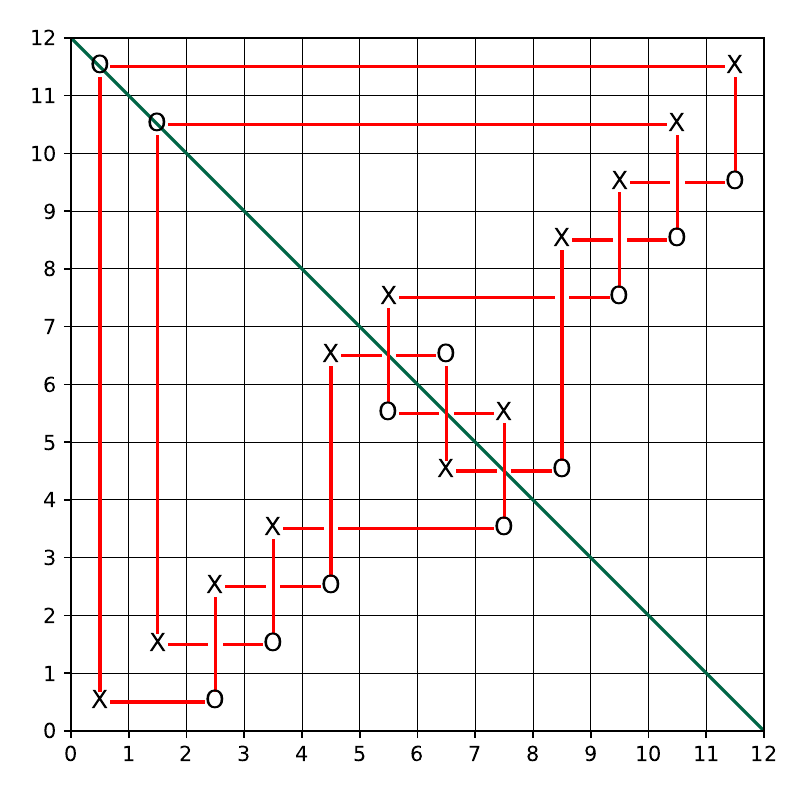} & \parbox[c]{\linewidth}{\centering\tiny
\textcolor{blue}{Polynomial Invariant}\\
$t^{-2} + 2t^{-1} - 2t + t^{3}$\\[0.1in]
\textcolor{blue}{Real grid homology - hat version}\\
$(-1,0)^{2}\oplus (-2,0)\oplus (1,1)^{2}\oplus (3,2)$\\[0.1in]
\textcolor{blue}{Real grid homology - minus version}\\
$U^{\infty}_{(1,1)}\oplus U^{\infty}_{(3,2)}\oplus U_{(-1,0)}\oplus U^{2}_{(1,1)}$
} \tabularnewline
  \hline
  \centering $9_6^{OO}$ & \centering \includegraphics[width=0.25\textwidth]{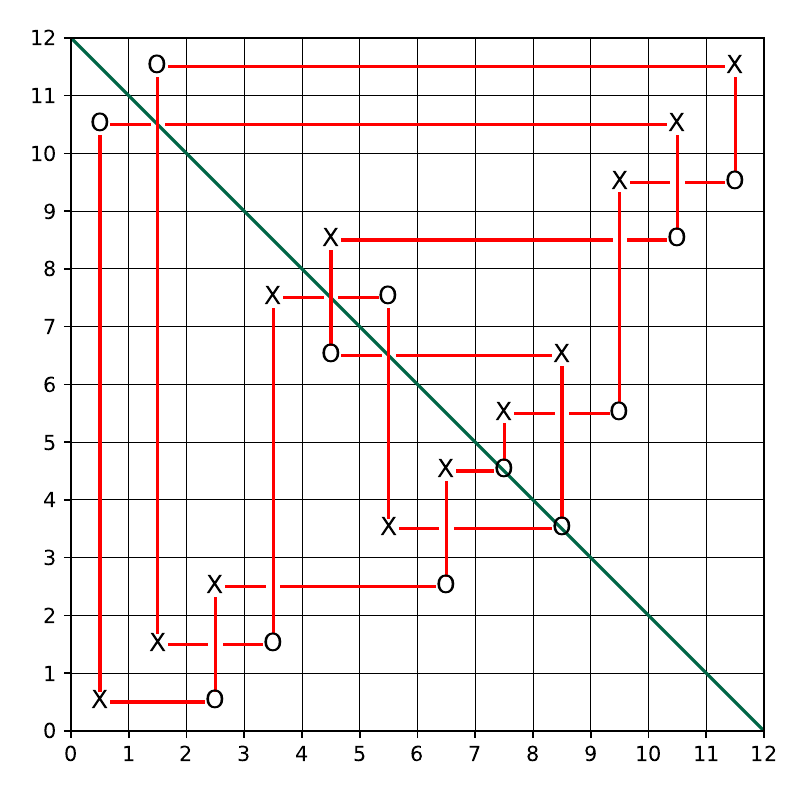} & \parbox[c]{\linewidth}{\centering\tiny
\textcolor{blue}{Polynomial Invariant}\\
$2t^{-2} + 3t^{-1} - 2 - 4t + t^{2} + 2t^{3}$\\[0.1in]
\textcolor{blue}{Real grid homology - hat version}\\
$(-1,0)^{3}\oplus (-2,0)^{2}\oplus (0,1)^{2}\oplus (1,1)^{4}\oplus (2,2)\oplus (3,2)^{2}$\\[0.1in]
\textcolor{blue}{Real grid homology - minus version}\\
$U^{\infty}_{(1,1)}\oplus U^{\infty}_{(3,2)}\oplus \bigg(U_{(-1,0)}\bigg)^{2}\oplus \bigg(U_{(1,1)}\bigg)^{2}\oplus U_{(3,2)}\oplus U^{2}_{(1,1)}$
} \tabularnewline
  \hline
  \centering $9_9^{OO}$ & \centering \includegraphics[width=0.25\textwidth]{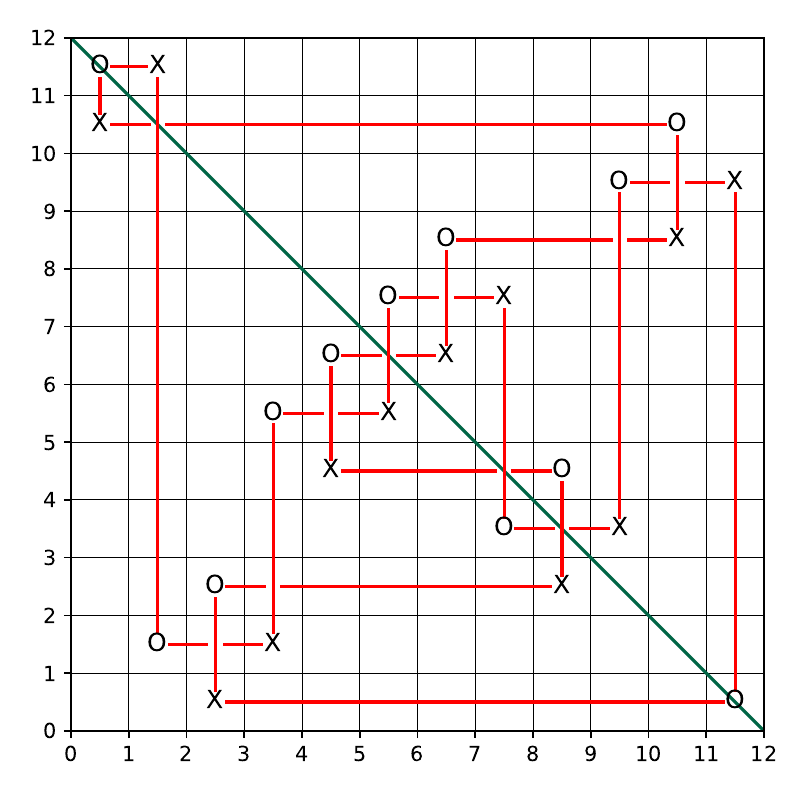} & \parbox[c]{\linewidth}{\centering\tiny
\textcolor{blue}{Polynomial Invariant}\\
$2t^{-2} + 4t^{-1} - 1 - 5t + 2t^{3}$\\[0.1in]
\textcolor{blue}{Real grid homology - hat version}\\
$(-1,0)^{4}\oplus (-2,0)^{2}\oplus (0,1)\oplus (1,1)^{5}\oplus (3,2)^{2}$\\[0.1in]
\textcolor{blue}{Real grid homology - minus version}\\
$U^{\infty}_{(1,1)}\oplus U^{\infty}_{(3,2)}\oplus \bigg(U_{(-1,0)}\bigg)^{2}\oplus U_{(1,1)}\oplus \bigg(U^{2}_{(1,1)}\bigg)^{2}\oplus U^{2}_{(3,2)}$
} \tabularnewline
  \hline
  \centering $9_{28}$ & \centering \includegraphics[width=0.25\textwidth]{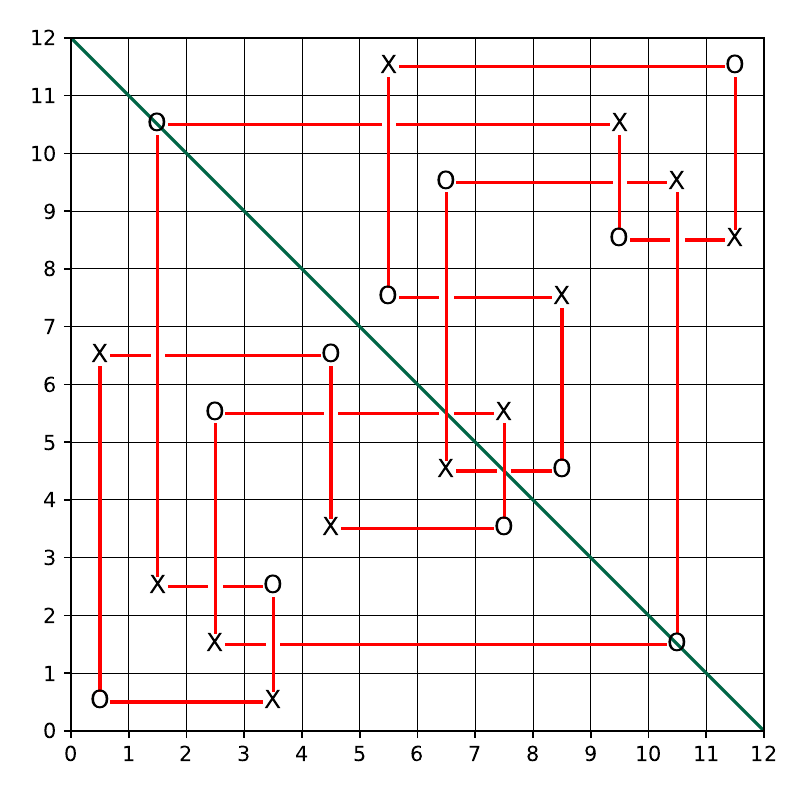} & \parbox[c]{\linewidth}{\centering\tiny
\textcolor{blue}{Polynomial Invariant}\\
$t^{-3} + 2t^{-2} - 3t^{-1} - 5 + 3t + 5t^{2} - t^{4}$\\[0.1in]
\textcolor{blue}{Real grid homology - hat version}\\
$(-1,-1)^{3}\oplus (-2,-2)^{2}\oplus (-3,-2)\oplus (0,-1)^{5}\oplus (1,0)^{3}\oplus (2,0)^{5}\oplus (4,1)$\\[0.1in]
\textcolor{blue}{Real grid homology - minus version}\\
$U^{\infty}_{(-2,-2)}\oplus U^{\infty}_{(0,-1)}\oplus U_{(-2,-2)}\oplus \bigg(U_{(0,-1)}\bigg)^{3}\oplus \bigg(U_{(2,0)}\bigg)^{3}\oplus U^{2}_{(2,0)}\oplus U^{2}_{(4,1)}$
} \tabularnewline
  \hline
  \centering $9_{42}^{OO}$ & \centering \includegraphics[width=0.25\textwidth]{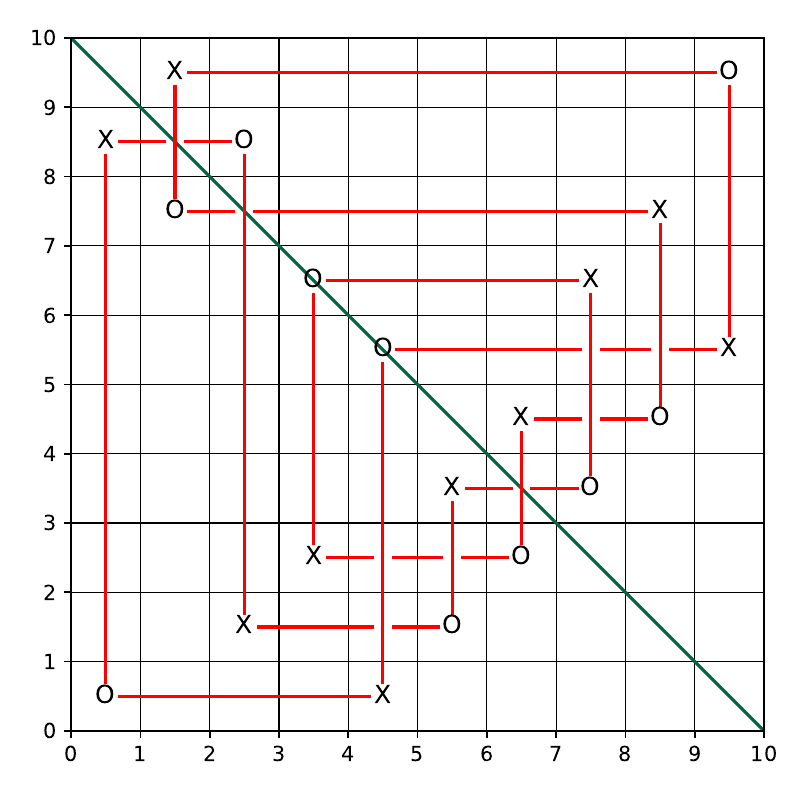} & \parbox[c]{\linewidth}{\centering\tiny
\textcolor{blue}{Polynomial Invariant}\\
$-t^{-2} - t^{-1} + 3 + 3t - t^{2} - t^{3}$\\[0.1in]
\textcolor{blue}{Real grid homology - hat version}\\
$(-1,-1)\oplus (-2,-1)\oplus (0,0)^{3}\oplus (1,0)^{3}\oplus (2,1)\oplus (3,1)$\\[0.1in]
\textcolor{blue}{Real grid homology - minus version}\\
$U^{\infty}_{(0,0)}\oplus U^{\infty}_{(1,0)}\oplus U_{(-1,-1)}\oplus \bigg(U_{(1,0)}\bigg)^{2}\oplus U_{(3,1)}$
} \tabularnewline
  \hline
  \centering $9_{46}^{OO}$ & \centering \includegraphics[width=0.25\textwidth]{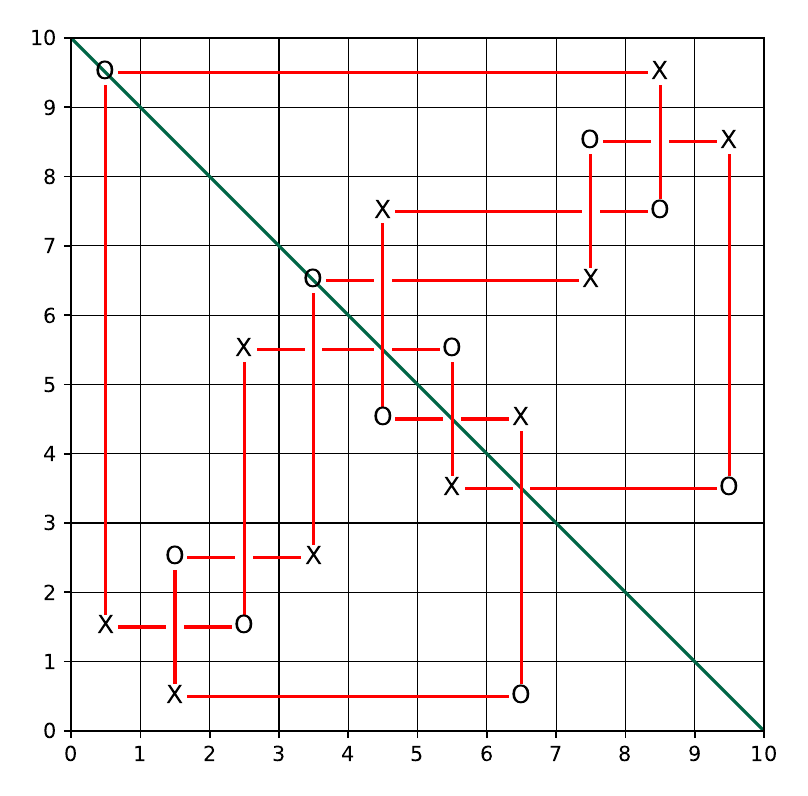} & \parbox[c]{\linewidth}{\centering\tiny
\textcolor{blue}{Polynomial Invariant}\\
$-2t^{-1} - 1 + 3t + 2t^{2}$\\[0.1in]
\textcolor{blue}{Real grid homology - hat version}\\
$(-1,-1)^{2}\oplus (0,-1)\oplus (1,0)^{3}\oplus (2,0)^{2}$\\[0.1in]
\textcolor{blue}{Real grid homology - minus version}\\
$U^{\infty}_{(-1,-1)}\oplus U^{\infty}_{(1,0)}\oplus U_{(0,-1)}\oplus \bigg(U_{(2,0)}\bigg)^{2}$
} \tabularnewline
  \hline
  \centering $10_{35}^{OO}$ & \centering \includegraphics[width=0.25\textwidth]{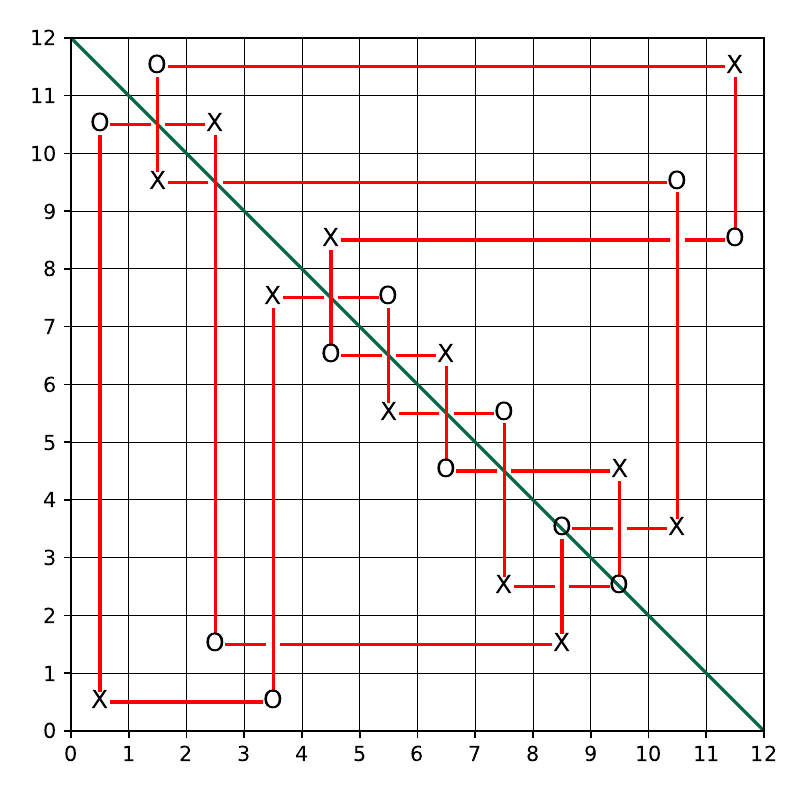} & \parbox[c]{\linewidth}{\centering\tiny
\textcolor{blue}{Polynomial Invariant}\\
$-2t^{-1} - 1 + 3t + 2t^{2}$\\[0.1in]
\textcolor{blue}{Real grid homology - hat version}\\
$(-1,-1)^{2}\oplus (0,-1)\oplus (1,0)^{3}\oplus (2,0)^{2}$\\[0.1in]
\textcolor{blue}{Real grid homology - minus version}\\
$U^{\infty}_{(-1,-1)}\oplus U^{\infty}_{(1,0)}\oplus U_{(0,-1)}\oplus \bigg(U_{(2,0)}\bigg)^{2}$
} \tabularnewline
  \hline
  \centering $10_{75}^{OO}$ & \centering \includegraphics[width=0.25\textwidth]{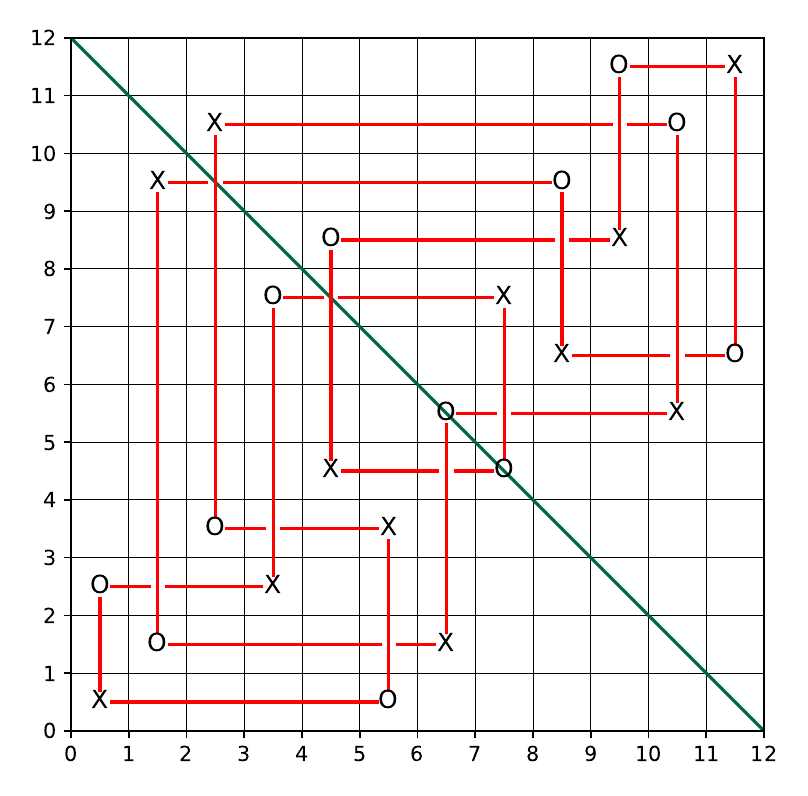} & \parbox[c]{\linewidth}{\centering\tiny
\textcolor{blue}{Polynomial Invariant}\\
$t^{-3} - 6t^{-1} - 2 + 8t + 4t^{2} - 2t^{3} - t^{4}$\\[0.1in]
\textcolor{blue}{Real grid homology - hat version}\\
$(-1,-1)^{6}\oplus (-3,-2)\oplus (0,-1)^{2}\oplus (1,0)^{8}\oplus (2,0)^{4}\oplus (3,1)^{2}\oplus (4,1)$\\[0.1in]
\textcolor{blue}{Real grid homology - minus version}\\
$U^{\infty}_{(-1,-1)}\oplus U^{\infty}_{(1,0)}\oplus \bigg(U_{(0,-1)}\bigg)^{2}\oplus \bigg(U_{(2,0)}\bigg)^{4}\oplus U_{(4,1)}\oplus U^{2}_{(-1,-1)}\oplus \bigg(U^{2}_{(1,0)}\bigg)^{2}\oplus U^{2}_{(3,1)}$
} \tabularnewline
  \hline
  \centering $10_{99}$ & \centering \includegraphics[width=0.25\textwidth]{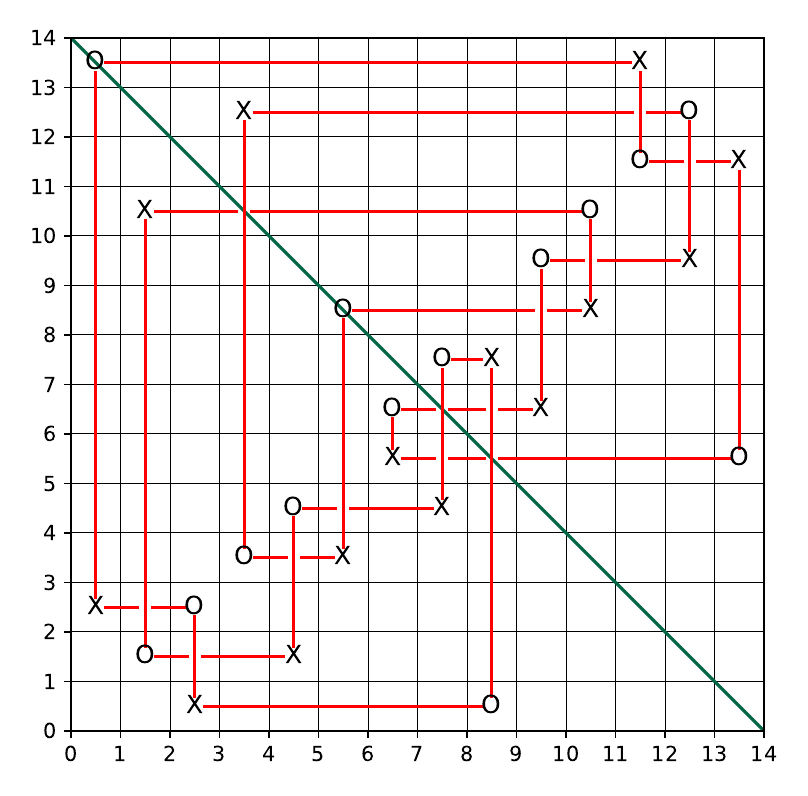} & \parbox[c]{\linewidth}{\centering\tiny
\textcolor{blue}{Polynomial Invariant}\\
$2t^{-2} + 3t^{-1} - 2 - 4t + t^{2} + 2t^{3}$
} \tabularnewline
  \hline
  \centering $10_{129}^{OO}$ & \centering \includegraphics[width=0.25\textwidth]{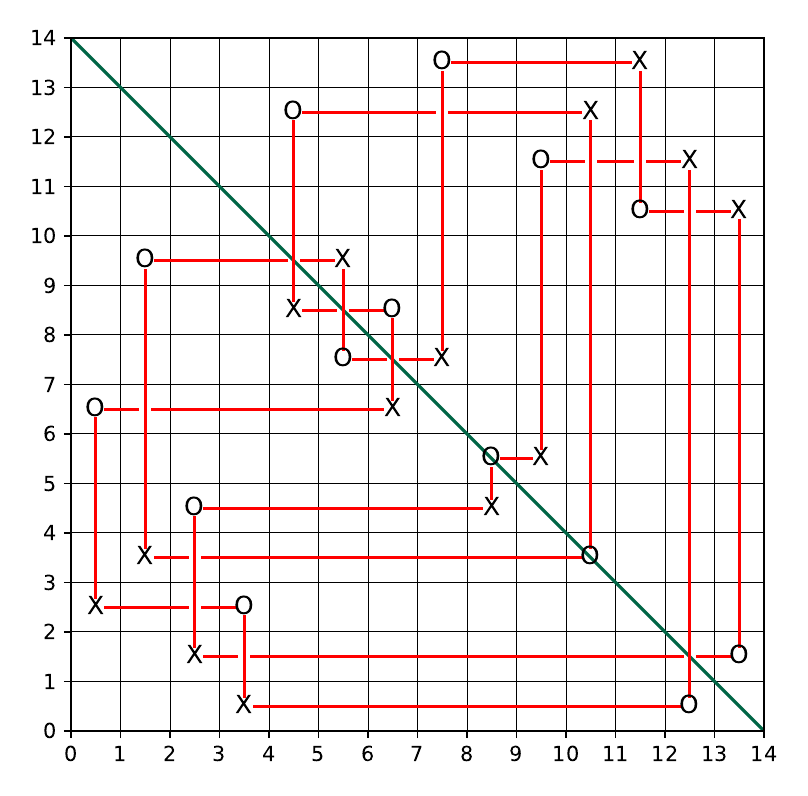} & \parbox[c]{\linewidth}{\centering\tiny
\textcolor{blue}{Polynomial Invariant}\\
$-2t^{-1} - 1 + 3t + 2t^{2}$
} \tabularnewline
  \hline
  \centering $3_1 \# 3_1$ & \centering \includegraphics[width=0.25\textwidth]{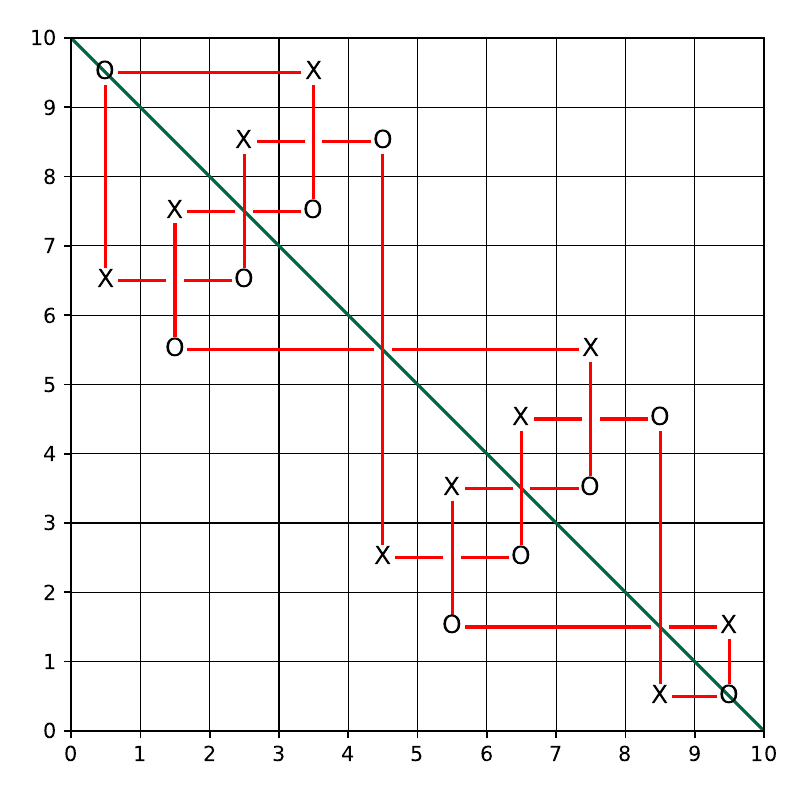} & \parbox[c]{\linewidth}{\centering\tiny
\textcolor{blue}{Polynomial Invariant}\\
$t^{-2} + 3t^{-1} + 1 - 3t - t^{2} + t^{3}$\\[0.1in]
\textcolor{blue}{Real grid homology - hat version}\\
$(-1,0)^{3}\oplus (-2,0)\oplus (0,0)\oplus (1,1)^{3}\oplus (2,1)\oplus (3,2)$\\[0.1in]
\textcolor{blue}{Real grid homology - minus version}\\
$U^{\infty}_{(1,1)}\oplus U^{\infty}_{(3,2)}\oplus U_{(-1,0)}\oplus U_{(0,0)}\oplus U_{(2,1)}\oplus U^{2}_{(1,1)}$
} \tabularnewline
  \hline
  \centering $3_1 \# 4_1$ & \centering \includegraphics[width=0.25\textwidth]{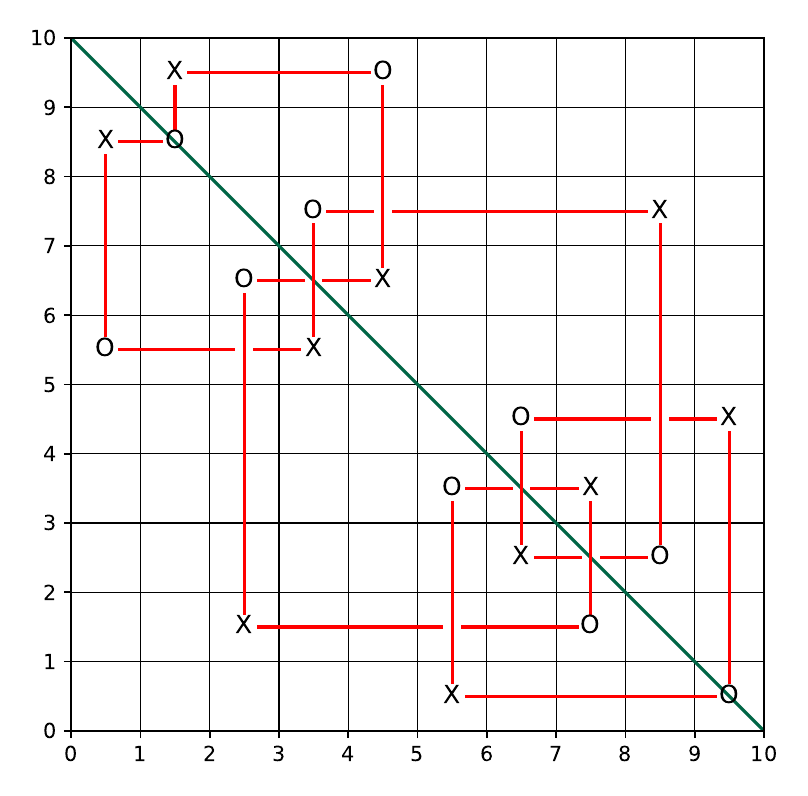} & \parbox[c]{\linewidth}{\centering\tiny
\textcolor{blue}{Polynomial Invariant}\\
$-t^{-2} - t^{-1} + 3 + 3t - t^{2} - t^{3}$\\[0.1in]
\textcolor{blue}{Real grid homology - hat version}\\
$(-1,-1)\oplus (-2,-1)\oplus (0,0)^{3}\oplus (1,0)^{3}\oplus (2,1)\oplus (3,1)$\\[0.1in]
\textcolor{blue}{Real grid homology - minus version}\\
$U^{\infty}_{(0,0)}\oplus U^{\infty}_{(1,0)}\oplus U_{(-1,-1)}\oplus \bigg(U_{(1,0)}\bigg)^{2}\oplus U_{(3,1)}$
} \tabularnewline
  \hline
  \centering $(3_1 \# 4_1)'$ & \centering \includegraphics[width=0.25\textwidth]{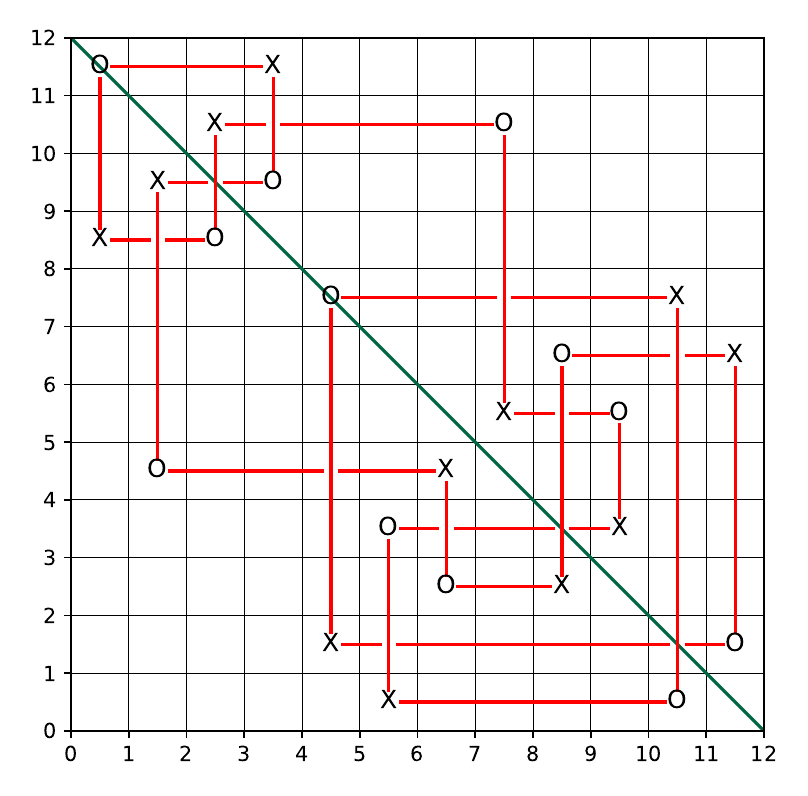} & \parbox[c]{\linewidth}{\centering\tiny
\textcolor{blue}{Polynomial Invariant}\\
$t^{-2} + 3t^{-1} + 1 - 3t - t^{2} + t^{3}$\\[0.1in]
\textcolor{blue}{Real grid homology - hat version}\\
$(-1,0)^{3}\oplus (-2,0)\oplus (0,0)\oplus (1,1)^{3}\oplus (2,1)\oplus (3,2)$\\[0.1in]
\textcolor{blue}{Real grid homology - minus version}\\
$U^{\infty}_{(1,1)}\oplus U^{\infty}_{(3,2)}\oplus U_{(-1,0)}\oplus U_{(0,0)}\oplus U_{(2,1)}\oplus U^{2}_{(1,1)}$
} \tabularnewline
  \hline
  \centering $3_1 \# 5_1$ & \centering \includegraphics[width=0.25\textwidth]{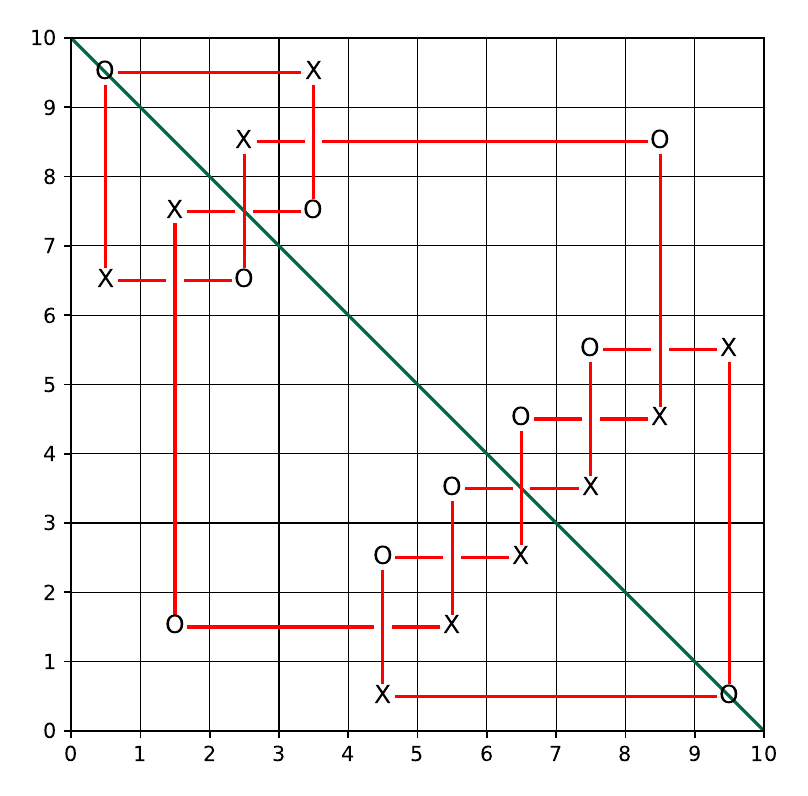} & \parbox[c]{\linewidth}{\centering\tiny
\textcolor{blue}{Polynomial Invariant}\\
$t^{-3} + 3t^{-2} + t^{-1} - 4 - 2t + 3t^{2} + t^{3} - t^{4}$\\[0.1in]
\textcolor{blue}{Real grid homology - hat version}\\
$(-1,0)\oplus (-2,0)^{3}\oplus (-3,0)\oplus (0,1)^{4}\oplus (1,1)^{2}\oplus (2,2)^{3}\oplus (3,2)\oplus (4,3)$\\[0.1in]
\textcolor{blue}{Real grid homology - minus version}\\
$U^{\infty}_{(2,2)}\oplus U^{\infty}_{(4,3)}\oplus U_{(-1,0)}\oplus U_{(-2,0)}\oplus \bigg(U_{(1,1)}\bigg)^{2}\oplus U_{(3,2)}\oplus U^{2}_{(0,1)}\oplus U^{2}_{(2,2)}$
} \tabularnewline
  \hline
  \centering $3_1 \# 5_2$ & \centering \includegraphics[width=0.25\textwidth]{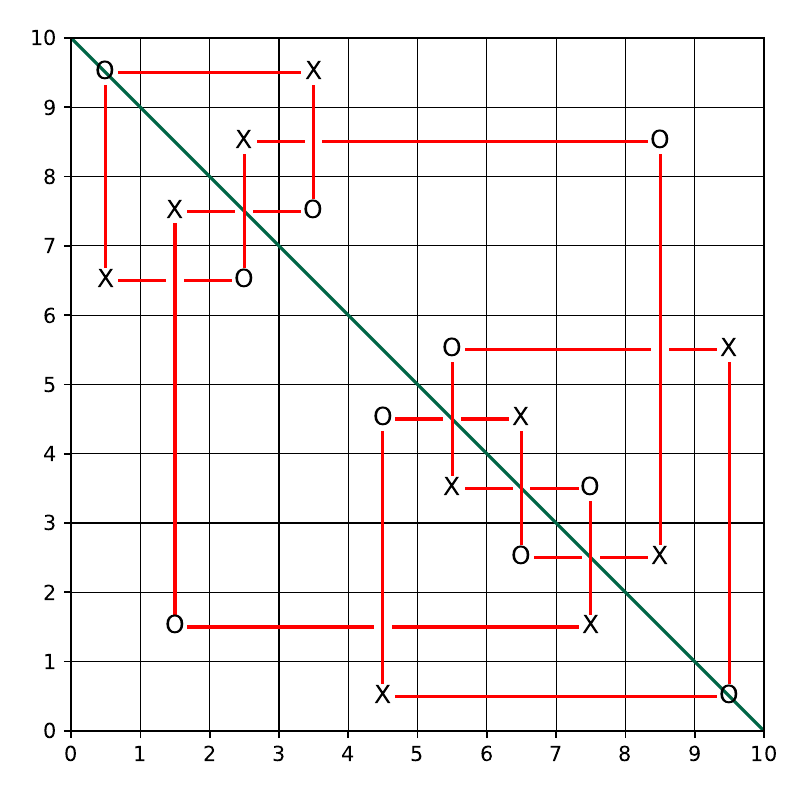} & \parbox[c]{\linewidth}{\centering\tiny
\textcolor{blue}{Polynomial Invariant}\\
$t^{-1} + 2 - t^{2}$\\[0.1in]
\textcolor{blue}{Real grid homology - hat version}\\
$(-1,0)\oplus (0,0)^{2}\oplus (2,1)$\\[0.1in]
\textcolor{blue}{Real grid homology - minus version}\\
$U^{\infty}_{(0,0)}\oplus U^{\infty}_{(2,1)}\oplus U_{(0,0)}$
} \tabularnewline
  \hline
  \centering $3_1 \# 6_1$ & \centering \includegraphics[width=0.25\textwidth]{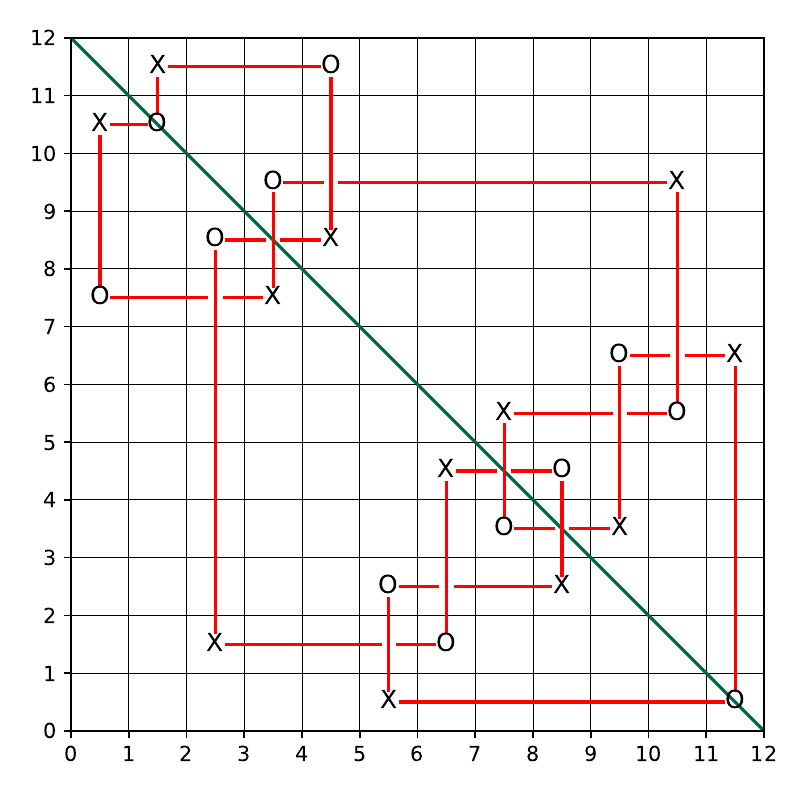} & \parbox[c]{\linewidth}{\centering\tiny
\textcolor{blue}{Polynomial Invariant}\\
$-2t^{-2} - 3t^{-1} + 4 + 6t - t^{2} - 2t^{3}$\\[0.1in]
\textcolor{blue}{Real grid homology - hat version}\\
$(-1,-1)^{3}\oplus (-2,-1)^{2}\oplus (0,0)^{4}\oplus (1,0)^{6}\oplus (2,1)\oplus (3,1)^{2}$\\[0.1in]
\textcolor{blue}{Real grid homology - minus version}\\
$U^{\infty}_{(-1,-1)}\oplus U^{\infty}_{(1,0)}\oplus \bigg(U_{(-1,-1)}\bigg)^{2}\oplus \bigg(U_{(1,0)}\bigg)^{4}\oplus U_{(3,1)}\oplus U^{2}_{(3,1)}$
} \tabularnewline
  \hline
  \centering $4_1 \# 4_1^{OO}$ & \centering \includegraphics[width=0.25\textwidth]{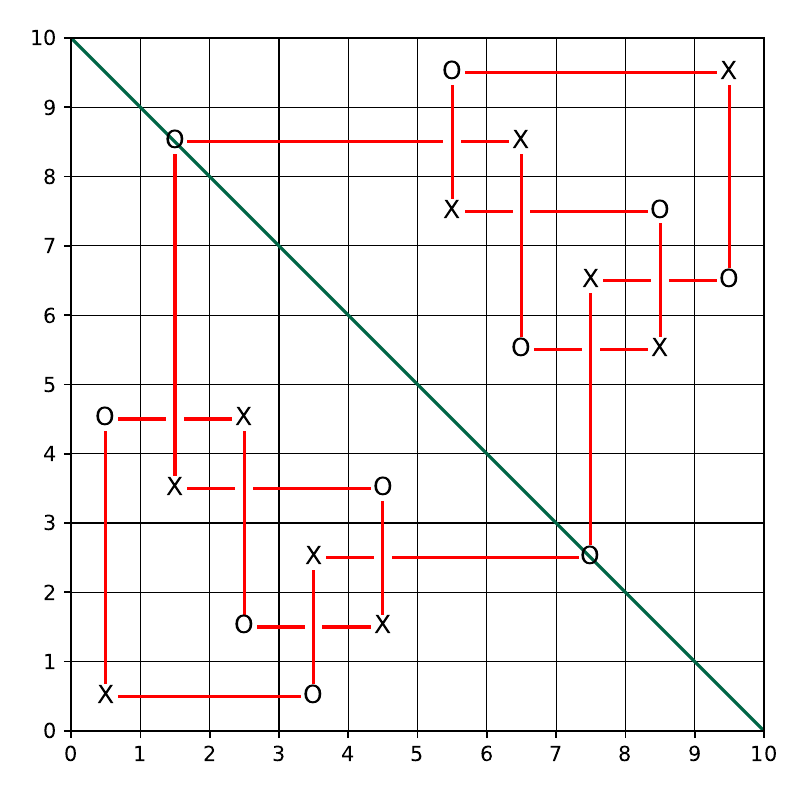} & \parbox[c]{\linewidth}{\centering\tiny
\textcolor{blue}{Polynomial Invariant}\\
$-t^{-2} - t^{-1} + 3 + 3t - t^{2} - t^{3}$\\[0.1in]
\textcolor{blue}{Real grid homology - hat version}\\
$(-1,-1)\oplus (-2,-1)\oplus (0,0)^{3}\oplus (1,0)^{3}\oplus (2,1)\oplus (3,1)$\\[0.1in]
\textcolor{blue}{Real grid homology - minus version}\\
$U^{\infty}_{(0,0)}\oplus U^{\infty}_{(1,0)}\oplus U^{2}_{(0,0)}\oplus U^{2}_{(1,0)}\oplus U^{2}_{(2,1)}\oplus U^{2}_{(3,1)}$
} \tabularnewline
  \hline
  \centering $(4_1 \# 4_1)'^{OO}$ & \centering \includegraphics[width=0.25\textwidth]{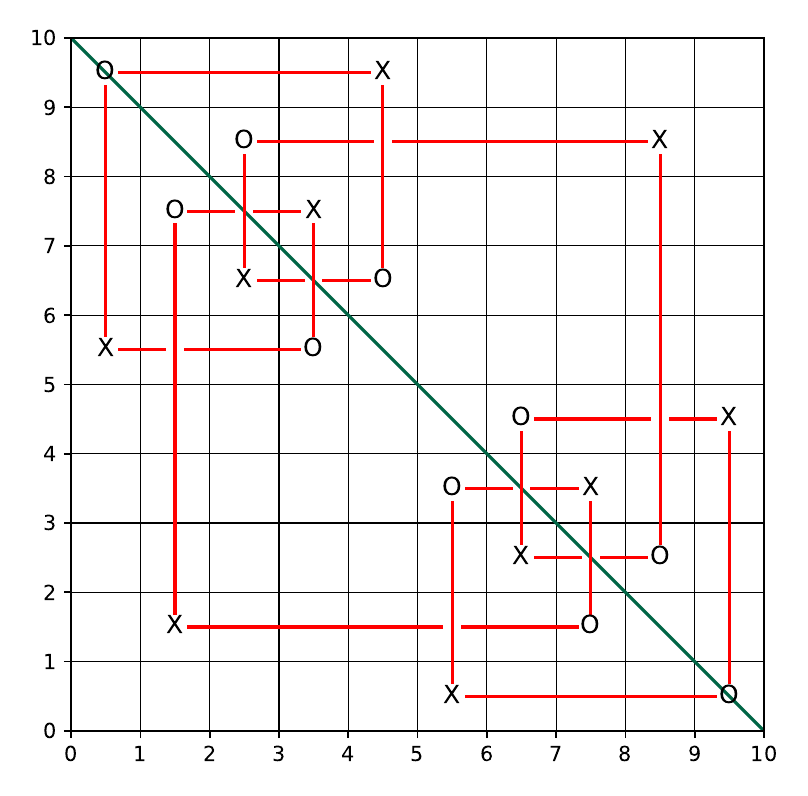} & \parbox[c]{\linewidth}{\centering\tiny
\textcolor{blue}{Polynomial Invariant}\\
$t^{-2} - t^{-1} - 3 + t + 3t^{2} + t^{3}$\\[0.1in]
\textcolor{blue}{Real grid homology - hat version}\\
$(-1,-1)\oplus (-2,-2)\oplus (0,-1)^{3}\oplus (1,0)\oplus (2,0)^{3}\oplus (3,0)$\\[0.1in]
\textcolor{blue}{Real grid homology - minus version}\\
$U^{\infty}_{(-2,-2)}\oplus U^{\infty}_{(0,-1)}\oplus U_{(0,-1)}\oplus U_{(2,0)}\oplus U_{(3,0)}\oplus U^{2}_{(2,0)}$
} \tabularnewline
  \hline
  \centering $5_1 \# 5_1^{OO}$ & \centering \includegraphics[width=0.25\textwidth]{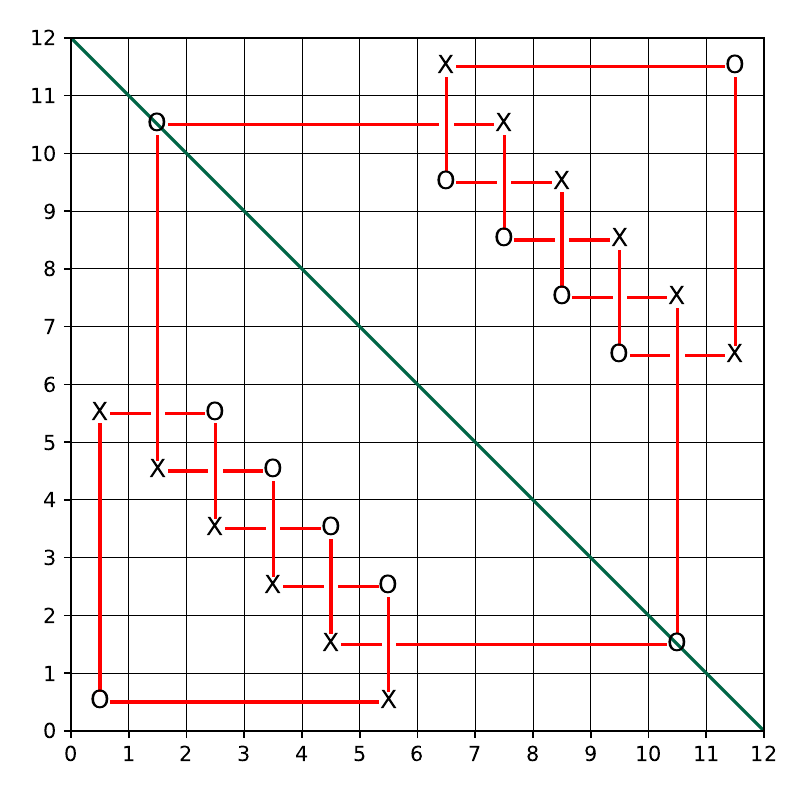} & \parbox[c]{\linewidth}{\centering\tiny
\textcolor{blue}{Polynomial Invariant}\\
$t^{-4} + t^{-3} - t^{-2} - t^{-1} + 1 + t - t^{2} - t^{3} + t^{4} + t^{5}$\\[0.1in]
\textcolor{blue}{Real grid homology - hat version}\\
$(-1,-3)\oplus (-2,-3)\oplus (-3,-4)\oplus (-4,-4)\oplus (0,-2)\oplus (1,-2)\oplus (2,-1)\oplus (3,-1)\oplus (4,0)\oplus (5,0)$\\[0.1in]
\textcolor{blue}{Real grid homology - minus version}\\
$U^{\infty}_{(-3,-4)}\oplus U^{\infty}_{(-4,-4)}\oplus U^{2}_{(0,-2)}\oplus U^{2}_{(1,-2)}\oplus \bigg(U^{2}_{(2,-1)}\bigg)^{0}\oplus \bigg(U^{2}_{(3,-1)}\bigg)^{0}\oplus U^{2}_{(4,0)}\oplus U^{2}_{(5,0)}$
} \tabularnewline
  \hline
  \centering $5_2 \# 5_2^{OO}$ & \centering \includegraphics[width=0.25\textwidth]{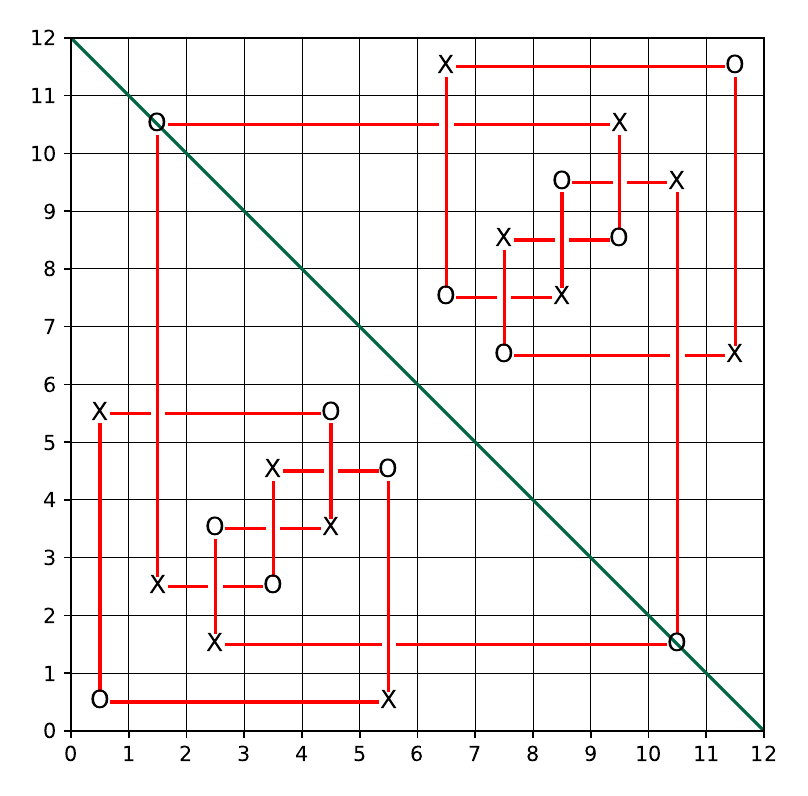} & \parbox[c]{\linewidth}{\centering\tiny
\textcolor{blue}{Polynomial Invariant}\\
$2t^{-2} + 2t^{-1} - 3 - 3t + 2t^{2} + 2t^{3}$\\[0.1in]
\textcolor{blue}{Real grid homology - hat version}\\
$(-1,-2)^{2}\oplus (-2,-2)^{2}\oplus (0,-1)^{3}\oplus (1,-1)^{3}\oplus (2,0)^{2}\oplus (3,0)^{2}$\\[0.1in]
\textcolor{blue}{Real grid homology - minus version}\\
$U^{\infty}_{(-1,-2)}\oplus U^{\infty}_{(-2,-2)}\oplus U^{2}_{(0,-1)}\oplus U^{2}_{(1,-1)}\oplus \bigg(U^{2}_{(2,0)}\bigg)^{2}\oplus \bigg(U^{2}_{(3,0)}\bigg)^{2}$
} \tabularnewline
  \hline
\end{longtable}

\bibliographystyle{plain}
\bibliography{bibliography}
\end{document}